\documentclass[twoside]{scrartcl}
\usepackage{scrlayer-scrpage}


\usepackage[utf8]{inputenc}
\usepackage[T1]{fontenc}
\usepackage[english]{babel}
\usepackage[unicode=true,plainpages=false]{hyperref}

\usepackage[left=2.5cm, right=3cm, top=2cm, bottom=3.5cm]{geometry}

\usepackage[shortlabels]{enumitem}

\usepackage{amsmath}
\usepackage{amssymb}
\usepackage{amsthm}
\usepackage{mathrsfs}
\usepackage{yhmath}
\usepackage{wasysym}
\usepackage{mathtools}

\usepackage[capitalise]{cleveref}

\usepackage{caption}
\usepackage{subcaption}

\usepackage{graphicx}
\usepackage{pgf,tikz}
\usepackage{tikz-cd}
\usepackage{rotating}

\usepackage{lmodern} 
\usepackage{tabularx} 
\usepackage[draft=false,babel,kerning=true,spacing=true]{microtype} 



\allowdisplaybreaks

\bibliographystyle{abbrv}



\let\cprod\times

\makeatletter
\newcommand*{\relrelbarsep}{.386ex}
\newcommand*{\relrelbar}{%
  \mathrel{%
    \mathpalette\@relrelbar\relrelbarsep
  }%
}
\newcommand*{\@relrelbar}[2]{%
  \raise#2\hbox to 0pt{$\m@th#1\relbar$\hss}%
  \lower#2\hbox{$\m@th#1\relbar$}%
}
\providecommand*{\rightrightarrowsfill@}{%
  \arrowfill@\relrelbar\relrelbar\rightrightarrows
}
\providecommand*{\leftleftarrowsfill@}{%
  \arrowfill@\leftleftarrows\relrelbar\relrelbar
}
\providecommand*{\xrightrightarrows}[2][]{%
  \ext@arrow 0359\rightrightarrowsfill@{#1}{#2}%
}
\providecommand*{\xleftleftarrows}[2][]{%
  \ext@arrow 3095\leftleftarrowsfill@{#1}{#2}%
}
\makeatother

\renewcommand{\times}{\cdot}
\DeclarePairedDelimiter{\abs}{\lvert}{\rvert}

\DeclareMathOperator{\coeq}{coeq}


\newcommand{\isom}{\cong}

\newcommand{\from}{\mathbin{\leftarrow}}
\newcommand{\xto}[1]{\mathbin{\xrightarrow{#1}}} 
\newcommand{\xfrom}[1]{\mathbin{\xleftarrow{#1}}} 
\newcommand{\isoto}{\xto\sim}
\newcommand{\isofrom}{\xfrom\sim}
\newcommand{\longisoto}{\xto{\ \sim\ }}

\DeclareMathOperator{\Hom}{Hom}

\newcommand{\setst}{\mathrel{|}}
\newcommand{\isect}{\mathbin{\cap}}

\newcommand{\bigunion}{\bigcup}
\newcommand{\dunion}{\mathbin{\sqcup}}
\newcommand{\bigdunion}{\bigsqcup}

\renewcommand{\emptyset}{\varnothing}

\renewcommand{\subset}{\subseteq}
\renewcommand{\supset}{\supseteq}

\newcommand{\comp}{\mathbin{\circ}}
\newcommand{\id}[1]{\mathrm{id}_{#1}}
\newcommand{\pr}[1]{\mathrm{pr}_{#1}}
\newcommand{\restrict}[2]{{#1}|_{#2}}
\newcommand{\longto}{\longrightarrow}
\newcommand{\injto}{\mathrel{\hookrightarrow}}
\newcommand{\surjto}{\mathrel{\twoheadrightarrow}}


\newcommand{\dsum}{\oplus}



\newcommand{\Z}{\mathbb{Z}}
\newcommand{\N}{\mathbb{N}}
\newcommand{\Fld}{\mathbb{F}}
\newcommand{\Q}{\mathbb{Q}}
\newcommand{\R}{\mathbb{R}}
\newcommand{\Cpx}{\mathbb{C}}

\newcommand{\divides}{\mathbin{|}}


\newcommand{\GL}{\mathrm{GL}}



\newcommand{\units}{^\cprod}
\newcommand{\tensor}{\otimes}


\newcommand{\tclos}[1]{\overline{#1}} 



\DeclareMathOperator{\cts}{\mathcal C}


\newcommand{\topos}{\sim}

\newcommand{\catmod}[1]{\text{${#1}$-$\mathrm{Mod}$}}

\newcommand{\catlocsys}[2]{\mathrm{Loc}_{#1}(#2)}
\newcommand{\catlocsysint}[2]{\mathrm{ILoc}_{#1}(#2)}

\newcommand{\catmodproptriv}[1]{\mathrm{Mod}^{pt}(#1)}

\newcommand{\catwfintype}[1]{\mathrm{wFinType}(#1)}

\newcommand{\catshs}[1]{\operatorname{Sh}(#1)}

\DeclareMathOperator{\IHom}{\mathscr{H}\kern -3pt \mathit{om}}
\DeclareMathOperator{\IExt}{\mathscr{E}\kern -2pt \mathit{xt}}

\newcommand{\an}{{\mathrm{an}}} 
\newcommand{\et}{{\mathrm{et}}}
\newcommand{\fet}{{\mathrm{fet}}}
\newcommand{\proet}{{\mathrm{proet}}} 
\newcommand{\qproet}{{\mathrm{qproet}}} 
\newcommand{\vsite}{{\mathrm{v}}} 
\newcommand{\qcqs}{{\mathrm{qcqs}}} 
\newcommand{\aff}{{\mathrm{aff}}} 

\newcommand{\ri}{\mathcal O} 
\newcommand{\mm}{\mathfrak m} 

\DeclareMathOperator{\Spa}{Spa}
\DeclareMathOperator{\Spd}{Spd}


\newcommand{\Modplus}{{\mathcal Mod}^{pt}(\check{O}_X^+)}
\newcommand{\Mod}{{\mathcal Mod}^{pt}(\check{O}_X)}
\newcommand{\ModY}{{\mathcal Mod}^{pt}(\check{O}_Y)}

\newcommand{\Perf}{\mathrm{Perf}}

\newcommand{\catbdl}[1]{\mathcal B(#1)}


\newcommand{\supn}{{\mathrm{sup}}}



\theoremstyle{plain}
\newtheorem{theorem}{Theorem}[section]
\newtheorem{theorem*}{Theorem}
\newtheorem{proposition}[theorem]{Proposition}
\newtheorem{proposition*}[theorem*]{Proposition}
\newtheorem{corollary}[theorem]{Corollary}
\newtheorem{lemma}[theorem]{Lemma}

\theoremstyle{definition}
\newtheorem{definition}[theorem]{Definition}
\newtheorem{definition*}[theorem*]{Definition}
\newtheorem{example}[theorem]{Example}

\newtheorem{remark}[theorem]{Remark}

\newtheorem{hypothesis*}[theorem*]{Hypothesis}

\numberwithin{equation}{theorem}
\numberwithin{figure}{subsection}
\numberwithin{table}{subsection}

\newlist{thmenum}{enumerate}{1}
\setlist[thmenum]{label=(\roman*), ref=\thetheorem.(\roman*)}
\crefalias{thmenumi}{theorem}

\newlist{propenum}{enumerate}{1}
\setlist[propenum]{label=(\roman*), ref=\theproposition.(\roman*)}
\crefalias{propenumi}{proposition}

\newlist{corenum}{enumerate}{1}
\setlist[corenum]{label=(\roman*), ref=\thecorollary.(\roman*)}
\crefalias{corenumi}{corollary}

\newlist{lemenum}{enumerate}{1}
\setlist[lemenum]{label=(\roman*), ref=\thelemma.(\roman*)}
\crefalias{lemenumi}{lemma}

\newlist{exampleenum}{enumerate}{1}
\setlist[exampleenum]{label=(\alph*), ref=\theexamples.(\alph*)}
\crefalias{exampleenumi}{example}

\newlist{remarksenum}{enumerate}{1}
\setlist[remarksenum]{label=(\roman*), ref=\theremarks.(\roman*)}
\crefalias{remarksenumi}{remark}

\newlist{defenum}{enumerate}{1}
\setlist[defenum]{label=(\alph*), ref=\thedefinition.(\alph*)}
\crefalias{defenumi}{definition}

\title{Local systems on diamonds and $p$-adic vector bundles}
\author{Lucas Mann and Annette Werner}

\begin{document}
\maketitle

\begin{abstract}
We use Scholze's framework of diamonds to gain new insights in correspondences between $p$-adic vector bundles and local systems. Such correspondences arise in the context of $p$-adic Simpson theory in the case of vanishing Higgs fields. In the present paper we provide a detailed analysis of  local systems on diamonds for the \'etale, pro-\'etale, and the $v$-topology, and study the structure sheaves for all three topologies in question.  Applied to proper adic spaces of finite type over $\mathbb{C}_p$ this enables us to prove a category equivalence between $\mathbb{C}_p$-local systems with integral models, and modules under the $v$-structure sheaf which modulo each $p^n$ can be trivialized on a proper cover. The flexibility of the $v$-topology together with a descent result on integral models of local systems allows us to prove that the trivializability condition in the module category may be checked on any normal proper cover. This result leads to an extension of the parallel transport theory by Deninger and the second author to vector bundles with numerically flat reduction on a proper normal cover. \textbf{2020 MSC:} 14G45, 14G22, 11G25.
\end{abstract}

\tableofcontents

\section*{Introduction} 
\label{sec:intro}
\addcontentsline{toc}{section}{Introduction}

We use Scholze's theory of diamonds to gain new insights in correspondences between vector bundles and local systems which arise in the context of $p$-adic Simpson theory. Our results are motivated by  \cite{deninger-werner-parallel-transport-varieties}, where Deninger and the second author define a category of vector bundles $\mathcal{B}^s_{V_{\mathbb{C}_p}}$ with numerically flat reduction on every proper, smooth variety $V$  over $\overline{\mathbb{Q}}_p$.  By $\Pi_1(V_{\mathbb{C}_p})$ we denote the \'etale fundamental groupoid. One of the main results of  \cite{deninger-werner-parallel-transport-varieties} states that for every vector bundle in $\mathcal{B}^s_{V_{\mathbb{C}_p}}$ there exists a continuous functor  $\rho_E: \Pi_1(V_{\mathbb{C}_p}) \rightarrow \mathrm{Vec_{\mathbb{C}_p}}$  of parallel transport along \'etale paths. The association $E \mapsto \rho_E$ is functorial with respect to morphisms of smooth, proper varieties and compatible with several natural operations on vector bundles. This generalizes previous work on curves (\cite{deninger-werner-parallel-transport-1}, \cite{deninger-werner-parallel-transport-2}) which can be seen as a $p$-adic analog of the classical Narasimhan-Seshadri correspondence on Riemann surfaces. In the complex case, this theory is the special case of Simpson's correspondence \cite{S} in the case of vanishing Higgs field. There exists also a $p$-adic version of Simpson's correspondence developed by Faltings (\cite{Fa2}, \cite{fa3}). A detailed and systematic treatment is provided by \cite{agt}. If $X$ is a curve, this correspondence relates Higgs bundles and generalized representations. It is shown in \cite{xu} that the above mentioned parallel transport on curves provides an inverse to Faltings' functor for a category of of suitable vector bundles with trivial Higgs field. 

Recently Würthen  \cite{matti-rigid-DW-functor} has generalized the results by Deninger and the second author to proper seminormal rigid analytic varieties using the framework of pro-étale topology. Working with locally free sheaves on the pro-\'etale site provides a common framework for vector bundles and limits of \'etale local systems. In particular it is useful to get rid of cumbersome arguments with models of schemes. In the present paper we push this approach even further with the framework of Scholze's diamonds \cite{etale-cohomology-of-diamonds}. Here the very general $v$-topology allows us to deal with arbitrary proper covers. To be more precise, we first look at the category of local systems of $\ri_{\mathbb{C}_p}$-modules for the $v$-topology on diamonds. It turns out that by replacing the $v$-topology by the quasi-pro-étale topology, we obtain an equivalence of categories. This works over a large class of coefficient rings $\Lambda$, including all complete Huber rings and colimits of those. For \emph{discrete} rings it is also equivalent to consider the étale  topology, but, as is well known, this fails for non-discrete rings such as $\ri_{\mathbb{C}_p}$. 

Let $X$ be a proper adic space of finite type over $\mathbb{C}_p$. 
We define structure sheaves $\check\ri^+_X$ and $\check\ri_X$ on the $v$-site of the associated diamond (which we denote by $X_\vsite$),  and consider $\check\ri_X^+$-modules. Denote by  $\Modplus$ the category of all $\check\ri^+_X$-modules $\mathcal E$ on $X_\vsite$  with properly trivializable reduction modulo all $p^n$ consisting of all $\mathcal E$ such that for all $n$ the sheaf $\mathcal E / p^n$ pulls back to a free $\check\ri^+_Y / p^n$-module of finite rank on a proper cover $Y$ of $X$. Then we construct a functor $\Delta^+$ from  $\Modplus$ to the category $\catlocsys{\ri_{\mathbb{C}_p}}X$ of local systems of $\ri_{\mathbb{C}_p}$-modules on $X$. One of our main results is that $\Delta^+$ is an equivalence of categories. It is compatible with several natural operations on vector bundles.  Moreover, for every $\mathcal E$  in $\Modplus$ there is a natural isomorphism
\begin{align*}
	\Delta^+(\mathcal E) \tensor_\ri \check\ri^+_X = \mathcal E.
\end{align*} 
Inverting $p$, we say that an $\check\ri_X$-module $E$ has properly trivializable reduction modulo all $p^n$  if there exists a subsheaf $\mathcal E \subset E$ contained in $\Modplus$ such that $E = \mathcal E[p^{-1}]$. The category of $\check\ri_X$-modules with properly trivializable reduction modulo all $p^n$ is denoted by $\Mod$.  Setting $\Delta(E) = \Delta^+(\mathcal E)[1/p]$  defines a functor from $ \Mod $ to $\catlocsysint{\Cpx_p}X$, where $\catlocsysint{\Cpx_p}X$ denotes the category of $\mathbb{C}_p$-local systems on $X$ which are derived from an $\ri_{\mathbb{C}_p}$-local system by inverting $p$. 

One of the main theorems in the present paper is the following (see \cref{rslt:DW-for-OX}):

\begin{theorem}\label{introduction-thm-Delta}
\begin{thmenum}
	\item The functor $ \Delta \colon \Mod \longisoto \catlocsysint{\Cpx_p}X $ is an equivalence of categories, which is compatible with direct sums, tensor products, internal homs and exterior powers.
	\item For every $E \in \Mod$ there is a natural isomorphism $\Delta(E) \tensor_{\Cpx_p} \check\ri_X = E$.
	\item Let $f\colon Y \to X$ be a morphism of finite-type proper adic spaces over $\Cpx_p$ and let $E \in \Mod$. Then $f^*E \in \ModY$ and $\Delta(f^*E) = f^*\Delta(E)$.
	\item Let $f\colon Y \to X$ be a surjective morphism of finite type proper adic spaces over $\Cpx_p$ and assume that $X$ is normal. Let $E$ be an $\check\ri_X$-module. If $f^*E \in \catmodproptriv{\check\ri_Y}$ then $E \in \catmodproptriv{\check\ri_X}$. In other words, the property of having properly trivializable reduction descends along $f$.
\end{thmenum}
\end{theorem}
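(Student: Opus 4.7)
The strategy is to derive parts (i)--(iii) by inverting $p$ in the integral equivalence $\Delta^+ \colon \Modplus \longisoto \catlocsys{\ri_{\Cpx_p}}X$ set up earlier in the body of the paper, and to prove (iv) by a $v$-descent argument that rests on a descent result for integral models of local systems, for which the normality of $X$ is essential.

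For (i), I would begin by verifying that $\Delta$ is well-defined: given two integral models $\mathcal E_1, \mathcal E_2 \in \Modplus$ with $\mathcal E_i[p^{-1}] = E$, the inclusions $p^n \mathcal E_1 \subset \mathcal E_2 \subset p^{-n} \mathcal E_1$ for $n$ large and an application of $\Delta^+$ produce a canonical isomorphism $\Delta^+(\mathcal E_1)[p^{-1}] \isom \Delta^+(\mathcal E_2)[p^{-1}]$. Full faithfulness of $\Delta$ then reduces to that of $\Delta^+$, since $\Hom$ groups on both sides are obtained from their integral counterparts by inverting $p$; essential surjectivity is immediate from the definition of $\catlocsysint{\Cpx_p}X$. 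Compatibility with $\dsum$, $\tensor$, internal $\Hom$ and exterior powers transports from the integral case because $(-)[p^{-1}]$ is exact and commutes with these operations. Part (ii) follows by tensoring the integral identity $\Delta^+(\mathcal E) \tensor_{\ri_{\Cpx_p}} \check\ri_X^+ \isom \mathcal E$ with $\Cpx_p$ over $\ri_{\Cpx_p}$. For (iii), the pullback of a proper cover is a proper cover, so $f^*E \in \ModY$ whenever $E \in \Mod$, and the pullback compatibility $f^*\Delta \isom \Delta f^*$ is inherited from the corresponding statement for $\Delta^+$.

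Part (iv) is the main obstacle and requires genuinely new work. The plan is to descend on the local system side rather than the module side. Choose an integral model $\mathcal F \subset f^*E$; by the integral equivalence on $Y$ it corresponds to an integral local system $L_Y := \Delta^+(\mathcal F)$. On the fiber product $Y \times_X Y$ the two pullbacks $\pi_1^* L_Y$ and $\pi_2^* L_Y$ correspond via $\Delta^+$ to two integral models of the same module $\pi_1^* f^*E = \pi_2^* f^*E$, and hence agree as $\Cpx_p$-local systems after inverting $p$. The key step, which uses the normality of $X$ via the earlier descent theorem for integral models of local systems, is to upgrade this rational comparison to a canonical integral isomorphism $\pi_1^* L_Y \isom \pi_2^* L_Y$ satisfying the cocycle condition. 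Since a surjective morphism of proper adic spaces is a $v$-cover and $v$-descent is effective for local systems, this yields an integral local system $L_X$ on $X$ with $f^*L_X \isom L_Y$, and then $\mathcal E := (\Delta^+)^{-1}(L_X)$ is an integral model of $E$ lying in $\Modplus$, proving $E \in \catmodproptriv{\check\ri_X}$. The normality assumption is genuinely necessary at this integral-descent step: without it, distinct integral models of the same $\Cpx_p$-local system on the components $Y$ need not be canonically identified on $Y \times_X Y$, and the descent datum cannot be constructed.
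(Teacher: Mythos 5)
Parts (i)--(iii) of your proposal track the paper's argument closely: well-definedness of $\Delta$, full faithfulness, essential surjectivity, compatibility with tensor operations and pullback all do reduce to the integral functor $\Delta^+$ exactly as you describe, and the paper's own proof of (iii) is essentially your observation that $f^*$ commutes with the construction of $\Delta^+_n$.

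Part (iv) contains a genuine gap in the mechanism you propose, even though you correctly identify the two key ingredients (the integral-model descent theorem and the normality of $X$). You claim that the integral-model descent theorem can be used to ``upgrade'' the rational comparison $\pi_1^*L_Y[p^{-1}] \isom \pi_2^*L_Y[p^{-1}]$ on $Y\cprod_X Y$ to a \emph{canonical integral} isomorphism $\pi_1^*L_Y \isom \pi_2^*L_Y$ satisfying the cocycle condition. That theorem does no such thing: it asserts that if a $\Cpx_p$-local system on a normal space $X$ has an integral model after restriction along a surjective proper morphism $Y \to X$, then it has an integral model on $X$. It is an existence statement, not a comparison of two given integral models, and in your setting the would-be target $Y\cprod_X Y$ is not even assumed normal (only $X$ is). Indeed, nothing forces $\pi_1^*L_Y$ and $\pi_2^*L_Y$ to be canonically identified as $\ri$-local systems, so there is in general no integral cocycle and your proposed descent of the $\ri$-local system $L_Y$ cannot go through.

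The paper avoids this by inverting the order of operations: it first descends the $\Cpx_p$-local system $\Delta(f^*E)$ rationally along the v-cover $Y \to X$ --- the rational gluing datum $\Delta(\varphi)$ exists automatically by applying the functor $\Delta$ to the gluing datum $\varphi$ of $f^*E$ and invoking (iii), and no normality is needed here --- obtaining a $\Cpx_p$-local system $L$ on $X$. Only then does it apply the integral-model descent theorem, on $X$ itself with its normality hypothesis, to the already-descended $L$: since $\restrict LY$ has an integral model, so does $L$. Note that the integral model so produced on $X$ need \emph{not} pull back to the integral model you started with on $Y$; this is precisely why one should not attempt to descend a specific integral lattice, but only the existence of one. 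With this reordering the rest of your argument goes through: $L\tensor_{\Cpx_p}\check\ri_X \isoto E$ v-locally and (i) give $E\in\Mod$.
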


Part (iv) of the previous theorem is a powerful descent result relying on the strength of the $v$-topology. As will be explained below, the algebraic theory in the framework of \cite{deninger-werner-parallel-transport-varieties} provides strong motivation for such a descent result along proper covers to hold.
By the category equivalence from (i) the descent datum of $E$ along $f$ induces a descent datum of the associated $\Cpx_p$-local system. In order to descend the integral model, we need the following theorem (\cref{rslt:descent-of-integral-model-along-morphism-of-proper-spaces-with-loc-irr-target}):

\begin{theorem} \label{intro-descent}
Let $X, Y $ be adic spaces weakly of finite type over $\mathbb{C}_p$, let $f\colon Y \to X$ be a surjective proper morphism of finite type, and assume that $X$ is normal. If $L$ is a $\mathbb{C}_p$-local system on $X$ such that $\restrict LY$ has an integral model (i.e. there exists a $ \ri_{\mathbb{C}_p}$-local system on $Y$ inducing $\restrict LY$ by inverting $p$), then $L$ also has an integral model on $X$.
\end{theorem}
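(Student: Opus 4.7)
The plan is to translate the statement into the language of continuous representations of the étale fundamental group of $X$, where the existence of an integral model corresponds to the representation preserving an $\ri_{\Cpx_p}$-lattice. Since the conclusion can be checked connected-componentwise and a normal connected adic space is irreducible, I may assume $X$ is connected and, by replacing $Y$ by a connected component that surjects onto $X$ (at least one exists because $f$ is proper and $X$ is irreducible), also that $Y$ is connected. Fix a geometric point $\bar x \in X$ with a lift $\bar y \in Y$; the $\Cpx_p$-local system $L$ then corresponds to a continuous representation $\rho\colon \pi_1(X,\bar x) \to \GL_n(\Cpx_p)$, and an integral model exists precisely when $\rho(\pi_1(X,\bar x))$ preserves some $\ri_{\Cpx_p}$-lattice in $\Cpx_p^n$.

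Next I would apply Stein factorization to $f$, writing it as $Y \xrightarrow{g} Y^s \xrightarrow{h} X$, where $g$ is proper with geometrically connected fibers and $h$ is finite and surjective. Since $X$ is normal, replacing $Y^s$ by its normalization keeps $h$ finite and surjective (and $Y^s$ connected, because $Y$ is). Set $\bar y^s := g(\bar y)$. By the classical fact that proper morphisms with geometrically connected fibers induce surjections on étale fundamental groups, $\pi_1(Y,\bar y) \to \pi_1(Y^s,\bar y^s)$ is surjective. Because $h$ is finite surjective between connected normal spaces, the image $H := \img\bigl(\pi_1(Y^s,\bar y^s) \to \pi_1(X,\bar x)\bigr)$ is of finite index (via the standard identification of $H$ with the stabilizer of $\bar y^s$ in the transitive action of $\pi_1(X,\bar x)$ on the finite geometric fiber $h^{-1}(\bar x)$, whose size is bounded by $\deg h$). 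Combining, the image of $\pi_1(Y,\bar y) \to \pi_1(X,\bar x)$ equals $H$ and is of finite index in $\pi_1(X,\bar x)$.

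The hypothesis on $L|_Y$ provides an $\ri_{\Cpx_p}$-lattice $\Lambda \subset \Cpx_p^n$ preserved by $\rho(H)$. Let $N \trianglelefteq \pi_1(X,\bar x)$ be the normal core of $H$, which is still of finite index, and pick representatives $g_1,\dots,g_d$ of the finite quotient $\pi_1(X,\bar x)/N$. Since $N \subset H$, the lattice $\Lambda$ is also $\rho(N)$-stable, and the normality of $N$ then implies that $\Lambda' := \sum_{i=1}^d \rho(g_i)\Lambda$ is a finite sum of lattices which is stable under all of $\rho(\pi_1(X,\bar x))$. The lattice $\Lambda'$ exhibits the desired integral model of $L$ on $X$.

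The main obstacle will be installing the foundational dictionary in the adic / diamond framework used by the paper. Concretely, one needs that a lattice-preserving continuous representation of $\pi_1(X,\bar x)$ actually integrates to a locally free $\ri_{\Cpx_p}$-local system in the appropriate topology on all of $X$, and that both Stein factorization together with $\pi_1$-surjectivity for proper morphisms with geometrically connected fibers, and the finite-index statement for finite surjective maps between connected normal spaces, are available for adic spaces weakly of finite type over $\Cpx_p$. The first point should follow from the equivalences between $\et$-, $\proet$-, and $\vsite$-formulations of local systems developed earlier in the paper, while the second should reduce, via passage to suitable formal models, to the corresponding classical results for schemes (SGA~1).
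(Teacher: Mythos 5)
Your proof takes a genuinely different route from the paper. Both proofs begin with Stein factorization, splitting $f$ into a proper map with geometrically connected fibers followed by a finite map. But where you translate everything into continuous representations of $\pi_1^{\et}(X,\bar x)$ and then argue group-theoretically (finite-index image, normal core, sum of translates of a lattice), the paper never leaves the sheaf world: it reduces the descent question, via w-contractible quasi-pro-étale covers (\cref{rslt:w-contractible-basis}, \cref{rslt:descent-of-integral-models-abstract-result}), to a purely topological statement about sheaves on profinite sets (\cref{rslt:descent-of-integral-model-on-profinite-set}, \cref{rslt:etalification-of-finite-open-map-of-profinite-sets}). The lattice-summation trick over the normal core is sound as a piece of abstract group theory, and this approach is conceptually appealing. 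However, the version in the paper buys a significant amount of robustness: it sidesteps precisely the points where your argument has gaps, because the only geometric inputs it needs are openness statements about finite morphisms of normal equidimensional adic spaces (\cite[Theorem 1.20]{simpson-appendix}) and connectedness of fibers, both of which are checked on w-contractible covers where the topology is profinite.

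The concrete gap is your justification that $H := \img\bigl(\pi_1(Y^s,\bar y^s) \to \pi_1(X,\bar x)\bigr)$ has finite index. You assert this "via the standard identification of $H$ with the stabilizer of $\bar y^s$ in the transitive action of $\pi_1(X,\bar x)$ on the finite geometric fiber $h^{-1}(\bar x)$". This identification does not exist for a general finite surjective $h$: the étale fundamental group acts on the geometric fibers only of \emph{finite étale} morphisms. After Stein factorization the finite part $h$ is typically ramified, so $h^{-1}(\bar x)$ is not a $\pi_1(X,\bar x)$-set and its cardinality even varies with $\bar x$. The finite-index statement is still true (and is a classical result for normal Noetherian schemes), but the honest proof restricts to the dense open locus $U \subset X$ where $h$ is étale, uses the $\pi_1$-action there, and then invokes that $\pi_1(U) \to \pi_1(X)$ is surjective for a dense open of a connected normal space. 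In the adic-space setting this last surjectivity is a genuinely non-trivial foundational input that neither your sketch nor the paper establishes, and it is exactly the kind of statement the paper's strategy is designed to avoid. Relatedly, the other inputs you defer to "passage to formal models" — Stein factorization for adic spaces, surjectivity of $\pi_1$ for proper morphisms with geometrically connected fibers, and the fact that $\Cpx_p$-local systems without a known integral model still correspond to $\GL_n(\Cpx_p)$-representations of $\pi_1^{\et}$ — all require care; the paper supplies the first via \cite[Theorem 2.9]{simpson-appendix}, bypasses the second entirely, and addresses the last implicitly through \cref{rslt:complete-huber-ring-has-etale-locally-integral-model} and \cref{rslt:profinet-cover}. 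As written, your argument is not complete at the finite step.
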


The  proof of this theorem relies on a Stein factorization result, which separates the problem into the case of finite morphisms and the case of morphisms with geometrically connected fibers. Both cases can be solved by using w-contractible covers in order to reduce to a topological question on profinite sets; the former case additionally requires some elementary results on adic spaces which are  collected in \cite{simpson-appendix}.

On a proper, connected seminormal adic space $X$ of finite type over $\mathbb{C}_p$ we define furthermore the category $\catbdl{X}$ as the full subcategory of adic vector bundles (i.e. locally free $\ri_ {X}$-sheaves) $E$ on $X$ such that the pullback of $E$ to the $v$-site is an object of $\Mod$. For every $E$ in $\catbdl{X}$ we therefore get a local system in $\catlocsysint{\Cpx_p}X$ via the functor $\Delta$ on $\Mod$. By means of the category equivalence $\Delta$, we  show the following result (\cref{rslt:comparison-to-Matti}).

\begin{corollary}
Every vector bundle $E$ in $\catbdl{X}$ gives rise to a continuous functor $\rho_E$ from the fundamental groupoid $\Pi_1(X)$ to the category of finite free $\mathbb{C}_p$-vector spaces.
\end{corollary}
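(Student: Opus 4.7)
The plan is to compose the category equivalence $\Delta$ of \cref{introduction-thm-Delta} with the classical dictionary between étale $\ri_{\Cpx_p}$-local systems and continuous representations of the fundamental groupoid $\Pi_1(X)$. Given $E \in \catbdl X$, write $E_\vsite$ for its pullback to the $v$-site; by the definition of $\catbdl X$ this lies in $\Mod$, so $L := \Delta(E_\vsite) \in \catlocsysint{\Cpx_p}{X}$ is a $\Cpx_p$-local system that, by the definition of the target category, admits an integral model $L^+$, i.e.\ an $\ri_{\Cpx_p}$-local system on $X$ with $L^+[1/p] \isom L$.

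Next I would associate to $L^+$ its fibre functor: for each geometric point $\bar x$ of $X$ the stalk $L^+_{\bar x}$ is a finite free $\ri_{\Cpx_p}$-module, and each étale path $\gamma$ from $\bar x$ to $\bar y$ yields an $\ri_{\Cpx_p}$-linear isomorphism $L^+_{\bar x} \isoto L^+_{\bar y}$; this defines a functor $\rho_{L^+}\colon \Pi_1(X) \to (\text{finite free } \ri_{\Cpx_p}\text{-modules})$. Continuity for the $p$-adic topology comes from writing $L^+$ as the inverse limit of its reductions $L^+/p^n$, each of which is a finite locally constant sheaf on $X_\et$ and hence corresponds to a representation of $\pi_1^\et(X,\bar x)$ factoring through a finite quotient. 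Setting $\rho_E := \rho_{L^+}[1/p]$ then produces the desired continuous functor $\Pi_1(X) \to \mathrm{Vec}_{\Cpx_p}$.

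The main obstacle is controlling continuity, which is precisely why the integral refinement is indispensable: a bare $\Cpx_p$-local system need not arise from a continuous representation of the profinite group $\pi_1^\et(X,\bar x)$ on $\mathrm{GL}_n(\Cpx_p)$, whereas the profinite nature of $\mathrm{GL}_n(\ri_{\Cpx_p}/p^n)$ forces continuity of each finite-level reduction and hence of the inverse limit. Independence of the choice of integral model reduces to the observation that any two integral models of $L$ become canonically isomorphic after inverting $p$, and functoriality of $E \mapsto \rho_E$ is inherited from that of $\Delta$ and of the local-system--representation correspondence.
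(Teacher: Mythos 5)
Your proposal follows essentially the same route as the paper: apply the equivalence $\Delta$ to obtain a $\Cpx_p$-local system with integral model, construct the parallel transport functor for the integral $\ri$-local system, and invert $p$. The paper packages the middle step as \cref{rslt:parallel-transport-localsys} (built from the profinite-étale trivializing cover of \cref{rslt:profinet-cover}), and the corollary's proof then just sets $\rho_E = \rho_{\mathcal L} \tensor \Cpx_p$ where $\Delta(E) = \mathcal L[p^{-1}]$; you redo that construction by hand, but the content is the same.

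One imprecision worth flagging: $L^+/p^n$ is \emph{not} a finite locally constant sheaf, since $\ri/p^n$ is an infinite ring, so its stalks are infinite. To conclude that the associated representation of $\pi_1^\et(X,\bar x)$ factors through a finite quotient (equivalently, that $L^+/p^n$ becomes constant on a finite étale cover), one must first invoke \cref{rslt:injlim-of-locsys} to realise $L^+/p^n$ as the extension of scalars of an $\ri_F/p^n$-local system for some finite extension $F$ of $\Q_p$ — for which $\ri_F/p^n$ really is finite — and then apply \cref{rslt:locsys-trivial-on-fet-cover}. This is exactly how \cref{rslt:profinet-cover} argues, and with that insertion your continuity argument is complete.
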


If $E$ is has a suitable integral model, our construction coincides with the one given by W\"urthen in \cite[Section 4.3]{matti-rigid-DW-functor}. We also prove that the category $\catbdl X$ is abelian (\cref{rslt:category-abelian}).

Let us now return to  the algebraic setting of a  proper, smooth variety $V$  over $\overline{\mathbb{Q}}_p$. Note that by Langer \cite[Proposition 5.5]{Langer-Sfund}, the category of numerically flat vector bundles on $V$ is an abelian tensor category. In view of the results \cite{deninger-werner-parallel-transport-varieties} one might hope that every numerically flat vector bundle on $V_{\mathbb{C}_p}$ gives rise to $p$-adic parallel transport, i.e. that it is contained in  $\mathcal{B}^s_{V_{\mathbb{C}_p}}$.  Since for any surjective morphism $g: Y \rightarrow X$  of connected proper schemes over a field, a bundle $E$ is numerically flat on $X$ if and only if $g^\ast E$ is numerically flat on $Y$, a first test for this conjecture is the extension of the results by Deninger and the second author to vector bundles which admit numerically flat reduction only after pullback by a surjective morphism of proper, smooth varieties. This is achieved by the following theorem  (\cref{rslt:algebraic-case}) of the present paper: 

\begin{theorem} \label{rslt:introduction1}
Let $E$ be a vector bundle on $V_{\mathbb{C}_p}$ such that there exists a normal proper variety $W$ over $\overline{\mathbb{Q}}_p$ and a  surjective $\overline{\mathbb{Q}}_p$-morphism $f: W \rightarrow V$ such that $f^\ast E$ lies in the category $\mathcal{B}^s_{W_{\mathbb{C}_p}}$, i.e. $f^\ast E$  has numerically flat reduction on $W_{\mathbb{C}_p}$.

Then there exists a continuous functor  $\rho_E: \Pi_1(V_{\mathbb{C}_p}) \rightarrow \mathrm{Vec_{\mathbb{C}_p}}$ such that the identification of fibers $(f^\ast E)_x \simeq E_{f(x)}$ induces a natural isomorphism $\rho_{f^\ast E} \simeq \rho_E \circ f_\ast$. The association $E \mapsto \rho_E$ is compatible with direct sums, tensor products, internal homs and exterior powers. Moreover it is functorial with respect to morphisms of varieties and coincides with the functor from \cite{deninger-werner-parallel-transport-varieties} if $E$ lies in $\mathcal{B}^s_{V_{\mathbb{C}_p}}$.
\end{theorem}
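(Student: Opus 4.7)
The plan is to reduce the statement to its adic-analytic counterpart and invoke the $v$-topological descent result \cref{introduction-thm-Delta}(iv). First I analytify $f \colon W \to V$ to a morphism $f^{\mathrm{ad}} \colon Y := W^{\mathrm{ad}}_{\Cpx_p} \to X := V^{\mathrm{ad}}_{\Cpx_p}$, which by rigid GAGA is again proper, surjective, and of finite type. Since $V$ is smooth, $X$ is normal, and $Y$ is normal by assumption. Writing $E^{\mathrm{an}}$ for the analytification of $E$, let $\mathcal E$ denote its pullback to the $v$-site $X_\vsite$; then the $v$-pullback of $(f^{\ast}E)^{\mathrm{an}}$ equals $(f^{\mathrm{ad}})^{\ast}\mathcal E$.

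The central translation step is to verify that $f^{\ast}E \in \mathcal B^s_{W_{\Cpx_p}}$ implies $(f^{\mathrm{ad}})^{\ast}\mathcal E \in \catmodproptriv{\check\ri_Y}$, i.e.\ that numerical flatness of the reduction on a normal proper $\Cpx_p$-variety yields a $v$-site module whose reduction modulo each $p^n$ becomes free of finite rank on a proper cover. This translation is exactly what W\"urthen's formal-model construction provides: for each $n$, a formal $\ri^+_Y / p^n$-model of the reduction admits a proper cover on which it trivializes, and these trivializations assemble into a trivialization of $(f^{\mathrm{ad}})^{\ast}\mathcal E / p^n$ after pullback to the $v$-site structure sheaf $\check\ri^+_Y$. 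Equivalently, $(f^{\mathrm{ad}})^{\ast}E^{\mathrm{an}}$ lies in $\catbdl Y$, so its $v$-pullback lies in $\catmodproptriv{\check\ri_Y}$. Now \cref{introduction-thm-Delta}(iv) applies to the surjective proper morphism $f^{\mathrm{ad}} \colon Y \to X$ of finite-type proper adic spaces over $\Cpx_p$ with normal target, yielding $\mathcal E \in \catmodproptriv{\check\ri_X}$.

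I then set $L := \Delta(\mathcal E) \in \catlocsysint{\Cpx_p}{X}$ via \cref{introduction-thm-Delta}(i), and define $\rho_E$ to be the continuous representation of $\Pi_1(V_{\Cpx_p}) = \Pi_1(X)$ attached to $L$ by the preceding corollary on $\catbdl X$. The natural isomorphism $\rho_{f^{\ast}E} \isom \rho_E \comp f_{\ast}$ follows from the pullback compatibility $(f^{\mathrm{ad}})^{\ast}\Delta(\mathcal E) \isom \Delta((f^{\mathrm{ad}})^{\ast}\mathcal E)$ of \cref{introduction-thm-Delta}(iii), translated through the monodromy equivalence between $\Cpx_p$-local systems on $X$ and continuous representations of $\Pi_1(X)$. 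Compatibility with direct sums, tensor products, internal homs and exterior powers, and functoriality in morphisms of varieties, are inherited from the corresponding properties of $\Delta$ in \cref{introduction-thm-Delta}(i) and (iii). When $E$ is already in $\mathcal B^s_{V_{\Cpx_p}}$, taking $f = \mathrm{id}_V$ and invoking the coincidence with W\"urthen's functor recorded in the corollary above, together with W\"urthen's comparison to \cite{deninger-werner-parallel-transport-varieties}, recovers the original parallel transport functor.

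The main obstacle is the translation step in the second paragraph: making precise how the classical geometric hypothesis of numerically flat reduction on $W_{\Cpx_p}$ produces an object of $\catmodproptriv{\check\ri_Y}$ on the $v$-site, which requires careful bookkeeping between formal $\ri^+_Y$-models of the reduction and the $v$-site structure sheaves $\check\ri^+_Y$. Once this is in place, the descent theorem \cref{introduction-thm-Delta}(iv) and the monodromy formalism carry the remainder of the argument.
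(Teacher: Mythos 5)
Your proposal tracks the paper's own proof quite closely: analytify $f \colon W \to V$ to a morphism of proper adic spaces, convert the hypothesis of numerically flat reduction on $W_{\Cpx_p}$ into membership in the trivializable-module category via W\"urthen's formal-model machinery, apply the descent statement (\cref{introduction-thm-Delta}(iv), which the paper packages as \cref{rslt:proper-pullback}), and then feed the resulting object through $\Delta$ and the parallel-transport formalism; the compatibility and comparison statements are handled the same way. The only (cosmetic) differences are that you phrase the descent at the level of $\catmodproptriv{\check\ri_X}$ rather than $\catbdl X$ (these are interchangeable by definition of $\catbdl X$), and that you cite smoothness of $V$ where normality is all that is needed -- both are fine, and the paper is itself content to outsource the translation of numerical flatness to W\"urthen, exactly as you propose.
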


For finite coverings of curves a similar result was proven previously by comparing topological and étale fundamental groups and using analytic methods on Riemann surfaces \cite{deninger-werner-parallel-transport-2}. The techniques in the present paper lead to a new proof in the curve case which does not rely on complex analytic methods. The freedom of analyzing the reduction behaviour after a proper pullback is quite useful as was seen in the investigation \cite{hawe}  of  semistable syzygy bundles on curves. By  some quite detailed computations it is shown in this paper  that certain syzygy bundles have numerically flat reduction after pullback to a suitable finite cover. In $p$-adic Simpson correspondence it is a deep open problem to determine the category of Higgs bundles corresponding to actual $p$-adic representations in the $p$-adic Simpson correspondence. In the case of trivial Higgs field we hope that the results of the present paper will prove useful to shed light on this problem. 
 
The structure of the present paper is as follows. In the first section we collect some basic facts on perfectoid spaces and diamonds. In particular, we introduce $w$-contractible spaces and show that they form a basis of the quasi-pro-\'etale site on a diamond. They have the useful property that every pro-\'etale cover admits a section. The second section deals with the structure sheaves on the quasi-pro-\'etale site $X_\qproet$ and the $v$-site $X_\mathrm{v}$ of a diamond $X$ over $\mathrm{Spd }\mathbb{Z}_p$. By  definition of $\mathrm{Spd }\mathbb{Z}_p$, every perfectoid characteristic $p$ space $Y$ over $X$ gives rise to an element in $\mathrm{Spd }\mathbb{Z}_p(Y)$, hence an untilt $Y^\sharp$ of $Y$. The association $\hat \ri^+_X(Y) = \ri_{Y^\sharp}^+(Y^\sharp)$ gives rise to a sheaf $\hat \ri^+_X$ on $X_\qproet$. Inverting $p$, we define $\hat\ri_X$. Similarly, we define the sheaf $\check\ri_X$ on $X_\mathrm{v}$, which has the property that $\lambda_\ast \check\ri_X = \hat \ri_X$ for the natural morphism $\lambda: X_\mathrm{v} \rightarrow X_\qproet$. Let  furthermore $\nu: X_\qproet \rightarrow X_\et$ be the natural projection. For an adic space $Z$  over $\mathbb{Q}_p$ we can identify the \'etale site over $Z$ with the \'etale site of its associated diamond $X$. It turns out (\cref{rslt:completed-structure-sheaves-coincide}) that $\hat \ri_X^+$ can be identified with the pro-\'etale completion $ \varprojlim_n \nu^{-1} \ri_X^+/p^n $ on $X_\qproet$, where $\ri_X^+= \ri_{X_\et}^+$.

The third section introduces $\Lambda$-local systems for the \'etale, quasi-pro-\'etale and $v$-topology on a diamond for condensed (e.g. topological) rings $\Lambda$. In \cref{rslt:qproet-locsys-equiv-v-locsys-complete-huber-rings} we show that pullback with respect to $\lambda: X_\mathrm{v} \rightarrow X_\qproet$ induces an equivalence of the respective categories of $\Lambda$-local systems,  under mild assumptions on $\Lambda$. If $\Lambda$ is a discrete ring, then a similar result holds for $\nu: X_\qproet \rightarrow X_\et$. We also show that for a finite discrete ring $\Lambda$ any $\Lambda$-local system on a spatial diamond can be trivialized on a finite \'etale cover of $X$. This implies (\cref{rslt:profinet-cover}) that on a connected spatial diamond we can trivialize any $\ri_E$-local system for a completed algebraic extension $E$ of $\mathbb{Q}_p$ on a connected quasi-profinite-\'etale cover. We also define (\cref{def:rep-integral-model}) the category $\catlocsysint EX$ of local systems with integral models (which are local systems with respect to the ring of integers) and prove (\cref{rslt:ILoc-is-abelian}) that this category is abelian. In section four, we compute global sections of $\ri_{\mathbb{C}_p}$-local systems on proper adic spaces of finite type over $\mathbb{C}_p$. We start with generalizing Scholze's fundamental comparison theorem from \cite{relative-p-adic-hodge-2} to local systems of  $\ri$- or $\ri / \varpi$-modules for any pseudo-uniformizer $\varpi$ (\cref{rslt:O-E-cohomology-on-proper-space}). We can then pass from almost isomorphisms to isomorphisms in our setting (\cref{rslt:global-sections-on-proper-space}). Section 5 is devoted to the descent result \cref{intro-descent} for integral models of local systems along surjective proper morphisms.  It relies on some fundamental results on adic spaces provided by \cite{simpson-appendix}. In section 6 we prove our correspondence results between local systems and locally free sheaves. First we consider sheaves of $\check\ri_X^+/p^n$-modules on a proper adic space $X$ of finite type over $\mathbb{C}_p$, which are properly trivializable, i.e. which are free $\check\ri_X^+/p^n$-modules of finite rank after pullback to a proper cover of $X$. Using the comparison results of section 4, we define a functor $\Delta^+_n$ from the category of properly trivializable $\check\ri_X^+/p^n$-modules to the category of $\ri_X/p^n$-local systems on $X$, which induces an equivalence of categories (\cref{rslt:DW-n-is-equivalence}). Passing to the limit we obtain the equivalence $\Delta^+$, and after inverting $p$, we can show \cref{introduction-thm-Delta}. Section 7 applies these results to vector bundles on proper, seminormal adic spaces over $\mathbb{C}_p$, and section 8 deals with the algebraic result \cref{rslt:introduction1}.

\textbf{Notation and conventions.} Throughout, fix a prime $p$, let $\Cpx_p$ denote the completed algebraic closure of $\Q_p$ and let $\ri \subset \Cpx_p$ be the ring of integers. All sheaves and cohomology are considered on the v-topology, unless explicitly stated otherwise.

\textbf{Acknowledgements.} We are very grateful to Peter Scholze for many enlightening comments. This work was inspired by discussions during the Simons Symposion on $p$-adic Hodge theory in 2019. The second author is grateful to the Simons foundation for financial support of this meeting and to the organizers, Bhargav Bhatt and Martin Olsson. We also thank Bhargav Bhatt for suggesting to look into the $v$-topology. Moreover, we would like to thank Christopher Deninger for his comments, Torsten Wedhorn for useful hints on adic spaces and Matti W\"urthen for many helpful conversations.

\section{Some Background on Diamonds}

The theory of diamonds is introduced in \cite{etale-cohomology-of-diamonds} and can be seen as a rigid-analytic analogue of the theory of algebraic spaces in algebraic geometry. Let $\Perf$ be the category of ($\kappa$-small, see \cite[Convention 4.2]{etale-cohomology-of-diamonds}) char-$p$ perfectoid spaces. Then a \emph{diamond} $X$ is a sheaf on the big pro-étale site on $\Perf$ such that $X$ can be written as the quotient $X = Y/R$ of some perfectoid space $Y$ by a pro-étale equivalence relation $R \subset Y \cprod Y$ (cf. \cite[Definition 11.1]{etale-cohomology-of-diamonds}). Every diamond $X$ possesses a natural ``underlying'' topological space $\abs X = \abs Y/\abs R$. In case this topology is (locally) spectral we call $X$ a \emph{(locally) spatial} diamond. This definition is necessary because the topology of a diamond can in general be quite far from being spectral, for example every compact Hausdorff space can be realised as the topology of some diamond (see \cite[Example 11.16]{etale-cohomology-of-diamonds}). A diamond $X$ comes equipped with the following sites (cf. \cite[Definition 14.1]{etale-cohomology-of-diamonds}):
\begin{itemize}
	\item The \emph{analytic site} $X_\an$. This is just the site associated to the topological space $\abs X$.
	\item The \emph{étale site} $X_\et$, if $X$ is locally spatial.
	\item The \emph{quasi-pro-étale site} $X_\qproet$ defined in \cite[Definition 14.1]{etale-cohomology-of-diamonds}. This is an analogue of the pro-étale site of schemes (the term ``quasi'' refers to the fact that in general not every quasi-pro-étale map $Y \to X$ is locally an inverse limit of étale maps, which is similar to the case of weakly étale maps of schemes).
\end{itemize}
Here are two very useful properties of diamonds:
\begin{enumerate}[(a)]
	\item There is a functor $X \mapsto X^\diamond$ from analytic adic spaces over $\Z_p$ to diamonds. This functor preserves the analytic and the étale site, i.e $\abs X = \abs{X^\diamond}$ and $X_\et = X^\diamond_\et$ (see \cite[Lemma 15.6]{etale-cohomology-of-diamonds}). Moreover this functor induces, for any non-archimedean field $K$ over $\Q_p$, a fully faithful functor
	\begin{align}
		\{ \text{seminormal\footnotemark{} rigid-analytic spaces over $K$} \} \injto \{ \text{diamonds over $\Spd K$} \} \label{eq:fully-faithful-functor-from-seminormal-to-diamond}
	\end{align}
	\footnotetext{Seminormality of adic spaces is defined in \cite[\S3.7]{relative-p-adic-hodge-2} and is a rather mild hypothesis; in particular it is true for all normal spaces.}
	(see \cite[Proposition 10.2.3]{scholze-berkeley-lectures}).

	\item Diamonds are not tied to any base field. In fact, a diamond itself does not even know whether it lives in characteristic $0$ or $p$. This is best illustrated in the case of perfectoid spaces: If $X$ is any perfectoid space (in either characteristic) then the associated diamond $X^\diamond$ combines all untilts of the tilt of $X$ into one object. The reason this works is because the tilt and untilt functors preserve many geometric properties, including the topology and the étale site. Thus by using diamonds it is very easy to pass between different characteristics.

	On the other hand it is often important to keep the information of which untilt one started with: This can be achieved by remembering the ``structure morphism'' $X^\diamond \to \Spd\Z_p$ obtained from the unique morphism $X \to \Z_p$ (cf. \cref{rmk:morphism-to-Z-p-important-to-define-structure-sheaf} below). Note that in particular it is crucial in \cref{eq:fully-faithful-functor-from-seminormal-to-diamond} to have the structure morphism to $K$ as an additional datum on both sides.
\end{enumerate}
Another intriguing property of diamonds (and also the main reason why we want to use them in the present paper) is the existence of the v-site. The v-site on $\Perf$ is the site whose coverings are given by collections $\{f_i\colon Y_i \to X\}_{i\in I}$ such that for every quasi-compact open subset $U \subset X$ there is a finite subset $J \subset I$ and quasi-compact open subsets $V_j \subset Y_j$ for $j \in J$ with the property $\bigunion_{j\in J} f(V_j) = U$ (see \cite[Definition 8.1.(iii)]{etale-cohomology-of-diamonds}). This definition is very similar to the definition of the fpqc site for schemes, but without any flatness condition! The remarkable fact is that the v-site still possesses very nice properties, especially in comparison with the pro-étale site.

With the v-site at hand, one defines a \emph{small v-sheaf} $X$ to be a sheaf of sets on the v-site of $\Perf$ such that there exists a surjective map $Y \to X$ from a perfectoid space $Y$ (see \cite[Definition 12.1]{etale-cohomology-of-diamonds}). It is a non-trivial fact that every diamond is a small v-sheaf (the hard part is to show that a diamond is a sheaf for the v-site and not just the big pro-étale site, see \cite[Proposition 11.9]{etale-cohomology-of-diamonds}). While a general small v-sheaf $X$ does not possess a quasi-pro-étale site (there is a notion of quasi-pro-étale morphism of small v-sheaves, but the associated site behaves badly in general), we can instead introduce:
\begin{itemize}
	\item The \emph{v-site} $X_\vsite$: The objects of $X_\vsite$ are all maps $Y \to X$ from small v-sheaves $Y$, with coverings given by jointly surjective maps.
\end{itemize}
For every diamond $X$ there is an obvious projection of sites
\begin{align*}
	\lambda_X\colon X_\vsite \to X_\qproet,
\end{align*}
and if $X$ is locally spatial (so that $X_\et$ is defined) there is a projection of sites
\begin{align*}
	\nu_X\colon X_\qproet \to X_\et.
\end{align*}
We will usually drop the index and simply write $\lambda = \lambda_X$ and $\nu = \nu_X$. The projections $\lambda$ and $\nu$ satisfy $\lambda_* \lambda^{-1} = \id{}$ and $\nu_* \nu^{-1} = \id{}$ (see \cite[Proposition 14.7 and 14.8]{etale-cohomology-of-diamonds}). This implies that abelian sheaves on $X_\et$ embed fully faithfully into abelian sheaves on $X_\qproet$, abelian sheaves on $X_\qproet$ embed fully faithfully into abelian sheaves on $X_\vsite$. It is even true that the former embedding and the composition of both embeddings preserve cohomology.

A powerful technique for studying sheaves on any site is to reduce a given problem to a local problem and then verify it on a nice basis of the site. It is thus helpful to know useful bases of the sites we work with. Both the v-site of a small v-sheaf and the quasi-pro-étale site of a diamond possess bases consisting of objects of each of the following types (for precise definitions see \cite[\S7]{etale-cohomology-of-diamonds}):
\begin{itemize}
	\item Perfectoid spaces, or even affinoid perfectoid spaces.
	\item \emph{Totally disconnected spaces}: These are special types of affinoid perfectoid spaces which are weakly contractible for the analytic topology. In particular, every analytic sheaf on a totally disconnected space is acyclic. A special case are \emph{w-local spaces}, whose analogue in the theory for schemes is used frequently. Totally disconnected space are of a particular simple form: Each connected component is of the form $\Spa(K, K^+)$ for some perfectoid field $K$ with an open and bounded valuation subring $K^+ \subset K$.
	\item \emph{Strictly totally disconnected spaces}: These are special cases of totally disconnected spaces with the additional property of being weakly contractible for the étale site. Hence every étale sheaf on a strictly totally disconnected space is acyclic. Again there is the slightly stronger notion of \emph{w-strictly local spaces} (see \cite[Definition 7.17]{etale-cohomology-of-diamonds}), which has an important analogue in the theory for schemes. The connected components of a strictly totally disconnected space are of the form $\Spa(C, C^+)$, where now $C$ is an algebraically closed perfectoid field.
	\item \emph{w-Contractible Spaces}: These are special cases of strictly totally disconnected spaces and will be introduced below (see \cref{def:w-contractible-space}). They are weakly contractible for the quasi-pro-étale site, so that every quasi-pro-étale sheaf on a w-contractible space is acyclic.
\end{itemize}
Unlike in the theory of the pro-étale topology of schemes, for diamonds it is often enough to consider strictly totally disconnected spaces. However in the present paper we do have some applications for w-contractible spaces, so let us introduce them now. Recall the definition of extremally disconnected spaces in \cite[Definition 2.4.4]{proetale-topology}. 

\begin{definition} \label{def:w-contractible-space}
A \emph{w-contractible space} is a w-strictly local perfectoid space $X$ such that $\pi_0(X)$ is extremally disconnected.
\end{definition}

\begin{lemma} \label{rslt:w-contractible-basis}
Let $X$ be a diamond. Then the w-contractible spaces in $X_\qproet$ form a basis of $X_\qproet$. Moreover, if $U \in X_\qproet$ is w-contractible then it is quasi-pro-étale weakly contractible, i.e. every pro-étale cover of $U$ admits a section. In particular, for any sheaf $\mathcal F$ of abelian groups on $X_\qproet$ we have $H^i(U_\qproet, \mathcal F) = 0$ for $i > 0$, and  $X_\qproet$ is replete.

\end{lemma}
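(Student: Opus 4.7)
My plan is to address the four assertions in order, since each one feeds into the next.

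First, for the basis claim, I would invoke the fact (stated earlier in the excerpt) that strictly totally disconnected spaces already form a basis of $X_\qproet$, and then show that every strictly totally disconnected $V \in X_\qproet$ admits a cover by a w-contractible object. Since $V$ is w-strictly local, $\pi_0(V)$ is a profinite set, and by Gleason's theorem there is a surjection $T \surjto \pi_0(V)$ from an extremally disconnected profinite set $T$ (the projective cover). One then forms the pullback $W := V \cprod_{\pi_0(V)} T$, where the map to $\pi_0(V)$ is literal after noting that $V \to \pi_0(V)$ is a pro-étale map (it is a cofiltered limit of the finite quotients of $\pi_0(V)$, each of which corresponds to a finite disjoint decomposition of $V$). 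The pullback $W$ is w-strictly local with $\pi_0(W) = T$, hence w-contractible, and $W \to V$ is pro-étale and surjective, giving the desired cover.

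Second, for weak contractibility, let $U$ be w-contractible and $\{f_i\colon Y_i \to U\}$ a pro-étale cover. I would first pass to a single cover: take $Y = \bigdunion Y_i \to U$, which is quasi-pro-étale and surjective; replacing $Y$ by a refinement I may assume $Y$ is a disjoint union of strictly totally disconnected spaces (using the basis result). The map $\pi_0(Y) \to \pi_0(U)$ is then a surjection of profinite sets, and because $\pi_0(U)$ is extremally disconnected this surjection splits by a continuous section $s\colon \pi_0(U) \to \pi_0(Y)$. The image of $s$ singles out a clopen subset $Y' \subset Y$ such that $Y' \to U$ is a bijection on connected components, each component mapping to the corresponding $\Spa(C,C^+)$ in $U$. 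Since both spaces are w-strictly local and étale maps over $\Spa(C,C^+)$ with algebraically closed $C$ are split by $\pi_0$-data, one checks componentwise that $Y' \to U$ is an isomorphism of perfectoid spaces, providing the required section. This is the main technical step, where the extremally disconnected property of $\pi_0(U)$ is essential.

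Third, given weak contractibility, the vanishing $H^i(U_\qproet, \mathcal F) = 0$ for $i > 0$ is formal: by the standard Čech-to-cohomology spectral sequence it suffices to show $\check H^i(\mathfrak U, \mathcal F) = 0$ for every pro-étale cover $\mathfrak U$ of $U$, and this is immediate from the existence of a section, which makes the Čech nerve homotopically trivial.

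Fourth, repleteness of $X_\qproet$ (in the sense of Bhatt–Scholze, i.e.\ every surjection of sheaves admits sections locally, and sequential limits of surjections remain surjections) then follows from the existence of a basis of weakly contractible objects: on a w-contractible $U$, evaluation of sheaves is an exact functor by the cohomology vanishing, so surjections of sheaves on $X_\qproet$ remain surjective after evaluating on $U$, and pulling back along $U \to X$ then provides local sections. The hardest step, as noted, is the construction and verification of the section in part two; once that is in hand, the remaining assertions are essentially formal.
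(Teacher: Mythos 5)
Your overall plan mirrors the paper's, and parts (1), (3), and (4) are essentially fine: the paper also reduces the basis claim to covering a nice space by the pullback along a Gleason cover of its $\pi_0$ (though it first passes from strictly totally disconnected to \emph{w-strictly local} via \cite[Proposition 7.12, Lemma 7.19]{etale-cohomology-of-diamonds} --- you implicitly treat a strictly totally disconnected space as already w-strictly local, which is not automatic, since w-strictly local additionally requires $|V|$ to be w-local), and the derivation of acyclicity and repleteness from weak contractibility is the standard formal argument via \cite[Proposition 3.2.3]{proetale-topology}.

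The genuine gap is in part (2), which you rightly identify as the technical heart. Choosing a continuous section $s\colon \pi_0(U) \to \pi_0(Y)$ and taking $Y'$ to be the preimage of $s(\pi_0(U))$ does \emph{not} produce a section of $Y \to U$. The issue is precisely what the paper's argument is designed to handle: for a connected component $\Spa(C, C^+)$ of $U$, the chosen component of $Y$ lying over it is of the form $\Spa(C', C'^+)$ with $C' \isom C$ but only $C'^+ \supset C^+$, and this inclusion of valuation subrings can be strict. In that case the component map is a proper open immersion, not an isomorphism, so $Y' \to U$ is not even surjective and is certainly not a section. Your claim that ``étale maps over $\Spa(C,C^+)$ with algebraically closed $C$ are split by $\pi_0$-data'' does not apply here because the relevant morphisms are pro-étale, not étale, and the possible loss is exactly in the valuation subring. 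The paper circumvents this by working not at the level of $\pi_0$ but at the level of the closed-point set $X^c$ (using $X^c \cong \pi_0(X)$, so it is still extremally disconnected) and its preimage $U_0 := f^{-1}(X^c)$: a point $u \in U$ lies in $U_0$ exactly when the component through $u$ maps isomorphically onto its image component, so a section $X^c \to U_0$ \emph{does} select components that map isomorphically, and then extends uniquely to a set-theoretic section $X \to U$. This step --- isolating the components over which the valuation subring does not shrink --- is the idea your proof is missing. A secondary, smaller omission is that even once the set-theoretic section exists, one must still verify it is a morphism of adic spaces; the paper does this by showing $s(X)$ is closed and invoking \cite[Corollary 7.22, Lemma 2.5]{etale-cohomology-of-diamonds}, whereas you simply assert the componentwise isomorphism assembles to one of perfectoid spaces.
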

\begin{proof}
Let us first show that the w-contractible spaces form a basis of $X_\qproet$. Because strictly totally disconnected spaces form a basis of $X_\qproet$ (by definition of diamonds there is a cover of $X$ by perfectoid spaces, then use \cite[Lemma 7.18]{etale-cohomology-of-diamonds}) we can reduce to $X$ being strictly totally disconnected. By \cite[Proposition 7.12]{etale-cohomology-of-diamonds} there is an affinoid pro-étale cover of $X$ by a w-local space which by \cite[Lemma 7.19]{etale-cohomology-of-diamonds} is still strictly totally disconnected and hence w-strictly local. We thus reduce to the case that $X$ is w-strictly local.

By \cite[Example 2.4.6]{proetale-topology} there is a covering $T \to \pi_0(X)$ by an extremally disconnected space. Now we can argue similarly as in \cite[Lemma 2.2.8]{proetale-topology}. By \cite[Lemma 2.1.13]{proetale-topology}, we can write $T = \varprojlim_i T_i$ where each $T_i$ is an exterior disjoint union of a finite family of clopen subsets of $\pi_0(X)$. To each clopen subset of $\pi_0(X)$ we associate the clopen subset of $X$ obtained as the preimage under $X \to \pi_0(X)$. This way we can associate to each $T_i$ a w-strictly local space. Taking the limit, we obtain a w-strictly local space $\tilde X$, affinoid pro-étale over $X$, with $\pi_0(\tilde X) = T$. Thus $\tilde X$ is w-contractible and provides the desired cover of $X$.

It remains to show that every w-contractible space is pro-étale weakly contractible (then repleteness of $X_\qproet$ follows from \cite[Proposition 3.2.3]{proetale-topology}). Thus let $X$ be a w-contractible space and let $f\colon U \to X$ be a pro-étale cover. Then $U$ is a perfectoid space. By quasi-compactness of $X$ there exist finitely many affinoid open subsets $U_1, \dots, U_n \subset U$ such that $U' := \bigdunion_{i=1}^n U_i \to X$ is a (pro-étale) covering. It is then enough to construct a section of $U' \to X$, as we can compose this section with $U' \to U$ to get the desired section of $U \to X$. We can thus replace $U$ by $U'$ to assume that $U$ is affinoid perfectoid. Then $U$ is strictly totally disconnected by \cite[Lemma 7.19]{etale-cohomology-of-diamonds}. Let $X^c \subset X$ denote the subset of closed points and let $U_0 := f^{-1}(X^c)$. Since $X^c$ is closed in $X$ by w-locality of $X$, the preimage $U_0$ is closed in $U$. We will see below that every connected component of $U$ contains at most one point of $U_0$, which implies that $U_0 $ is in fact a compact Hausdorff space.  On the other hand, the natural map $X^c \to \pi_0(X)$ is a homeomorphism because it is a bijection of compact Hausdorff spaces. Therefore $X^c$ is extremaly disconnected and consequently the surjective map
$ f |_{U_0}\colon U_0 \to X^c$ admits a section $s_0\colon X^c \to U_0$.

By the structure of strictly totally disconnected spaces, every connected component of $U$ and of $X$ is of the form $\Spa(C, C^+)$ for some algebraically closed perfectoid field $C$ and some open and bounded valuation subring $C^+$. If $u \in U$ and $x \in X$ are closed points and $U_u = \Spa(C', C'^+) \subset U$ and $X_x = \Spa(C, C^+) \subset X$ are the connected components containing $u$ and $x$ respectively such that $f(U_u) \subset X_x$, then $f$ induces a pro-étale map $U_u \to X_x$ and consequently an isomorphism $C' \isom C$ under which $C'^+ \supset C^+$. If moreover $u \in U_0$ then we must have $C'^+ = C^+$; in this case the map $U_u \to X_x$ is an isomorphism and hence admits a unique section. Altogether this shows that $s_0\colon X^c \to U_0$ can be uniquely extended to a set-theoretic map $s\colon X \to U$ which respects connected components.

It remains to show that $s$ is a morphism of adic spaces. By \cite[Corollary 7.22]{etale-cohomology-of-diamonds} it is enough to show that $s$ is a spectral map. But note that $\pi_0(s(X)) \subset \pi_0(U)$ is closed, hence $s(X) \subset U$ is closed (because it is obtained as the preimage of $\pi_0(s(X))$ under the projection $U \surjto \pi_0(U)$). Therefore $s(X)$ is a spectral space and $\restrict f{s(X)}\colon s(X) \to X$ is a generalizing spectral map of spectral spaces. By \cite[Lemma 2.5]{etale-cohomology-of-diamonds} $\restrict f{s(X)}$ is a quotient map, hence a homeomorphism, so that $s$ is a homeomorphism onto its image and in particular continuous.
\end{proof}

\section{Structure Sheaves of Diamonds}

As is shown in \cite[Lemma 15.6]{etale-cohomology-of-diamonds}, there is a functor $X \to X^\diamond$ from analytic adic spaces over $\Z_p$ to diamonds which preserves the analytic site $\abs X = \abs{X^\diamond}$ and the étale site $X_\et = X^\diamond_\et$. This shows that diamonds can capture the topological properties of analytic adic spaces very well, but in order to study properties of classical rigid-analytic varieties intrinsically inside the diamond world, it is important to capture the structure sheaves $\ri_X$ and $\ri^+_X$ as well. Therefore, in the present section we introduce and study the structure sheaf $\check\ri_X$ and the integral structure sheaf $\check\ri^+_X$ on the v-site of a diamond (or even small v-sheaf) $X$ over $\Spd\Z_p$. We show that these sheaves coincide with the classical structure sheaves when $X$ comes from an adic space over $\Q_p$ via the diamond functor defined in \cite[Definition 15.5]{etale-cohomology-of-diamonds}, see \cref{rslt:comparison-of-adic-and-diamond-world} below.

To define the sheaves $\check\ri_X$ and $\check\ri^+_X$, note that by definition of small v-sheaves, the perfectoid spaces in $X_\vsite$ form a basis of the site. Hence to define a sheaf $\mathcal F$ on $X_\vsite$ it is sufficient to define its sections $\mathcal F(Y)$ on all perfectoid $Y \in X_\vsite$. We will work with the diamond $\Spd \Z_p$ defined in \cite[Lemma 15.1]{etale-cohomology-of-diamonds}.
\begin{definition}
Let $X$ be a small v-sheaf over $\Spd\Z_p$.
\begin{defenum}
	\item Let $Y \in X_\vsite$ be representable by a perfectoid space (of characteristic $p$). The morphism $Y \to \Spd\Z_p$, obtained as the composition $Y \to X \to \Spd\Z_p$, corresponds to an element of $(\Spd\Z_p)(Y)$. By definition of $\Spd\Z_p$, this element is an untilt of $Y$. We will denote it by $Y^\sharp$.

	\item The \emph{v-structure sheaf} $\check\ri_X$ of $X$ is the sheaf on $X_\vsite$ such that $\check\ri_X(Y) := \ri_{Y^\sharp}(Y^\sharp)$ for all perfectoid $Y \in X_\vsite$, where $\ri_{Y^\sharp}$ is the structure sheaf of the perfectoid adic space $Y^\sharp$. Similarly we define the \emph{integral v-structure sheaf} $\check\ri^+_X$ by $\check\ri^+_X(Y) = \ri^+_{Y^\sharp}(Y^\sharp)$ for perfectoid $Y \in X_\vsite$.

	\item \label{def:completed-structure-sheaf-on-diamond} Suppose $X$ is a diamond. Then the \emph{quasi-pro-étale structure sheaf} $\hat\ri_X$ of $X$ is the sheaf on $X_\qproet$ such that $\hat\ri_X(Y) := \ri_{Y^\sharp}(Y^\sharp)$ for all perfectoid $Y \in X_\qproet$. Similarly we define the \emph{integral quasi-pro-étale structure sheaf} $\hat\ri^+_X$ on $X_\qproet$.
\end{defenum}
\end{definition}

Our definition of the v-structure sheaf and integral v-structure sheaf on a small v-sheaf $X$ is based on the fact that the presheaves $Y \mapsto \ri_Y(Y)$ and $Y \mapsto \ri^+_Y(Y)$ on the v-site of all perfectoid spaces are sheaves, see \cite[Theorem 8.7]{etale-cohomology-of-diamonds}. If $X$ is a diamond over $\Spd\Z_p$ and $\lambda\colon X_\vsite \to X_\qproet$ the projection of sites, then
\begin{align}
	\hat\ri_X = \lambda_* \check\ri_X, \qquad \hat\ri^+_X = \lambda_* \check\ri^+_X. \label{eq:compat-of-structure-sheaves-v-qproet}
\end{align}
However, it is not true that $\check\ri^{(+)}_X$ can be obtained from $\hat\ri^{(+)}_X$ by the inverse image functor $\lambda^{-1}$, so $\check\ri^{(+)}_X$ carries more information than $\hat\ri^{(+)}_X$.

\begin{remark} \label{rmk:morphism-to-Z-p-important-to-define-structure-sheaf}
The definition of the v-structure sheaf $\check\ri_X$ above is highly dependent on the structure morphism $X \to \Spd\Z_p$, since when passing from adic spaces to diamonds, one loses the information which untilt one started with. More precisely, if $X$ is a perfectoid space then the diamond $X^\diamond$ embodies all untilts of $X^\flat$ at once. In particular we cannot retrieve $X$ from $X^\diamond$ -- this is only possible if we also track the structure morphism $X^\diamond \to \Spd\Z_p$ induced from the natural morphism $X \to \Z_p$. Note that if we are only interested in the topology and the étale site of $X$, then this subtelty does not matter, as all untilts share the same topology and étale site.
\end{remark}

In order to justify our definition of structure sheaves we will check its compatibility with the classical structure sheaf of adic spaces in case that the small v-sheaf $X$ comes from an adic space. To make that statement more precise, let us introduce the quasi-pro-étale site and the completed structure sheaf on an adic space:

\begin{definition}
Let $X$ be an adic space over $\Q_p$ and let $X^\diamond$ be the associated diamond according to \cite[Definition 15.5]{etale-cohomology-of-diamonds}. Note that  $X_\et = X^\diamond_\et$ by \cite[Lemma 15.6]{etale-cohomology-of-diamonds}.
\begin{defenum}
	\item The \emph{quasi-pro-étale site} of $X$ is the site $X_\qproet := X^\diamond_\qproet$. Similarly, the \emph{v-site} of $X$ is the site $X_\vsite := X^\diamond_\vsite$. We have natural projections of sites $\lambda\colon X_\vsite \to X_\qproet$ and $\nu\colon X_\qproet \to X_\et$.
	\item \label{def:completed-structure-sheaf-on-adic-space} Let $\nu^{-1}\ri_X$ and $\nu^{-1}\ri^+_X$ denote the inverse images of the étale structure sheaves $\ri_X$ and $\ri^+_X$ from $X_\et$ to $X_\qproet$. The \emph{completed (integral) structure sheaf} $\hat\ri_X$ (resp. $\hat\ri^+_X$) on $X_\qproet$ is defined as
	\begin{align*}
		\hat\ri^+_X := \varprojlim_n \nu^{-1}\ri^+_X/p^n, \qquad \hat\ri_X := \hat\ri^+_X[p^{-1}].
	\end{align*}
	\end{defenum}
\end{definition}

This definition is motivated by the analogous definition for the pro-étale site on an adic space in \cite[Definition 4.1]{rigid-p-adic-hodge}.
Given an adic space $X$ over $\Q_p$, we now have defined two completed structure sheaves on $X_\qproet$: On the one hand we have the sheaf $\hat\ri_X$ defined ``from bottom to top'' through the classical structure sheaf on $X_\et$. On the other hand we have the sheaf $\hat\ri_{X^\diamond}$ defined ``from top to bottom'' through the basis of perfectoid spaces inside $X_\qproet$. In the following we will show that both versions of the completed structure sheaf are the same. Before we come to that result, we need a better understanding of the site $X_\qproet$ in terms of the adic space $X$. We fix a cardinal $\kappa$ as in \cite[Convention 4.2]{etale-cohomology-of-diamonds} to evade set-theoretic issues.

\begin{definition} \label{def:pro-etale-morphism}
Let $f\colon Y \to X$ be a morphism of $\kappa$-small adic spaces over $\Q_p$, and assume that $X$ and $Y$ are either locally noetherian or perfectoid.
\begin{defenum}
	\item \label{def:pro-etale-morphism.affinoid} The map $f$ is \emph{affinoid pro-étale} if $Y = \Spa(S, S^+)$ and $X = \Spa(R, R^+)$ are affinoid and one can write $Y = \varprojlim_i Y_i \to X$ as a cofiltered limit of étale maps $Y_i \to X$ from affinoid adic spaces $Y_i = \Spa(S_i, S_i^+)$ along a $\kappa$-small index category $I$.

	\item The map $f$ is \emph{pro-étale} if for all $y \in Y$ there is an open neighbourhood $V \subset Y$ of $y$ and an open subset $U \subset X$ such that $f(V) \subset U$ and the restriction $\restrict fV\colon V \to U$ is affinoid pro-étale.
\end{defenum}
\end{definition}

In case that both $X$ and $Y$ are perfectoid spaces, our \cref{def:pro-etale-morphism} coincides with \cite[Definition 7.8]{etale-cohomology-of-diamonds}.

\begin{lemma} \label{rslt:affinoid-pro-etale-stable-under-composition}
\begin{lemenum}
\item The composition of two affinoid pro-étale morphisms (as defined in \cref{def:pro-etale-morphism.affinoid}) is affinoid pro-étale.
\item Let $Y \rightarrow X$ be an affinoid pro-étale (or pro-étale, respectively) morphism, and let $X' \rightarrow X$ be any morphism from an affinoid perfectoid space $X'$ (from a perfectoid space, respectively). Then the base change $X' \cprod_X Y \rightarrow X'$ is also affinoid pro-étale (pro-étale, respectively).
\end{lemenum}
\end{lemma}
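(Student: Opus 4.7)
The plan is to reduce both parts to the descent principle that an étale morphism of finite presentation into a cofiltered limit of affinoid adic spaces descends, up to base change, to an affinoid étale morphism at a finite stage of the limit.

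For part (i), suppose $g\colon W \to Y$ and $f\colon Y \to X$ are both affinoid pro-étale, with presentations $Y = \varprojlim_{i \in I} Y_i$ over $X$ where each $Y_i \to X$ is affinoid étale, and $W = \varprojlim_{j \in J} W_j$ over $Y$ where each $W_j \to Y$ is affinoid étale. For each fixed $j \in J$ the descent principle provides an index $i(j) \in I$ and an affinoid étale morphism $\tilde W_j \to Y_{i(j)}$ with canonical isomorphism $W_j \isom \tilde W_j \cprod_{Y_{i(j)}} Y$. I then form the cofiltered index category $K = \{ (j, i) \in J \cprod I \mid i \geq i(j) \}$ with the obvious transition maps, and set $\tilde W_{j,i} := \tilde W_j \cprod_{Y_{i(j)}} Y_i$. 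Each $\tilde W_{j,i}$ is affinoid étale over $X$ by stability of affinoid étale morphisms under composition and base change, and a straightforward cofinality argument combined with the commutation of affinoid fiber products with filtered (co)limits of affinoid adic spaces yields $W = \varprojlim_{(j,i) \in K} \tilde W_{j,i}$, exhibiting $W \to X$ as affinoid pro-étale.

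For part (ii), in the affinoid case write $Y = \varprojlim_i Y_i$ with $Y_i \to X$ affinoid étale and base change termwise along $X' \to X$. Each $X' \cprod_X Y_i$ exists as an affinoid adic space (in the noetherian case because $Y_i \to X$ is of finite type, in the perfectoid case because the base change of an affinoid étale morphism by a map of affinoid perfectoid spaces is again affinoid perfectoid étale), and $X' \cprod_X Y_i \to X'$ is affinoid étale. Passing to the limit produces $X' \cprod_X Y = \varprojlim_i (X' \cprod_X Y_i) \to X'$ as an affinoid pro-étale presentation. For the pro-étale version I argue locally: given any potential point $z$ of the fiber product with images $y \in Y$ and $x' \in X'$, pick an open $V \subset Y$ containing $y$ with $\restrict fV\colon V \to U := f(V) \subset X$ affinoid pro-étale, choose an affinoid perfectoid open neighbourhood $X'' \subset X'$ of $x'$ contained in the preimage of $U$, and apply the affinoid case to obtain $X'' \cprod_U V \to X''$ as an affinoid pro-étale open piece; these pieces glue to produce $X' \cprod_X Y \to X'$ as pro-étale.

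The main obstacle is the descent step in part (i): one must know that every affinoid étale morphism with target $Y = \varprojlim_i Y_i$ comes, after base change to $Y$, from an affinoid étale morphism with target some $Y_i$. In the locally noetherian setting this is the standard finite-presentation descent along cofiltered limits adapted to Huber's framework (going back to EGA~IV.8). In the perfectoid setting it is provided by Scholze's theory of étale morphisms of perfectoid spaces, which is used systematically in \cite[\S6--7]{etale-cohomology-of-diamonds}. Once this input is granted, the rest of both parts is essentially bookkeeping with cofiltered index categories plus the well-known stability of (affinoid) étale morphisms under composition and base change.
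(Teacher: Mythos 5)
Your proposal takes the same approach as the paper: for (i) the key ingredient is descent of affinoid étale morphisms along a cofiltered limit to a finite stage, plus index-category bookkeeping; for (ii) it is termwise base change. The paper handles the bookkeeping in (i) by simply invoking the proof of \cite[Lemma 7.11.(i)]{etale-cohomology-of-diamonds}, and spends its effort on precisely the case your dichotomy leaves out: by \cref{def:pro-etale-morphism} the tower $Y_i$ may consist of noetherian affinoid spaces while the limit $Y$ is perfectoid, and in this mixed case neither the EGA~IV.8-style finite-presentation descent (which needs $Y$ locally noetherian) nor Scholze's perfectoid étale theory (which needs the $Y_i$ perfectoid) applies directly as you assert. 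The paper verifies the needed equivalences $\varinjlim_i (X_i)_{\et,\qcqs} \isoto X_{\et,\qcqs}$ and $\varinjlim_i (X_i)_{\et,\aff} \isoto X_{\et,\aff}$ precisely in this mixed generality, using \cite[Proposition 11.23.(ii)]{etale-cohomology-of-diamonds} together with \cite[Lemma 15.6]{etale-cohomology-of-diamonds} for the first and \cite[Proposition 1.7.1]{huber-etale-cohomology} for essential surjectivity in the second. A smaller point: the ``straightforward cofinality argument'' in your part (i) also requires that the transition maps $W_{j'} \to W_j$ themselves descend to finite stages of the tower (otherwise $K$ is not a cofiltered index category with compatible objects $\tilde W_{j,i}$), which is another application of the same descent ingredient and is exactly what the proof of \cite[Lemma 7.11.(i)]{etale-cohomology-of-diamonds} carries out in detail.
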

\begin{proof}
(i) Let $Z \to Y \to X$ be a composition of affinoid pro-étale morphisms. If $X$, $Y$ and $Z$ are perfectoid, then the claim is \cite[Lemma 7.11.(i)]{etale-cohomology-of-diamonds}. In general, we can still apply the proof of the reference if we show that \cite[Proposition 6.4.(ii,iv)]{etale-cohomology-of-diamonds} still holds in our setting. That is, we have to show that for every $\kappa$-small projective limit $X = \varprojlim_i X_i$ of noetherian affinoid adic space over $\Q_p$ (with perfectoid or locally noetherian limit $X$) we have natural equivalences of categories
\begin{align*}
	\varinjlim_i (X_i)_{\et,\qcqs} \isoto X_{\et,\qcqs}, \qquad \varinjlim_i (X_i)_{\et,\aff} \isoto X_{\et,\aff}.
\end{align*}
The first equivalence follows from \cite[Proposition 11.23.(ii)]{etale-cohomology-of-diamonds} (using \cite[Lemma 15.6]{etale-cohomology-of-diamonds}). This already proves that that the second functor is fully faithful (as it is the restriction of the first functor). To prove that the second functor is essentially surjective we apply \cite[Proposition 1.7.1]{huber-etale-cohomology} as in the proof of \cite[Proposition 6.4.(iv)]{etale-cohomology-of-diamonds}.

(ii) follows from \cref{def:pro-etale-morphism}.
\end{proof}

\begin{lemma} \label{rslt:affinoid-pro-etale-is-basis}
Let $X$ be a $\kappa$-small adic space over $\Q_p$ and assume that $X$ is either locally noetherian or perfectoid. Then the set of all $Y^\diamond$ for perfectoid spaces $Y$ with pro-étale $Y \to X$ forms a basis of $X_\qproet$.
\end{lemma}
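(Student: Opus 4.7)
The plan is to deduce the statement from the known fact that affinoid perfectoid spaces that are quasi-pro-étale over $X^\diamond$ form a basis of $X^\diamond_\qproet$, combined with the identification $X_\et = X^\diamond_\et$ and the existence of affinoid pro-étale limits inside adic spaces.

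First, I would check that if $Y$ is a perfectoid space equipped with a pro-étale map $Y \to X$ in the sense of \cref{def:pro-etale-morphism}, then $Y^\diamond$ really is an object of $X_\qproet = X^\diamond_\qproet$. Locally on $Y$ the map is affinoid pro-étale, say $Y = \varprojlim_i Y_i$ with étale $Y_i \to X$; passing to diamonds via $X^\diamond_\et = X_\et$ then exhibits $Y^\diamond \to X^\diamond$ locally as a cofiltered limit of qcqs étale maps of diamonds, hence as pro-étale and in particular quasi-pro-étale.

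For the basis property, fix $Z \in X^\diamond_\qproet$. Since affinoid perfectoid spaces quasi-pro-étale over $X^\diamond$ form a basis of $X^\diamond_\qproet$, and since every quasi-pro-étale map is pro-étale after a further pro-étale cover by affinoid perfectoids, we may cover $Z$ by affinoid perfectoid $W$'s whose structure map $W \to X^\diamond$ is pro-étale in the diamond sense. Locally, $W = \varprojlim_i W_i$ with each $W_i \to X^\diamond$ a qcqs étale map of diamonds. The equivalence $X^\diamond_\et = X_\et$ identifies each $W_i$ with $V_i^\diamond$ for a unique étale adic space map $V_i \to X$, which after refinement we take to be affinoid, say $V_i = \Spa(S_i, S_i^+)$.

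It remains to realize $W$ itself as $V^\diamond$ for an affinoid pro-étale adic morphism $V \to X$ with $V$ perfectoid. I would set $V := \Spa(S, S^+)$ where $(S, S^+)$ is the completion of $\colim_i (S_i, S_i^+)$, so that $V \to X$ is affinoid pro-étale by construction. Invoking the equivalences $\varinjlim_i (V_i)_{\et,\aff} \isoto V_{\et,\aff}$ and $\varinjlim_i (V_i)_{\et,\qcqs} \isoto V_{\et,\qcqs}$ already used in the proof of \cref{rslt:affinoid-pro-etale-stable-under-composition} (from \cite[Propositions 6.4 and 11.23]{etale-cohomology-of-diamonds}), one identifies $V^\diamond$ with $\varprojlim_i V_i^\diamond = W$ as diamonds over $X^\diamond$. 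Since $W$ is perfectoid and $V$ is recovered as the untilt $W^\sharp$ determined by the structure map to $\Spd\Z_p$, $V$ is itself a perfectoid space, which is exactly what is needed.

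The main obstacle is the identification $V^\diamond = W$ in the last step: one must check that the diamond functor genuinely commutes with these affinoid pro-étale limits and that the perfectoid structure matches on both sides, which is the content of the cited propositions of \cite{etale-cohomology-of-diamonds}. Once this is in hand, the collection of $V^\diamond$ ranging over such affinoid pro-étale $V \to X$ with perfectoid $V$ covers $W$ (in fact equals it locally), providing the required basis of $X_\qproet$.
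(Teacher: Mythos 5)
There is a genuine gap at the opening step. You assert that ``every quasi-pro-étale map is pro-étale after a further pro-étale cover by affinoid perfectoids,'' and from this you conclude that you may take each $W$ to be, locally, a cofiltered limit $\varprojlim_i W_i$ of qcqs \'etale maps $W_i \to X^\diamond$. But this assertion is precisely what needs proving, and stated in this generality it contradicts the point of the ``quasi'' prefix: the paper explicitly notes (in the background section on diamonds) that \emph{not} every quasi-pro-\'etale map $Y \to X$ is locally an inverse limit of \'etale maps. Indeed, the definition of quasi-pro-\'etale only guarantees that the map becomes pro-\'etale after base change to a strictly totally disconnected space over $X^\diamond$, not over $X^\diamond$ itself.

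In the specific situation at hand ($X$ locally noetherian or perfectoid over $\Q_p$) your assertion does happen to be true, but the justification is exactly the nontrivial maneuver that the paper performs and that your argument skips: one first produces a strictly totally disconnected space $\tilde X$ with an \emph{affinoid pro-\'etale} (not just quasi-pro-\'etale) morphism $\tilde X \to X$, via \cite[Lemma 15.3]{etale-cohomology-of-diamonds} and \cite[Lemma 7.18]{etale-cohomology-of-diamonds}; then base changes the given quasi-pro-\'etale object to $\tilde X^\diamond$, where by definition it becomes pro-\'etale; and finally invokes \cref{rslt:affinoid-pro-etale-stable-under-composition} to conclude that the composite down to $X$ is affinoid pro-\'etale. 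Your preamble mentions the composition lemma and the \'etale-site identification, but the actual refinement step that brings them to bear is missing, so as written the proof begs the question.

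The remainder of your argument, once $W = \varprojlim_i W_i$ with $W_i$ \'etale over $X^\diamond$ is secured, is fine in outline: passing to $V_i$ via $X_\et = X^\diamond_\et$, forming the completed colimit $V = \Spa(S,S^+)$, and identifying $V^\diamond = \varprojlim_i V_i^\diamond = W$ by \cite[Propositions 6.4 and 11.23]{etale-cohomology-of-diamonds} is a reasonable way to recover the perfectoid adic space $V$, and it is a repackaging of the same tools the paper uses (which goes the other direction, untilting the $Y_i^\flat$ over $\tilde X$). But without establishing the opening refinement you have no access to the limit presentation $W = \varprojlim_i W_i$ in the first place.
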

\begin{proof}
We can assume that $X$ is affinoid. Let $Z' \in X_\qproet = X^\diamond_\qproet$ be given. We will show that $Z'$ can be covered by elements $Y^\diamond$ for perfectoid spaces $Y$ which are affinoid pro-étale over $X$. We may assume that $Z'$ is perfectoid. By definition of $X^\diamond$, the morphism $Z' \to X^\diamond$ determines an untilt $Z$ of $Z'$ and a morphism $Z \to X$ such that $Z' = Z^\diamond$ and $Z \to X$ is mapped to $Z^\diamond \to X^\diamond$ under $()^\diamond$.

By \cite[Lemma 15.3]{etale-cohomology-of-diamonds} we can find an affinoid perfectoid space $X'$ with an affinoid pro-étale cover $X' \to X$. By \cite[Lemma 7.18]{etale-cohomology-of-diamonds} there is an affinoid pro-étale cover $\tilde X \to X'$ by a strictly totally disconnected perfectoid space $\tilde X$. From \cref{rslt:affinoid-pro-etale-stable-under-composition} we deduce that $\tilde X \to X$ is an affinoid pro-étale cover of $X$.

Since $Z' \rightarrow X^\diamond$ is quasi-pro-étale (\cite[Definition 10.1]{etale-cohomology-of-diamonds}), the base change $Z^\diamond \cprod_{X^\diamond} \tilde X^\diamond$ is perfectoid and $Z^\diamond \cprod_{X^\diamond} \tilde X^\diamond \rightarrow \tilde X^\diamond$ is pro-étale. The adic space $\tilde Z := Z \cprod_X \tilde X$ maps to $Z^\diamond \cprod_{X^\diamond} \tilde X^\diamond$ under the diamond functor, which is compatible with fiber products.  By definition of pro-étale morphisms, we can cover the perfectoid space $Z^\diamond \cprod_{X^\diamond} \tilde X^\diamond$ by perfectoid spaces $Y_i^\flat$ which are affinoid pro-étale over $\tilde X^\diamond$. By the tilting equivalence, we find affinoid perfectoid spaces $Y_i$, which are affinoid pro-étale over $\tilde X$, with tilt $Y_i^\flat$.
By \cref{rslt:affinoid-pro-etale-stable-under-composition} these $Y_i$ are also affinoid pro-étale over $X$. Their diamonds $Y_i^\diamond$ are represented by the tilts $Y_i^\flat$ and cover $Z'$.\end{proof}

\begin{lemma} \label{rslt:completed-structure-sheaves-coincide}
Let $X$ be a $\kappa$-small adic space over $\Q_p$ and assume that $X$ is either locally noetherian or perfectoid. Then on $X_\qproet$ we have
\begin{align*}
	\hat\ri_X = \hat\ri_{X^\diamond}, \qquad \hat\ri^+_X = \hat\ri^+_{X^\diamond}.
\end{align*}
In other words, \cref{def:completed-structure-sheaf-on-diamond} and \cref{def:completed-structure-sheaf-on-adic-space} coincide.
\end{lemma}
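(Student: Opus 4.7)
The plan is to verify both equalities on a basis of $X_\qproet$. By \cref{rslt:affinoid-pro-etale-is-basis}, the objects $Y^\diamond$ for affinoid perfectoid spaces $Y = \Spa(S, S^+) = \varprojlim_i \Spa(S_i, S_i^+)$ which are affinoid pro-étale over $X$ (with $Y_i = \Spa(S_i, S_i^+) \to X$ étale) form such a basis; recall that $S^+$ arises as the $p$-adic completion of $\varinjlim_i S_i^+$. Since $Y$ lives over $\Q_p$, the canonical untilt of $Y^\diamond$ determined by the structure morphism to $\Spd\Z_p$ inherited from $Y \to X \to \Spa\Z_p$ is $Y$ itself, so by construction $\hat\ri^+_{X^\diamond}(Y^\diamond) = \ri^+_Y(Y) = S^+$. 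It therefore suffices to show that $\hat\ri^+_X(Y^\diamond) = S^+$, compatibly with these identifications.

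To set up the comparison, construct a natural map $\nu^{-1}\ri^+_X \to \hat\ri^+_{X^\diamond}$: since the target is a sheaf, it is enough to produce one from the presheaf $Z \mapsto \varinjlim_{Z \to Z' \to X} \ri^+_X(Z')$ (colimit over étale $Z' \to X$ factoring $Z \to X$). On a perfectoid $Z \in X_\qproet$ each $\ri^+_X(Z') = \ri^+_{Z'}(Z')$ maps to $\ri^+_{Z^\sharp}(Z^\sharp) = \hat\ri^+_{X^\diamond}(Z)$ by pulling back along $Z^\sharp \to Z'$, compatibly in $Z'$. Reducing modulo $p^n$ and evaluating on the basis $Y^\diamond$, this map becomes
\begin{align*}
    \varinjlim_i S_i^+/p^n \longto S^+/p^n,
\end{align*}
which is an isomorphism since $S^+$ is the $p$-adic completion of $\varinjlim_i S_i^+$.

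The main obstacle is the verification that $(\nu^{-1}\ri^+_X/p^n)(Y^\diamond)$ really equals the naive colimit $\varinjlim_i S_i^+/p^n$, i.e.\ that sheafification introduces nothing new. The key input is \cref{rslt:w-contractible-basis}: refining to a cover by w-contractible spaces, any pro-étale cover splits, so Čech cohomology of the presheaf vanishes on such covers and the sheafification agrees with the presheaf on them; since w-contractible spaces form a basis refining the affinoid pro-étale basis, the required identification at $Y^\diamond$ follows. With this in hand, passing to the limit gives
\begin{align*}
    \hat\ri^+_X(Y^\diamond) = \varprojlim_n S^+/p^n = S^+,
\end{align*}
where the last equality uses that $S^+$ is $p$-adically complete (and $p$-torsion-free, so the derived limit also vanishes). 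Thus $\hat\ri^+_X = \hat\ri^+_{X^\diamond}$ on the basis, hence everywhere, and inverting $p$ yields $\hat\ri_X = \hat\ri_{X^\diamond}$.
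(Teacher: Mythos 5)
Your overall structure (check both sheaves on a basis, identify $\hat\ri^+_{X^\diamond}(Y^\diamond)=S^+$, compute $\nu^{-1}\ri^+_X(Y^\diamond)=\varinjlim_i S_i^+$ via \cite[Prop.\ 14.9]{etale-cohomology-of-diamonds}, reduce mod $p^n$ and then take $\varprojlim_n$) is the same as the paper's. Where you diverge is the one delicate step, and that is where the gap is.

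Your observation that a presheaf $\mathcal G$ always agrees with its sheafification on a w-contractible $W$ is correct: every cover of $W$ splits, so both $+$-constructions are trivial at $W$. But you then conclude ``since w-contractible spaces form a basis refining the affinoid pro-étale basis, the required identification at $Y^\diamond$ follows.'' This is a non-sequitur. You are trying to show $(\nu^{-1}\ri^+_X/p^n)(Y^\diamond)=\mathcal G_n(Y^\diamond)$ for the presheaf $\mathcal G_n\colon Z\mapsto\nu^{-1}\ri^+_X(Z)/p^n$. Knowing $\mathcal G_n=\mathcal G_n^{\#}$ on w-contractibles says nothing about their comparison at $Y^\diamond$, which is \emph{not} w-contractible: the sheaf value at $Y^\diamond$ is an equalizer over a w-contractible cover $\{W_j\to Y^\diamond\}$, and to identify it with $\mathcal G_n(Y^\diamond)$ you would need $\mathcal G_n$ to satisfy the sheaf condition for that cover. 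That is precisely the nontrivial point --- it amounts to a vanishing-type statement for $H^1$ of $\nu^{-1}\ri^+_X$ on $Y^\diamond$, which is only known in the almost sense. The paper is explicit that this identity ``a priori is not clear''; it proves it only up to almost isomorphism (via \cite[Prop.\ 8.8]{etale-cohomology-of-diamonds}) and then recovers the honest equality $\hat\ri^+_X(Y^\diamond)=S^+$ by passing to the $p$-adic inverse limit, following \cite[Lem.\ 4.10]{rigid-p-adic-hodge}: the almost-zero ambiguities disappear in the limit because $S^+$ is complete and $p$-torsion-free.

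Your idea is salvageable, and would in fact give a cleaner argument than the paper's, but you must put the w-contractibility to use \emph{at the level of the basis element itself}: check the equality on a basis of w-contractible objects $W$ that are moreover affinoid pro-étale over $X$. Such a basis exists: the w-contractible covers constructed in \cref{rslt:w-contractible-basis} are obtained by composing affinoid pro-étale covers with a strictly totally disconnected one, so (by \cref{rslt:affinoid-pro-etale-is-basis} and \cref{rslt:affinoid-pro-etale-stable-under-composition}) they can be taken affinoid pro-étale over $X$. For such $W=\varprojlim_i W_i$ one has $\nu^{-1}\ri^+_X(W^\diamond)=\varinjlim_i S_{W,i}^+$ by \cite[Prop.\ 14.9]{etale-cohomology-of-diamonds}; since $W$ is w-contractible the presheaf-equals-sheaf argument applies at $W$ itself, giving $(\nu^{-1}\ri^+_X/p^n)(W^\diamond)=(\varinjlim_i S_{W,i}^+)/p^n$, and $\varprojlim_n$ then gives $\hat\ri^+_X(W^\diamond)=S^+_W=\hat\ri^+_{X^\diamond}(W^\diamond)$, which suffices. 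As written, though, your proof applies the w-contractible trick at the wrong object, and the key step is unjustified.
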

\begin{proof}
We can assume that $X$ is affinoid. The first of the claimed equalities follows from the second one, so we only have to show that $\hat\ri^+_X = \hat\ri^+_{X^\diamond}$. By \cref{rslt:affinoid-pro-etale-is-basis} it is enough to prove that both sides of the equation coincide on $Y^\diamond$ for affinoid perfectoid spaces $Y$ which are affinoid pro-étale over $X$. Write $Y = \Spa(S, S^+) = \varprojlim Y_i$ for étale morphisms $Y_i = \Spa(S_i, S^+_i) \to X$. By definition we have
\begin{align*}
	\hat\ri^+_{X^\diamond}(Y^\diamond) = S^+.
\end{align*}
On the other hand, by \cite[Proposition 14.9]{etale-cohomology-of-diamonds} we notice that
\begin{align*}
	\nu^{-1}\ri^+_X(Y^\diamond) = \varinjlim_i \ri^+_X(Y_i^\diamond) = \varinjlim_i \ri^+_X(Y_i) = \varinjlim_i S_i^+.
\end{align*}
Also, $S^+$ is the $p$-adic completion of $\varinjlim_i S_i^+$, hence $\hat\ri^+_{X^\diamond}(Y^\diamond)$ is the $p$-adic completion of $\nu^{-1}\ri^+_X(Y^\diamond)$. Now $\hat\ri^+_X$ is the $p$-adic completion of $\nu^{-1}\ri^+_X$, so we have to show that the completion of sheaves translates to the completion of sections on $Y^\diamond$. The problem is that a priori it is not clear that $(\nu^{-1}\ri^+_X/p^n)(Y^\diamond) = \nu^{-1}\ri^+_X(Y^\diamond)/p^n$. But it turns out that this identity is easily seen to hold in the almost sense, which is enough for our purpose. We present the details in the following.

By the above computations we see that $\nu^{-1}\ri^+_X(Y^\diamond)/p^n = S^+/p^n = \hat\ri^+_{X^\diamond}(Y^\diamond)/p^n$ for every $n \ge 1$. By \cite[Proposition 8.8]{etale-cohomology-of-diamonds}, $\hat\ri^+_{X^\diamond}(Y^\diamond)/p^n$ is almost isomorphic to $(\hat\ri^+_{X^\diamond}/p^n)(Y^\diamond)$. Altogether we deduce that $\nu^{-1}\ri^+_X(Y^\diamond)/p^n$ is almost isomorphic to $(\hat\ri^+_{X^\diamond}/p^n)(Y^\diamond)$. But $\nu^{-1}\ri^+_X/p^n$ is the sheafification of the presheaf $Y^\diamond \mapsto \nu^{-1}\ri^+_X(Y^\diamond)/p^n$. Hence if we restrict to almost coefficients then no sheafification is needed and $\nu^{-1}\ri^+_X/p^n$ is almost isomorphic to $\hat\ri^+_{X^\diamond}/p^n$. We conclude that for all $n \ge 1$, $\nu^{-1}\ri^+_X/p^n(Y^\diamond)$ is almost isomorphic to $S^+/p^n$. As in the proof of \cite[Lemma 4.10.(ii,iii)]{rigid-p-adic-hodge}, taking the inverse limit over all $n$ yields $\hat\ri^+_X(Y^\diamond) = S^+$, as desired.
\end{proof}

We now arrive at the following comparison result between the rigid-analytic world and the diamond world:

\begin{proposition} \label{rslt:comparison-of-adic-and-diamond-world}
Let $X$ be a locally noetherian adic space over $\Q_p$. Then $\abs X = \abs{X^\diamond}$, $X_\et = X^\diamond_\et$, $X_\fet = X^\diamond_\fet$, $X_\qproet = X^\diamond_\qproet$, $X_\vsite = X^\diamond_\vsite$, and $\hat\ri_X = \hat\ri_{X^\diamond}, \hat\ri^+_X = \hat\ri^+_{X^\diamond}$. In particular, $\ri^+_X/\varpi = \nu_*(\hat\ri^+_{X^\diamond}/\varpi)$ for any pseudo-uniformizer $\varpi$.

If $X$ is seminormal (e.g. normal) and of finite type over a non-archimedean field $K$ over $\Q_p$,  then $\ri_X = \nu_*\hat\ri_{X^\diamond}$ and $\ri^+_X = \nu_*\hat\ri^+_{X^\diamond}$.
\end{proposition}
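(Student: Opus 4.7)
The proposition compiles several earlier results together with two genuinely new ingredients: the mod-$\varpi$ identity and the recovery of the classical (integral) structure sheaf in the seminormal case. The site-level equalities $\abs X = \abs{X^\diamond}$ and $X_\et = X^\diamond_\et$ are \cite[Lemma 15.6]{etale-cohomology-of-diamonds}, and restricting to finite étale morphisms gives $X_\fet = X^\diamond_\fet$. The sites $X_\qproet$ and $X_\vsite$ were defined to equal $X^\diamond_\qproet$ and $X^\diamond_\vsite$, and the completed structure-sheaf identities $\hat\ri^+_X = \hat\ri^+_{X^\diamond}$ and $\hat\ri_X = \hat\ri_{X^\diamond}$ are exactly \cref{rslt:completed-structure-sheaves-coincide}.

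Next I would establish $\ri^+_X/\varpi = \nu_*(\hat\ri^+_{X^\diamond}/\varpi)$. The proof of \cref{rslt:completed-structure-sheaves-coincide} shows that $\nu^{-1}\ri^+_X/\varpi$ and $\hat\ri^+_{X^\diamond}/\varpi$ are almost isomorphic as sheaves on $X_\qproet$ (the argument given there for $\varpi = p^n$ applies verbatim to any pseudo-uniformizer, since $\varpi$ and a suitable power of $p$ divide each other). Applying $\nu_*$ and using $\nu_*\nu^{-1} = \mathrm{id}$ on étale sheaves yields an almost isomorphism $\ri^+_X/\varpi \to \nu_*(\hat\ri^+_{X^\diamond}/\varpi)$. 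To upgrade this to an exact equality of étale sheaves, I would evaluate both sides on an affinoid étale $U = \Spa(S,S^+) \to X$: taking an affinoid perfectoid pro-étale cover $\tilde U = \Spa(\tilde S,\tilde S^+) \to U$ and combining the sheaf property on $X_\qproet$ with the explicit formula $\hat\ri^+_{X^\diamond}(\tilde U^\diamond) = \tilde S^+$ from \cref{rslt:completed-structure-sheaves-coincide}, one identifies $(\hat\ri^+_{X^\diamond}/\varpi)(U^\diamond)$ as an equalizer that, by faithfully flat descent along $\tilde U \to U$, returns $S^+/\varpi = (\ri^+_X/\varpi)(U)$.

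For the seminormal, finite-type case, I would take $\varprojlim_n$ of the previous identity with $\varpi = p^n$. Since $\nu_*$ is a right adjoint it commutes with inverse limits, so $\nu_*\hat\ri^+_{X^\diamond} = \varprojlim_n(\ri^+_X/p^n)$. For $X$ of finite type over $K$, the sections $\ri^+_X(U) = S^+$ on an affinoid étale $U$ are topologically of finite type over $\ri_K$, hence $p$-adically complete, so the inverse limit returns $\ri^+_X$. Inverting $p$ and using compatibility of $\nu_*$ with this localization on an affinoid basis gives $\ri_X = \nu_*\hat\ri_{X^\diamond}$. Seminormality enters to ensure that the natural inclusion $\ri^+_X \injto \nu_*\hat\ri^+_{X^\diamond}$ is actually surjective on affinoid étale sections; this plays the same role as in the fully faithful embedding \cite[Proposition 10.2.3]{scholze-berkeley-lectures}, where without seminormality one would only recover the seminormalization from the diamond data.

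The main obstacle will be promoting the almost isomorphism to an exact equality after $\nu_*$, since almost-isomorphic sheaves on $X_\qproet$ need not remain so after pushforward; this is handled by the explicit section-level computations on a basis of affinoid étale opens via perfectoid pro-étale covers, where the sheaf $\hat\ri^+_{X^\diamond}$ has known sections. A secondary subtlety is the role of seminormality in the last step, which prevents the classical integral structure sheaf from being a proper subsheaf of its diamond-theoretic avatar.
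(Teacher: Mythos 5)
Your handling of the site-level equalities and the identification $\hat\ri^{(+)}_X = \hat\ri^{(+)}_{X^\diamond}$ is correct and matches the paper. However, the two ``genuinely new'' parts of your argument diverge from the paper's route and each contains a gap.

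For the mod-$\varpi$ identity, the paper does not pass through almost isomorphisms at all. Since by \cref{def:completed-structure-sheaf-on-adic-space} one has $\hat\ri^+_X = \varprojlim_n \nu^{-1}\ri^+_X/p^n$, one gets directly $\hat\ri^+_{X^\diamond}/\varpi = \hat\ri^+_X/\varpi = (\nu^{-1}\ri^+_X)/\varpi = \nu^{-1}(\ri^+_X/\varpi)$ (the last step by exactness of $\nu^{-1}$), and then $\nu_*\nu^{-1} = \mathrm{id}$ from \cite[Proposition 14.8]{etale-cohomology-of-diamonds} finishes. Your route instead produces an \emph{almost} isomorphism on $X_\qproet$ and tries to promote it after $\nu_*$ by Čech computation on a perfectoid pro-étale cover $\tilde U \to U$. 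The gap is in the step where you identify $(\hat\ri^+_{X^\diamond}/\varpi)(\tilde U^\diamond)$ with $\tilde S^+/\varpi$: the quotient sheaf $\hat\ri^+_{X^\diamond}/\varpi$ involves a sheafification, and a priori its value on $\tilde U^\diamond$ is only \emph{almost} isomorphic to $\tilde S^+/\varpi$ (this is exactly \cite[Proposition 8.8]{etale-cohomology-of-diamonds}). Proving actual equality is the content of \cref{rslt:O+-mod-p-on-vsite}, which is a nontrivial result proved later by a different argument, and the ``faithfully flat descent along $\tilde U \to U$'' you invoke is not obviously available since $S^+/\varpi \to \tilde S^+/\varpi$ is only almost faithfully flat.

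For the seminormal case, the paper simply cites \cite[Corollary 6.19]{rigid-p-adic-hodge}. Your $\varprojlim_n$ argument has a real gap: from the mod-$p^n$ identity you correctly deduce $\nu_*\hat\ri^+_{X^\diamond} = \varprojlim_n (\ri^+_X/p^n)$, but then you claim this equals $\ri^+_X$ merely because $\ri^+_X(U) = S^+$ is $p$-adically complete. That does not follow: $(\ri^+_X/p^n)(U)$ is the value of the quotient \emph{sheaf}, which involves sheafification and can be strictly larger than $S^+/p^n$. So $\varprojlim_n (\ri^+_X/p^n)(U)$ can be strictly larger than $S^+$. Indeed, the natural map $\ri^+_X \to \varprojlim_n(\ri^+_X/p^n)$ can fail to be surjective, and the content of Scholze's theorem is precisely that seminormality forces this to be an isomorphism. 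Your argument, if it were correct as written, would prove the statement for arbitrary locally noetherian $X$ over $\Q_p$, which contradicts the necessity of the seminormality hypothesis.
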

\begin{proof}
The first two sentences of our claim follow  \cite[Lemma 15.6]{etale-cohomology-of-diamonds} and \cref{rslt:completed-structure-sheaves-coincide}. To deduce $\ri^+_X/\varpi = \nu_*(\hat\ri^+_{X^\diamond}/\varpi)$ we first note that $\hat\ri^+_{X^\diamond}/\varpi = \hat\ri^+_X/\varpi = (\nu^{-1}\ri^+_X)/\varpi$. Using the exactness of $\nu^{-1}$ we obtain $\hat\ri^+_{X^\diamond}/\varpi = \nu^{-1}(\ri^+_X/\varpi)$. Now the claim follows from \cite[Proposition 14.8]{etale-cohomology-of-diamonds}.

To prove the claim about the structure sheaves in the seminormal case, by \cref{rslt:completed-structure-sheaves-coincide} it is enough to show $\nu_*\hat\ri^+_X = \ri^+_X$. This can be found in \cite[Corollary 6.19]{rigid-p-adic-hodge} or \cite[Theorem 8.2.3]{relative-p-adic-hodge-2} (the references use a slightly different notion of the pro-étale site of $X$, but the proofs still apply).
\end{proof}

Altogether we see that much of the structure of a locally noetherian adic space $X$ over $\Q_p$ is captured by its associated diamond: The diamond possesses a topology, an étale site and a completed structure sheaf, all of which coincide with the corresponding structure on the adic space $X$. If $X$ is seminormal and of finite type over some non-archimedean field $K$ then the associated diamond even captures the structure sheaf on $X$. From now on, we will implicitly consider all adic spaces as diamonds and work in the category of diamonds (or even small v-sheaves) throughout.

In the following, we will mostly work with the v-site and hence with the sheaves $\check\ri_X$ and $\check\ri^+_X$. By definition we have $\lambda_* \check\ri^{(+)}_X = \hat\ri^{(+)}_X$ (where $\lambda\colon X_\vsite \to X_\qproet$ is the projection of sites). We want to strengthen these identities by proving them even in the (almost) derived sense. This implies that computing (almost) cohomology on the v-site and the quasi-pro-étale site produces the same result, allowing us to extend many results from the quasi-pro-étale site to the v-site.

\begin{lemma} \label{rslt:higher-direct-image-of-structure-sheaves-qproet-to-v}
Let $X$ be a diamond over $\Spd\Z_p$ and let $\lambda\colon X_\vsite \to X_\qproet$ be the natural projection of sites. Let $\varpi$ be a global pseudo-uniformizer in $\check\ri_X$. Then
\begin{align*}
	R\lambda_* \check\ri_X = \hat\ri_X, \qquad R\lambda_* \check\ri^{+a}_X = \hat\ri^{+a}_X, \qquad R\lambda_* (\check\ri^{+a}_X/\varpi) = \hat\ri^{+a}_X/\varpi.
\end{align*}
\end{lemma}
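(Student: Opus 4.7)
The plan is to establish the vanishing of $R^i\lambda_*$ (in the relevant category) for $i > 0$, since the identities at the level of $\lambda_*$ are already recorded in \eqref{eq:compat-of-structure-sheaves-v-qproet}. I would check this on a basis of $X_\qproet$, taking the w-contractible objects, which form a basis by \cref{rslt:w-contractible-basis}. Any such $U$ is quasi-pro-étale weakly contractible, so $H^p(U_\qproet, \mathcal G) = 0$ for every abelian sheaf $\mathcal G$ on $X_\qproet$ and every $p > 0$. The Leray spectral sequence for $\lambda$ therefore degenerates on $U$ and yields $(R^q\lambda_* \mathcal F)(U) = H^q(U_\vsite, \mathcal F|_{U_\vsite})$ for every abelian sheaf $\mathcal F$ on $X_\vsite$. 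It thus suffices to prove that, for every w-contractible $U \in X_\qproet$ and every $i > 0$, one has $H^i(U_\vsite, \check\ri_X|_{U_\vsite}) = 0$ and that $H^i(U_\vsite, \check\ri^+_X|_{U_\vsite})$ is almost zero.

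Next I would unpack the definition of the structure sheaves: since $U$ is in particular an affinoid perfectoid space equipped with a structure map to $\Spd\Z_p$ inherited from $X$, the restriction $\check\ri_X|_{U_\vsite}$ coincides with the v-structure sheaf of $U$ associated to its untilt $U^\sharp$, and analogously for the integral variant. The required vanishing of $H^i(U_\vsite, \ri_{U^\sharp})$ and almost vanishing of $H^i(U_\vsite, \ri^+_{U^\sharp})$ for $i > 0$ is then precisely Scholze's v-cohomology vanishing result \cite[Proposition 8.8]{etale-cohomology-of-diamonds}. This settles the first two identities.

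Finally, for the mod-$\varpi$ statement I would apply $R\lambda_*$ to the short exact sequence of almost sheaves
\[
0 \to \check\ri^{+a}_X \xto\varpi \check\ri^{+a}_X \to \check\ri^{+a}_X/\varpi \to 0
\]
on $X_\vsite$. Inserting the identity $R\lambda_* \check\ri^{+a}_X = \hat\ri^{+a}_X$ established above, and using that $\varpi$ acts injectively on $\hat\ri^{+a}_X$ (which can be checked on the perfectoid basis, where $\varpi$ is a non-zero-divisor by assumption), the associated distinguished triangle collapses to $R\lambda_*(\check\ri^{+a}_X/\varpi) = \hat\ri^{+a}_X/\varpi$. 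The main work is concentrated in the second step, where the identification of the restricted sheaves $\check\ri^{(+)}_X|_{U_\vsite}$ with the genuine v-structure sheaves of the perfectoid space $U^\sharp$ is what unlocks Scholze's vanishing theorem; the remaining site-theoretic reductions and the mod-$\varpi$ dévissage are then formal.
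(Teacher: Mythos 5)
Your proof is correct and follows the same basic strategy as the paper's: compute $R\lambda_*$ by checking vanishing of higher v-cohomology on a basis of $X_\qproet$ and invoke Scholze's \cite[Proposition 8.8]{etale-cohomology-of-diamonds} for affinoid perfectoids. There are two execution differences worth noting. First, you work with w-contractible objects and a Leray spectral sequence to identify $(R^q\lambda_*\mathcal F)(U)$ with $H^q(U_\vsite, \mathcal F)$; the paper instead uses the simpler observation that $R^i\lambda_*\mathcal F$ is the sheafification of $U \mapsto H^i(U_\vsite, \mathcal F)$, which already reduces the problem to vanishing on \emph{any} basis — in particular the basis of affinoid perfectoid $U \in X_\qproet$, without needing w-contractibility or the Leray machinery. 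Your route is valid but carries some overhead that the paper avoids. Second, for the mod-$\varpi$ identity the paper simply cites Proposition~8.8 again, whereas you derive it by dévissage from the integral case, using $\varpi$-torsion-freeness of $\check\ri^+_X$ and $\hat\ri^{+a}_X$. This is a legitimate and arguably more self-contained way to handle that case, and it cleanly separates what one needs from Scholze's vanishing theorem from what follows formally. The step identifying $\check\ri_X|_{U_\vsite}$ with the v-structure sheaf attached to $U^\sharp$, which you correctly flag as the crux that unlocks Proposition~8.8, is implicit in the paper's proof and is handled correctly in yours.
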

\begin{proof}
If $\mathcal F$ is any sheaf on $X_\vsite$ then $R^i\lambda_*\mathcal F$ is the sheafification of the presheaf $U \mapsto H^i(U_\vsite, \mathcal F)$. Thus to show $R^i\lambda_*\mathcal F = 0$ for $i > 0$ it is enough to show that there is a basis of $X_\qproet$ such that for $U$ an element of that basis we have $H^i(U_\vsite, \mathcal F) = 0$. We take the basis consisting of affinoid perfectoid spaces in $X_\qproet$. Then the claim follows from \cite[Proposition 8.8]{etale-cohomology-of-diamonds}.
\end{proof}

\begin{lemma} \label{rslt:higher-direct-image-of-structure-sheaves-et-to-qproet}
Let $X$ be a locally noetherian adic space over $\Q_p$ and let $\varpi$ be a global pseudo-uniformizer in $\ri_X$. Then
\begin{align*}
	R\nu_*(\hat\ri^+_X/\varpi) = \ri^+_X/\varpi.
\end{align*}
\end{lemma}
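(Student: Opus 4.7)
My plan is to chain together two ingredients that are already on record in the paper, so that no local Čech computation on perfectoids is actually needed. The first ingredient is the sheaf-level identification
\[
\hat\ri^+_X/\varpi \;=\; \nu^{-1}(\ri^+_X/\varpi)
\]
on $X_\qproet$. This is exactly the intermediate step carried out inside the proof of \cref{rslt:comparison-of-adic-and-diamond-world}: starting from the defining presentation $\hat\ri^+_X = \varprojlim_n \nu^{-1}\ri^+_X/p^n$, one observes that reducing the transition system modulo $\varpi$ collapses the inverse limit (since $\varpi$ and some power of $p$ divide each other in $\ri^+_X$, so the system is eventually constant mod $\varpi$), and then exactness of $\nu^{-1}$ rewrites $(\nu^{-1}\ri^+_X)/\varpi$ as $\nu^{-1}(\ri^+_X/\varpi)$.

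The second ingredient is the statement, recalled in Section~1 immediately after the introduction of the projections $\lambda$ and $\nu$ (and proved in \cite[Proposition 14.8]{etale-cohomology-of-diamonds}), that $\nu^{-1}\colon\mathrm{Sh}(X_\et) \to \mathrm{Sh}(X_\qproet)$ is not merely fully faithful but \emph{preserves cohomology}: $R\Gamma(U_\et,\mathcal F) \isoto R\Gamma(U_\qproet, \nu^{-1}\mathcal F)$ for every étale $U \to X$ and every abelian sheaf $\mathcal F$ on $X_\et$. Applying this at each étale $U$ simultaneously and rewriting the right-hand side via Leray as $R\Gamma(U_\et, R\nu_*\nu^{-1}\mathcal F)$, one concludes that the unit map is a derived isomorphism
\[
R\nu_* \nu^{-1}\mathcal F \isoto \mathcal F
\]
in the derived category of abelian sheaves on $X_\et$.

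Plugging $\mathcal F = \ri^+_X/\varpi$ into this and combining with the first identification immediately produces
\[
R\nu_*(\hat\ri^+_X/\varpi) \;=\; R\nu_*\nu^{-1}(\ri^+_X/\varpi) \;=\; \ri^+_X/\varpi,
\]
which is the claim. The argument is thus essentially formal once the two inputs are granted. The one point that genuinely requires care is the first identification for an arbitrary pseudo-uniformizer $\varpi$ (as opposed to $\varpi = p$, where it is immediate from the presentation of $\hat\ri^+_X$); but this reduces to comparing the completion mod $\varpi$ with the completion mod a sufficiently high power of $p$ that is divisible by $\varpi$, and the mod-$\varpi$ quotient of the inverse limit is easily seen to coincide with the uncompleted quotient.
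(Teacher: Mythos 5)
Your proposal is correct and takes essentially the same route as the paper's own (one-line) proof: rewrite $\hat\ri^+_X/\varpi$ as $\nu^{-1}(\ri^+_X/\varpi)$ exactly as is done inside the proof of the comparison proposition, and then apply \cite[Proposition 14.8]{etale-cohomology-of-diamonds} to conclude $R\nu_*\nu^{-1}(\ri^+_X/\varpi)=\ri^+_X/\varpi$. Your extra remarks on why the inverse limit collapses modulo $\varpi$ merely make explicit a detail the paper leaves implicit.
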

\begin{proof}
This follows from \cite[Proposition 14.8]{etale-cohomology-of-diamonds} using that $\hat\ri^+_X/\varpi = \nu^{-1}(\ri^+_X/\varpi)$ (as in the proof of \cref{rslt:comparison-of-adic-and-diamond-world}).
\end{proof}

By \cref{rslt:higher-direct-image-of-structure-sheaves-qproet-to-v} we have $\lambda_* (\check\ri^{+a}_X/\varpi) = \hat\ri^{+a}_X/\varpi$ on a diamond $X$ over $\Spd\Z_p$. This is in line with the comparisons $\lambda_* \check\ri^{(+)}_X = \hat\ri^{(+)}_X$ following from the definitions. However, the mod-$\varpi$ version is a priori only true in the almost sense, which is not enough for our applications. Proving this equality in the non-almost sense will thus be our next goal (see \cref{rslt:O+-mod-p-on-vsite} below). The following argument was suggested by Peter Scholze.

\begin{lemma} \label{rslt:v-cover-over-Spa-C-is-limit-with-sections}
Let $Y = \Spa(C, C^+)$ for some algebraically closed perfectoid field $C$ of characteristic $p$ with open and bounded valuation subring $C^+$. Let $Z \to Y$ be a v-covering, for some affinoid perfectoid space $Z$. Then there is a representation $Z = \varprojlim Z_i \to Y$ for some affinoid perfectoid spaces $Z_i$ over $Y$ such that each $Z_i \to Y$ admits a section.
\end{lemma}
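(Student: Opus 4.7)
The strategy is to express $Z = \Spa(R,R^+)$ as a cofiltered inverse limit of "finite-type" affinoid perfectoid spaces $Z_S$ over $Y$ and to construct a section of each $Z_S \to Y$ using the algebraic closedness of $C$. Since $Z \to Y$ is a v-cover and $\abs Y$ is a chain of specializations with unique closed point $s$ corresponding to the valuation ring $C^+$, there is a point $\tilde z \in Z$ over $s$. This yields a morphism of Huber pairs $(R,R^+) \to (K,K^+)$ over $(C,C^+)$, where $K = k(\tilde z)$ is a perfectoid field and $K^+ \supset C^+$ is its valuation subring.

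To set up the inverse system, for each finite subset $S \subset R^+$ (taken closed under compatible $p$-power roots in order to preserve perfectoidness), I let $R_S^+$ be the $p$-adic completion of the $C^+$-subalgebra of $R^+$ generated by $S$, and set $R_S := R_S^+[p^{-1}]$. Then $(R_S, R_S^+)$ is a perfectoid Huber pair over $(C,C^+)$ and $Z_S := \Spa(R_S, R_S^+)$ sits in a factorization $Z \to Z_S \to Y$. As $S$ ranges over such finite subsets ordered by inclusion, cofinality yields $Z = \varprojlim_S Z_S$ over $Y$.

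The crucial step is then, for each $S$, to produce a section $\sigma_S \colon Y \to Z_S$, i.e.\ a morphism of Huber pairs $(R_S, R_S^+) \to (C, C^+)$ over $(C,C^+)$. The composition $R_S \to R \to K$ identifies the image with a finitely generated $C$-subalgebra $A \subset K$ in such a way that the image of $R_S^+$ lies in $A \cap K^+$. Using algebraic closedness of $C$ and divisibility of the value group $v(C^\times)$, one constructs a $C$-algebra retraction $A \to C$ sending $A \cap K^+$ into $C^+$: concretely, one chooses a closed point of the affinoid adic space associated to $A$ by solving the system of polynomial relations among the chosen generators over $C$ in a way compatible with the valuation constraints imposed by $K^+$. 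Precomposing with $R_S \to A$ then gives the section.

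The main obstacle is this last geometric step, namely making precise and verifying the compatibility of the Nullstellensatz-style argument with the $C^+$-integral structure. Algebraic closedness of $C$ readily furnishes $C$-algebra retractions of finitely generated $C$-subalgebras, but ensuring that the chosen retraction sends $R_S^+$ into $C^+$ (rather than merely into $\ri_C$) requires a careful simultaneous choice of residue field and valuation extension, exploiting the divisibility of $v(C^\times)$ inherited from algebraic closedness.
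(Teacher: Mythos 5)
Your proposal diverges from the paper's route and, more importantly, leaves the crucial step unproved. You set up an inverse system of "finite-type" perfectoid subspaces $Z_S$ (completed $C^+$-subalgebras of $R^+$ generated by a finite set together with $p$-power roots), and you want a section of each $Z_S \to Y$. But that section-existence claim is essentially as hard as the original lemma, and you explicitly concede it ("the main obstacle") without supplying an argument. The $Z_S$ are arbitrary Zariski closed subspaces of a finite-dimensional perfectoid disc, and for such a subspace $W = \Spa(A,A^+)$ surjecting onto $Y = \Spa(C,C^+)$ it is not at all formal that $W$ has a $(C,C^+)$-valued point respecting $C^+$; the genuine difficulty is precisely to control the valuation constraint imposed by the possibly higher-rank valuation ring $C^+$, and "divisibility of $v(C^\times)$" plus a Nullstellensatz gesture does not address the issue that $A^+ \to C^+$ (not merely $A^\circ \to \ri_C$) must hold. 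In addition, some technical claims along the way are off: $R_S^+$ as you define it need not be integrally closed in $R_S$ (so $(R_S,R_S^+)$ is not automatically a Huber pair), and the image $A$ of $R_S$ in $K$ is not a finitely generated $C$-subalgebra — it carries all the $p$-power roots, and in any case the image of a completed ring under a continuous map need not be an algebra of finite type.

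The paper's proof avoids all of this by a different decomposition. It embeds $Z$ into an "infinite perfectoid polydisc" $\tilde Y$ over $Y$ via $T_i \mapsto i$ for $i \in S^+$; since $f^*$ is a topological quotient, this is a Zariski closed immersion, so by \cite[Remark 7.9]{etale-cohomology-of-diamonds} one writes $Z = \varprojlim_\alpha U_\alpha$ as a cofiltered limit of \emph{rational opens} $U_\alpha$ of $\tilde Y$. Each rational open only involves finitely many coordinates, so it is pulled back from a rational open $U'$ of a finite-dimensional perfectoid disc $\tilde Y_J$, and then the section comes from \cite[Lemma 9.5]{etale-cohomology-of-diamonds}. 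The point is that the lemma in Scholze's paper handles exactly rational opens of finite perfectoid discs over $(C,C^+)$, and the Zariski-closed-immersion structure is what gives the reduction to rational opens. Your decomposition into $Z_S$ does not produce rational opens, so that lemma is not available to you, and you would have to reprove a harder statement from scratch. To repair your argument you would essentially need to perform the paper's reduction inside each $Z_S$ anyway, at which point the $Z_S$ intermediary is superfluous.
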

\begin{proof}
Write $Z = \Spa(S, S^+)$ and let $I = S^+$. For every finite subset $J \subset I$ consider the perfectoid discs
\begin{align*}
	R^+_J = C^+\langle T_i^{1/p^\infty} \setst i \in J \rangle, \qquad R_J = C\langle T_i^{1/p^\infty} \setst i \in J\rangle.
\end{align*}
Consider also the ``infinite perfectoid disc''
\begin{align*}
	R^+ = C^+\langle T_i^{1/p^\infty} \setst i \in I \rangle, \qquad R = C\langle T_i^{1/p^\infty} \setst i \in I\rangle.
\end{align*}
Fix a pseudo-uniformizer $\varpi$ in $C$ and equip $R^+$ with the $\varpi$-adic topology and $R = R^+[\varpi^{-1}]$ with the induced topology. By writing $R^+$ as the $\varpi$-adically completed direct limit over the $R^+_J$'s one sees that $R^+$ is open and integrally closed in $R$. One checks that $R^\circ = C^\circ\langle T_i^{1/p^\infty} \setst i \in I \rangle$ from which one deduces easily that $R$ is perfectoid. Thus, $\tilde Y := \Spa(R, R^+)$ is an affinoid perfectoid space.

Consider the continuous map $f^*\colon R \to S$ mapping $T_i \mapsto i \in S^+$ (by assumption $S$ is perfectoid in characteristic $p$, hence perfect, so that it has unique $p^n$-th roots). This map satisfies $f^*(R^+) \subset S^+$ and thus induces a map $f\colon Z \to \tilde Y$ over $Y$. Furthermore, since both $S^+$ and $R^+$ are equipped with the $\varpi$-adic topology, $f^*$ is a quotient map. This implies that $f$ is a Zariski closed immersion (see \cite[Proposition III.4.1.2]{morel-adic-spaces}). Thus by \cite[Remark 7.9]{etale-cohomology-of-diamonds} we can write $f\colon Z = \varprojlim_\alpha U_\alpha \to \tilde Y$ for some rational open subsets $U_\alpha \subset \tilde Y$.

This reduces the claim to the following: Let $U \subset \tilde Y$ be a rational open subset. Then the projection $U \to Y$ admits a section. By \cite[Lemma 3.10]{huber-cts-valuations} we can write $U = U(\frac{f_1, \dots, f_n}g)$ such that $f_1, \dots, f_n, g \in R$ contain only finitely many of the $T_i$'s. If $J$ is the set of these $i$'s then we see that $U$ can be written as $U = U' \cprod_{\tilde Y_J} \tilde Y$, where $\tilde Y_J := \Spa(R_J, R^+_J)$ and $U'$ is an open rational subset of $\tilde Y_J$. Since there obviously exists a section of $\tilde Y \to \tilde Y_J$, in order to construct a section of $U \to Y$ it remains to construct a section of $U' \to Y$. Such a section exists by \cite[Lemma 9.5]{etale-cohomology-of-diamonds} (cf. the end of the proof of \cite[Lemma 9.4]{etale-cohomology-of-diamonds}).
\end{proof}

\begin{lemma} \label{rslt:presheaf-with-limit-property-is-sheaf-on-strictly-tot-disc}
Let $X$ be a small v-sheaf and let $\mathcal F$ be a presheaf of abelian groups on $X_\vsite$ satisfying the following properties:
\begin{enumerate}[(a)]
	\item For any limit $U = \varprojlim_i U_i$ in $X_\vsite$ with $U$ and all $U_i$ being affinoid perfectoid we have $\mathcal F(U) = \varinjlim_i \mathcal F_i(U)$.
	\item For any $U_1, U_2 \in X_\vsite$ we have $\mathcal F(U_1 \dunion U_2) = \mathcal F(U_1) \cprod \mathcal F(U_2)$.
\end{enumerate}
Then $\mathcal F$ satisfies the sheaf property for every covering $Z \to Y$ in $X_\vsite$ for which $Y$ is strictly totally disconnected.
\end{lemma}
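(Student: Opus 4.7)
My plan is to extend Lemma~\ref{rslt:v-cover-over-Spa-C-is-limit-with-sections} from the case $Y = \Spa(C, C^+)$ to an arbitrary strictly totally disconnected $Y$, and then deduce the sheaf condition from the elementary fact that a morphism equipped with a section has a split cosimplicial Cech nerve.

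As a preliminary reduction, since $Y$ is strictly totally disconnected it is in particular affinoid perfectoid and spectral, so by the definition of v-coverings the map $Z \to Y$ is refined by finitely many affinoid perfectoid open subsets $Z_1, \dots, Z_n \subset Z$ still surjecting onto $Y$. Property (b) identifies $\mathcal F(\bigdunion_i Z_i)$ with $\prod_i \mathcal F(Z_i)$ and allows us to check the sheaf condition on the refinement $\bigdunion_i Z_i \to Y$ in place of $Z \to Y$, so we may assume $Z = \Spa(S, S^+)$ is affinoid perfectoid.

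Next I would run the construction of Lemma~\ref{rslt:v-cover-over-Spa-C-is-limit-with-sections} in this generality. Writing $Y = \Spa(A, A^+)$ and $I = S^+$, form the relative infinite perfectoid disc $\tilde Y = \Spa(R, R^+)$ over $Y$ with perfectoid coordinates $(T_i^{1/p^\infty})_{i \in I}$, using $A^+$ in place of $C^+$. The continuous $A^+$-algebra map $R^+ \to S^+$ defined by $T_i \mapsto i$ is a quotient map, so it induces a Zariski closed immersion $Z \injto \tilde Y$ over $Y$, and one then writes $Z = \varprojlim_\alpha U_\alpha$ as a cofiltered limit of rational open subsets $U_\alpha \subset \tilde Y$, each pulled back from a rational open $U'_\alpha$ of some finite-dimensional disc $\tilde Y_{J_\alpha}$. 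The decisive step -- and the main obstacle of the argument -- is to show that each projection $U_\alpha \to Y$ admits a section: for $Y = \Spa(C, C^+)$ this is \cite[Lemma 9.5]{etale-cohomology-of-diamonds}, proved by a valuative argument, while for general strictly totally disconnected $Y$ one must run this argument coherently across the profinite family of connected components $Y_y = \Spa(C_y, C_y^+)$, exploiting that $A^+$ is ``rich enough'' to produce a compatible family of values in the valuation rings $C_y^+$.

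Once these sections $s_\alpha\colon Y \to U_\alpha$ are in hand, the conclusion is formal. For any morphism $p\colon U \to Y$ in $X_\vsite$ equipped with a section $s$, the morphism $(s \comp p, \id{U})\colon U \to U \cprod_Y U$ together with the identity $s^* \comp p^* = \id{Y}$ exhibits $\mathcal G(Y)$ as the equalizer of $\mathcal G(U) \rightrightarrows \mathcal G(U \cprod_Y U)$ for any presheaf $\mathcal G$. Applying this to each $U_\alpha \to Y$ with $\mathcal G = \mathcal F$ and passing to the cofiltered limit in $\alpha$ -- all spaces $U_\alpha$ and $U_\alpha \cprod_Y U_\alpha$ are affinoid perfectoid with affinoid perfectoid limits $Z$ and $Z \cprod_Y Z$, so property (a) applies, and equalizers of abelian groups commute with filtered colimits -- yields the desired equality $\mathcal F(Y) = \mathrm{eq}\bigl(\mathcal F(Z) \rightrightarrows \mathcal F(Z \cprod_Y Z)\bigr)$.
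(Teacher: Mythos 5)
The reduction to $Z$ affinoid perfectoid at the start is fine, and the closing paragraph of your proposal --- that a map equipped with a section forces the sheaf condition for any presheaf, and that one may then pass to a filtered colimit of equalizers of abelian groups using condition (a) --- is correct reasoning. The gap is exactly where you flag it: you do not prove, and it is not clear, that each projection $U_\alpha \to Y$ admits a section when $Y$ is an arbitrary strictly totally disconnected space. The phrase ``run this argument coherently across the profinite family of connected components'' is a wish, not an argument. \cref{rslt:v-cover-over-Spa-C-is-limit-with-sections} and \cite[Lemma~9.5]{etale-cohomology-of-diamonds} work over a single $\Spa(C, C^+)$ because one only has to exhibit \emph{one} tuple of coordinates lying in $C^+$; over a general strictly totally disconnected $Y = \Spa(A, A^+)$ one must make such choices \emph{continuously} across $\pi_0(Y)$, and continuity of selections over a profinite base is exactly the sort of problem that extremal disconnectedness of $\pi_0(Y)$ (i.e.\ w-contractibility, not mere strict total disconnectedness) is designed to solve. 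Both cited lemmas are stated only for $\Spa(C, C^+)$, so you would need to prove a genuinely new statement, and it is far from clear that it is even true in the stated generality.

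The paper's proof sidesteps the generalization entirely by putting condition (b) to more structural use. Because $Y$ is strictly totally disconnected, every \'etale cover of $Y$ splits, so any presheaf on $Y_\et$ that transforms finite disjoint unions into products is already an \'etale sheaf. Thus the pushforward presheaves $\mathcal F_\et$, $f_*(\restrict{\mathcal F}Z)_\et$ and $(f \cprod f)_*(\restrict{\mathcal F}{Z \cprod_Y Z})_\et$ are sheaves on $Y_\et$, and left-exactness of $\Gamma(Y,-)$ reduces the sheaf condition to exactness of a short sequence of \'etale sheaves, which one checks on stalks via \cite[Proposition~14.3]{etale-cohomology-of-diamonds}. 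Condition (a) together with \cite[Example~5.2]{etale-cohomology-of-diamonds} then collapse the stalk check exactly to the $\Spa(C,C^+)$ situation, which is \cref{rslt:v-cover-over-Spa-C-is-limit-with-sections}. Your proposal uses condition (b) only for the preliminary reduction to $Z$ affinoid perfectoid, and thereby misses the mechanism by which the argument is legitimately reduced to a single connected component.
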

\begin{proof}
Let the covering $f:Z \to Y$ be given. Note that the sequence $0 \to \mathcal F(Y) \to \mathcal F(Z) \to \mathcal F(Z \cprod_Y Z)$ can be written as
\begin{align*}
	0 \to \Gamma(Y, \mathcal F) \to \Gamma(Y, f_*(\restrict{\mathcal F}Z)) \to \Gamma(Y, (f \cprod f)_*(\restrict{\mathcal F}{Z \cprod_Y Z})).
\end{align*}
We have to show that this sequence is exact. Consider the presheaves $\mathcal F_\et$, $f_*(\restrict{\mathcal F}Z)_\et$ and $(f \cprod f)_*(\restrict{\mathcal F}{Z \cprod_Y Z})_\et$ on $Y_\et$, where $(-)_\et$ denotes the projection to the étale site along $\nu \comp \lambda\colon Y_\vsite \to Y_\et$. We claim that these three presheaves are in fact sheaves. Since $Y$ is strictly totally disconnected (in particular every étale cover of $Y$ splits), this amounts to saying that all three presheaves transform finite disjoint unions into products; this is guaranteed by condition (b). Now $\Gamma(Y, -)$ is left exact, so that it is enough to show that the following sequence of sheaves on $Y_\et$ is exact:
\begin{align*}
	0 \to \mathcal F_\et \to f_*(\restrict{\mathcal F}Z)_\et \to (f \cprod f)_*(\restrict{\mathcal F}{Z \cprod_Y Z})_\et.
\end{align*}
The exactness of this sequence can be checked on stalks by \cite[Proposition 14.3]{etale-cohomology-of-diamonds}. By \cite[Example 5.2]{etale-cohomology-of-diamonds} and the fact that $\mathcal F$ (and hence also $f_*(\restrict{\mathcal F}Z)$ and $(f \cprod f)_*(\restrict{\mathcal F}{Z \cprod_Y Z})$) behave nicely under limits by assumption, we can reduce to the case that $Y = \Spa(C, C^+)$ for an algebraically closed perfectoid field $C$ with an open and bounded valuation subring $C^+$.

We can assume that $Z$ is affinoid perfectoid. By \cref{rslt:v-cover-over-Spa-C-is-limit-with-sections} we can write $Z = \varprojlim_i Z_i \to Y$ for some $Z_i \in Y_\vsite$ which admit a section $Y \to Z_i$. As is true on any site, every presheaf satisfies the sheaf property along a covering that admits a section. Consequently, $\mathcal F$ satisfies the sheaf property for the coverings $Z_i \to Y$ for all $i$. By assumption on $\mathcal F$, the sequence $0 \to \mathcal F(Y) \to \mathcal F(Z) \to \mathcal F(Z \cprod_Y Z)$ is obtained as a direct limit of the exact sequences $0 \to \mathcal F(Y) \to \mathcal F(Z_i) \to \mathcal F(Z_i \cprod_Y Z_i)$ and hence is also exact.
\end{proof}

\begin{proposition} \label{rslt:O+-mod-p-on-vsite}
Let $X$ be a small v-sheaf over $\Spd\Z_p$ and let $\varpi$ be a global pseudo-uniformizer in $\check\ri^+_X$. Then for all strictly totally disconnected spaces $Y \in X_\vsite$ we have
\begin{align*}
	(\check\ri^+_X/\varpi)(Y) = \check\ri^+_X(Y)/\varpi.
\end{align*}
In particular, if $X$ is a diamond and $\lambda\colon X_\vsite \to X_\qproet$ denotes the projection of sites then
\begin{align*}
	\lambda_*(\check\ri^+_X/\varpi) = \hat\ri^+_X/\varpi.
\end{align*}
\end{proposition}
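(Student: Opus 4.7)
The plan is to apply \cref{rslt:presheaf-with-limit-property-is-sheaf-on-strictly-tot-disc} to the presheaf of abelian groups $\mathcal G$ on $X_\vsite$ defined by $\mathcal G(U) := \check\ri^+_X(U)/\varpi$. By construction, the sheaf $\check\ri^+_X/\varpi$ is the sheafification of $\mathcal G$. Hence once one knows that $\mathcal G$ already satisfies the sheaf property on every covering of a strictly totally disconnected $Y \in X_\vsite$, the universal property of sheafification forces $\mathcal G(Y) = (\check\ri^+_X/\varpi)(Y)$, which is precisely the first claim. Everything thus reduces to verifying hypotheses (a) and (b) of the lemma for $\mathcal G$.

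Hypothesis (b) is immediate: since $\check\ri^+_X$ is already a v-sheaf, it sends finite disjoint unions in $X_\vsite$ to finite products, and this product structure descends to the quotient modulo $\varpi$. For hypothesis (a), let $U = \varprojlim_i U_i$ be a cofiltered limit of affinoid perfectoids in $X_\vsite$ with $U$ itself affinoid perfectoid. Writing $U^\sharp = \Spa(S, S^+)$ and $U_i^\sharp = \Spa(S_i, S_i^+)$, the standard structure theorem for cofiltered limits of affinoid perfectoid spaces (\cite[Proposition 6.4.(i)]{etale-cohomology-of-diamonds}, applied to the untilts) identifies $S^+$ with the $\varpi$-adic completion of $A := \varinjlim_i S_i^+$. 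Since $\varpi$-adic completion does not affect reduction modulo $\varpi$ and reduction modulo $\varpi$ commutes with filtered colimits, we obtain
\begin{align*}
	\mathcal G(U) = S^+/\varpi = A/\varpi = \varinjlim_i S_i^+/\varpi = \varinjlim_i \mathcal G(U_i).
\end{align*}

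For the ``In particular'' claim, both $\lambda_*(\check\ri^+_X/\varpi)$ and $\hat\ri^+_X/\varpi$ are sheaves on $X_\qproet$, so it is enough to compare their sections on the basis of strictly totally disconnected perfectoid $U \in X_\qproet$. On such $U$ we have $\hat\ri^+_X(U) = \check\ri^+_X(U)$ by definition, and the first part of the proposition yields $\lambda_*(\check\ri^+_X/\varpi)(U) = (\check\ri^+_X/\varpi)(U) = \check\ri^+_X(U)/\varpi$. Running the same argument for the presheaf $U \mapsto \hat\ri^+_X(U)/\varpi$ on $X_\qproet$ (whose sheafification is $\hat\ri^+_X/\varpi$, and which satisfies the analogues of (a) and (b)) gives $(\hat\ri^+_X/\varpi)(U) = \hat\ri^+_X(U)/\varpi$, so the two sheaves agree on the basis. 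The main obstacle is verifying hypothesis (a); the essential input is the classical description of sections of the integral structure sheaf on an affinoid perfectoid limit as a $\varpi$-adic completion, after which the mod-$\varpi$ statement becomes a formal consequence of the exactness of filtered colimits.
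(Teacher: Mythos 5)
The main part of your argument — applying \cref{rslt:presheaf-with-limit-property-is-sheaf-on-strictly-tot-disc} to the presheaf $U \mapsto \check\ri^+_X(U)/\varpi$ and verifying condition (a) via the description of $S^+$ as the $\varpi$-adic completion of $\varinjlim_i S_i^+$ — is exactly the paper's proof, and your verification of (a) and (b) is sound.

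Your treatment of the ``In particular'' clause, however, diverges from the paper and has a gap. You propose to ``run the same argument'' for the presheaf $U \mapsto \hat\ri^+_X(U)/\varpi$ on $X_\qproet$. But \cref{rslt:presheaf-with-limit-property-is-sheaf-on-strictly-tot-disc} is stated and proved only for the v-site: its proof relies on \cref{rslt:v-cover-over-Spa-C-is-limit-with-sections}, which is about v-covers of $\Spa(C,C^+)$, and on v-site stalk arguments. To use your route one would have to formulate and prove a quasi-pro-étale analogue, which you neither cite nor supply. The paper sidesteps this entirely by evaluating on w-contractible spaces $Y$: by \cref{rslt:w-contractible-basis} these form a basis of $X_\qproet$ and every quasi-pro-étale sheaf on them is acyclic, so the short exact sequence $0 \to \hat\ri^+_X \xto{\varpi} \hat\ri^+_X \to \hat\ri^+_X/\varpi \to 0$ gives $(\hat\ri^+_X/\varpi)(Y) = \hat\ri^+_X(Y)/\varpi = \check\ri^+_X(Y)/\varpi = (\check\ri^+_X/\varpi)(Y)$ directly from the first part. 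That is the cleaner path here; your version would also work once the quasi-pro-étale analogue of the lemma is actually established, but as written it leans on an unproved statement.
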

\begin{proof}
The second part of the claim follows from the first part by evaluating both sides on w-contractible spaces $Y$, which form a basis of $X_\qproet$ by \cref{rslt:w-contractible-basis}. Indeed, on such a space $Y$, any (quasi-)pro-étale sheaf is acyclic, so in particular this is true for $\hat\ri^+_X$ and hence we have $(\hat\ri^+_X/\varpi)(Y) = \hat\ri^+_X(Y)/\varpi = \check\ri^+_X(Y)/\varpi = (\check\ri^+_X/\varpi)(Y)$.

To prove the first part of the claim, let $\mathcal F$ be the presheaf on $X_\vsite$ defined by $Y \mapsto \check\ri^+_X(Y)/\varpi$. It is enough to show that $\mathcal F$ satisfies the sheaf property for all coverings $f\colon Z \to Y$ in $X_\vsite$ with $Y$ strictly totally disconnected. We apply \cref{rslt:presheaf-with-limit-property-is-sheaf-on-strictly-tot-disc}, i.e. we have to verify that $\mathcal F$ satisfies the conditions (a) and (b). It is clear that $\mathcal F$ satisfies condition (b). To show (a), we have to prove that $\mathcal F(U) = \varinjlim \mathcal F(U_i)$ for any cofiltered inverse limit $U = \varprojlim_i U_i$ of affinoid perfectoid spaces in $X_\vsite$. But this is clear: We first note that $U^\sharp = \varprojlim_i U_i^\sharp$ (e.g. by \cite[Corollary 3.20]{etale-cohomology-of-diamonds}). Thus, writing $U^\sharp = \Spa(R, R^+)$ and $U_i^\sharp = \Spa(R_i, R_i^+)$ we have that $R^+$ is the completion of $\varinjlim R_i^+$, in particular $\mathcal F(U) = R^+/\varpi = \varinjlim_i R_i^+/\varpi = \varinjlim_i \mathcal F(U_i)$.
\end{proof}

We will use \cref{rslt:O+-mod-p-on-vsite} in \cref{rslt:global-sections-on-proper-space} below in order to avoid having to work with almost coefficients later on. Another nice consequence of \cref{rslt:O+-mod-p-on-vsite} is the following.

\begin{corollary} \label{rslt:hat-O-flat-over-O-E}
Let $X$ be a small v-sheaf over $\Cpx_p$. Then $\check\ri^+_X/\varpi$ is flat over $\ri/\varpi$ (as sheaves on $X_\vsite$) for all pseudo-uniformizers $\varpi \in \ri$.
\end{corollary}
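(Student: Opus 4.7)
The plan is to verify flatness on stalks of the v-site, reducing to a ring-theoretic statement on sections.

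First, for every strictly totally disconnected $Y \in X_\vsite$, I claim that the section ring $\check\ri^+_X(Y)$ is $\ri$-flat. Indeed, the structure morphism $X \to \Spd\Cpx_p$ makes $\check\ri_X(Y)$ into a $\Cpx_p$-algebra, so every nonzero $a \in \ri$ acts invertibly on $\check\ri_X(Y)$ and hence injectively on the subring $\check\ri^+_X(Y) \subset \check\ri_X(Y)$. Since $\ri = \ri_{\Cpx_p}$ is a rank-one valuation ring, torsion-freeness over $\ri$ is equivalent to flatness, so $\check\ri^+_X(Y)$ is $\ri$-flat. Tensoring with $\ri/\varpi$ over $\ri$ then preserves flatness: $\check\ri^+_X(Y)/\varpi = \check\ri^+_X(Y) \tensor_\ri \ri/\varpi$ is flat over $\ri/\varpi$.

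Next I would invoke \cref{rslt:O+-mod-p-on-vsite}, which identifies $(\check\ri^+_X/\varpi)(Y) = \check\ri^+_X(Y)/\varpi$ on strictly totally disconnected $Y$. Thus on this basis of $X_\vsite$ the sheaf $\check\ri^+_X/\varpi$ has $\ri/\varpi$-flat sections. To promote this to flatness of the sheaf, I would check on stalks at geometric points of the v-site: these are computed as filtered colimits of sections on strictly totally disconnected spaces, and filtered colimits of flat modules remain flat, so each stalk is $\ri/\varpi$-flat and the sheaf is flat.

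No step is particularly hard; the only delicate point is the last one, i.e. the reduction from sectionwise flatness on a basis to sheaf-theoretic flatness via stalks. This is routine given that $X_\vsite$ has enough points and that strictly totally disconnected spaces furnish a sufficiently rich system for computing stalks, but it is the only place where the topology of the v-site enters rather than pure commutative algebra.
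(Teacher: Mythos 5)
Your proof is correct and follows essentially the same strategy as the paper: reduce to sections on the basis of strictly totally disconnected spaces, invoke \cref{rslt:O+-mod-p-on-vsite} to identify $(\check\ri^+_X/\varpi)(Y)$ with $\check\ri^+_X(Y)/\varpi$, observe that $\check\ri^+_X(Y)$ is $\varpi$-torsion-free (hence flat over the valuation ring $\ri$), and pass to the quotient. The paper phrases the final reduction as ``it is enough to check flatness on a basis'' rather than unwinding the stalk argument, but the content is identical.
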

\begin{proof}
The sheaf $(\ri/\varpi)_X$ on $X_\vsite$ is the sheafification of the presheaf $Y \mapsto \ri/\varpi$, so it is enough to show that $(\check\ri^+_X/\varpi)(U)$ is flat over $\ri/\varpi$ for $U$ in a basis of $X_\vsite$. Such a basis is given by strictly totally disconnected spaces $U$, in which case we have $(\check\ri^+_X/\varpi)(U) = \check\ri^+_X(U)/\varpi$ by \cref{rslt:O+-mod-p-on-vsite}. On the other hand, $\check\ri^+_X(U) = \ri^+_{U^\sharp}(U^\sharp)$ has no $\varpi$-torsion and is thus flat over $\ri$; hence $\check\ri^+_X(U)/\varpi$ is flat over $\ri/\varpi$.
\end{proof}

Having extensively studied the structure sheaf of a diamond (or even small v-sheaf) $X$, we now turn our attention to locally free sheaves on $X$. Our focus lies on understanding how they compare when studied on the étale, quasi-pro-étale and v-site. We show that given a locally free sheaf $\mathcal M$ of $\ri_X$-modules on any of the three sites, we can equivalently study $\mathcal M$ on any bigger site. However there are more locally free sheaves of $\ri_X$-modules on $X_\vsite$ than there are on $X_\qproet$ (and the same for $X_\qproet$ and $X_\et$).

\begin{proposition} \label{rslt:locfree-sheaf-fully-faithful-qproet-to-v}
Let $X$ be a diamond over $\Spd\Z_p$, let $\varpi$ be a global pseudo-uniformizer in $\check\ri_X$ and let
\begin{align*}
	\lambda\colon (X_\vsite, \check\ri_X) &\to (X_\qproet, \hat\ri_X),\\
	\lambda^+\colon (X_\vsite, \check\ri^+_X) &\to (X_\qproet, \hat\ri^+_X),\\
	\lambda^+_\varpi\colon (X_\vsite, \check\ri^+_X/\varpi) &\to (X_\qproet, \hat\ri^+_X/\varpi)
\end{align*}
be the natural morphisms of ringed sites. Then the pullbacks $\lambda^*$, $\lambda^{+*}$ and $\lambda^{+*}_\varpi$ are fully faithful on the respective categories of locally free sheaves of modules.

Moreover, on these categories, $\lambda^*$ preserves cohomology and $\lambda^{+*}$ and $\lambda^{+*}_\varpi$ preserve almost cohomology.
\end{proposition}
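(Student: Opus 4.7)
The plan is to reduce everything to the sheaf-theoretic identities of \cref{rslt:higher-direct-image-of-structure-sheaves-qproet-to-v} and \cref{rslt:O+-mod-p-on-vsite} via a local triviality argument combined with the adjunction $(\lambda^*, \lambda_*)$ and the Leray spectral sequence.

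The key step is the following claim: for $\mathcal M$ a locally free sheaf on $X_\qproet$ of one of the three types ($\hat\ri_X$-module, $\hat\ri^+_X$-module, or $\hat\ri^+_X/\varpi$-module), the adjunction unit
\begin{align*}
\mathcal M \longto R\lambda_* \lambda^* \mathcal M
\end{align*}
is an isomorphism (an almost isomorphism in the two integral cases). Since both sides are sheaves and $R\lambda_*$ can be computed locally on the target, the statement can be checked on a basis of $X_\qproet$ trivializing $\mathcal M$, so we may assume $\mathcal M$ is free of finite rank $n$. Then $\lambda^* \mathcal M$ is free of the same rank, and since $R\lambda_*$ commutes with finite direct sums the claim reduces directly to the identities $R\lambda_* \check\ri_X = \hat\ri_X$, $R\lambda_* \check\ri^{+a}_X = \hat\ri^{+a}_X$, and $R\lambda_*(\check\ri^{+a}_X/\varpi) = \hat\ri^{+a}_X/\varpi$ from \cref{rslt:higher-direct-image-of-structure-sheaves-qproet-to-v}.

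Granting the key step, full faithfulness follows from the adjunction: for locally free $\mathcal M, \mathcal N$ of the appropriate type,
\begin{align*}
\Hom(\lambda^* \mathcal M, \lambda^* \mathcal N) = \Hom(\mathcal M, \lambda_* \lambda^* \mathcal N) = \Hom(\mathcal M, \mathcal N),
\end{align*}
where the last equality uses the $H^0$ part of the key step; this part is honest in all three cases because $\lambda_* \check\ri_X = \hat\ri_X$ and $\lambda_* \check\ri^+_X = \hat\ri^+_X$ hold by definition of the structure sheaves, and $\lambda_*(\check\ri^+_X/\varpi) = \hat\ri^+_X/\varpi$ holds honestly by \cref{rslt:O+-mod-p-on-vsite}. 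Preservation of cohomology is then obtained from the Leray spectral sequence:
\begin{align*}
R\Gamma(X_\vsite, \lambda^* \mathcal M) = R\Gamma(X_\qproet, R\lambda_* \lambda^* \mathcal M) = R\Gamma(X_\qproet, \mathcal M),
\end{align*}
honestly for $\lambda^*$ and in the almost sense for $\lambda^{+*}$ and $\lambda^{+*}_\varpi$.

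I do not foresee a serious obstacle; the argument is essentially a formality once the structure-sheaf identities are in hand. The main point of care is the bookkeeping between honest and almost isomorphisms: the $H^0$-level comparisons are honest in all three cases, so full faithfulness holds on the nose; the higher direct image identities for the integral sheaves are only almost isomorphisms, which is precisely why cohomology is only preserved in the almost sense for $\lambda^{+*}$ and $\lambda^{+*}_\varpi$.
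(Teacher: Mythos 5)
Your proof is correct and follows essentially the same approach as the paper: reduce the adjunction unit $\mathcal M \to R\lambda_* \lambda^* \mathcal M$ to the free case and invoke \cref{rslt:higher-direct-image-of-structure-sheaves-qproet-to-v}, then deduce full faithfulness from adjunction and cohomology preservation from Leray. Your explicit bookkeeping of which comparisons are honest at the $H^0$-level (via the definitions and \cref{rslt:O+-mod-p-on-vsite}) versus only almost in higher degrees is a welcome clarification of a point the paper treats somewhat tersely.
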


Note that this statement differs from \cite[Proposition 14.7]{etale-cohomology-of-diamonds} because the $\lambda^*$ in the reference denotes inverse image (denoted $\lambda^{-1}$ in the present paper), whereas the functors $\lambda^*$, $\lambda^{+*}$ and $\lambda^{+*}_\varpi$ are pullbacks along morphisms of \emph{ringed} sites.

\begin{proof}
We only prove the claim for $\lambda$; the other two cases are analogous (using \cref{rslt:O+-mod-p-on-vsite} for $\lambda^+_\varpi$). Let $\mathcal M$ be a locally free sheaf of $\hat\ri_X$-modules on $X_\qproet$. Then the natural morphism $\mathcal M \isoto R\lambda_*\lambda^*\mathcal M$ is an isomorphism: This can be checked locally on $X_\qproet$, so we can assume that $\mathcal M$ is free, which reduces to the case $\mathcal M = \hat\ri_X$. Then the claimed isomorphism is stated in \cref{rslt:higher-direct-image-of-structure-sheaves-qproet-to-v}. This already shows that $\lambda^*$ preserves cohomology. We also note that for two locally free sheaves of $\hat\ri_X$-modules $\mathcal M$ and $\mathcal N$ on $X_\qproet$ we have by adjunction
\begin{align*}
	\Hom_{\check\ri_X}(\lambda^*\mathcal M, \lambda^*\mathcal N) &= \Hom_{\hat\ri_X}(\mathcal M, \lambda_*\lambda^*\mathcal N) = \Hom_{\hat\ri_X}(\mathcal M, \mathcal N).
\end{align*}
This proves fully faithfulness of $\lambda^*$.
\end{proof}

\begin{proposition} \label{rslt:locfree-sheaf-fully-faithful-et-to-qproet}
Let $X$ be a locally noetherian adic space over $\Q_p$ and let $\varpi$ be a global pseudo-uniformizer in $\ri_X$. Let
\begin{align*}
	\nu\colon (X_\qproet, \hat\ri_X) &\to (X_\et, \ri_X),\\
	\nu^+\colon (X_\qproet, \hat\ri^+_X) &\to (X_\et, \ri^+_X),\\
	\nu^+_\varpi\colon (X_\qproet, \hat\ri^+_X/\varpi) &\to (X_\et, \ri^+_X/\varpi)
\end{align*}
be the natural morphisms of ringed sites. Then the pullback $\nu^{+*}_\varpi$ is fully faithful on the category of locally free sheaves of modules, where it also preserves cohomology.

If $X$ is seminormal (e.g. normal) and of finite type over a non-archimedian field over $\Q_p$, then the pullbacks $\nu^*$ and $\nu^{+*}$ are fully faithful on the respective categories of locally free sheaves of modules.
\end{proposition}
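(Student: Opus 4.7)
The plan is to imitate the proof of \cref{rslt:locfree-sheaf-fully-faithful-qproet-to-v} almost verbatim, replacing the input \cref{rslt:higher-direct-image-of-structure-sheaves-qproet-to-v} by \cref{rslt:higher-direct-image-of-structure-sheaves-et-to-qproet} in the first case and by the underived identities from \cref{rslt:comparison-of-adic-and-diamond-world} in the second case. The whole argument comes down to: (i) locality on $X_\et$ reduces the projection formula $\mathcal M \isoto (R)\nu_* \nu^{(+)*}_{(\varpi)}\mathcal M$ to the case $\mathcal M$ free, hence to the structure sheaf itself; (ii) the structure-sheaf identity is supplied by the lemmas quoted above; (iii) adjunction upgrades this into fully faithfulness.

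For the first statement, let $\mathcal M$ be a locally free sheaf of $\ri^+_X/\varpi$-modules on $X_\et$. I would first show that the unit $\mathcal M \to R\nu_* \nu^{+*}_\varpi \mathcal M$ is an isomorphism. This can be checked locally on $X_\et$, so we reduce to the case $\mathcal M = \ri^+_X/\varpi$, where the identity $R\nu_*(\hat\ri^+_X/\varpi) = \ri^+_X/\varpi$ is precisely \cref{rslt:higher-direct-image-of-structure-sheaves-et-to-qproet}. This already yields the statement about preservation of cohomology. Fully faithfulness then follows for locally free $\mathcal M, \mathcal N$ by the adjunction computation
\begin{align*}
    \Hom_{\hat\ri^+_X/\varpi}(\nu^{+*}_\varpi \mathcal M, \nu^{+*}_\varpi \mathcal N)
    = \Hom_{\ri^+_X/\varpi}(\mathcal M, \nu_* \nu^{+*}_\varpi \mathcal N)
    = \Hom_{\ri^+_X/\varpi}(\mathcal M, \mathcal N),
\end{align*}
exactly as in the proof of \cref{rslt:locfree-sheaf-fully-faithful-qproet-to-v}.

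For the second statement, assume $X$ is seminormal and of finite type over a non-archimedean field over $\Q_p$. Here we only claim fully faithfulness, not preservation of cohomology, so we only need the underived identities $\ri_X = \nu_* \hat\ri_X$ and $\ri^+_X = \nu_* \hat\ri^+_X$, both of which are recorded in \cref{rslt:comparison-of-adic-and-diamond-world}. For $\mathcal M$ a locally free sheaf of $\ri_X$-modules (resp.\ $\ri^+_X$-modules), locality on $X_\et$ again reduces the natural map $\mathcal M \to \nu_* \nu^* \mathcal M$ (resp.\ $\mathcal M \to \nu_* \nu^{+*}\mathcal M$) to the free case, which is handled by the structure-sheaf identity. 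Full faithfulness then follows from the same adjunction as above.

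There is really no substantial obstacle here; the only subtle point is bookkeeping, namely that in the seminormal case we do not have (and do not need) a derived statement for $R\nu_*\hat\ri^{(+)}_X$, which is why the second part of the proposition is weaker than the first. All the real work has already been done in \cref{rslt:comparison-of-adic-and-diamond-world} and \cref{rslt:higher-direct-image-of-structure-sheaves-et-to-qproet}; this proposition is essentially a formal consequence, packaged for later use.
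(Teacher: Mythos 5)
Your proposal is correct and matches the paper's proof exactly: the paper simply states that the result is ``Analogous to \cref{rslt:locfree-sheaf-fully-faithful-qproet-to-v}, using \cref{rslt:higher-direct-image-of-structure-sheaves-et-to-qproet} and \cref{rslt:comparison-of-adic-and-diamond-world} in place of \cref{rslt:higher-direct-image-of-structure-sheaves-qproet-to-v}.'' You have identified precisely which lemma substitutes for which (the derived identity $R\nu_*(\hat\ri^+_X/\varpi) = \ri^+_X/\varpi$ for the mod-$\varpi$ case, the underived $\nu_*\hat\ri^{(+)}_X = \ri^{(+)}_X$ in the seminormal case), and your observation that the seminormal half is weaker precisely because only the underived pushforward identity is available there is the right bookkeeping.
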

\begin{proof}
Analogous to \cref{rslt:locfree-sheaf-fully-faithful-qproet-to-v}, using \cref{rslt:higher-direct-image-of-structure-sheaves-et-to-qproet} and \cref{rslt:comparison-of-adic-and-diamond-world} in place of \cref{rslt:higher-direct-image-of-structure-sheaves-qproet-to-v}.
\end{proof}

\section{Local Systems on Diamonds} \label{sec:locsys}

In this section we introduce and study local systems on diamonds (or more generally small v-sheaves). Our main results are the following: First we show that pro-étale local systems behave similarly on diamonds as on schemes by adapting some results from \cite{proetale-topology} to diamonds. Afterwards we prove that pro-étale local systems are equivalent to v-local systems on a diamond (under mild assumptions on the coefficient ring), which allows us to replace the pro-étale site by the much bigger v-site later on.

In order to define categories of étale, quasi-pro-étale and v-local systems on a locally spatial diamond $X$ with coefficients in a suitable ring $\Lambda$
we start with some generalities which are hopefully useful for future reference. Note however, that in the present paper we will focus on topological rings $\Lambda$, where \cref{rslt:explicit-computation-of-S-X} below provides a more explicit description of the relevant objects.

In the classical setting of the étale site, a good theory of local systems is only possible for discrete rings $\Lambda$. Given such a ring $\Lambda$ and a ``space'' $X$ (e.g. a scheme, adic space or spatial diamond) one defines the constant sheaf $\Lambda_X$ on $X_\et$ to be the sheafification of the constant presheaf $U \mapsto \Lambda$. There is a different, in many cases more illuminating, definition of the constant sheaf $\Lambda_X$: Let $*_\et$ be the site consisting of finite discrete sets and jointly surjective coverings. This site is equivalent to the étale site of a geometric point, hence the notation.  Now a discrete ring $\Lambda$ is the same as a sheaf of rings on $*_\et$. Moreover, there is a natural morphism of sites $\pr\et\colon X_\et \to *_\et$ given by $\pr\et^{-1}(S) = \bigdunion_{s\in S} X$ for $S \in *_\et$. It follows straight from the definitions that the constant sheaf $\Lambda_X$ can be described as the pullback $\Lambda_X = \pr\et^{-1}\Lambda$.

The point of view on constant sheaves presented in the previous paragraph has many advantages because we can apply the general theory of morphisms of sites (e.g. the adjunction of $\pr{\et*}$ and $\pr\et^{-1}$). It suggests that we extend the definition in order to include topological rings as follows. Replace $*_\et$ by the site $*_\proet$ of profinite spaces with covers given by finitely many continuous jointly surjective maps. Sheaves on this site are called condensed sets and are studied extensively in \cite{condensed-mathematics}. In particular it is shown there that the category of compactly generated topological rings (this includes all metrizable spaces) embeds fully faithfully into the category of sheaves of rings on $*_\proet$ (see \cite[Proposition 1.7]{condensed-mathematics}), where we implicitly assume a cardinality bound $\kappa$.

\begin{definition}
Let $\mathcal C$ be any category. A \emph{discrete object of $\mathcal C$} is a sheaf of objects in $\mathcal C$ on the site $*_\et$. A \emph{condensed object of $\mathcal C$} is a sheaf of objects in $\mathcal C$ on the site $*_\proet$.

In case $\mathcal C$ is the category of sets/groups/rings, we also speak of discrete sets/groups/rings and condensed sets/groups/rings.
\end{definition}

\begin{definition} \label{def:constant-sheaves}
\begin{defenum}
	\item \label{def:constant-etale-sheaves} Let $X$ be a locally spatial diamond and let $T$ be a discrete set. We define the \emph{constant étale sheaf} $T_X$ on $X_\et$ as $T_X = \pr\et^{-1} T$, where $\pr\et\colon X_\et \to *_\et$ is the natural projection of sites defined above.
	\item Let $X$ be a diamond and let $T$ be a condensed set. We define the \emph{constant quasi-pro-étale sheaf} $T_X$ on $X_\qproet$ as $T_X = \pr\qproet^{-1} T$, where $\pr\qproet\colon X_\qproet \to *_\proet$ is the natural projection of sites given by $\pr\qproet^{-1}S = \varprojlim \pr\et^{-1}S_i$ for a profinite set $S = \varprojlim_i S_i$ with $S_i$ finite discrete.
	\item Let $X$ be a small v-sheaf (e.g. a diamond) and let $T$ be a condensed set. We define the \emph{constant v-sheaf} $T_X$ on $X_\vsite$ as $T_X = \pr\vsite^{-1} T$, where $\pr\vsite\colon X_\vsite \to  *_\proet$ is the natural projection of sites defined similarly as $\pr\qproet$ (in the case that $X$ is a diamond there is a definition for $X_\qproet$ and then $\pr\vsite$ is just the composition $X_\vsite \to X_\qproet \to *_\proet$).
\end{defenum}
\end{definition}

Note that these notions behave well under pullbacks. For example, if $f\colon Y \to X$ is a morphism of diamonds and $T$ is a condensed set then $f_\qproet^{-1} T_X = T_Y$. This follows from the functoriality of pullbacks. It is also clear that being a constant sheaf does not depend on the chosen site, i.e. constant sheaves pull back to constant sheaves along $X_\qproet \to X_\et$ and $X_\vsite \to X_\qproet$.

\begin{definition}\label{def:localsys}
In each of the cases of \cref{def:constant-sheaves}, assume that $\Lambda := T$ is a sheaf of rings. Then $\Lambda_X$ is a sheaf of rings on $X_\tau$, where $\tau \in \{ \et, \qproet, \vsite \}$ denotes the site in question, and we often drop the subscript, i.e. denote $\Lambda_X$ simply be $\Lambda$.
\begin{defenum}
	\item A sheaf $\mathcal L$ of $\Lambda$-modules on $X_\tau$ is called \emph{constant} if $\mathcal L \isom \Lambda^r$ for some $r \ge 0$. $\mathcal L$ is called \emph{locally constant} or a $\Lambda$-local system if locally on $X_\tau$, $\mathcal L$ is constant.
	\item The category of $\Lambda$-local systems on $X_\tau$ (with all morphisms of $\Lambda$-modules) is denoted $\catlocsys\Lambda{X_\tau}$. By abuse of notation we usually write $\catlocsys\Lambda{X}$	 for $\catlocsys\Lambda{X_\vsite}$ (this is justified by \cref{rmk:locsys-equiv-on-all-sites}).
\end{defenum}	
We say that a local system $\mathcal L $ on $X_\tau$  has \emph{constant rank} if there exists a cover $(Y_i)_ {i \in I}$ of $X$ in the $\tau$-topology such that $\mathcal L|_{Y_i}\isom \Lambda_{Y_i} ^r$ for $r$ independent of $i$.
\end{definition}

We first study the behaviour of the categories of local systems under direct and inverse limits of the coefficient ring. 

\begin{lemma} \label{rslt:constant-sheaf-commutes-with-lim-and-colim}
Let $X$ be a small v-sheaf. Then the functor $T \mapsto T_X$ associating to every condensed set $T$ the constant sheaf $T_X$ on $X_\vsite$ commutes with all limits and colimits. If $X$ is a diamond, then the same is true for $X_\qproet$ in place of $X_\vsite$.
\end{lemma}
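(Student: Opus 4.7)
By construction, the functor $T \mapsto T_X$ is the inverse image $\pr\tau^{-1}$ along the natural morphism of sites $\pr\tau\colon X_\tau \to *_\proet$, where $\tau \in \{\vsite, \qproet\}$ (with $\tau = \qproet$ requiring $X$ to be a diamond). As a left adjoint to $\pr{\tau *}$, the functor $\pr\tau^{-1}$ preserves all small colimits, which immediately settles the colimit half of the claim.

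For the commutation with limits, my plan is to reduce to an explicit formula on a suitable basis. By \cref{rslt:w-contractible-basis} and the discussion preceding it, the w-contractible perfectoid spaces form a basis of both $X_\vsite$ and $X_\qproet$. On such a $U$, every pro-\'etale cover splits, so the sheafification step in the definition of $\pr\tau^{-1}$ is trivial, and $T_X(U)$ can therefore be computed as a presheaf pullback. I would then argue that this presheaf pullback collapses to the explicit formula
\begin{align*}
	T_X(U) = T(\pi_0(U)),
\end{align*}
where $\pi_0(U)$ is the profinite (in fact extremally disconnected) set of connected components of $U$. The key input is that the canonical map $U \to \pi_0(U)$ exhibits $\pi_0(U)$ as the universal profinite set receiving a map from $U$, so that in the colimit defining the presheaf pullback one finds a cofinal subcategory picking out the single object $\pi_0(U) \in *_\proet$.

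Given the formula, commutation with limits is immediate: limits of sheaves of sets on any site are computed section-wise, and the same holds for limits of condensed sets, which are sheaves on $*_\proet$. Thus for any small diagram $(T_i)_i$ of condensed sets with limit $T$ and for $U$ in the chosen basis,
\begin{align*}
	\Bigl(\varprojlim_i T_i\Bigr)_X(U) = T(\pi_0(U)) = \varprojlim_i T_i(\pi_0(U)) = \varprojlim_i T_{i,X}(U) = \Bigl(\varprojlim_i T_{i,X}\Bigr)(U),
\end{align*}
which proves the claim.

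The main obstacle is the careful verification of the explicit formula $T_X(U) = T(\pi_0(U))$ on the w-contractible basis: one has to track how the inverse image $\pr\tau^{-1}$ interacts with the indexing of profinite sets and exploit the weak contractibility of $U$ to kill both the sheafification and the colimit. Once this is in place, the rest of the argument is purely formal.
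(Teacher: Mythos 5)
Your overall strategy --- reduce to the explicit formula $T_X(U)=T(\pi_0(U))$ on a weakly contractible basis and then observe that sections commute with limits of sheaves on $*_\proet$ --- is essentially the one the paper follows, and the colimit half is correct for the same trivial reason. But there is a genuine gap in your justification of the key formula in the v-site case. You write that on a w-contractible $U$ "every pro-\'etale cover splits, so the sheafification step in the definition of $\pr\tau^{-1}$ is trivial." That is true for quasi-pro-\'etale covers, but the site $X_\vsite$ has many more coverings: arbitrary v-covers of a w-contractible (or strictly totally disconnected) perfectoid space do \emph{not} admit sections in general. So weak contractibility does not make the sheafification trivial on $X_\vsite$, and your plan as stated does not establish that the presheaf pullback $V\mapsto T(\pi_0(V))$ already satisfies the sheaf axiom for v-covers.

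The paper handles exactly this point differently: after reducing the $X_\qproet$ statement to the $X_\vsite$ statement via $T_{X_\qproet}=\lambda_*T_{X_\vsite}$ (and $\lambda_*$ preserves limits), it first establishes the adjunction between $\pi_0$ (there called $\lambda_\circ$) and $\pr\vsite^{-1}$ on qcqs objects, which identifies $\pr\vsite^{-1}T$ as the sheafification of $V\mapsto T(\pi_0(V))$; and then it proves directly that this presheaf satisfies the sheaf property for v-covers whose base is strictly totally disconnected. The crucial nontrivial input there is that $\pi_0$ commutes with fiber products of strictly totally disconnected spaces (\cite[Lemma 14.6]{etale-cohomology-of-diamonds}), combined with the sheaf property of $T$ on $*_\proet$. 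Your proposal identifies the right formula and the right basis, but to close the argument you need this compatibility of $\pi_0$ with fiber products rather than a splitting of covers, since the latter fails for v-covers.
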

\begin{proof}
In both the $X_\qproet$ and $X_\vsite$ case it is clear that the functor $T \mapsto T_X$ commutes with all colimits, because it is given by a pullback. To show that the functor also commutes with limits, it is enough to consider the case of $X_\vsite$, as $T_{X_\qproet} = \lambda_* T_{X_\vsite}$ (by \cite[Proposition 14.7]{etale-cohomology-of-diamonds}) and $\lambda_*$ commutes with all limits.

Let $\pr\vsite\colon X_\vsite \to *_\proet$ be the natural morphism of sites. Then we have to show that $\pr\vsite^{-1}$ commutes with all limits. We imitate the proof of \cite[Lemma 14.4]{etale-cohomology-of-diamonds}: Since restriction of sheaves commutes with limits, the claim can be checked v-locally on $X$, so we can assume that $X$ is a qcqs  perfectoid space. Then the topos $X_\vsite^\topos$ is equivalent to the topos $X_{\vsite,\qcqs}^\topos$ coming from the subsite of qcqs perfectoid spaces $Y$ over $X$. For every such $Y$ let $\lambda_\circ(Y) := \pi_0(Y)$. This defines a functor $\lambda_\circ\colon X_{\vsite,\qcqs} \to *_\qproet$. It has the following properties:
\begin{itemize}
	\item The functor $\lambda_\circ$ is left adjoint to the functor $\pr\vsite^{-1}$. To check this let any $V \in X_{\vsite,\qcqs}$ and $S \in *_\qproet$ be given. The claim is that
	\begin{align*}
		\Hom_X(V, \pr\vsite^{-1}S) = \Hom(\lambda_\circ(V), S).
	\end{align*}
	If $S$ is a finite set then $\pr\vsite^{-1}S$ is a finite disjoint union of copies of $X$ and the identity of $\Hom$'s is easy to see. For general $S$ write $S = \varprojlim_{i\in I} S_i$ with finite sets $S_i$ and use that $\pr\vsite^{-1}S = \varprojlim_i \pr\vsite^{-1} S_i$ (by definition of $\pr\vsite^{-1}$) and that $\Hom$ commutes with limits in its second argument.

	\item If $V_1 \to V_2 \from V_3$ is a diagram of strictly totally disconnected spaces in $X_{\vsite,\qcqs}$ then $\lambda_\circ(V_1 \cprod_{V_2} V_3) = \lambda_\circ(V_1) \cprod_{\lambda_\circ(V_2)} \lambda_\circ(V_3)$. To see this, one easily reduced to the case that $V_1$, $V_2$ and $V_3$ are connected, in which case we can apply \cite[Lemma 14.6]{etale-cohomology-of-diamonds}.
\end{itemize}
Let now $T$ be a condensed set and $Y \in X_{\vsite,\qcqs}$ a strictly totally disconnected space. We claim that
\begin{align*}
	(\pr\vsite^{-1}T)(Y) = T(\lambda_\circ(Y)).
\end{align*}
To prove this, first note that by the adjunction of $\lambda_\circ$ and $\pr\vsite^{-1}$ it follows directly from the definitions that $\pr\vsite^{-1}T$ is the sheafification of the presheaf $V \mapsto T(\lambda_\circ(V))$. It is thus enough to show that this presheaf satisfies the sheaf property for all coverings $V \to Y$ with $V$ strictly totally disconnected (because these $V$'s form a basis of the v-site). This follows from the sheaf property of $T$ on $*_\qproet$ and the fact that $\lambda_\circ$ commutes with fiber products of strictly totally disconnected spaces (as shown above).

To finish the proof, we observe that $T(\lambda_\circ(Y))$ commutes with limits in $T$, hence so does $(\pr\vsite^{-1}T)(Y)$. As this is true for all strictly totally disconnected $Y \in X_\vsite$, we deduce that $\pr\vsite^{-1}T$ commutes with limits in $T$.
\end{proof}

\begin{proposition} \label{rslt:projlim-of-locsys}
Let $X$ be a diamond and let $(\Lambda_i)_{i\in I}$ be an inverse system of condensed rings with limit $\Lambda = \varprojlim_i \Lambda_i$. Then the natural base change functor
\begin{align*}
	\catlocsys\Lambda{X_\qproet} \isoto \varprojlim_i \catlocsys{\Lambda_i}{X_\qproet}
\end{align*}
is an equivalence. If $I = (\N, \le)$ then also the natural base change functor
\begin{align*}
	\catlocsys\Lambda{X_\vsite} \isoto \varprojlim_i \catlocsys{\Lambda_i}{X_\vsite}
\end{align*}
is an equivalence.
\end{proposition}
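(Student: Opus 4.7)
The plan is to exhibit the functor $(\mathcal L_i)_i \mapsto \varprojlim_i \mathcal L_i$ (the limit of sheaves of abelian groups, carrying a natural $\Lambda$-module structure since $\Lambda = \varprojlim_i \Lambda_i$) as a quasi-inverse to the base change functor. Two things must be verified: \emph{(a)} for every $\Lambda$-local system $\mathcal L$, the natural map $\mathcal L \to \varprojlim_i (\mathcal L \tensor_\Lambda \Lambda_i)$ is an isomorphism; and \emph{(b)} for every compatible system $(\mathcal L_i)_i$, the limit $\mathcal L := \varprojlim_i \mathcal L_i$ is a $\Lambda$-local system whose base change to $\Lambda_i$ recovers $\mathcal L_i$. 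Fully faithfulness follows from (a) via the adjunction $\Hom_\Lambda(\mathcal L, \mathcal L' \tensor_\Lambda \Lambda_i) = \Hom_{\Lambda_i}(\mathcal L \tensor_\Lambda \Lambda_i, \mathcal L' \tensor_\Lambda \Lambda_i)$ together with the fact that $\Hom_\Lambda(\mathcal L, -)$ commutes with limits in its second argument.

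For the $X_\qproet$ case I would check both claims on the basis of w-contractible spaces $U$ provided by \cref{rslt:w-contractible-basis}. On such a $U$, every quasi-pro-étale sheaf is acyclic, so each $\mathcal L_i|_U$ is trivializable of some rank $r_i$, which the base change compatibility forces to be independent of $i$ on every connected component of $U$; call it $r$. Claim (a) then reduces to the identification $\Lambda|_U^r = \varprojlim_i \Lambda_i|_U^r$, which is \cref{rslt:constant-sheaf-commutes-with-lim-and-colim}. The heart of (b) is the choice of \emph{compatible} trivializations $\varphi_i\colon \Lambda_i^r|_U \isoto \mathcal L_i|_U$ for all $i \in I$ simultaneously; these are sections over $U$ of the inverse limit of the $\GL_r(\Lambda_i)$-torsors $\underline{\mathrm{Isom}}_{\Lambda_i}(\Lambda_i^r, \mathcal L_i)$ on $U_\qproet$. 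Each individual torsor admits a section by the qproet-acyclicity of $U$, and one assembles these into a compatible section using the cocycle data of the system $(\mathcal L_i)$ and the fact that every non-abelian $H^1$ on $U_\qproet$ vanishes. Given compatible trivializations, \cref{rslt:constant-sheaf-commutes-with-lim-and-colim} yields $\mathcal L|_U = \varprojlim_i \Lambda_i^r|_U = \Lambda^r|_U$, and the compatibility $\mathcal L \tensor_\Lambda \Lambda_i = \mathcal L_i$ becomes evident on the same basis.

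For the $X_\vsite$ case with $I = \N$, w-contractible opens are no longer v-weakly contractible, so the above acyclicity argument fails directly. Instead I would exploit the sequentiality of $\N$ to construct compatible trivializations iteratively: on a strictly totally disconnected $U \in X_\vsite$, inductively build a tower of v-covers $\dots \to V_2 \to V_1 \to U$ such that $\mathcal L_1, \dots, \mathcal L_n$ are compatibly trivialized on $V_n$, choosing at each step a v-cover $V_{n+1} \to V_n$ which trivializes $\mathcal L_{n+1}$ in a manner compatible with the already-fixed trivialization of $\mathcal L_n$ modulo base change. The cofiltered limit $V_\infty = \varprojlim V_n$ remains a v-cover of $U$ (being a cofiltered limit in $\Perf$) and carries compatible trivializations of the full tower, yielding $\mathcal L|_{V_\infty} \isom \Lambda^r|_{V_\infty}$ as required. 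The main obstacle throughout is precisely the production of these compatible trivializations: in the quasi-pro-étale case with general $I$ it rests on the strong acyclicity of w-contractible opens and a descent-type argument for the inverse limit of torsors, while in the v-site case the restriction to $I = \N$ is essential for the iterative refinement of v-covers to terminate in a single limit cover.
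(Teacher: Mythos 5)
Your overall skeleton matches the paper's: the quasi-inverse is $\varprojlim_i \mathcal L_i$, fully faithfulness comes from the same $\Hom$-with-limits computation, and essential surjectivity is checked on a nice basis, with a tower-of-covers argument when $I = \N$. There are, however, two places where your argument departs from the paper's, and each departure introduces a real difficulty that the paper's route avoids.

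First, in the $X_\qproet$ case you isolate the heart of the matter correctly (finding a \emph{compatible} family of trivializations $\varphi_i$), but the mechanism you invoke --- vanishing of non-abelian $H^1$ on a w-contractible $U$ --- is not the right tool. That vanishing says every $\GL_r(\Lambda_i)$-torsor \emph{on the site $U_\qproet$} is trivial; you already know this, since you have individual sections $\varphi_i$. The obstruction you actually face lives in the \emph{index direction}: $\varprojlim_i \underline{\mathrm{Isom}}_{\Lambda_i}(\Lambda_i^r, \mathcal L_i)(U)$ is an inverse limit of non-empty sets whose transition maps need not be surjective, and that is a $\varprojlim^1$-type obstruction, not a sheaf-cohomological one. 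Appealing to $H^1(U_\qproet, -) = 1$ does not produce the compatible section, and indeed showing that the limit sheaf is a $\GL_r(\Lambda)$-torsor (locally non-empty) is exactly the statement you are trying to prove, so the argument as written is circular. The paper's treatment of this point is terse (it asserts that since each $\mathcal L_i$ is constant on a w-contractible $X$, ``the same is true for $\mathcal L$'') but it does not misdirect the reader to a cohomological mechanism.

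Second, and more substantially, your $X_\vsite$ argument inserts an extra requirement that the paper deliberately avoids: you want each step $V_{n+1} \to V_n$ to trivialize $\mathcal L_{n+1}$ \emph{compatibly with the already-fixed trivialization of $\mathcal L_n$}. Given $\varphi_n$ on $V_n$, the sheaf of compatible $\varphi_{n+1}$'s is a sub-sheaf of $\underline{\mathrm{Isom}}(\Lambda_{n+1}^r, \mathcal L_{n+1})$ whose non-emptiness on some v-cover reduces to lifting an element of $\GL_r(\Lambda_n)(W)$ along $\GL_r(\Lambda_{n+1}) \to \GL_r(\Lambda_n)$, and this map of condensed groups need not be surjective. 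So the ``choose a compatible refinement'' step may fail outright. The paper sidesteps this entirely: it only builds covers $X_n \to X_{n-1}$ on which $\mathcal L_n$ becomes constant (with no compatibility demanded), passes to $\tilde X = \varprojlim_n X_n$ where \emph{all} $\mathcal L_n$ are simultaneously constant, and then observes that on $\tilde X$ one is back in the quasi-pro-étale situation already handled. That is a genuinely cleaner decomposition, and you should adopt it: do not try to propagate a fixed trivialization up the tower; instead trivialize each $\mathcal L_n$ separately, pass to the limit cover, and then invoke the $\qproet$ case on $\tilde X$.
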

\begin{proof}
Note first that by \cref{rslt:constant-sheaf-commutes-with-lim-and-colim} it does not matter whether we form the limit of the $\Lambda_i$'s on $*_\proet$, on $X_\qproet$ or on $X_\vsite$. This is implicitly used in the following proof.

We first show that both of the displayed functors are fully faithful (even for general $I$ on the v-site), so let $\tau \in \{ \qproet, \vsite \}$. Let us observe the following: For any $\Lambda$-local system $\mathcal L$ on $X_\tau$ we have a natural isomorphism $\varprojlim_i (\mathcal L \tensor_\Lambda \Lambda_i) = \mathcal L$. Indeed, we have a natural map from right to left, so the claimed isomorphism can be checked locally. But then $\mathcal L \isom \Lambda^r$ for some $r$ and the claim is obvious. Now let $\mathcal L, \mathcal L' \in \catlocsys\Lambda{X_\tau}$ be given. Under the displayed functor they are mapped to $(\mathcal L \tensor_\Lambda \Lambda_i)_i$ and $(\mathcal L' \tensor_\Lambda \Lambda_i)_i$ respectively. We get
\begin{align*}
	&\Hom((\mathcal L \tensor_\Lambda \Lambda_i)_i, (\mathcal L' \tensor_\Lambda \Lambda_i)_i) = \varprojlim_i \Hom_{\Lambda_i}(\mathcal L \tensor_\Lambda \Lambda_i, \mathcal L' \tensor_\Lambda \Lambda_i)\\
	&\qquad= \Gamma(X, \varprojlim_i \IHom_{\Lambda_i}(\mathcal L \tensor_\Lambda \Lambda_i, \mathcal L' \tensor_\Lambda \Lambda_i)) = \Gamma(X, \varprojlim_i \IHom_\Lambda(\mathcal L, \mathcal L') \tensor_\Lambda \Lambda_i)\\
	&\qquad= \Gamma(X, \IHom_\Lambda(\mathcal L, \mathcal L')) = \Hom(\mathcal L, \mathcal L'),
\end{align*}
proving fully faithfulness.

It remains to prove essential surjectivity: Let first $\tau = \qproet$ and let $(\mathcal L_i)_{i\in I}$ be a given inverse system with $\mathcal L_i \in \catlocsys{\Lambda_i}{X_\qproet}$. Then let $\mathcal L := \varprojlim_i \mathcal L_i$. We have to prove that $\mathcal L$ is locally constant. We can thus assume that $X$ is w-contractible. But then every local system on $X_\qproet$ is constant: It is constant on some $U \in X_\qproet$, but then there is a section $X \to U \to X$, so it is also constant on $X$. Hence each $\mathcal L_i$ is constant and consequently the same is true for $\mathcal L$.

Now let $\tau = \vsite$ and $I = \N$. Let $(\mathcal L_n)_{n\in\N}$ be a given inverse system with $\mathcal L_n \in \catlocsys{\Lambda_n}{X_\vsite}$. Then let $\mathcal L := \varprojlim_n \mathcal L_n$. We have to prove that $\mathcal L$ is locally constant. Working locally, we may assume that $X = X_1$ is a spatial diamond such that $\restrict{\mathcal L_1}{X_1}$ is constant. We can then inductively find covers by spatial diamonds $X_n \to X_{n-1}$ such that $\restrict{\mathcal L_n}{X_n}$ is constant. Let $\tilde X := \varprojlim_n X_n$. By \cite[Lemma 11.22]{etale-cohomology-of-diamonds}, $\tilde X$ is a spatial diamond and $\abs{\tilde X} = \varprojlim_n \abs{X_n}$. In particular $\abs{\tilde X} \to \abs X$ is surjective and hence $\tilde X \to X$ is surjective by \cite[Lemma 12.11]{etale-cohomology-of-diamonds}. As each of the $\mathcal L_n$ is constant on $\tilde X$ the same is true for $\mathcal L$.
\end{proof}

\begin{proposition} \label{rslt:injlim-of-locsys}
Let $X$ be a qcqs small v-sheaf and let $(\Lambda_i)_{i\in I}$ be a filtered direct system of condensed rings with limit $\Lambda = \varinjlim_i \Lambda_i$. Then the natural base-change functor
\begin{align*}
	\varinjlim_i \catlocsys{\Lambda_i}{X_\vsite} \isoto \catlocsys\Lambda{X_\vsite}
\end{align*}
is an equivalence. The same is true with $X_\vsite$ replaced by $X_\qproet$ if $X$ is a diamond and with $X_\vsite$ replaced by $X_\et$ if $X$ is a locally spatial diamond and all $\Lambda_i$ are discrete.
\end{proposition}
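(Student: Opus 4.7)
My plan is to prove separately essential surjectivity and full faithfulness of the base-change functor $(\mathcal L_i) \mapsto \mathcal L_i \tensor_{\Lambda_i} \Lambda$, which is well-defined because base change preserves local freeness of the same rank. The driving observation, valid in all three cases $\tau \in \{\et, \qproet, \vsite\}$, is that quasi-compact objects form a basis of $X_\tau$ and every cover of a qcqs object admits a finite refinement, so that filtered colimits of sheaves commute with sections on qcqs objects. Combined with the identity $\Lambda_X = \varinjlim_i (\Lambda_i)_X$ (since $\pr\tau^{-1}$ commutes with colimits by \cref{rslt:constant-sheaf-commutes-with-lim-and-colim} in the $\qproet$- and $\vsite$-cases, and directly in the discrete étale case), this yields the key identity $\Lambda_X(Y) = \varinjlim_i \Lambda_i(Y)$ for every qcqs $Y \in X_\tau$.

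For essential surjectivity, let $\mathcal L$ be a $\Lambda$-local system on $X_\tau$. Using that $X$ is qcqs and $\mathcal L$ is locally constant, I would choose a finite covering $\{Y_k \to X\}_{k=1}^n$ by qcqs objects (e.g.~affinoid perfectoid in the v- or qpro-étale case) with trivializations $\varphi_k\colon \Lambda_{Y_k}^{r_k} \isoto \restrict{\mathcal L}{Y_k}$; after restricting to rank components one may take $r_k = r$ constant. The transition isomorphisms $g_{kl} := \varphi_k^{-1} \comp \varphi_l \in \GL_r(\Lambda(Y_k \cprod_X Y_l))$ satisfy the cocycle condition on the (also qcqs) triple products $Y_k \cprod_X Y_l \cprod_X Y_m$. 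By the key identity of the first paragraph applied to these finitely many qcqs objects, the finitely many matrix entries of the $g_{kl}$ and of their inverses $g_{kl}^{-1}$, together with the finitely many equations witnessing $g_{kl} g_{kl}^{-1} = \mId$ and $g_{kl} g_{lm} = g_{km}$, all descend to a common sufficiently large index $i_0 \in I$. The resulting matrices $\tilde g_{kl}$ over $\Lambda_{i_0}$ are invertible, satisfy the cocycle condition, and hence glue to a $\Lambda_{i_0}$-local system $\mathcal L_{i_0}$ with $\mathcal L_{i_0} \tensor_{\Lambda_{i_0}} \Lambda \isom \mathcal L$.

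For full faithfulness, given $\Lambda_i$-local systems $\mathcal L_i, \mathcal M_i$ with base changes $\mathcal L_j, \mathcal M_j$ over $\Lambda_j$ for $j \geq i$ and $\mathcal L, \mathcal M$ over $\Lambda$, the internal hom $\IHom_{\Lambda_j}(\mathcal L_j, \mathcal M_j) \isom \mathcal L_j^\vee \tensor_{\Lambda_j} \mathcal M_j$ is again a $\Lambda_j$-local system, whose base change to $\Lambda$ is $\IHom_\Lambda(\mathcal L, \mathcal M)$. Fixing a finite qcqs cover $\{Y_k\}$ of $X$ trivializing $\mathcal L_i$ and $\mathcal M_i$ simultaneously, and hence all their base changes, the hom group $\Hom_{\Lambda_j}(\mathcal L_j, \mathcal M_j) = \Gamma(X, \IHom_{\Lambda_j}(\mathcal L_j, \mathcal M_j))$ is computed as an equalizer of finite products of sections of $\Lambda_j$ on the qcqs objects $Y_k$ and $Y_k \cprod_X Y_l$. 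Each term of this finite diagram commutes with the filtered colimit in $j$ by the first paragraph, hence so does the equalizer, yielding $\varinjlim_{j \geq i} \Hom_{\Lambda_j}(\mathcal L_j, \mathcal M_j) = \Hom_\Lambda(\mathcal L, \mathcal M)$.

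The main obstacle is the bookkeeping in essential surjectivity: the gluing matrices, their invertibility witnesses, and the cocycle identities must all be bundled at a single common index $i_0 \in I$. This is exactly what finiteness of the cover (from qcqs of $X$) together with finiteness of the matrix entries (from finite rank of $\mathcal L$) make possible, and is the reason the qcqs hypothesis on $X$ cannot be dropped.
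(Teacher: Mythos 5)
Your proposal is correct and takes essentially the same approach as the paper: both rest on the fact that filtered colimits of sheaves commute with sections on qcqs objects (via the paper's reference to Stacks Project Tag~0739 applied to the subsite of qcqs objects), which yields the key identity $\Lambda_X(Y)=\varinjlim_i\Lambda_i(Y)$ for qcqs $Y$, and both then descend the gluing data of a $\Lambda$-local system to a single index $i_0$ using this identity together with the finiteness of the cover. The one organizational difference is that the paper indexes its cover by the rank $r$ of $\mathcal L$, so that $U_r\cprod_X U_{r'}=\emptyset$ for $r\ne r'$ and the descent datum reduces to a single automorphism $\varphi_r$ on $U_r\cprod_X U_r$, which is then descended by invoking the already-proven full faithfulness on $U_r\cprod_X U_r$; your argument instead descends the transition matrices $g_{kl}$, their inverses, and the cocycle identities of a generic finite \v{C}ech cover directly from the key identity. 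These two routes are bookkeeping variants of the same idea and both are valid; the paper's is slightly more economical because the rank stratification removes the off-diagonal cocycle terms, while yours is slightly more self-contained in that essential surjectivity does not lean on full faithfulness.
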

\begin{proof}
We only provide the proof for $X_\vsite$. The cases $X_\qproet$ and $X_\et$ can be handled in the same way.

Since $X$ is qcqs, we can restrict the v-site $X_\vsite$ to the site $X_{\vsite,\qcqs}$ of qcqs (and even representable) objects inside $X_\vsite$ to get the same topos (as in the beginning of the proof of \cite[Lemma 14.4]{etale-cohomology-of-diamonds}). By \cite[Lemma 0739]{stacks-project}, filtered direct limits of sheaves on $X_{\vsite,\qcqs}$ can be computed directly on sections, i.e. if $(\mathcal F_j)_{j\in J}$ is a direct system of sheaves on $X_{\vsite,\qcqs}$ then $\Gamma(U, \varinjlim_j \mathcal F_j) = \varinjlim_j \Gamma(U, \mathcal F_j)$ for all $U \in X_{\vsite,\qcqs}$.

Let us first check that the displayed functor is fully faithful. For this, let $\mathcal L$ and $\mathcal L'$ be two elements if the left-hand side. They can be represented by $\mathcal L_i, \mathcal L'_i \in \catlocsys{\Lambda_i}X$ for some $i \in I$. Then
\begin{align*}
	&\Hom(\mathcal L, \mathcal L') = \varinjlim_{k\ge i} \Hom_{\Lambda_k}(\mathcal L_i \tensor_{\Lambda_i} \Lambda_k, \mathcal L'_i \tensor_{\Lambda_i} \Lambda_k) = \varinjlim_{k\ge i} \Gamma(X_\vsite, \IHom_{\Lambda_k}(\mathcal L_i \tensor_{\Lambda_i} \Lambda_k, \mathcal L'_i \tensor_{\Lambda_i} \Lambda_k))\\
	&\qquad= \Gamma(X_\vsite, \varinjlim_{k\ge i} \IHom_{\Lambda_k}(\mathcal L_i \tensor_{\Lambda_i} \Lambda_k, \mathcal L'_i \tensor_{\Lambda_i} \Lambda_k)) = \Gamma(X_\vsite, \varinjlim_{k\ge i} \IHom_{\Lambda_i}(\mathcal L_i, \mathcal L'_i) \tensor_{\Lambda_i} \Lambda_k)\\
	&\qquad= \Gamma(X_\vsite, \IHom_{\Lambda_i}(\mathcal L_i, \mathcal L'_i) \tensor_{\Lambda_i} \Lambda) = \Gamma(X_\vsite, \IHom_\Lambda(\mathcal L_i \tensor_{\Lambda_i} \Lambda, \mathcal L'_i \tensor_{\Lambda_i} \Lambda))\\
	&\qquad= \Hom(\mathcal L_i \tensor_{\Lambda_i} \Lambda, \mathcal L'_i \tensor_{\Lambda_i} \Lambda),
\end{align*}
as desired. To prove essential surjectivity, let $\mathcal L$ be a given $\Lambda$-local system on $X_\vsite$. Then there is a covering $(U_r \to X)_{r\ge0}$ in $X_{\vsite,\qcqs}$ (with almost all $U_r$ empty) such that for each $r \ge 0$ there is an isomorphism $\restrict{\mathcal L}{U_r} \isom \Lambda_{U_r}^r$. Clearly we must have $U_r \cprod_X U_{r'} = \emptyset$ for $r \ne r'$. Thus $\mathcal L$ is the unique sheaf described by a descent datum $(\varphi_r\colon \Lambda_{U_r \cprod_X U_r}^r \isoto \Lambda_{U_r \cprod_X U_r}^r)_r$ along $(U_r \to X)_r$. Let us abbreviate $Y_r := U_r \cprod_X U_r$. By the fully faithfulness we just proved (applied to $Y_r$ instead of $X$) we see that for some $k_r \in I$ we can find a representation $\varphi_r = \varphi_{r,k_r} \in \Hom_{\Lambda_{k_r}}(\Lambda_{k_r,Y_r}^r, \Lambda_{k_r,Y_r}^r)$. By enlarging $k_r$ if necessary we can assume that $\varphi_{r,k_r}$ is an isomorphism and defines a descent datum along $U_r \to X$. By further enlarging $k_r$ we can assume that all $k_r$'s are equal, say $k_r = k$ (by quasi-compactness there are only finitely many $k_r$'s). Letting $\mathcal L_k$ be the descended $\Lambda_k$-local system on $X_\vsite$ along $(\varphi_{k,r}\colon \Lambda_{k,Y_r}^r \isoto \Lambda_{k,Y_r}^r)_r$, we see that $\mathcal L_k \tensor_{\Lambda_k} \Lambda = \mathcal L$, hence $\mathcal L$ lies in the essential image of our functor.
\end{proof}

If $X$ is a diamond we have a morphism of sites
\begin{align*}
	\lambda = \lambda_X\colon X_\vsite \to X_\qproet,
\end{align*}
and if $X$ is a locally spatial diamond we also have a morphism of sites
\[
	\nu = \nu_X\colon X_\qproet \to X_\et.
\]
By \cite[Proposition 14.7]{etale-cohomology-of-diamonds} and \cite[Proposition 14.8]{etale-cohomology-of-diamonds} the pullbacks $\lambda^{-1}$ and $\nu^{-1}$ are fully faithful. In particular, if $\Lambda$ is a condensed ring then $\lambda^{-1}$ induces a fully faithful functor from the category of $\Lambda$-local systems on $X_\qproet$ to the category of $\Lambda$-local systems on $X_\vsite$, and similarly if $\Lambda$ is a discrete ring, then $\nu^{-1}$ is a fully faithful functor from the category of $\Lambda$-local systems on $X_\et$ to the category of $\Lambda$-local systems on $X_\qproet$. We will show in the following that both of theses functors are equivalences for a large class of rings $\Lambda$. This means that for studying local systems it does not matter which site to work on.

Let us start with the functor $\nu^{-1}$:

\begin{proposition} \label{rslt:et-locsys-equiv-qproet-locsys}
Let $X$ be a locally spatial diamond and let $\Lambda$ be a discrete ring. Then pullback along the morphism of sites $\nu\colon X_\qproet \to X_\et$ induces an equivalence of categories
\begin{align*}
	\nu^{-1}\colon \catlocsys\Lambda{X_\et} \isoto \catlocsys\Lambda{X_\qproet}.
\end{align*}
\end{proposition}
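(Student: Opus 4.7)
Full faithfulness of $\nu^{-1}$ is immediate from the identity $\nu_*\nu^{-1} = \id{}$ on abelian sheaves (\cite[Proposition 14.8]{etale-cohomology-of-diamonds}): for $\mathcal L_0, \mathcal L_0' \in \catlocsys\Lambda{X_\et}$ the internal Hom sheaf $\IHom_\Lambda(\mathcal L_0, \mathcal L_0')$ is étale-locally isomorphic to a finite power of $\mathcal L_0'$ and thus pulls back along $\nu^{-1}$ to $\IHom_\Lambda(\nu^{-1}\mathcal L_0, \nu^{-1}\mathcal L_0')$; taking global sections gives $\Hom(\nu^{-1}\mathcal L_0, \nu^{-1}\mathcal L_0') = \Hom(\mathcal L_0, \mathcal L_0')$.

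For essential surjectivity, the full faithfulness just established together with classical étale descent makes the property of lying in the essential image of $\nu^{-1}$ into an étale-local condition on $X$: any glueing isomorphism for $\mathcal L$ on a double fibre product of an étale cover descends (fully faithfully) to an étale glueing isomorphism, and the cocycle condition checks after pullback. I may therefore assume $X$ is a qcqs spatial diamond and $\mathcal L$ has constant rank $r$. Using \cref{rslt:w-contractible-basis} I choose a w-contractible cover $U \to X$ in $X_\qproet$; since every pro-étale cover of $U$ admits a section, $\mathcal L|_U \isom \Lambda_U^r$, and the associated descent datum is an element $\varphi \in \GL_r(\Lambda(U \cprod_X U))$. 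Discreteness of $\Lambda$ identifies $\varphi$ with a locally constant map $\varphi\colon U \cprod_X U \to \GL_r(\Lambda)$.

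The central step is to descend $\varphi$ from $U$ to a finite-level étale cover of $X$. Quasi-compactness of $U \cprod_X U$ together with discreteness of $\GL_r(\Lambda)$ forces $\varphi$ to have finite image. By \cref{rslt:affinoid-pro-etale-is-basis}, after refining the cover, I write $U = \varprojlim_i U_i$ as a cofiltered limit of affinoid étale $U_i \to X$; quasi-compactness of $X$ yields some $U_{i_0} \to X$ already surjective. Since $\pi_0$ commutes with such cofiltered limits of spectral spaces, $\pi_0(U \cprod_X U) = \varprojlim_i \pi_0(U_i \cprod_X U_i)$, and any locally constant map from such an inverse limit to a finite set factors through a finite level. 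Enlarging $i_0$, we obtain $\varphi_{i_0}\colon U_{i_0} \cprod_X U_{i_0} \to \GL_r(\Lambda)$ whose pullback is $\varphi$; a further enlargement propagates the cocycle condition from $U \cprod_X U \cprod_X U$ down to $U_{i_0} \cprod_X U_{i_0} \cprod_X U_{i_0}$. Étale descent along $U_{i_0} \to X$ then produces the desired $\mathcal L_0 \in \catlocsys\Lambda{X_\et}$ with $\nu^{-1}\mathcal L_0 \isom \mathcal L$.

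The hard part is this final descent of the cocycle from the pro-étale cover $U \to X$ to an étale level. It relies on the structural description of affinoid pro-étale morphisms as cofiltered limits of étale maps (\cref{rslt:affinoid-pro-etale-is-basis}) and on the compactness-driven fact that locally constant maps from qcqs spaces into discrete sets have finite image. The discreteness of $\Lambda$ is essential precisely here: without it (e.g. for $\Lambda = \ri_{\Cpx_p}$) this finiteness fails and the analogous equivalence breaks down, as already noted in the introduction.
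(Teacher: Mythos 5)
Your argument is broadly sound and takes a genuinely different route from the paper for the core step: you lift the glueing cocycle $\varphi$ along the tower $U = \varprojlim_i U_i$ and then re-glue on an étale level, whereas the paper first invokes \cite[Theorem 14.12.(i)]{etale-cohomology-of-diamonds} to produce an étale sheaf $\mathcal L_0$ with $\nu^{-1}\mathcal L_0 = \mathcal L$, and then lifts the $r$ trivializing sections $s_1,\dots,s_r$ through $\mathcal L(Y) = \varinjlim_i \mathcal L(Y_i)$. The conceptual engine is the same in both cases (discrete data at the bottom of a cofiltered tower of qcqs étale covers spreads to a finite level), but the paper's version avoids re-descending the cocycle and in particular sidesteps your need to separately descend the cocycle relation to the finite level. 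So both approaches buy essentially the same thing; the paper's is slightly shorter by outsourcing the existence of $\mathcal L_0$ to ECoD.

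There is, however, a genuine gap where you say ``I may therefore assume \dots $\mathcal L$ has constant rank $r$.'' Étale-locality of the essential-image condition reduces you to $X$ qcqs spatial, but it does \emph{not} reduce you to constant rank: you still need to know that the rank of $\mathcal L$ is locally constant for the \emph{analytic} (clopen) topology on $\abs X$ so that you can pass to a clopen piece of $X$. A priori you only know it is locally constant for the quasi-pro-étale topology. The paper proves the needed statement: with $(f_r\colon Y_r \to X)_r$ the cover where $\mathcal L$ has rank $r$ on $Y_r$, the pairwise emptiness of $Y_r \times_X Y_{r'}$ together with the fact that each $\abs{f_r}$ is quasi-compact, generalizing, and (being part of a disjoint cover) also specializing, forces each image $\abs{f_r}(\abs{Y_r})$ to be clopen in $\abs X$ (\cite[Lemma 2.4]{etale-cohomology-of-diamonds}). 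Without this the jump from ``$U$ w-contractible'' to ``$\mathcal L|_U \isom \Lambda_U^r$'' is unjustified, since on a w-contractible $U$ a local system is only constant on a clopen decomposition, with rank possibly varying across the pieces.

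A smaller issue: you invoke \cref{rslt:affinoid-pro-etale-is-basis} to write $U = \varprojlim_i U_i$, but that lemma is stated for adic spaces over $\Q_p$ (locally noetherian or perfectoid), not for a general locally spatial diamond $X$. For the general diamond case the paper relies on the argument in the proof of \cite[Proposition 14.8]{etale-cohomology-of-diamonds} to replace the quasi-pro-étale cover by a cofiltered limit of (qcqs separated) étale covers. You should cite that instead. Once those two points are fixed, the remainder of your argument (finiteness of the image of $\varphi$ in $\GL_r(\Lambda)$, factoring the locally constant map through $\pi_0(U_{i_0}\times_X U_{i_0})$, descending the cocycle relation via surjectivity of the transition maps, and étale descent) is correct.
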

\begin{proof}
As explained above, we only need to show that $\nu^{-1}$ is essentially surjective. Thus let $\mathcal L$ be a $\Lambda$-local system on $X_\qproet$. By definition of local systems, there is a quasi-pro-étale cover $(f_r\colon Y_r \to X)_{r\ge0}$ by some locally spatial diamonds $Y_r$ such that $f_{r,\qproet}^{-1}\mathcal L = (f_{r,\qproet}^{-1}\Lambda)^r$. On the other hand we have $f_{r,\qproet}^{-1}\Lambda = \nu_{Y_r}^{-1} \Lambda_{Y_r}$, so that $f_{r,\qproet}^{-1}\mathcal L$ lies in the image of $\nu_{Y_r}^{-1}$. Letting $f\colon Y = \bigdunion_r Y_r \to X$ be the combined map we see that $f_\qproet^{-1}\mathcal L$ lies in the image of $\nu_Y^{-1}$. By \cite[Theorem 14.12.(i)]{etale-cohomology-of-diamonds} we deduce that $\mathcal L$ lies in the image of $\nu_X^{-1}$, i.e. we have $\mathcal L = \nu^{-1} \mathcal L_0$ for some sheaf of $\Lambda$-modules $\mathcal L_0$ on $X_\et$.

To finish the proof, it remains to show that $\mathcal L$ is locally constant on the étale site (not just on the quasi-pro-étale site), i.e. that $\mathcal L_0$ is locally constant. The problem is étale local on $X$ so we can assume that $X$ is spatial. Then almost all $Y_r$ are empty, so that there is some $n \ge 0$ such that $Y = \bigdunion_{r=0}^n Y_r$; we can furthermore assume all $Y_r$ to be qcqs. Clearly $Y_r \cprod_X Y_{r'} = \emptyset$ for $r \ne r'$, from which we deduce that $\abs X = \bigdunion_{r=0}^n \abs{f_r}(\abs{Y_r})$. Now each $\abs{f_r}: \abs{Y_r} \rightarrow \abs X$ is quasi-compact and generalizing (see \cite[Proposition 11.19]{etale-cohomology-of-diamonds}) and hence specializing, since the images of these maps constitute a disjoint covering of $\abs X$. Then \cite[Lemma 2.4]{etale-cohomology-of-diamonds} shows that each $\abs{f_r}(\abs{Y_r})$ is closed and therefore also open. Thus we may replace $X$ by the open subspaces $\abs{f_r}(\abs{Y_r})$ in order to reduce to the case that $Y = Y_r$ for some $r$, i.e. that $\mathcal L$ has constant rank $r$.

As in the proof of \cite[Proposition 14.8]{etale-cohomology-of-diamonds} we can assume that $Y$ is the cofiltered inverse limit $Y = \varprojlim_i Y_i$ of étale covers $Y_i \to X$. By \cite[Proposition 14.9]{etale-cohomology-of-diamonds} we have $\mathcal L(Y) = \varinjlim_i \mathcal L(Y_i)$. The isomorphism $\restrict{\mathcal L}Y \isom \Lambda_Y^r$ is given by $r$ sections $s_1, \dots, s_r \in \mathcal L(Y)$. Thus $s_1, \dots, s_r$ all lie in $\mathcal L(Y_i)$ for some $i$. This implies that the morphism $\Lambda_Y^r \to \restrict{\mathcal L}Y$ extends to a morphism $\Lambda_{Y_i}^r \to \restrict{\mathcal L}{Y_i}$, which is an isomorphism because this is true quasi-pro-étale locally. Thus $Y_i \to X$ is an étale cover such that $\restrict{\mathcal L}{Y_i}$ is constant, as desired.

\end{proof}

The functor $\lambda^{-1}$ is somewhat harder to handle. Fortunately (and maybe surprisingly), the analogous statement of the second half of the proof of \cref{rslt:et-locsys-equiv-qproet-locsys} also works for $\lambda^{-1}$:

\begin{lemma} \label{rslt:qproet-locsys-equiv-v-locsys-helper}
Let $X$ be a diamond, let $\Lambda$ be a condensed ring and let $\mathcal L_0$ be a sheaf of $\Lambda$-modules on $X_\qproet$ such that $\mathcal L := \lambda^{-1}\mathcal L_0$ is a $\Lambda$-local system on $X_\vsite$. Then $\mathcal L_0$ is a $\Lambda$-local system on $X_\qproet$.
\end{lemma}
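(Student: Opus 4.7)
The plan is to mimic the second half of the proof of \cref{rslt:et-locsys-equiv-qproet-locsys} with a v-cover replacing the étale cover. Since the statement is local on $X_\qproet$ and w-contractible objects form a basis there by \cref{rslt:w-contractible-basis}, I may assume $X$ is w-contractible, and in particular spatial. Since $\mathcal L$ is a v-local system its rank defines a locally constant function on $\abs X$, so decomposing $\abs X$ into the corresponding clopen level sets reduces to the case where there is a v-cover $f\colon Y \to X$ and an isomorphism $\mathcal L|_Y \cong \Lambda^r_Y$ for some fixed $r$, given by $r$ sections $s_1,\dots,s_r \in \mathcal L(Y) = (\lambda^{-1}\mathcal L_0)(Y)$.

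By the colimit-then-sheafify description of $\lambda^{-1}$ as the left adjoint to $\lambda_*$, after passing to a v-refinement $Y' \to Y$ I may assume that each $s_i|_{Y'}$ is already represented at the presheaf level, i.e.\ comes from a section $t_i \in \mathcal L_0(V_i)$ along some factorization $Y' \to V_i \to X$ with $V_i \in X_\qproet$. Setting $U := V_1 \cprod_X \cdots \cprod_X V_r \in X_\qproet$, the maps $Y' \to V_i$ combine by the universal property of the fiber product into a morphism $Y' \to U$; since $Y' \to U \to X$ is v-surjective, the qproet morphism $U \to X$ is surjective and hence a qproet cover. The $t_i$ pulled back to $U$ assemble into a morphism $\varphi\colon \Lambda^r_U \to \mathcal L_0|_U$ whose further pullback to $Y'$ recovers the trivialization $(s_i|_{Y'})$ and is therefore an isomorphism.

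It remains to show that $\varphi$ itself is an isomorphism on $U_\qproet$; together with the qproet cover $U \to X$ just produced this gives the desired local trivialization. Because $\lambda_{U,*}\lambda_U^{-1} = \id{}$, the pullback $\lambda_U^{-1}$ is exact and fully faithful, and hence reflects isomorphisms of qproet $\Lambda$-modules (if $\lambda_U^{-1}\varphi$ were an iso with kernel or cokernel $K$, then $K = \lambda_{U,*}\lambda_U^{-1}K = 0$). It therefore suffices to verify that $\lambda_U^{-1}\varphi$ is an isomorphism on $U_\vsite$.

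The main obstacle is that $Y' \to U$ need not itself be a v-cover of $U$: the image of $\abs{Y'} \to \abs U$ surjects onto $\abs X$ via $\abs U \to \abs X$ but may not exhaust $\abs U$, so iso-on-$Y'$ does not immediately descend to iso-on-$U$. I expect to resolve this by shrinking $U$ to the qproet-open locus which equals the image of $Y' \to U$ — using that, for a v-surjection into a qproet cover of the w-contractible spatial diamond $X$, one can extract a quasi-compact open subspace $U' \subset U$ still covering $X$ over which $Y' \to U'$ becomes a genuine v-cover (the topological input being the same quasi-compact/generalizing arguments on v-covers used in the rank-decomposition step). Once this is secured, pulling back along the v-cover $Y' \to U'$ shows $\varphi|_{U'}$ is an isomorphism, so $\mathcal L_0|_{U'} \cong \Lambda^r_{U'}$ and $\mathcal L_0$ is a $\Lambda$-local system on $X_\qproet$ as claimed.
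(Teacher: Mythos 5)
Your reduction to a w-contractible $X$ and the clopen rank decomposition are both fine, and the idea of lifting the trivializing sections $s_1,\dots,s_r$ to $\mathcal L_0(V_i)$ for quasi-pro-étale $V_i$ and assembling $U = V_1 \cprod_X \cdots \cprod_X V_r$ is in the right spirit. But the final step, which you honestly flag as unresolved, is a genuine gap, and the proposed fix does not go through. The image of $\abs{Y'} \to \abs U$ is a pro-constructible, generalizing subset of $\abs U$, but there is no reason for it to be open, and the clopen-image argument you invoke from the proof of \cref{rslt:et-locsys-equiv-qproet-locsys} relies on the images being \emph{pairwise disjoint} (which forces a quasi-compact generalizing map to also be specializing, hence with closed, hence clopen, image). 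Here $Y' \to U$ is a single map with no disjointness structure, so that mechanism is unavailable; in general no quasi-compact open $U' \subset U$ contained in $\img(\abs{Y'})$ will still surject onto $\abs X$.

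The paper avoids this by never trying to assemble $U$ from lifts of sections at all. Instead it invokes the construction $\lambda_{X\circ}(Y)$ from \cite[Lemma 14.5]{etale-cohomology-of-diamonds}: for $X$ strictly totally disconnected and $Y \to X$ a qcqs v-cover, this produces a quasi-pro-étale object $\lambda_{X\circ}(Y) \in X_\qproet$ through which $Y \to X$ factors, with two crucial properties which exactly fill your gap: $Y \to \lambda_{X\circ}(Y)$ is a v-cover (so an isomorphism of v-sheaves can be checked after pullback to $Y$), and $\mathcal L(Y) = (\lambda^{-1}\mathcal L_0)(Y) = \mathcal L_0(\lambda_{X\circ}(Y))$ on the nose (which also makes the colimit-then-sheafify bookkeeping unnecessary). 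Taking $U := \lambda_{X\circ}(Y)$ then $s_1,\dots,s_r \in \mathcal L_0(U)$ define $\Lambda_U^r \to \restrict{\mathcal L_0}{U}$ which is an isomorphism because its $\lambda^{-1}$-pullback is an isomorphism after restriction along the v-cover $Y \to U$. Without something playing the role of $\lambda_{X\circ}$, the "shrink $U$ to the image" step cannot be made precise, and the proof as written is incomplete.
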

\begin{proof}
The claim can be checked quasi-pro-étale locally, so by definition of diamonds we can assume that $X$ is a perfectoid space; restricting further we can assume that $X$ is affinoid perfectoid and by \cite[Lemma 7.18]{etale-cohomology-of-diamonds} we can even assume that $X$ is a strictly totally disconnected perfectoid space. By definition of local systems there is a v-cover $(f_r\colon Y_r \to X)_{r\ge0}$ by some small v-sheaves $Y_r$ such that $\restrict{\mathcal L}{Y_r} \isom \Lambda_{Y_r}^r$ for all $r$.  By quasi-compactness of $X$, we can further assume that  this cover consists of finitely many quasi-compact  $Y_r$. We may replace each $Y_r$ by any v-cover of itself and therefore assume that it is affinoid perfectoid (e.g. totally disconnected). As in the proof of \cref{rslt:et-locsys-equiv-qproet-locsys}, the images of $Y_r$ in $X$ form a disjoint clopen cover of $X$, so that we can replace $X$ by each element of this cover in order to reduce to the case that $Y = Y_r \to X$ is a covering for some $r$. Note that $Y \to X$ is separated.
 
Now consider the (quasi-)pro-étale cover $\lambda_{X\circ}(Y) \to X$ defined in \cite[Lemma 14.5]{etale-cohomology-of-diamonds} (note that $\lambda_{X\circ}(Y) \to X$ is surjective by part (i) of the lemma). The last part of the proof of \cite[Lemma 14.4]{etale-cohomology-of-diamonds} (to be found below Lemma 14.6 in loc. cit.) together with \cite[Proposition 14.7]{etale-cohomology-of-diamonds} shows that
\begin{align*}
	\mathcal L(Y) = (\lambda^{-1}\mathcal L_0)(Y) = \mathcal L_0(\lambda_{X\circ}(Y)) = \mathcal L(\lambda_{X\circ}(Y)).
\end{align*}
On the other hand, the isomorphism $\restrict{\mathcal L}Y \isom \Lambda_Y^r$ is induced by $r$ sections $s_1, \dots, s_r$ in $\mathcal L(Y)$. We get $s_1, \dots, s_r \in \mathcal L(\lambda_{X\circ}(Y))$, hence the isomorphism $\Lambda_Y^r \isoto \restrict{\mathcal L}Y$ extends to a map $\Lambda_{\lambda_{X\circ}(Y)}^r \to \restrict{\mathcal L}{\lambda_{X\circ}(Y)}$, which must then be an isomorphism (because it is so v-locally). Thus $\lambda_{X\circ}(Y)$ is a (quasi-)pro-étale cover of $X$ on which $\mathcal L$ becomes constant, as desired.
\end{proof}

The problem for showing that $\lambda^{-1}\colon \catlocsys\Lambda{X_\qproet} \to \catlocsys\Lambda{X_\vsite}$ is an equivalence is that the analogous statement of \cite[Theorem 14.12.(i)]{etale-cohomology-of-diamonds} is unknown for general quasi-pro-étale sheaves, so we cannot argue in the same way as in the beginning of the proof of \cref{rslt:et-locsys-equiv-qproet-locsys}. Instead we will have to start with the case that our local system comes from the étale site (i.e. that $\Lambda$ is discrete) and go from there as far as possible. We start with the following simple observation:

\begin{proposition} \label{rslt:qproet-locsys-equiv-v-locsys-pro-discrete-case}
Let $X$ be a diamond and let $\Lambda$ be a condensed ring. Assume that either $\Lambda$ is discrete or that $\Lambda = \varprojlim_n \Lambda_n$ for an inverse system $(\Lambda_n)_{n\in\N}$ of discrete rings. Then pullback along the morphism of sites $\lambda\colon X_\vsite \to X_\qproet$ induces an equivalence of categories
\begin{align*}
	\lambda^{-1}\colon \catlocsys\Lambda{X_\qproet} \isoto \catlocsys\Lambda{X_\vsite}.
\end{align*}
\end{proposition}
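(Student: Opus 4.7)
The plan is to verify essential surjectivity of $\lambda^{-1}$, since fully faithfulness follows formally from the identity $\lambda_*\lambda^{-1} = \mathrm{id}$ recalled before the proposition: for $\mathcal L_0, \mathcal L_0' \in \catlocsys\Lambda{X_\qproet}$, adjunction gives $\Hom(\lambda^{-1}\mathcal L_0, \lambda^{-1}\mathcal L_0') = \Hom(\mathcal L_0, \lambda_*\lambda^{-1}\mathcal L_0') = \Hom(\mathcal L_0, \mathcal L_0')$. Given a $\Lambda$-local system $\mathcal L$ on $X_\vsite$, my strategy is to set $\mathcal L_0 := \lambda_*\mathcal L$ and show that the adjunction counit $\lambda^{-1}\mathcal L_0 \to \mathcal L$ is an isomorphism; \cref{rslt:qproet-locsys-equiv-v-locsys-helper} will then automatically upgrade $\mathcal L_0$ to a $\Lambda$-local system on $X_\qproet$.

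For the discrete case, being an isomorphism may be checked locally on $X_\qproet$, so by \cref{rslt:w-contractible-basis} I may assume $X$ is w-contractible. I pick a v-cover $Y \to X$ trivializing $\mathcal L$, and by \cite[Lemma 7.18]{etale-cohomology-of-diamonds} I may further arrange that $Y$ is strictly totally disconnected; then $\restrict{\mathcal L}Y \isom \Lambda_Y^r$ and $\mathcal L$ is prescribed by a descent datum $\varphi \in \GL_r(\Lambda(Y\cprod_X Y))$ satisfying the cocycle condition. The crucial observation is that for discrete $\Lambda$ and a strictly totally disconnected space $W$, the sections of the constant sheaf $\Lambda$ on $W_\vsite$ and on $W_\qproet$ coincide, which is an instance of \cite[Theorem 14.12]{etale-cohomology-of-diamonds}. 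Applied to $W = Y\cprod_X Y$ (after refining by a strictly totally disconnected cover if necessary) this shows that $\varphi$ is already quasi-pro-étale. Following the trick used in the proof of \cref{rslt:qproet-locsys-equiv-v-locsys-helper}, I would then pass to the associated pro-étale cover $\lambda_\circ(Y) \to X$ of \cite[Lemma 14.5]{etale-cohomology-of-diamonds} and descend there to obtain a $\Lambda$-local system on $X_\qproet$ whose pullback to $X_\vsite$ agrees with $\mathcal L$, so that the counit is an isomorphism on $X$.

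For the inverse limit case $\Lambda = \varprojlim_n \Lambda_n$ with each $\Lambda_n$ discrete, I would invoke \cref{rslt:projlim-of-locsys}, whose statement applies on both the quasi-pro-étale and the v-site since the index set is $\N$. This yields equivalences $\catlocsys\Lambda{X_\tau} \isoto \varprojlim_n \catlocsys{\Lambda_n}{X_\tau}$ for $\tau \in \{\qproet, \vsite\}$, fitting $\lambda^{-1}$ into a commutative base-change square that reduces the claim to the discrete case term-by-term. I expect the technical heart of the argument to be the appeal to \cite[Theorem 14.12]{etale-cohomology-of-diamonds} in the discrete case to match sections of the constant sheaf across the two topologies on strictly totally disconnected spaces; once this compatibility is in hand, the remainder is a routine descent via \cref{rslt:w-contractible-basis} and \cref{rslt:qproet-locsys-equiv-v-locsys-helper}.
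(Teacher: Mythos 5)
Your overall roadmap — first the discrete case via a descent argument plus \cref{rslt:qproet-locsys-equiv-v-locsys-helper}, then the inverse-limit case via \cref{rslt:projlim-of-locsys} — matches the paper's. The fully faithfulness observation and the reduction of the inverse-limit case are both fine. The gap is in the heart of the discrete case: the step where you want to show the counit $\lambda^{-1}\lambda_*\mathcal L \to \mathcal L$ is an isomorphism (equivalently, that $\mathcal L$ is a pullback from $X_\qproet$).

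Your ``crucial observation'' — that the sections of the constant sheaf $\Lambda$ on $W_\vsite$ and $W_\qproet$ agree for a strictly totally disconnected $W$ — is true, but it is a consequence of $\lambda_*\lambda^{-1}=\mathrm{id}$ (i.e.\ \cite[Proposition 14.7]{etale-cohomology-of-diamonds}), not of Theorem 14.12, and it does not deliver what you need. Knowing that $\varphi\in\GL_r(\Lambda(Y\cprod_X Y))$ has the ``expected'' set of sections does not let you descend along $Y\to X$, because $Y\to X$ is only a v-cover and $\Lambda(Y\cprod_X Y)$ and $\Lambda(\lambda_\circ(Y)\cprod_X\lambda_\circ(Y))$ are sections over \emph{different} objects, not the same object in two topologies. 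The proposed move to $\lambda_\circ(Y)$ following the ``trick'' in \cref{rslt:qproet-locsys-equiv-v-locsys-helper} is circular here: in that lemma, the identity $\mathcal L(Y)=\mathcal L(\lambda_\circ(Y))$ is derived from the standing hypothesis that $\mathcal L$ is already a pullback $\lambda^{-1}\mathcal L_0$ — precisely what you are trying to establish.

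The correct (and, in fact, slicker) use of \cite[Theorem 14.12.(i)]{etale-cohomology-of-diamonds} is the one the paper makes in the proof of \cref{rslt:et-locsys-equiv-qproet-locsys}: the theorem says that the property of a v-sheaf being a pullback from the étale site is a v-local property. Since $\Lambda$ is discrete, the constant sheaf $\Lambda_Y^r$ on $Y_\vsite$ is the pullback of the constant sheaf on $Y_\et$; hence $\mathcal L$ is v-locally a pullback from the étale site, so by Theorem 14.12.(i) it is globally of the form $\lambda^{-1}\nu^{-1}\mathcal L_0$ for an étale sheaf $\mathcal L_0$. Then \cref{rslt:qproet-locsys-equiv-v-locsys-helper} takes over to show $\nu^{-1}\mathcal L_0$ is a quasi-pro-étale local system. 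No explicit manipulation of the descent datum $\varphi$ and no passage to $\lambda_\circ(Y)$ is needed at this stage. (You also silently assume $\mathcal L$ has constant rank after reducing to a w-contractible $X$; this requires the same clopen decomposition used in the proof of \cref{rslt:et-locsys-equiv-qproet-locsys}, which you should replicate.)
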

\begin{proof}
If $\Lambda$ is discrete then we can argue as in \cref{rslt:et-locsys-equiv-qproet-locsys} (noting that \cite[Theorem 14.12.(i)]{etale-cohomology-of-diamonds} also applies to sheaves on the v-site), with the second part of the proof replaced by \cref{rslt:qproet-locsys-equiv-v-locsys-helper}. Then the case $\Lambda = \varprojlim_n \Lambda_n$ follows from \cref{rslt:projlim-of-locsys}.
\end{proof}

A special case of \cref{rslt:qproet-locsys-equiv-v-locsys-pro-discrete-case} is that $\Lambda$ is a complete adic ring, i.e. $\Lambda = \varprojlim_n \Lambda/I^n$ for an open ideal $I \subset \Lambda$. This includes the important cases $\Lambda = \Z_p$ and $\Lambda = \ri_{\Cpx_p}$. However, for our applications we need to extend \cref{rslt:qproet-locsys-equiv-v-locsys-pro-discrete-case} to include more general Huber rings, i.e. topological rings $\Lambda$ which contain an adic open subring $\Lambda'$ with finitely generated ideal of definition $I$. The main new ingredient is the following result:

\begin{lemma} \label{rslt:complete-huber-ring-has-etale-locally-integral-model}
Let $X$ be a locally spatial diamond and let $\Lambda$ be a complete Huber ring with ring of definition $\Lambda' \subset \Lambda$. Let $\mathcal L \in \catlocsys\Lambda{X_\vsite}$. Then there exists an étale cover $U \surjto X$ and a $\Lambda'$-local system $\mathcal L_U' \in \catlocsys{\Lambda'}{U_\vsite}$ such that $\restrict{\mathcal L}U = \mathcal L_U' \tensor_{\Lambda'} \Lambda$.
\end{lemma}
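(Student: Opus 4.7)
The strategy is to parametrize $\Lambda'$-integral models of $\mathcal L$ by a v-sheaf $\mathcal F$ that is v-locally isomorphic to a constant sheaf with discrete fiber; étale-local existence of sections then follows from the descent statements for discrete coefficients already proven in this section. After reducing to constant rank $r$ by passing to the clopen decomposition of $X$ (possible since $X$ is locally spatial), I define
\[
	\mathcal F(U) = \bigl\{\, \mathcal L'_U \in \catlocsys{\Lambda'}{U_\vsite} \setst \mathcal L'_U \subset \restrict{\mathcal L}U \text{ and } \mathcal L'_U \tensor_{\Lambda'} \Lambda = \restrict{\mathcal L}U \,\bigr\},
\]
where $\restrict{\mathcal L}U$ is viewed as a sheaf of $\Lambda'$-modules via $\Lambda' \injto \Lambda$. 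Fixing a strictly totally disconnected v-cover $f\colon Y \to X$ and a trivialization $\restrict{\mathcal L}Y \isom \Lambda^r_Y$, the standard lattice $\Lambda'^r_Y$ lies in $\mathcal F(Y)$, so $\mathcal F$ is v-locally non-empty.

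The main computation is that $\restrict{\mathcal F}Y \isom Q_Y$ for $Q := GL_r(\Lambda)/GL_r(\Lambda')$, which is a discrete set: $\Lambda'$ is open in $\Lambda$, so $\Lambda'^\times$ is open in $\Lambda^\times$ (via the open neighborhood $1 + I\Lambda' \subset \Lambda'^\times$ where $I$ is an ideal of definition of $\Lambda'$), hence $GL_r(\Lambda') \subset GL_r(\Lambda)$ is open. For any strictly totally disconnected $U \to Y$, every $\Lambda'$-local system of rank $r$ on $U_\vsite$ is globally trivial: combining \cref{rslt:qproet-locsys-equiv-v-locsys-pro-discrete-case,rslt:projlim-of-locsys,rslt:et-locsys-equiv-qproet-locsys} applied to $\Lambda' = \varprojlim_n \Lambda'/I^n$, such a local system corresponds to a compatible system of $\Lambda'/I^n$-local systems on $U_\et$, each of which is trivial since $U$ is strictly totally disconnected, and the system of trivializations assembles into an element of $GL_r(\Lambda')$. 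Hence every element of $\mathcal F(U)$ is represented by an embedding $\Lambda'^r_U \xinjto g \Lambda^r_U = \restrict{\mathcal L}U$, well-defined modulo $GL_r(\Lambda'(U))$, giving $\mathcal F(U) = GL_r(\Lambda(U))/GL_r(\Lambda'(U))$. Using $\Lambda^{(\prime)}(U) = \Cont(\pi_0(U), \Lambda^{(\prime)})$ from the proof of \cref{rslt:constant-sheaf-commutes-with-lim-and-colim} together with the elementary identity $\Cont(\pi_0(U), G)/\Cont(\pi_0(U), H) = \Cont(\pi_0(U), G/H)$ for an open subgroup $H$ of a topological group $G$ and profinite $\pi_0(U)$, this quotient becomes $\Cont(\pi_0(U), Q) = Q_Y(U)$.

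It remains to deduce that $\mathcal F$ admits a section on some étale cover. The set-valued analogues of \cref{rslt:et-locsys-equiv-qproet-locsys} and the discrete case of \cref{rslt:qproet-locsys-equiv-v-locsys-pro-discrete-case} (whose proofs, including the key spreading argument of \cref{rslt:qproet-locsys-equiv-v-locsys-helper} via $\lambda_{X\circ}(Y)$, carry over verbatim to v-sheaves of sets that are v-locally isomorphic to a fixed discrete set) show that $\mathcal F$ descends to a sheaf $\mathcal F_0$ on $X_\et$ étale-locally isomorphic to $Q_{X_\et}$. Its stalk at any geometric point of $X$ agrees with the corresponding v-stalk of $\mathcal F$, which is non-empty because $\mathcal F(Y) \ne \emptyset$. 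Hence for each $x \in X$ there is an étale neighborhood $U_x \to X$ with $\mathcal F_0(U_x) \ne \emptyset$, and $U := \bigdunion_x U_x \surjto X$ is the desired étale cover, with any element of $\mathcal F_0(U) = \mathcal F(U)$ providing the integral model $\mathcal L'_U$. The main obstacle is this last descent step: although the argument is essentially formal, one must check that the proofs of the discrete-coefficient local system equivalences (which are written for sheaves of $\Lambda$-modules) adapt without change to sheaves of sets locally isomorphic to a fixed discrete set, the only structural use of the module structure being in identifying a trivialization with a tuple of sections, which is replaced by a bijection between the fiber and sections in the set-valued setting.
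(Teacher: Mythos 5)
Your proof follows the same overall strategy as the paper's: parametrize $\Lambda'$-integral models of $\mathcal L$ by a sheaf of sets $\mathcal F$ on $X_\vsite$, show $\mathcal F$ is v-locally the constant sheaf with discrete fiber $Q = \GL_r(\Lambda)/\GL_r(\Lambda')$, descend $\mathcal F$ to an étale sheaf $\mathcal F_0$, and use non-emptiness of stalks to produce the étale cover. The divergences are technical but worth comparing. You work on strictly totally disconnected spaces, where only \emph{étale} covers split, so you need the limit argument over $\Lambda'/I^n$ (including the nontrivial step of choosing compatible trivializations, which rests on surjectivity of $\GL_r(\Cont(\pi_0(U),\Lambda'/I^{n+1})) \to \GL_r(\Cont(\pi_0(U),\Lambda'/I^n))$) to establish global triviality of $\Lambda'$-local systems. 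The paper instead reduces to w-contractible spaces, on which \emph{every} quasi-pro-étale cover splits (\cref{rslt:w-contractible-basis}), so global triviality of $\Lambda'$-local systems is immediate via \cref{rslt:qproet-locsys-equiv-v-locsys-pro-discrete-case}; the computation $\mathcal F = (S_r)_X$ is then done by spreading a pointwise basis over an open neighbourhood using openness of $\Lambda'$ and $(\Lambda')^\times$, rather than your direct $\Cont(\pi_0(U),-)$ identification. Both routes arrive at the same place, and the w-contractible route is precisely what that basis was set up for. Most importantly, the ``main obstacle'' you flag at the end — adapting the local system equivalences to sheaves of sets — is not actually an obstacle: the paper applies \cite[Theorem 14.12.(i)]{etale-cohomology-of-diamonds} \emph{directly} to the sheaf of sets $\mathcal F$ (that theorem is a statement about v-sheaves that are v-locally pulled back from the étale site, with no $\Lambda$-module hypothesis), which immediately yields $\mathcal F = \lambda^{-1}\nu^{-1}\mathcal F_0$. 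Your workaround of re-deriving set-valued analogues of \cref{rslt:et-locsys-equiv-qproet-locsys} and \cref{rslt:qproet-locsys-equiv-v-locsys-pro-discrete-case} is sound but redundant, since those lemmas are themselves proved by Theorem 14.12.(i); citing it directly makes the last step a one-liner.
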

\begin{proof}
Let $\mathcal F$ be the sheaf of sets on $X_\vsite$ which associates to every $U \in X_\vsite$ the set of all $\Lambda'$-local systems $\mathcal L_U' \subset \restrict{\mathcal L}U$ on $U_\vsite$ such that $\mathcal L_U' \tensor_{\Lambda'} \Lambda = \mathcal L$. We claim that locally on $X_\vsite$, $\mathcal F$ is isomorphic to the constant sheaf associated to the discrete set $S_r = \GL_r(\Lambda)/\GL_r(\Lambda')$ for some $r \ge 0$. This claim is local on $X_\vsite$ so that we can assume that $X$ is w-contractible (cf. \cref{rslt:w-contractible-basis}) and $\mathcal L = \Lambda^r$ for some $r \ge 0$ (explicitly this means that $\mathcal L(X) = \cts(\abs X, \Lambda^r)$ by \cref{rslt:explicit-computation-of-S-X}). We will show that then $\mathcal F = (S_r)_X$.

There is an obvious injective map $(S_r)_X \to \mathcal F$ given by $g \mapsto g \Lambda'^r$, so it only remains to show surjectivity of that map, which we can verify on global sections (because w-contractible spaces form a basis of $X_\vsite$). Let $\mathcal L' \in \mathcal F(X)$ be given. By \cref{rslt:qproet-locsys-equiv-v-locsys-pro-discrete-case} $\mathcal L'$ is quasi-pro-étale locally constant, so the w-contractibility of $X$ implies that $\mathcal L'$ is constant, necessarily of rank $r$. Fix any $x \in X$ and let $\mathcal L'_x$ and $\mathcal L_x$ denotes the fibers at $x$. Then $\mathcal L_x = \Lambda^r$ and $\mathcal L'_x \isom \Lambda'^r$, so that after replacing $\mathcal L'$ by $g\mathcal L'$ for a suitable $g \in S_r$ we can assume $\mathcal L'_x = \Lambda'^r \subset \Lambda^r$. Now choose $r$ sections $s_1, \dots, s_r \in \mathcal L'(X)$ which form a basis of $\mathcal L'$. Then $s_i(x) \in \Lambda'^r$ for all $i$ and since $\Lambda'$ is open inside $\Lambda$, there exists an open neighbourhood $U$ of $x$ such that $s_i(y) \in \Lambda'^r$ for all $y \in U$. Moreover, one checks easily that the units $(\Lambda')\units \subset \Lambda'$ are open. Thus by looking at the determinant of $s_1, \dots, s_r$ on the points of $U$ one sees that after possibly shrinking $U$, $s_1(y), \dots, s_r(y)$ form a basis of $\Lambda'^r$ for all $y \in U$. But then $\restrict{\mathcal L'}U = \Lambda'^r_U \subset \Lambda_U^r$ is trivial. Repeating the same argument for other points $x$ we deduce that there is an open cover over $X$ and sections of $(S_r)_X$ on that cover which trivialize $\mathcal L'$; it follows easily that $\mathcal L'$ lies in the image of $S_r(X) \to \mathcal F(X)$, as desired.

To finish the proof, let $X$ be any locally spatial diamond. From the previous paragraph we deduce that there is a v-covering $Y \to X$ such that $\restrict{\mathcal F}Y$ is the pullback of an étale sheaf. Then \cite[Theorem 14.12.(i)]{etale-cohomology-of-diamonds} implies that $\mathcal F$ is the pullback $\mathcal F = \lambda^{-1} \nu^{-1} \mathcal F_0$ of an étale sheaf $\mathcal F_0$. We have seen that $\mathcal {F} (V) $ is non-empty for every $w$-contractible $V \to X$. This implies that the stalks of $\mathcal F_0$ are non-empty. Hence  there exists  an étale cover $U \to X$ such that $\mathcal F_0(U) \ne \emptyset$. Then any $\mathcal L_U' \in \mathcal F_0(U) = \mathcal F(U)$ satisfies the claim.
\end{proof}

We finally arrive at our main result concerning the comparison of quasi-pro-étale and v-local systems:

\begin{theorem} \label{rslt:qproet-locsys-equiv-v-locsys-complete-huber-rings}
Let $X$ be a diamond and let $\Lambda$ be a complete Huber ring. Then pullback along the morphism of sites $\lambda\colon X_\vsite \to X_\qproet$ induces an equivalence of categories
\begin{align*}
	\lambda^{-1}\colon \catlocsys\Lambda{X_\qproet} \isoto \catlocsys\Lambda{X_\vsite}.
\end{align*}
\end{theorem}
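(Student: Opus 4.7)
The plan is to bootstrap from the complete-adic case \cref{rslt:qproet-locsys-equiv-v-locsys-pro-discrete-case} by means of the integral-model result \cref{rslt:complete-huber-ring-has-etale-locally-integral-model}, and to finish by invoking the descent criterion \cref{rslt:qproet-locsys-equiv-v-locsys-helper}. Fully faithfulness of $\lambda^{-1}$ on local systems was already recorded in the discussion preceding \cref{rslt:qproet-locsys-equiv-v-locsys-pro-discrete-case}, so only essential surjectivity is at stake.

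Given $\mathcal L \in \catlocsys\Lambda{X_\vsite}$, the first step is to choose a quasi-pro-étale cover $Y \surjto X$ with $Y$ a perfectoid space, which exists by the definition of a diamond. Since $Y$ is locally spatial, \cref{rslt:complete-huber-ring-has-etale-locally-integral-model} applies to $\mathcal L|_Y$ and any ring of definition $\Lambda' \subset \Lambda$ (with finitely generated ideal of definition $I$), producing an étale cover $V \surjto Y$ and a $\Lambda'$-local system $\mathcal L'_V$ on $V_\vsite$ with $\mathcal L'_V \tensor_{\Lambda'} \Lambda = \mathcal L|_V$. Because $\Lambda' = \varprojlim_n \Lambda'/I^n$ is an inverse limit of discrete rings indexed by $\N$, \cref{rslt:qproet-locsys-equiv-v-locsys-pro-discrete-case} yields a unique $\Lambda'$-local system $\mathcal L'_{0,V}$ on $V_\qproet$ with $\lambda^{-1} \mathcal L'_{0,V} = \mathcal L'_V$. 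Setting $\mathcal L_{0,V} := \mathcal L'_{0,V} \tensor_{\Lambda'} \Lambda$ produces a $\Lambda$-local system on $V_\qproet$ satisfying $\lambda^{-1} \mathcal L_{0,V} = \mathcal L|_V$; this identification uses that $\lambda^{-1}$ of the constant sheaf of rings is the constant sheaf of rings (\cref{rslt:constant-sheaf-commutes-with-lim-and-colim}) and that inverse image of sheaves of modules commutes with tensor products.

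It remains to descend $\mathcal L_{0,V}$ along the composite quasi-pro-étale cover $f\colon V \to Y \to X$. The v-site descent datum for $\mathcal L$ along $f$ is an isomorphism $\varphi\colon p_1^*(\mathcal L|_V) \isoto p_2^*(\mathcal L|_V)$ on $(V \cprod_X V)_\vsite$ satisfying the cocycle condition on $(V \cprod_X V \cprod_X V)_\vsite$. Applying fully faithfulness of $\lambda^{-1}$ on local systems over these fiber products (whose $\lambda^{-1}$-pullbacks of $p_i^* \mathcal L_{0,V}$ equal the respective $p_i^*(\mathcal L|_V)$), both $\varphi$ and its cocycle condition descend uniquely to a quasi-pro-étale descent datum $\varphi_0$ for $\mathcal L_{0,V}$. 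Sheaf descent on $X_\qproet$ then glues $\mathcal L_{0,V}$ along $\varphi_0$ into a sheaf of $\Lambda$-modules $\mathcal L_0$ on $X_\qproet$ with $\mathcal L_0|_V = \mathcal L_{0,V}$; applying $\lambda^{-1}$ and invoking v-descent for $\mathcal L$ identifies $\lambda^{-1}\mathcal L_0 = \mathcal L$. Finally, \cref{rslt:qproet-locsys-equiv-v-locsys-helper} upgrades $\mathcal L_0$ to an honest $\Lambda$-local system on $X_\qproet$, completing the argument. The most delicate step is the descent of the isomorphism and cocycle from $X_\vsite$ to $X_\qproet$, which is precisely where fully faithfulness of $\lambda^{-1}$ on local systems is indispensable.
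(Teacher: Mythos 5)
Your proof is correct and follows the same route as the paper's: reduce to essential surjectivity, pass to a quasi-pro-étale cover by a (locally spatial) perfectoid, invoke \cref{rslt:complete-huber-ring-has-etale-locally-integral-model} to obtain a $\Lambda'$-local system for a ring of definition $\Lambda'$, handle $\Lambda'$ via \cref{rslt:qproet-locsys-equiv-v-locsys-pro-discrete-case}, descend to $X_\qproet$, and finish with \cref{rslt:qproet-locsys-equiv-v-locsys-helper}. You spell out the descent-of-the-glueing-datum step, which the paper compresses into a single sentence; one small misattribution is that the fact $\lambda^{-1}$ sends constant sheaves to constant sheaves is a remark made right after \cref{def:constant-sheaves}, not the content of \cref{rslt:constant-sheaf-commutes-with-lim-and-colim}.
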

\begin{proof} Recall that by \cite[Proposition 14.7]{etale-cohomology-of-diamonds} we only need to check essential surjectivity. Let $\mathcal L \in \catlocsys\Lambda{X_\vsite}$ be given. Choose any quasi-pro-étale cover $X' \to X$ by a locally spatial diamond $X'$ and then apply \cref{rslt:complete-huber-ring-has-etale-locally-integral-model} to $X'$ in order to find an étale cover $U \to X'$ such that $\restrict{\mathcal L}U$ comes from a $\Lambda'$-local system on $U_\vsite$, where $\Lambda'$ is a ring of definition for $\Lambda$. Then apply \cref{rslt:qproet-locsys-equiv-v-locsys-pro-discrete-case} to deduce that $\restrict{\mathcal L}U$ lies in the image of $\lambda_U^{-1}\colon \catlocsys\Lambda{U_\qproet} \to \catlocsys\Lambda{U_\vsite}$. Then $\mathcal L$ is obtained by glueing the quasi-pro-étale sheaf $\restrict{\mathcal L}U$ along the quasi-pro-étale cover $U \to X$, so in particular $\mathcal L = \lambda^{-1}\mathcal L_0$ for some quasi-pro-étale sheaf $\mathcal L_0$. Then \cref{rslt:qproet-locsys-equiv-v-locsys-helper} implies that $\mathcal L_0 \in \catlocsys\Lambda{X_\qproet}$.
\end{proof}

\cref{rslt:qproet-locsys-equiv-v-locsys-complete-huber-rings} is a very useful descent result on local systems, based on the new insights enabled by the theory of diamonds and the fascinating properties of the v-site. In fact, the equivalence of quasi-pro-étale and v-local systems lies at the heart of our applications: It is the reason why we can remove étaleness hypotheses everywhere.

\begin{remark}
By \cref{rslt:injlim-of-locsys} one can generalize \cref{rslt:qproet-locsys-equiv-v-locsys-complete-huber-rings} to the case where $\Lambda$ is filtered colimit of complete Huber rings (at least if $X$ is qcqs). This includes many non-complete Huber rings, for example all (non-completed) algebraic extensions of $\Q_p$ and their rings of integers.
\end{remark}

\begin{remark} \label{rmk:locsys-equiv-on-all-sites}
Recall from \cref{def:localsys} that we write $\catlocsys\Lambda{X}$ for local systems on the v-site of a small v-sheaf $X$. In view of \cref{rslt:qproet-locsys-equiv-v-locsys-complete-huber-rings} and \cref{rslt:et-locsys-equiv-qproet-locsys}, as long as we impose mild restrictions on $\Lambda$, we can equivalently work on the quasi-pro-étale site (if $X$ is a diamond) or on the étale site (if $X$ is a locally spatial diamond and $\Lambda$ is discrete). In the rest of the paper, we will often implicitely make use of this fact.
\end{remark}

As promised above we provide a more explicit definition for the constant sheaf associated to a topological ring (or more generally a topological space):

\begin{lemma} \label{rslt:F-T-is-sheaf-on-vsite}
Let $X$ be a small v-sheaf and let $T$ be a topological space. Then the presheaf
\begin{align*}
	\mathcal F_T\colon U \mapsto \cts(\abs U, T) \qquad \text{for $U \in X_\vsite$},
\end{align*}
mapping $U$ to the continuous functions from its underlying topological space to $T$, is a sheaf on $X_\vsite$.
\end{lemma}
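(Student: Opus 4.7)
The plan is to show that the underlying-topological-space functor $U \mapsto |U|$ turns every v-covering family into an effective epimorphism in topological spaces; applying $\cts(-, T)$ then immediately yields the required equalizer diagrams.

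Passing to the disjoint union $U' := \bigdunion_i U_i$, it suffices to treat a single surjective v-map $f\colon U' \to U$. The sheaf property for $\mathcal F_T$ along $f$ reduces to three topological facts: (i) $|f|\colon |U'| \to |U|$ is surjective, (ii) $|U|$ carries the quotient topology induced by $|f|$, and (iii) the canonical map $|U' \times_U U'| \to |U'| \times_{|U|} |U'|$ is surjective. Granting these, a section $s \in \cts(|U'|, T)$ satisfying $p_1^* s = p_2^* s$ is constant on the fibers of $|f|$ (by (iii) together with the cocycle condition), hence descends set-theoretically to $\bar s\colon |U| \to T$, whose continuity follows from (ii); separatedness of $\mathcal F_T$ is immediate from (i).

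To verify (i)--(iii), I would first reduce to the case where $U'$ is a perfectoid space by choosing a surjection $V \to U'$ from a perfectoid space $V$, so that $V \to U$ is also surjective from a perfectoid space. A standard formal argument, using that coequalizer diagrams in topological spaces transfer from a refined cover $V \to U' \to U$ to the intermediate cover $U' \to U$ (in particular, applying the property for $V \to U'$ to handle separatedness and for $V \to U$ to perform the gluing), reduces the general case to the perfectoid one. For a surjection $V \to U$ with $V$ perfectoid, (i) is \cite[Lemma 12.11]{etale-cohomology-of-diamonds}; (ii) is essentially the definition of the topology on $|U|$ for a small v-sheaf (cf.\ \cite[\S12]{etale-cohomology-of-diamonds}); and (iii) follows from the description of geometric points of small v-sheaves as equivalence classes of maps $\Spa(C, C^+) \to -$, since any two compatible points can be realized over a common $\Spa(C, C^+)$ after enlarging the perfectoid field $C$.

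The main obstacle is the clean formulation and use of (ii): once the topological-quotient property of $|V| \to |U|$ for perfectoid $V \to U$ is established, everything else is formal manipulation of coequalizer diagrams in topological spaces.
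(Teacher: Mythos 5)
Your core argument is the right one, and it is essentially the same argument the paper gives: reduce to the sheaf condition along a single surjective map $f\colon V \to U$, then deduce the equalizer condition from three topological facts about $\abs f$: surjectivity, being a quotient map, and surjectivity of $\abs{V \cprod_U V} \to \abs V \cprod_{\abs U} \abs V$. So the logical structure matches.

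Where you and the paper diverge is in how you verify those three facts. You propose reducing to the case of a perfectoid source by picking a further surjection $V \to U'$ from a perfectoid space and then transporting the coequalizer property along the refinement. This detour is unnecessary (and adds real friction to write out rigorously): Scholze's Proposition 12.9 already states, for an arbitrary surjective map of small v-sheaves, that $\abs f$ is a quotient map, and Proposition 12.10 gives the surjectivity of $\abs{V \cprod_U V} \to \abs V \cprod_{\abs U} \abs V$ with no perfectoidness hypothesis. The paper simply cites these two results directly at the level of generality needed, which is both shorter and avoids having to re-derive Proposition 12.10 via geometric points, as your sketch of item (iii) effectively does. Your reduction argument would work, but it is re-proving the input lemmas rather than using them.

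Two smaller points. First, your citation for item (i) goes the wrong way: Lemma 12.11 of the reference is the implication "$\abs f$ surjective $\Rightarrow f$ surjective" (under qcqs-type hypotheses), whereas what you need — $f$ surjective $\Rightarrow \abs f$ surjective — is a consequence of Proposition 12.9, or just of the definition of $\abs{\cdot}$. Second, your reduction to a single map by "passing to the disjoint union" implicitly uses that $\mathcal F_T$ turns disjoint unions into products; the paper makes this explicit by first observing that $\mathcal F_T$ satisfies the sheaf condition for analytic coverings (continuity being a local property), which handles both disjoint unions and the refinement bookkeeping cleanly. Neither issue is fatal, but both would need to be fixed for a complete write-up.
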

\begin{proof}
Obviously $\mathcal F_T$ satisfies the sheaf property for any analytic covering (i.e. open covering with respect to the topology $\abs U$ for $U \in X_\vsite$), as continuity is a local property. It is therefore enough to check that $\mathcal F_T$ satisfies the sheaf property for every covering of the form $f\colon V \to U$ in $X_\vsite$. Given such a covering, let $t \in \mathcal F_T(V)$, i.e. $t$ is a continuous map $\abs V \to T$, and assume that $t \comp \abs{\pr1} = t \comp \abs{\pr2}$ for $\pr1, \pr2\colon V \cprod_U V \to V$ the two projections. We have to show that there exists a unique continuous map $s\colon \abs U \to T$ such that $t = s \comp \abs f$. By \cite[Proposition 12.10]{etale-cohomology-of-diamonds} the map $\abs{V \cprod_U V} \to \abs V \cprod_{\abs U} \abs V$ is surjective, which implies that $s$ exists and is unique as a map. It remains to show that $s$ is continuous, which follows from the fact that $\abs f\colon \abs V \to \abs U$ is a quotient map by \cite[Proposition 12.9]{etale-cohomology-of-diamonds}.
\end{proof}

\begin{corollary} \label{rslt:explicit-computation-of-S-X}
Let $X$ be a small v-sheaf and $T$ a T1 topological space. Then the associated constant sheaf $T_X$ on $X_\vsite$ is $T_X = \mathcal F_T$, i.e. we have
\begin{align*}
	T_X(U) = \cts(\abs U, T)
\end{align*}
for all $U \in X_\vsite$.
\end{corollary}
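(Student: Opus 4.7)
The plan is to use \cref{rslt:F-T-is-sheaf-on-vsite}, which already tells us that $\mathcal F_T$ is a sheaf on $X_\vsite$. It then suffices to construct a natural morphism $T_X \to \mathcal F_T$ and verify that it is an isomorphism on some basis of $X_\vsite$. I would take the basis of strictly totally disconnected spaces, since on such spaces the left-hand side admits the very explicit description computed in the proof of \cref{rslt:constant-sheaf-commutes-with-lim-and-colim}: for $Y$ strictly totally disconnected we have $T_X(Y) = T(\lambda_\circ(Y)) = T(\pi_0(Y)) = \cts(\pi_0(Y), T)$, the last equality using that $T$ is a T1 topological space regarded as a condensed set.

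To produce the natural morphism $T_X \to \mathcal F_T$, I would use the adjunction between $\pr\vsite^{-1}$ and $\pr{\vsite*}$: it suffices to exhibit a morphism of condensed sets $T \to \pr{\vsite*}\mathcal F_T$. For a profinite set $S$, the natural continuous map $\abs{\pr\vsite^{-1}S} \to S$ (coming from the construction of $\pr\vsite^{-1}S$ as a cofiltered limit of disjoint unions along $S_i$) yields $\cts(S, T) \to \cts(\abs{\pr\vsite^{-1}S}, T) = (\pr{\vsite*}\mathcal F_T)(S)$, as desired. On a strictly totally disconnected $Y$, unraveling this construction shows that the resulting map $T_X(Y) \to \mathcal F_T(Y)$ is nothing but pullback along the natural continuous surjection $\abs Y \surjto \pi_0(Y)$.

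It remains to verify that this pullback map $\cts(\pi_0(Y), T) \to \cts(\abs Y, T)$ is a bijection. Injectivity is clear from surjectivity of $\abs Y \surjto \pi_0(Y)$. For surjectivity, I would show that every $f \in \cts(\abs Y, T)$ is constant on each connected component of $\abs Y$. Because $Y$ is strictly totally disconnected, each connected component of $\abs Y$ is of the form $\abs{\Spa(K, K^+)}$ for an algebraically closed perfectoid field $K$ with open bounded valuation subring $K^+$, and such a spectral space is a specialization chain with a unique closed point $c$ to which every point specializes. Hence for any $x$ in this component, $c \in \overline{\{x\}}$, so continuity of $f$ gives $f(c) \in \overline{\{f(x)\}}$, and the T1 hypothesis on $T$ forces $\overline{\{f(x)\}} = \{f(x)\}$; thus $f(c) = f(x)$. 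Consequently $f$ factors through $\pi_0(Y)$, and continuity of the factored function is immediate because $\abs Y \surjto \pi_0(Y)$ is a quotient map.

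The only nontrivial ingredient is the constancy of $f$ on connected components, where the T1 assumption on $T$ is used in an essential way together with the specialization structure of strictly totally disconnected spaces; everything else reduces to an unwinding of definitions and the formula $T_X(Y) = \cts(\pi_0(Y), T)$ already established in the proof of \cref{rslt:constant-sheaf-commutes-with-lim-and-colim}.
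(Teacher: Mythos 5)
Your proof is correct and follows essentially the same route as the paper: reduce to strictly totally disconnected $Y$ via \cref{rslt:F-T-is-sheaf-on-vsite}, use $T_X(Y) = \cts(\pi_0(Y), T)$ from the proof of \cref{rslt:constant-sheaf-commutes-with-lim-and-colim}, and exploit the structure of connected components $\Spa(C,C^+)$ together with the T1 hypothesis to show continuous maps to $T$ factor through $\pi_0$. The only cosmetic differences are that you anchor the constancy argument at the unique closed point of each component rather than its generic point (both work, since the component is the closure of a single point and specializations force equality of values under T1), and that you spell out the natural comparison morphism $T_X \to \mathcal F_T$ via adjunction, which the paper leaves implicit.
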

\begin{proof}
By \cref{rslt:F-T-is-sheaf-on-vsite} we know that $\mathcal F_T$ is a sheaf, so it is enough to check that $\mathcal F_T$ and $T_X$ coincide on a basis of the site $X_\vsite$. Let therefore $U \in X_\vsite$ be a strictly totally disconnected perfectoid space. Then as in the proof of \cref{rslt:constant-sheaf-commutes-with-lim-and-colim} we get
\begin{align*}
	T_X(U) = T_U(U) = T(\pi_0(\abs U)) = \cts(\pi_0(\abs U), T).
\end{align*}
Every connected component of $U$ is of the form $\Spa(C, C^+)$ for some algebraically closed perfectoid field $C$ and some open and bounded valuation subring $C^+ \subset C^\circ$. In particular, every connected component of $U$ is the closure of a generic point. But since $T$ is T1, any continuous map $f\colon \abs U \to T$ is constant on the closure of any point of $\abs U$. In particular, $f$ is constant on every connected component of $U$ and hence factors over $\pi_0(\abs U)$. We deduce $\cts(\pi_0(\abs U), T) = \cts(\abs U, T)$.
\end{proof}

We are also interested in the commutation of base change with global sections, which we will establish for discrete rings in the following.

\begin{lemma} \label{rslt:cts-from-profinite-to-discrete-is-injlim}
Let $S$ be a profinite space, $S = \varprojlim_{i\in I} S_i$ for finite discrete spaces $S_i$. Let $T$ be an arbitrary discrete set. Then
\begin{align*}
	\cts(S, T) = \varinjlim_i \cts(S_i, T).
\end{align*}
\end{lemma}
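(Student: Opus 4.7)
The plan is to exhibit explicitly that the natural map $\varinjlim_i \cts(S_i, T) \to \cts(S, T)$ (sending a map $f\colon S_i \to T$ to its pullback along the projection $\pi_i\colon S \to S_i$) is a bijection, using compactness of $S$ together with the fact that any clopen subset of a cofiltered limit of finite discrete spaces is pulled back from some finite stage. I may pass to a cofinal subsystem to assume all transition maps $S_j \to S_i$ and all projections $\pi_i\colon S \to S_i$ are surjective, which is harmless for both sides.

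For injectivity, suppose $f\colon S_i \to T$ and $g\colon S_j \to T$ become equal after pulling back to $S$. Choosing $k \ge i, j$, it suffices to show their pullbacks $f', g'\colon S_k \to T$ coincide, which follows because $\pi_k\colon S \to S_k$ is surjective and $f' \comp \pi_k = g' \comp \pi_k$. For surjectivity, let $f\colon S \to T$ be continuous. Since $S$ is compact and $T$ discrete, $f(S) = \{t_1, \dots, t_n\}$ is finite, and the fibers $U_k := f^{-1}(t_k)$ form a finite partition of $S$ into clopen subsets.

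The key step is then the standard fact: every clopen subset $U \subset S = \varprojlim_i S_i$ is of the form $\pi_i^{-1}(V)$ for some $i$ and some (necessarily clopen, since $S_i$ is finite discrete) subset $V \subset S_i$. I would prove this by noting that $U$ is compact open, so it is a finite union of basic open sets, each of which is of the form $\pi_i^{-1}(V_i)$; taking a common index $i$ large enough (using cofiltrance of $I$), all $V_i$'s pull back to subsets of $S_i$, and $U$ becomes the preimage of their union. Applying this to each $U_k$ and taking a common index $i$ working for all $k = 1, \dots, n$, we obtain a decomposition $S_i = V_1 \dunion \cdots \dunion V_n$ (after replacing $i$ by a larger index if needed, using surjectivity of $\pi_i$ to ensure disjointness on $S_i$), and then $f$ is the pullback of the map $S_i \to T$ sending $V_k \mapsto t_k$.

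The only genuinely non-formal step is the clopen-descent fact, but this is entirely elementary from the definition of the profinite topology; everything else is a compactness-plus-cofinality manipulation, so I expect no real obstacle.
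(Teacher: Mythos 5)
Your proof is correct and follows essentially the same route as the paper: reduce to surjectivity, use compactness of $S$ and discreteness of $T$ to get a finite clopen partition by fibers, descend each clopen piece to a finite stage via the standard basis of the profinite topology, and combine indices by directedness. You are slightly more careful than the paper about the injectivity step and about arranging surjective projections, but the substance is identical.
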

\begin{proof}
There is a natural map from right to left which is clearly injective, so we only need to check surjectivity. Let $f\colon S \to T$ be a given continuous map. Since $T$ is discrete and $S$ is compact, $f$ only takes finitely many distinct values, i.e. there is a partition $S = \bigdunion_{j=1}^n U_j$ into open subsets $U_j \subset S$ such that $\restrict f{U_j}$ is constant for all $j$. We can even assume that $U_j = \pi_{i_j}^{-1}(\{ x_j \})$ for some $i_j \in I$, the projection $\pi_{i_j}\colon S \to S_{i_j}$ and some element $x_j \in S_{i_j}$, because sets of this type form a basis of $S$. Since $I$ is directed we conclude that there is some $i \in I$ such that $U_j = \pi_i^{-1}(V_j)$ for some partition $S_i = \bigdunion_{j=1}^n V_j$. Then $f$ factors through $S \to S_i$, proving the claimed surjectivity.
\end{proof}

\begin{lemma} \label{rslt:discrete-base-change-on-qcqs-helper}
Let $X$ be a spatial small v-sheaf, let $\Lambda$ be a discrete ring and let $M$ be a discrete $\Lambda$-module. Then
\begin{align*}
	 \Gamma(X_\vsite, M) = \Gamma(X_\vsite, \Lambda) \tensor_\Lambda M.
\end{align*}
\end{lemma}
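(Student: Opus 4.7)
My plan is to reduce the statement to a concrete identity about continuous functions on the spectral space $\abs X$, using the explicit description of constant sheaves obtained in \cref{rslt:explicit-computation-of-S-X}.

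First, since $\Lambda$ and $M$ are discrete (hence T1), \cref{rslt:explicit-computation-of-S-X} identifies
\begin{align*}
	\Gamma(X_\vsite, \Lambda) = \cts(\abs X, \Lambda), \qquad \Gamma(X_\vsite, M) = \cts(\abs X, M).
\end{align*}
Thus the claim becomes the assertion that the natural map
\begin{align*}
	\cts(\abs X, \Lambda) \tensor_\Lambda M \longto \cts(\abs X, M), \qquad f \tensor m \longmapsto (x \mapsto f(x)m),
\end{align*}
is an isomorphism.

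Next I would exploit that $\abs X$ is spectral (because $X$ is spatial), hence in particular quasi-compact. Any continuous map from $\abs X$ to a discrete space has finite image, and the fibres yield a finite partition of $\abs X$ into disjoint clopen pieces. Let $\mathcal P$ be the directed set of finite clopen partitions of $\abs X$, ordered by refinement. For such a partition $P = \{V_1, \dots, V_n\}$, let $\Lambda^P$ be the free $\Lambda$-module on the parts, i.e. the functions $P \to \Lambda$, identified with functions $\abs X \to \Lambda$ constant on each $V_i$; similarly define $M^P$. The preceding remark gives
\begin{align*}
	\cts(\abs X, \Lambda) = \varinjlim_{P \in \mathcal P} \Lambda^P, \qquad \cts(\abs X, M) = \varinjlim_{P \in \mathcal P} M^P.
\end{align*}

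Finally, since $\Lambda^P$ is a finite free $\Lambda$-module, we have $\Lambda^P \tensor_\Lambda M = M^P$ canonically. Combining this with the fact that filtered colimits commute with tensor products yields
\begin{align*}
	\cts(\abs X, \Lambda) \tensor_\Lambda M = \Bigl(\varinjlim_{P} \Lambda^P\Bigr) \tensor_\Lambda M = \varinjlim_P (\Lambda^P \tensor_\Lambda M) = \varinjlim_P M^P = \cts(\abs X, M),
\end{align*}
and it is straightforward to check this composite coincides with the natural multiplication map above. There is no real obstacle here: the only subtle point is verifying that the partition argument applies, which is immediate from quasi-compactness of the spectral space $\abs X$ together with the discreteness of the targets $\Lambda$ and $M$.
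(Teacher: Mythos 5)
Your proof is correct and is essentially the same argument as the paper's: both express $\cts(\abs X, -)$ as a filtered colimit of finite free pieces and then invoke compatibility of tensor products with colimits. The only cosmetic difference is that the paper first factors through $\pi_0(\abs X)$ viewed as a profinite set $\varprojlim_i T_i$ (via \cref{rslt:cts-from-profinite-to-discrete-is-injlim}), whereas you work directly with the directed system of finite clopen partitions of $\abs X$, which indexes the same colimit.
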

\begin{proof}
By assumption $\abs X$ is a spectral space. Thus we have to show that for every spectral space $T$ there is a natural isomorphism $\cts(T, M) = \cts(T, \Lambda) \tensor_\Lambda M$. Note that since $M$ is discrete (and in particular totally disconnected), any continuous map $T \to M$ factors uniquely over $\pi_0(T)$, i.e. we have $\cts(T, M) = \cts(\pi_0(T), M)$. By \cite[Lemma 0906]{stacks-project} $\pi_0(T)$ is profinite, so we may assume that $T$ is profinite. Write $T = \varprojlim_{i\in I} T_i$ for finite discrete sets $T_i$. For each $i \in I$ we have $\cts(T_i, M) = M^{\abs {T_i}}$ and thus evidently $\cts(T_i, M) = \cts(T_i, \Lambda) \tensor_\Lambda M$. Using \cref{rslt:cts-from-profinite-to-discrete-is-injlim} and the fact that tensor products commute with colimits we obtain
\begin{align*}
	\cts(T, M) &= \varinjlim_i \cts(T_i, M) = \varinjlim_i \cts(T_i, M) \tensor_\Lambda M = \cts(T, \Lambda) \tensor_\Lambda M. \qedhere
\end{align*}
\end{proof}

\begin{proposition} \label{rslt:discrete-base-change-on-qcqs}
Let $X$ be a qcqs small v-sheaf, let $\Lambda$ be a discrete ring and let $M$ be a flat discrete $\Lambda$-module. Then for all $\mathcal N \in D^+(X_\vsite, \Lambda)$ and all $i \ge 0$ we have
\begin{align*}
	H^i(X_\vsite, \mathcal N \tensor_\Lambda M) = H^i(X_\vsite, \mathcal N) \tensor_\Lambda M.
\end{align*}
\end{proposition}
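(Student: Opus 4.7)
The plan is to reduce to the case where $M$ is a finite free $\Lambda$-module and then invoke a commutation of cohomology with filtered colimits. Since $M$ is a flat discrete $\Lambda$-module, Lazard's theorem lets us write $M = \varinjlim_{j \in J} \Lambda^{r_j}$ as a filtered colimit of finite free $\Lambda$-modules. By \cref{rslt:constant-sheaf-commutes-with-lim-and-colim}, this identity persists when passing to the associated constant sheaves on $X_\vsite$. Since tensor products commute with filtered colimits, we obtain
\begin{align*}
    \mathcal N \tensor_\Lambda M = \varinjlim_{j\in J} \mathcal N \tensor_\Lambda \Lambda^{r_j} = \varinjlim_{j\in J} \mathcal N^{r_j}
\end{align*}
in $D^+(X_\vsite, \Lambda)$ (no derived issues arise because $M$ is flat).

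Next, I would use that $X$ is qcqs to pass to the equivalent subsite $X_{\vsite,\qcqs}$ of qcqs objects, on which every covering admits a finite refinement; on such a site, filtered colimits of sheaves are computed on sections (\cite[Lemma 0739]{stacks-project}, as already used in the proof of \cref{rslt:injlim-of-locsys}), and consequently cohomology commutes with filtered colimits on $D^+$. Therefore
\begin{align*}
    H^i(X_\vsite, \mathcal N \tensor_\Lambda M) = \varinjlim_{j\in J} H^i(X_\vsite, \mathcal N^{r_j}) = \varinjlim_{j\in J} H^i(X_\vsite, \mathcal N)^{r_j} = H^i(X_\vsite, \mathcal N) \tensor_\Lambda M,
\end{align*}
where the last equality uses once more that tensor products commute with filtered colimits.

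The main technical obstacle is justifying the commutation of cohomology with filtered colimits on $X_\vsite$. The v-site of $X$ is not itself coherent (arbitrary small v-sheaves are allowed as objects), so one really must pass to the qcqs subsite and check that this produces the same topos (this is precisely the observation recalled at the beginning of the proof of \cref{rslt:injlim-of-locsys}). Once this reduction is in place the argument is formal: the Čech-to-sheaf spectral sequence on a qcqs site with finite coverings commutes with filtered colimits, so the same is true for cohomology of bounded-below complexes. The remaining ingredients—Lazard's theorem and the commutation of constant sheaves with colimits—have already been established or are standard, so no further difficulty arises.
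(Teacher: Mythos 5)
Your approach differs genuinely from the paper's. The paper works with an arbitrary abelian sheaf $\mathcal I$ and the explicit presheaf $\mathcal F_{\mathcal I}\colon U\mapsto \mathcal I(U)\otimes_\Lambda M$, showing (using flatness of $M$, finiteness of coverings on $X_{\vsite,\qcqs}$, and SGA~4~V~Prop.~4.3) that $\mathcal F_{\mathcal I}$ is a sheaf which is $\Gamma$-acyclic when $\mathcal I$ is, and then identifies $\mathcal F_{\mathcal I}$ with $\mathcal I\otimes_\Lambda M$ via \cref{rslt:discrete-base-change-on-qcqs-helper}; this gives the result directly from an acyclic resolution. You instead invoke Lazard's theorem to write $M$ as a filtered colimit of finite free $\Lambda$-modules, pull the colimit out of the tensor product (using \cref{rslt:constant-sheaf-commutes-with-lim-and-colim} to pass from condensed modules to constant sheaves), and reduce everything to the assertion that cohomology commutes with filtered colimits on $X_{\vsite,\qcqs}$. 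As a strategy this is sound and arguably cleaner — it entirely bypasses \cref{rslt:discrete-base-change-on-qcqs-helper}.

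The one place I would push back is your justification that cohomology commutes with filtered colimits. You argue that this is "formal" from the Čech-to-sheaf spectral sequence, but that sequence has $E_2^{p,q}=\check H^p\bigl(\mathcal U,\underline{H}^q(\mathcal F)\bigr)$, where $\underline{H}^q(\mathcal F)$ is the presheaf $V\mapsto H^q(V,\mathcal F)$. Whether \emph{those} commute with filtered colimits is exactly the claim being proved, so as written the argument is circular (it is only unconditional for the $q=0$ row). The standard repair is to show that a filtered colimit of $\Gamma$-acyclic sheaves is $\Gamma$-acyclic: by Cartan's criterion (SGA~4~V~Prop.~4.3), $\Gamma$-acyclicity is detected by exactness of the full Čech complexes, which commute with filtered colimits on $X_{\vsite,\qcqs}$ because coverings are finite and sections commute with filtered colimits; one then concludes using exactness of filtered colimits and a compatible system of acyclic (e.g.\ Godement) resolutions, or simply cites SGA~4~VI~Thm.~5.1 after observing that $X_{\vsite,\qcqs}^\sim$ is a coherent topos. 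Note that SGA~4~V~Prop.~4.3 is precisely the ingredient the paper leans on when it checks that $\mathcal F_{\mathcal I}$ inherits $\Gamma$-acyclicity from $\mathcal I$ — so once your gap is filled correctly, the two arguments require the same technical input; the Lazard reduction just packages it more modularly.
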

\begin{proof}
Let $\mathcal I$ be any sheaf of $\Lambda$-modules on $X_\vsite$ and consider the presheaf $\mathcal F_{\mathcal I} = [U \mapsto \mathcal I(U) \tensor_\Lambda M]$ on $X_\vsite$. Then $\mathcal F_{\mathcal I}$ satisfies the following properties:
\begin{itemize}
	\item $\mathcal F_{\mathcal I}$ is a sheaf. To prove this we have to show that tensoring over $\Lambda$ with $M$ preserves the exactness of the first three terms of the Čech complex associated to any covering $(V_i \to U)_{i \in I}$ in $X_\vsite$. As $X$ is qcqs we can restrict to finite $I$. But then the claimed exactness follows directly from the flatness of $M$ over $\Lambda$.

	\item If $\mathcal I$ is $\Gamma$-acyclic then so is $\mathcal F_{\mathcal I}$. To prove this, by SGA 4 V Proposition 4.3 all we have to show is that tensoring over $\Lambda$ with $M$ preserves the exactness of the whole Čech complex associated to any covering $(V_i \to U)_{i\in I}$ in $X_\vsite$. This follows in the same way as above.

	\item We have $\mathcal F_{\mathcal I} = \mathcal I \tensor_\Lambda M$. To prove this, note that we have a canonical morphism $\mathcal F_{\mathcal I} \to \mathcal I \tensor_\Lambda M$. The claim is that this morphism is an isomorphism. Now $\mathcal I \tensor_\Lambda M$ is the sheafification of the presheaf $U \mapsto \mathcal I(U) \tensor_{\Lambda(U)} M(U)$. By \cref{rslt:discrete-base-change-on-qcqs-helper}, for every spatial $U \in X_\vsite$ we have
	\begin{align*}
		\mathcal I(U) \tensor_{\Lambda(U)} M(U) = \mathcal I(U) \tensor_{\Lambda(U)} (\Lambda(U) \tensor_\Lambda M) = \mathcal I(U) \tensor_\Lambda M = \mathcal F_{\mathcal I}(U).
	\end{align*}
	Since the spatial small v-sheaves form a basis of $X_\vsite$ we deduce the claimed isomorphism.
\end{itemize}
Now to prove the claim, choose a quasi-isomorphism $\mathcal N \isom \mathcal I_\bullet$ with $\mathcal I_i$ being $\Gamma$-acyclic for all $i$. Then $\mathcal N \tensor_\Lambda M$ is quasi-isomorphic to $\mathcal I_\bullet \tensor_\Lambda M$ with all $\mathcal I_i \tensor_\Lambda M$ still $\Gamma$-acyclic and with $\Gamma(X_\vsite, \mathcal I_i \tensor_\Lambda M) = \Gamma(X_\vsite, \mathcal F_{\mathcal I_i}) = \Gamma(X_\vsite, \mathcal I_i) \tensor_\Lambda M$. Therefore
\begin{align*}
	H^i(X_\vsite, \mathcal N \tensor_\Lambda M) &= H^i(\Gamma(X_\vsite, \mathcal I_\bullet \tensor_\Lambda M)) = H^i(\Gamma(X_\vsite, \mathcal I_\bullet) \tensor_\Lambda M) \\& = H^i(\Gamma(X_\vsite, \mathcal I_\bullet)) \tensor_\Lambda M = H^i(X_\vsite, \mathcal N) \tensor_\Lambda M,
\end{align*}
as desired.
\end{proof}

\begin{example}
Our main application of \cref{rslt:discrete-base-change-on-qcqs} is the case where $\Lambda = \Fld_q$, $M = \ri/p$ and $\mathcal N = \mathcal L$ is an $\Fld_q$-local system on $X$. Then the statement is
\begin{align*}
	H^i(X_\vsite, \mathcal L \tensor_{\Fld_q} \ri/p) = H^i(X_\vsite, \mathcal L) \tensor_{\Fld_q} \ri/p,
\end{align*}
i.e. it does not matter whether base extension from $\Fld_q$ to $\ri/p$ is done before or after taking cohomology.
\end{example}

Let us also state a classical result about étale local systems with finite coefficient rings:

\begin{lemma} \label{rslt:locsys-trivial-on-fet-cover}
Let $X$ be a spatial diamond, let $\Lambda$ be a finite discrete ring and let $\mathcal L$ be a $\Lambda$-local system on $X$ of constant rank $r$. Then there exists a finite étale cover $Z \to X$ such that $\restrict{\mathcal L}Z$ is constant.
\end{lemma}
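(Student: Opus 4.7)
My plan has three steps: reduce to the étale site, attach a torsor under a finite group, and represent that torsor by a finite étale cover.

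First I would invoke \cref{rslt:et-locsys-equiv-qproet-locsys} together with the discrete case of \cref{rslt:qproet-locsys-equiv-v-locsys-pro-discrete-case}. Because $\Lambda$ is a discrete ring, pullback along $X_\vsite \to X_\qproet \to X_\et$ is an equivalence $\catlocsys\Lambda{X_\et} \isoto \catlocsys\Lambda{X}$, so we may assume that $\mathcal L$ is an étale $\Lambda$-local system on $X$ of constant rank $r$. This reduces the statement to a classical question about étale local systems with a finite coefficient ring.

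Next I would form the sheaf of frames
\begin{align*}
	\mathcal P := \underline{\mathrm{Isom}}_\Lambda(\Lambda_X^r, \mathcal L) \quad \text{on } X_\et,
\end{align*}
sending $U \in X_\et$ to the set of $\Lambda$-linear isomorphisms $\Lambda_U^r \isoto \restrict{\mathcal L}U$. An étale-local trivialization of $\mathcal L$ furnishes an étale-local section of $\mathcal P$, so $\mathcal P$ is a torsor on $X_\et$ under the constant sheaf of groups $\underline G$, where $G := \GL_r(\Lambda)$ is a \emph{finite} group (since $\Lambda$ is finite). In particular $\mathcal P$ is étale locally isomorphic to $\underline G$, which is itself representable on $X_\et$ by the finite étale cover $\bigdunion_{g \in G} X \to X$.

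The crux of the argument will be to show that $\mathcal P$ is globally representable by a finite étale cover $f\colon Z \to X$. For this I would appeal to étale descent for finite étale morphisms of spatial diamonds, developed in \cite[\S10]{etale-cohomology-of-diamonds}: since $\mathcal P$ is étale locally representable by a finite étale cover, these local pieces glue to a finite étale cover $Z \to X$ representing $\mathcal P$ globally. Equivalently, one can note that for a spatial diamond the category $X_\fet$ is controlled by a profinite étale fundamental groupoid, and every continuous torsor under the finite group $G$ over such a groupoid corresponds to a finite étale $G$-cover. I expect this representability/descent step to be the only non-formal point; the finiteness of $G$ is exactly what makes it go through.

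Granting this, the proof concludes cleanly: the identity $\id{Z} \in \Hom_X(Z, Z) = \mathcal P(Z)$ is, by the very definition of $\mathcal P$, an isomorphism $\Lambda_Z^r \isoto \restrict{\mathcal L}Z$ of $\Lambda_Z$-modules. Hence $\restrict{\mathcal L}Z$ is constant of rank $r$, as required.
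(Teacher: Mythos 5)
Your proof is correct and takes essentially the same route as the paper: you reduce to the étale site via the site comparisons, form the Isom-sheaf $\mathcal P = \underline{\mathrm{Isom}}_\Lambda(\Lambda_X^r, \mathcal L)$ (a torsor under the finite constant group $\underline{\GL_r(\Lambda)}$), represent it by a finite étale cover $Z \to X$ using descent of finite étale morphisms from \cite[Proposition 10.11.(iii)]{etale-cohomology-of-diamonds}, and observe that the tautological section $\id{Z} \in \mathcal P(Z)$ trivializes $\mathcal L$ on $Z$. The paper's proof is the same argument with the descent step written out explicitly as a coequalizer of v-sheaves; your alternative phrasing via the profinite étale fundamental groupoid is also valid but is not what the paper does, and the descent route is the more self-contained of the two.
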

\begin{proof}
This follows in the same way as in the theory of schemes, because diamonds satisfy descent of finite étale morphisms by \cite[Proposition 10.11.(iii)]{etale-cohomology-of-diamonds}. For the convenience of the reader we expand the argument.

Let $\mathcal F$ be the sheaf on $X_\et$ which associates to any $U \in X_\et$ the set of $\Lambda$-linear isomorphisms of $\restrict{\mathcal L}U$ with $\Lambda_U^r$. Then it is clear that $\mathcal F$ is a finite locally constant sheaf;\footnote{By a \emph{finite locally constant sheaf} on $X_\et$ we mean the obvious definition, i.e. it is a sheaf of sets on $X_\et$ which locally on $X_\et$ is the constant sheaf associated to a finite set as defined in \cref{def:constant-etale-sheaves}.} we show that it is represented by a finite étale cover of $X$. By assumption there exists an étale cover $U \to X$ such that $\restrict{\mathcal F}U$ is constant. In particular, $\restrict{\mathcal F}U$ is represented by a finite étale diamond $Z_U$ over $U$ (namely $\abs{\GL_r(\Lambda)}$ copies of $U$) and the glueing datum of $\mathcal F$ along $U \to X$ induces a descent datum for $Z_U$ along $U \to X$: We are given an isomorphism $\varphi\colon Z_U \cprod_X U \isoto U \cprod_X Z_U$ over $U \cprod_X U$ which satisfies the cocycle condition. Recall that all diamonds are sheaves for the v-topology on char-$p$ perfectoid spaces; we can thus define a v-sheaf $Z$ by
\begin{align*}
	Z := \coeq\left(Z_U \cprod_X U \xrightrightarrows[\pr1 \comp \varphi]{\pr0} Z_U\right).
\end{align*}
Then $Z$ comes equipped with a morphism to $X$ and we claim that it is the descent of $Z_U$ along $U \to X$. Indeed, the cocycle condition implies that there is a natural isomorphism $Z \cprod_X U = Z_U$. Furthermore, \cite[Proposition 10.11.(iii)]{etale-cohomology-of-diamonds} implies that $Z \to X$ is a finite étale morphism and in particular that $Z$ is a (spatial) diamond (see \cite[Lemma 11.21]{etale-cohomology-of-diamonds}). By construction $Z$ represents the sheaf $\mathcal F$ on $X_\vsite$. But then $\mathcal F(Z)$ is non-empty and hence there is an isomorphism $\restrict{\mathcal L}Z \isom \Lambda_Z^r$.
\end{proof}

\begin{corollary}\label{rslt:profinet-cover}
Let $E$ be a completed algebraic extension of $\Q_p$ with ring of integers $\ri_E$. Let $X$ be a connected spatial diamond and let $\mathcal L$ be an $\ri_E$-local system on $X$. Then there exists a connected quasi-pro-étale cover $Y = \varprojlim Y_n \to X$ for finite étale covers $Y_n$ of $X$ such that $\restrict{\mathcal L}Y$ is constant.
\end{corollary}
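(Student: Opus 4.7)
The plan is to reduce $\mathcal L$ to its finite-level reductions $\mathcal L_n := \mathcal L/p^n$, trivialize each $\mathcal L_n$ inductively on a finite étale cover, and take the inverse limit. Since $\ri_E = \varprojlim_n \ri_E/p^n$ is an inverse limit indexed by $\N$, \cref{rslt:projlim-of-locsys} identifies $\mathcal L$ with a compatible system $(\mathcal L_n)_n$, where each $\mathcal L_n$ has constant rank $r$ by connectedness of $X$. Because $E$ is a completed algebraic extension of $\Q_p$, we have $\ri_E/p^n = \varinjlim_F \ri_F/p^n$ over finite subextensions $F \subset E$, with each $\ri_F/p^n$ finite discrete. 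Since $X$ is spatial (hence qcqs), \cref{rslt:injlim-of-locsys} lets us descend each $\mathcal L_n$ to an $\ri_{F_n}/p^n$-local system $\mathcal L_n'$ for some finite $F_n \subset E$, and by enlarging the $F_n$ we arrange a nested sequence $F_1 \subset F_2 \subset \cdots$ for which the $\mathcal L_n'$ are compatible under reduction mod $p^{n-1}$.

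I then construct inductively a tower $\cdots \to Y_n \to Y_{n-1} \to \cdots \to Y_0 := X$ of finite étale morphisms together with trivializations $t_n\colon \mathcal L_n'|_{Y_n} \isoto (\ri_{F_n}/p^n)^r$ compatible in the sense that $t_n \bmod p^{n-1}$, after scalar extension to $\ri_{F_n}/p^{n-1}$, agrees with the pullback of $t_{n-1}$. For the inductive step, $Y_{n-1}$ is spatial (being finite étale over a spatial diamond), so \cref{rslt:locsys-trivial-on-fet-cover} applied to the finite-coefficient system $\mathcal L_n'|_{Y_{n-1}}$ produces a finite étale cover $Y_n \to Y_{n-1}$ and some trivialization $\tau$ of $\mathcal L_n'|_{Y_n}$. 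The discrepancy between $\tau \bmod p^{n-1}$ and the pullback of $t_{n-1}$ is an element $g \in \GL_r(\ri_{F_n}/p^{n-1})(Y_n)$; since $\GL_r(\ri_{F_n}/p^n) \surjto \GL_r(\ri_{F_n}/p^{n-1})$ is surjective (lift entries, using that units lift in the local ring $\ri_{F_n}/p^n$), one lifts $g$ to $\tilde g$ and sets $t_n := \tilde g^{-1}\tau$.

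Set $Y := \varprojlim_n Y_n$. By \cite[Lemma 11.22]{etale-cohomology-of-diamonds} this is a spatial diamond with $\abs Y = \varprojlim_n \abs{Y_n}$, and $Y \to X$ is pro-étale, hence lies in $X_\qproet$. The compatible system $(t_n)$ yields an isomorphism $\mathcal L|_Y = \varprojlim_n \mathcal L_n|_Y \isoto \varprojlim_n (\ri_{F_n}/p^n)^r = \ri_E^r$, so $\mathcal L|_Y$ is constant. For connectedness, $\pi_0(Y_n)$ is a nonempty finite set (as $X$ is connected and $Y_n \to X$ is finite étale), so the inverse system $(\pi_0(Y_n))$ has nonempty limit by the finite intersection property. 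Choosing a compatible sequence of connected components $Y_n^\circ \subset Y_n$ (each still finite étale over $X$) and replacing $Y_n$ by $Y_n^\circ$ throughout yields the desired connected $Y$ on which $\mathcal L$ remains constant.

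The main obstacle is the compatibility bookkeeping across levels: arranging the descents $\mathcal L_n'$ consistently along a nested sequence of finite subfields $F_n$, and adjusting trivializations so that they reduce correctly. The crucial algebraic ingredient is the surjectivity of $\GL_r$ between successive finite-level quotients, which allows the inductive correction of $\tau$ to produce a globally compatible family $(t_n)$; once this compatibility is secured, the inverse limit and connectedness steps proceed routinely.
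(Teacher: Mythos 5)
Your proof follows essentially the same strategy as the paper: reduce $\mathcal L$ modulo $p^n$, use \cref{rslt:injlim-of-locsys} to descend each finite-level reduction to a finite discrete coefficient ring, trivialize on a finite étale cover via \cref{rslt:locsys-trivial-on-fet-cover}, build a tower, and pass to the limit $Y = \varprojlim Y_n$, invoking \cite[Lemma 11.22]{etale-cohomology-of-diamonds} for spatiality and connectedness. The differences are in the bookkeeping, and cut both ways. Your effort in arranging compatible descents $\mathcal L_n'$ along a nested chain of subfields $F_1 \subset F_2 \subset \cdots$ is unnecessary: at each level $n$ you only need \emph{some} finite $F_n$ to make $\mathcal L/p^n$ descend so that \cref{rslt:locsys-trivial-on-fet-cover} applies over $Y_{n-1}$, and once $\mathcal L/p^n|_{Y_n}$ (with its original $\ri_E/p^n$-coefficients) is constant, nothing about $\mathcal L_n'$ itself is needed further; this is what the paper does. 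On the other hand, your explicit $\GL_r$-lifting argument — correcting the trivialization $\tau$ by $\tilde g^{-1}$ to produce a family $(t_n)$ compatible under reduction — is a genuine clarification of the final step. The paper concludes constancy of $\mathcal L|_Y$ from \cref{rslt:projlim-of-locsys}, but a priori that result only shows $\mathcal L|_Y$ is a local system; the upgrade from ``each $\mathcal L/p^n|_Y$ is constant'' to ``$\mathcal L|_Y$ is constant'' is exactly the observation you make, namely that the level transition isomorphisms are given by locally constant $\GL_r(\ri_E/p^{n-1})$-valued sections (single group elements after passing to the connected $Y$) and lift through the surjection $\GL_r(\ri_E/p^n) \surjto \GL_r(\ri_E/p^{n-1})$. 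Finally, you pass to connected components at the end via the finite intersection property, while the paper does so at each stage using \cref{rslt:fet-cover-of-connected-space-has-finitely-many-components}; both are correct.
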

\begin{proof}
By the same argument as in the proof of  \cref{rslt:et-locsys-equiv-qproet-locsys}) there is a disjoint clopen cover of $X$ on which $\mathcal L$ has constant rank. Since $X$ is connected this means that $\mathcal L$ must have constant rank.  By \cref{rslt:injlim-of-locsys}, there exists a finite extension $F$ of $\mathbb{Q}_p$ such that $\mathcal L/p$ is induced by an $\ri_F/p$-local system. Thus by \cref{rslt:locsys-trivial-on-fet-cover} there is a finite étale cover $Y_1$ of $X$ such that $\mathcal L/p$ becomes constant on $Y_1$. By \cref{rslt:fet-cover-of-connected-space-has-finitely-many-components} below, we can replace $Y_1$ by one of its connected components and hence assume that $Y_1$ is connected. In the same way we can construct a tower $\dots \to Y_n \to \dots \to Y_1 \to X$ with all $Y_n \to X$ finite étale, all $Y_n$ connected and $\restrict{\mathcal L/p^n}{Y_n}$ being constant. Letting $Y = \varprojlim Y_n$ and using \cref{rslt:projlim-of-locsys} we see that $\restrict{\mathcal L}Y$ is constant. Moreover, $\abs Y = \varprojlim \abs{Y_n}$ by \cite[Lemma 11.22]{etale-cohomology-of-diamonds} and a cofiltered limit of qcqs connected topological spaces is connected (cf. \cite[Remark 3.2]{matti-rigid-DW-functor}).
\end{proof}

\begin{lemma} \label{rslt:fet-cover-of-connected-space-has-finitely-many-components}
Let $f\colon Y \to X$ be a finite étale morphism of small v-sheaves. If $X$ is connected, then $Y$ has only finitely many connected components and $f$ restricted to any connected component of $Y$ is surjective.
\end{lemma}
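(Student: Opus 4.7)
The plan is to combine the pro-étale local triviality of finite étale morphisms with the connectedness of $X$ in order to control the clopen sub-v-sheaves of $Y$. The key observation is that every non-empty clopen piece of $Y$ is itself a finite étale cover of $X$, which must be surjective by connectedness of $X$; then the count of such pieces is controlled by the degree of $f$.

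First, by the characterization of finite étale morphisms of small v-sheaves (cf.\ \cite[Proposition 10.11]{etale-cohomology-of-diamonds}), there is a pro-étale cover $U \to X$ such that $Y \cprod_X U \isom \bigdunion_{i=1}^{n_U} U$ for some finite integer $n_U$. This local degree is pro-étale locally constant on $X$, and connectedness of $X$ forces it to equal a single integer $n \ge 0$ globally. Next, let $Z \subset Y$ be any non-empty clopen sub-v-sheaf. The inclusion $Z \injto Y$ is simultaneously an open and a closed immersion, so composing with $f$ shows that $Z \to X$ is étale (composition of étale maps) and proper (a closed immersion composed with the proper map $f$, using that finite étale implies proper). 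Since proper étale morphisms of small v-sheaves are finite étale, $Z \to X$ is finite étale, and its image in $X$ is therefore both open (étale morphisms are open) and closed (proper morphisms are universally closed). Connectedness of $X$ then forces this image to equal $X$, so $Z \to X$ is surjective.

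For the bound on the number of connected components, consider any decomposition $Y = Z_1 \dunion \cdots \dunion Z_k$ into non-empty pairwise disjoint clopen sub-v-sheaves. By the previous step, each $Z_j \to X$ is a finite étale surjection of some positive degree $n_j \ge 1$, and additivity of the degree under disjoint union gives $\sum_{j=1}^k n_j = n$. Hence $k \le n$, so $Y$ admits at most $n$ pairwise disjoint non-empty clopen pieces; a maximal such decomposition realizes the connected components, of which there are therefore at most $n$, each surjecting onto $X$.

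The main technical point will be to invoke the correct structural properties of finite étale morphisms of small v-sheaves — namely their pro-étale locally constant degree, their universal openness and closedness, and the implication ``proper $+$ étale $\Rightarrow$ finite étale'' — all of which should be standard consequences of \cite[Proposition 10.11]{etale-cohomology-of-diamonds} and the general theory. Once these are in place, the argument is formal.
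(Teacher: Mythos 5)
Your argument is correct in substance and shares its core mechanism with the paper's proof: both establish that $\abs f$ is open and closed (étale $\Rightarrow$ open, proper $\Rightarrow$ closed), conclude by connectedness of $X$ that every non-empty clopen piece of $Y$ surjects onto $X$, and then bound the number of such pieces using the finiteness of the fibers of $f$. The paper does this directly and by contradiction: if $Y$ had infinitely many components, one could partition $\abs Y$ into arbitrarily many clopen pieces, each mapping onto $\abs X$, forcing fibers of unbounded size, contradicting finiteness. Your version routes the same idea through a degree formalism: split $f$ on a cover, deduce constant degree $n$ by connectedness, observe each clopen piece is finite étale of positive degree, and use additivity. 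Both are valid, but your version imports more machinery and thereby takes on obligations the paper avoids: (a) the cover trivializing $Y$ is a v-cover (by a strictly totally disconnected perfectoid space), not a ``pro-étale cover'', since a general small v-sheaf has no (quasi-)pro-étale site — this is a terminological slip that is harmless once corrected; (b) the claim ``proper $+$ étale $\Rightarrow$ finite étale'' for small v-sheaves is plausible but not something you can simply wave at \cite[Proposition 10.11]{etale-cohomology-of-diamonds}, and in fact you do not need it — for the surjectivity step you only need the image to be open and closed, and for the counting step you only need fibre cardinalities, so the paper's direct count of fibre points is both shorter and cheaper. If you drop the degree formalism and argue directly that a partition of $\abs Y$ into $k$ clopen pieces, each mapping onto $X$, forces every fibre to have size $\geq k$, you recover the paper's argument exactly.
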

\begin{proof}
Since $f$ is finite étale, $\abs f\colon \abs Y \to \abs X$ is open and closed (by the definition of the topology on small v-sheaves it is enough to show that the pullback of $f$ to a perfectoid space is open and closed; then openness follows from \cite[Lemma 1.7.9]{huber-etale-cohomology} and closedness follows e.g. from the valuative criterion of properness, \cite[Proposition 18.6]{etale-cohomology-of-diamonds}). Assume that $Y$ has infinitely many connected components. Then for every $n \ge 1$ one can decompose $\abs Y$ into $n$ disjoint clopen subsets. But the image under $f$ of each of these subsets is clopen in $X$, hence all of $X$. This implies that the fiber of $\abs f$ over any point $x \in \abs X$ contains at least $n$ points, for all $n$, i.e. all fibers are infinite. On the other hand, the finiteness of $f$ implies that its fibers must be finite, contradiction.
\end{proof}

We get an immediate application to the category of local systems with integral model, which plays an important role in our applications.

\begin{definition}\label{def:rep-integral-model}
Let $E$ be a completed algebraic extension of $\Q_p$ with ring of integers $\ri_E$. Let $X$ be a small v-sheaf and let $L$ be a $E$-local system on $X$. An \emph{integral model} of $L$ is an $\ri_E$-local system $\mathcal L \subset L$ such that $L = \mathcal L[p^{-1}]$. The full subcategory of $\catlocsys EX$ of those local systems which have an integral model is denoted $\catlocsysint EX$.
\end{definition}

\begin{theorem} \label{rslt:ILoc-is-abelian}
Let $E$ be a completed algebraic extension of $\Q_p$ with ring of integers $\ri_E$. Let $X$ be a connected spatial diamond. Then $\catlocsysint EX$ is an abelian subcategory of the category of all sheaves of $E$-modules on $X_\vsite$.
\end{theorem}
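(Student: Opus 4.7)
The ambient category of sheaves of $E$-modules on $X_\vsite$ is abelian, so it suffices to show that $\catlocsysint EX$ is closed under finite direct sums---immediate, since $\mathcal L_1 \dsum \mathcal L_2$ is an integral model of $L_1 \dsum L_2$---and that every morphism $f\colon L_1 \to L_2$ in $\catlocsysint EX$ has kernel and cokernel (formed in the ambient category) lying in $\catlocsysint EX$.

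The strategy is to produce a single connected cover on which $f$ becomes a matrix. Pick integral models $\mathcal L_1, \mathcal L_2$ of $L_1, L_2$. Applying \cref{rslt:profinet-cover} first to $\mathcal L_1$ and then to the pullback of $\mathcal L_2$ to the resulting connected spatial diamond yields a connected quasi-pro-\'etale cover $Y \to X$ with $\abs Y$ connected and trivializations $\restrict{\mathcal L_i}Y \isom \ri_E^{r_i}$ (as constant sheaves) for $i = 1, 2$. The key computation is that after inverting $p$ such a morphism of constant sheaves is given by a single matrix: by \cref{rslt:constant-sheaf-commutes-with-lim-and-colim} we have $(\ri_E)_Y = \varprojlim_n (\ri_E/p^n)_Y$, and since $\ri_E/p^n$ is discrete and $\abs Y$ is connected, \cref{rslt:explicit-computation-of-S-X} gives $\Gamma(Y_\vsite, (\ri_E/p^n)_Y) = \ri_E/p^n$; taking the inverse limit yields $\Gamma(Y_\vsite, (\ri_E)_Y) = \ri_E$. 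Consequently $\restrict f Y$ is multiplication by a matrix $A \in M_{r_2 \cprod r_1}(E)$.

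With this in hand I define subsheaves of $\ri_E$-modules
\begin{equation*}
\mathcal K := \mathcal L_1 \isect \ker(f) \subset L_1, \qquad \mathcal C := \img\bigl(\mathcal L_2 \to \operatorname{coker}(f)\bigr) \subset \operatorname{coker}(f).
\end{equation*}
Their pullbacks to $Y$ are the constant sheaves associated to $\ker(A) \isect \ri_E^{r_1}$ and $\ri_E^{r_2}/(\ri_E^{r_2} \isect A(E^{r_1}))$ respectively. Both are finitely generated torsion-free modules over the valuation ring $\ri_E$, hence free (as $\ri_E$ is a B\'ezout domain, finitely generated torsion-free modules are free). Therefore $\mathcal K$ and $\mathcal C$ become constant of finite rank on the v-cover $Y \to X$, so they are $\ri_E$-local systems on $X_\vsite$. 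Checking on $Y$ that $\ri_E^{r_1} \isect \ker(A)$ is a lattice in $\ker(A)$ and $\ri_E^{r_2}/(\ri_E^{r_2} \isect A(E^{r_1}))$ is a lattice in $\coker(A)$ one verifies $\mathcal K[p^{-1}] = \ker(f)$ and $\mathcal C[p^{-1}] = \operatorname{coker}(f)$, so both lie in $\catlocsysint EX$.

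The principal obstacle is the matrix description of morphisms on the connected cover $Y$: without the reduction to finite-dimensional linear algebra over the field $E$, one could not control the ranks of kernel and cokernel, nor ensure the existence of integral models. This reduction combines in an essential way the connectedness of $\abs Y$ supplied by \cref{rslt:profinet-cover} with the totally disconnected, pro-discrete nature of $\ri_E$ equipped with its $p$-adic topology.
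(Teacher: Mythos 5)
Your argument follows the same strategy as the paper: trivialize both integral models on a connected quasi-pro-\'etale cover $Y$ via \cref{rslt:profinet-cover}, use \cref{rslt:explicit-computation-of-S-X} together with the connectedness of $\abs Y$ (and the total disconnectedness of $E$) to reduce the morphism to a single matrix $A$ over $E$, and invoke the structure theory of finitely generated torsion-free modules over the valuation ring $\ri_E$. Handling the cokernel directly via $\mathcal C = \img\bigl(\mathcal L_2 \to \operatorname{coker}(f)\bigr)$, rather than by dualizing the kernel case as the paper does, is a harmless and arguably more uniform variation.

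There is, however, one genuine gap. You assert without argument that $\ker(A) \isect \ri_E^{r_1}$ is \emph{finitely generated}. Torsion-freeness is clear, but over the generally non-Noetherian ring $\ri_E$ --- for instance $\ri_E = \ri_{\Cpx_p}$, the main case of interest --- a submodule of a finitely generated module need not be finitely generated; the maximal ideal $\mm \subset \ri_E$ is already a counterexample. The paper is careful here: it observes that the image $A(\ri_E^{r_1}) \subset \ri_E^{r_2}$ is finitely generated and torsion-free, hence free by \cite[Proposition 2.8 (i)]{rigid-p-adic-hodge}, hence finitely \emph{presented}; the kernel of the surjection $\ri_E^{r_1} \surjto A(\ri_E^{r_1})$ onto a finitely presented module is therefore finitely generated (in fact a direct summand, hence free). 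Your cokernel module $\ri_E^{r_2}/(\ri_E^{r_2}\isect A(E^{r_1}))$ is unproblematic since it is a quotient of $\ri_E^{r_2}$. Once you supply the missing argument for the kernel module, the rest of the proof stands.
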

\begin{proof}
We only have to show that kernels and cokernels of morphisms in $\catlocsysint EX$  (which exist as sheaves of $E$-modules on $X_\vsite$) are objects in $\catlocsysint EX$. Let $L$ and $L''$ be $E$-local systems with integral models on $X$, and let $\varphi: L \to L''$ be a morphism of $E$-local systems. The local systems $L$ and $L''$  have  integral model $\mathcal L$ and $\mathcal L''$, respectively.  We denote by $\psi: \mathcal L \to L''$ the restriction of $\varphi$ to $\mathcal L$.

Let $\mathcal L'$ be the kernel of $\psi$ as a sheaf on $X_\vsite$. We want to show that $\mathcal L'$ is a local system of $\ri$-modules on $X$. Arguing as in \cref{rslt:profinet-cover} simultaneously for the covers trivializing $\mathcal L / p^n$ and $\mathcal L'' / p^n$ we find a connected quasi-pro-étale cover $Y = \varprojlim_n Y_n$ of $X$ where both $\mathcal L$ and $\mathcal L''$ are constant.  Hence on this cover the morphism $\varphi$ is given by an $E$-linear map $\varphi_0: E^r \to E^s$ (see \cref{rslt:explicit-computation-of-S-X}), and we may assume that $\mathcal L$ is the constant sheaf given by $\ri_E^r \subset E^r$. Then the morphism $\restrict\psi Y$ is induced by the module homomorphism $\psi_0$ we get by restricting $\varphi_0$ to $\ri_E^r$.

By definition, $\restrict{\mathcal L'}Y$ is the constant sheaf induced by the $\ri_E$-module $N = \ker(\psi_0)$. Note that the image of $\psi_0$ is a finitely generated and torsion-free $\ri_E$-module, hence free of finite rank by \cite[Proposition 2.8 (i)]{rigid-p-adic-hodge}, and in particular of finite presentation. Therefore $N$ is also finitely generated, which again implies free of finite rank since it is torsion-free. Hence $\mathcal L'$ is indeed a local system of $\ri_E$-modules. After inverting $p$ we find that $\mathcal L'[p^{-1}]$ coincides with the kernel of $\varphi$, which consequently lies in $\catlocsysint EX$.

The corresponding statement about the cokernels follows easily by duality: Let $L \to L'' \to M \to 0$ be an exact sequence of sheaves of $E$-modules such that $L, L'' \in \catlocsysint EX$. On a connected quasi-pro-étale cover on which $L$ and $L''$ are constant (constructed as in the first part of the proof), $M$ is constant as well. In particular, if $(-)^*$ denotes $\IHom(-, E)$ then $M^{**} = M$. One sees easily that $(-)^*$ restricts to a functor on $\catlocsysint EX$. But $M^*$ is the kernel of $L''^* \to L^*$, hence $M^* \in \catlocsysint EX$. Then $M = (M^*)^* \in \catlocsysint EX$ as well.
\end{proof}

\section{Computing Global Sections} \label{sec:globsec}

We will now compute the global sections of $\ri$-local systems on a proper adic space $X$ of finite type over $\Cpx_p$. Our results heavily relie on Scholze's result for the cohomology of $\Fld_p$-local systems in \cite[Theorem 5.1]{rigid-p-adic-hodge}; in fact, all we do is extend the coefficients from $\Fld_p$ to $\ri$ and remove the smoothness hypothesis on $X$ using a resolution of singularities argument (a version of the latter is already found in \cite[Theorem 3.13]{perfectoid-spaces-survey}).

\begin{remark}
In the following, we will always assume that $X$ is proper and \emph{of finite type} over $\Cpx_p$. We define a morphism to be proper, if it is quasicompact, separated and universally closed. This definition is analogous to the notion of properness for diamonds. Note however that it differs from Huber's definition  \cite[Definition 1.3.2]{huber-etale-cohomology}, where the condition of ${}^+$weakly finite type is added for properness. This in turn is more general than our condition of finite type. In fact, Huber's definition allows canonical compactifications: Given an affinoid adic space $X = \Spa(A, A^+)$ of (${}^+$weakly) finite type over $\Cpx_p$, the canonical compactification of $X$ is $\tclos X = \Spa(A, A'^+)$, where $A'^+$ is the completed integral closure of $\Cpx_p + A^{\circ\circ}$ in $A$. This example immediately shows that the following results on global sections do not hold for such $X$, so that we really need to restrict to finite type morphisms.
\end{remark}

We start with the following mild generalization of Scholze's fundamental comparison result \cite[Theorem 5.1]{rigid-p-adic-hodge}. Since it might be useful in other contexts, we provide the details of the proof.

\begin{lemma} \label{rslt:F-q-cohomology-on-proper-space}
Let $X$ be a proper adic space of finite type over $\Cpx_p$, let $q$ be a power of $p$ and let $\mathcal L_1$ be an $\Fld_q$-local system on $X$. Then $H^i(X_\vsite, \mathcal L_1)$ is finite over $\Fld_q$ and the natural map $\mathcal L_1 \to \mathcal L_1 \tensor_{\Fld_q} \check\ri^+_X/p$ induces almost isomorphisms
\begin{align*}
	H^i(X_\vsite, \mathcal L_1) \tensor_{\Fld_q} \ri^a/p \isoto H^i(X_\vsite, \mathcal L_1 \tensor_{\Fld_q} \check\ri_X^{+a}/p)
\end{align*}
for all $i \ge 0$.
\end{lemma}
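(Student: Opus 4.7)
My plan has three main ingredients: (1) pass from the v-site to the quasi-pro-étale site, so that Scholze's machinery applies; (2) extend Scholze's primitive comparison theorem \cite[Theorem 5.1]{rigid-p-adic-hodge} from $\Fld_p$- to $\Fld_q$-coefficients by Galois base change; and (3) remove the smoothness hypothesis by a resolution of singularities / cohomological descent argument.

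For step~(1), since $\Fld_q$ is discrete, \cref{rslt:et-locsys-equiv-qproet-locsys} combined with \cref{rslt:qproet-locsys-equiv-v-locsys-complete-huber-rings} lets me view $\mathcal L_1$ as the pullback of an étale (hence quasi-pro-étale) $\Fld_q$-local system, which I still denote $\mathcal L_1$. By \cref{rslt:higher-direct-image-of-structure-sheaves-qproet-to-v} we have $R\lambda_* (\check\ri^{+a}_X/p) = \hat\ri^{+a}_X/p$, and since $\mathcal L_1$ is locally free over $\Fld_q$ the projection formula gives
\[ R\lambda_*\bigl(\mathcal L_1 \tensor_{\Fld_q} \check\ri^{+a}_X/p\bigr) = \mathcal L_1 \tensor_{\Fld_q} \hat\ri^{+a}_X/p. \]
A simpler variant of the same argument using w-contractible covers (\cref{rslt:w-contractible-basis}), on which every constant sheaf is v-acyclic, shows $R\lambda_* \mathcal L_1 = \mathcal L_1$. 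Hence both sides of the claimed isomorphism can be computed on $X_\qproet$ with $\hat\ri^{+a}_X/p$ in place of $\check\ri^{+a}_X/p$, and I am reduced to proving the analogous statement there.

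For step~(2), first assume $X$ is smooth. Viewing $\mathcal L_1$ as an $\Fld_p$-local system of rank $r \cdot [\Fld_q \colon \Fld_p]$, Scholze's theorem \cite[Theorem 5.1]{rigid-p-adic-hodge} (on the pro-étale side) yields finiteness of $H^i(X_\et, \mathcal L_1)$ over $\Fld_p$ and an almost isomorphism
\[ H^i(X_\et, \mathcal L_1) \tensor_{\Fld_p} \ri^a/p \isoto H^i(X_\qproet, \mathcal L_1 \tensor_{\Fld_p} \hat\ri^{+a}_X/p). \]
Both sides are naturally modules over $\Fld_q \tensor_{\Fld_p} \Fld_q$, acting on the left $\Fld_q$-factor through $\mathcal L_1$ and on the right through the Teichmüller inclusion $\Fld_q \injto \bar\Fld_p \subset \ri/p$. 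Since $\Fld_q/\Fld_p$ is étale and $\Fld_q$ lifts into $\ri/p$, the ring $\Fld_q \tensor_{\Fld_p} \ri/p$ splits as a product $\prod_\sigma \ri/p$ indexed by the $\Fld_p$-embeddings $\sigma\colon \Fld_q \injto \ri/p$; both sides of Scholze's iso decompose accordingly, and projecting onto the factor indexed by the fixed Teichmüller embedding yields the desired almost isomorphism with tensor products taken over $\Fld_q$.

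For step~(3), in the non-smooth case I use a resolution of singularities in the rigid-analytic setting as in \cite[Theorem 3.13]{perfectoid-spaces-survey} to produce a proper surjective simplicial hypercover $\pi_\bullet\colon X'_\bullet \to X$ with each $X'_n$ smooth proper over $\Cpx_p$. Since the v-topology satisfies descent along such covers, the descent spectral sequence
\[ E_1^{p,q} = H^q\bigl((X'_p)_\vsite, \pi_p^* \mathcal L_1\bigr) \Rightarrow H^{p+q}(X_\vsite, \mathcal L_1) \]
and its counterpart for $\mathcal L_1 \tensor_{\Fld_q} \check\ri^{+a}_X/p$ converge; step~(2) applies termwise and is functorial, so the comparison map of the lemma is an isomorphism at every $E_1$-term in the almost sense, giving the claim for $X$. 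Finiteness of $H^i(X_\vsite, \mathcal L_1)$ follows from finiteness of each $H^q((X'_p)_\vsite, \pi_p^* \mathcal L_1)$ and the vanishing of the spectral sequence beyond a bounded range. The main obstacle will be step~(3): even assuming the existence of a smooth proper hyperresolution, one has to verify that v-cohomological descent behaves well enough to transport both finiteness and the almost isomorphism term by term, and to control the rigid-analytic hyperresolution sufficiently well for finite type properness to be preserved on each level.
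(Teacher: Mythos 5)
Your overall structure is sound and your conclusion is correct, but your route differs from the paper's at two key points, in one case advantageously and in the other at the cost of introducing a harder auxiliary problem.

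In step~(2) you avoid re-examining the internals of Scholze's proof of \cite[Theorem~5.1]{rigid-p-adic-hodge}: you apply it as a black box with $\Fld_p$-coefficients (regarding an $\Fld_q$-local system as an $\Fld_p$-local system of larger rank) and then exploit the splitting $\Fld_q \tensor_{\Fld_p} \ri/p \isom \prod_\sigma \ri/p$ to extract the desired $\Fld_q$-linear comparison from the $\sigma_0$-factor. For this one does need to check that $\Fld_q$ embeds into $\ri/p$ (it does, via $W(\bar\Fld_p)/p = \bar\Fld_p \subset \ri/p$), that Scholze's comparison map is $\Fld_q\tensor_{\Fld_p}\ri/p$-equivariant (it is, since the map $\mathcal L_1 \to \mathcal L_1 \tensor_{\Fld_p} \hat\ri_X^{+a}/p$ is $\Fld_q$-linear on the source and $\hat\ri_X^{+}/p$-linear on the target), and that the $\sigma_0$-component of the $\Fld_p$-linear natural map really is the $\Fld_q$-linear natural map. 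All of this holds, so this route is valid — and in a sense cleaner than the paper's, which instead walks through Scholze's Lemmas 5.2--5.10 and 2.12 with $\varphi$ replaced by $\varphi_q$ and the Artin--Schreier sequence rewritten with $f \mapsto f^q - f$ so that $((\Cpx_p^\flat)^r)^{\varphi_q = 1} = \Fld_q^r$. Your Galois-descent shortcut buys you the statement without reopening the proof; the paper's version gives a proof of the $\Fld_q$-theorem on its own terms.

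In step~(3) you diverge in the opposite direction. The paper separates the reduction into two simple pieces: first it handles only the constant sheaf $\Fld_q$ on singular $X$ by resolution of singularities as in \cite[Theorem~3.17]{perfectoid-spaces-survey}; then for a general $\mathcal L_1$ it takes a \emph{finite étale} cover $f\colon Y \to X$ trivializing $\mathcal L_1$ (possible by \cref{rslt:locsys-trivial-on-fet-cover}), forms the Čech resolution $0 \to \mathcal L_1 \to f_{1*}f_1^{-1}\mathcal L_1 \to f_{2*}f_2^{-1}\mathcal L_1 \to \cdots$, tensors with $\check\ri_X^+/p$ using flatness (\cref{rslt:hat-O-flat-over-O-E}), and uses that $f_k$ is finite so $f_{k*}$ is exact — which reduces the statement termwise to the \emph{constant} case on the $Y_k$. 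Your proposal instead builds a smooth proper simplicial hyperresolution of $X$ and applies the smooth case at every simplicial level, directly to $\pi_p^*\mathcal L_1$. This can be made to work, but it moves the hardest part (constructing and controlling a smooth proper hyperresolution in the rigid-analytic category, and verifying compatibility of the almost comparison maps across the simplicial levels and almost filtrations) into a place where the paper carefully avoids it: the paper only invokes resolution of singularities for the constant sheaf, where it is a cited result of Scholze, and the reduction to the constant case is a purely finite-étale Čech argument that needs no hyperresolution at all. You identify this obstacle yourself; the cure is exactly the paper's trivialize-then-Čech reduction, after which your step~(2) and the constant singular case carry the day. One small precision on step~(1): what one actually uses is that $\nu^{-1}$ and $\lambda^{-1}\nu^{-1}$ preserve cohomology (\cite[Propositions~14.7,~14.8]{etale-cohomology-of-diamonds}); framing it as ``constant sheaves are v-acyclic on w-contractibles'' is not quite the right statement, though your conclusion $R\lambda_*\mathcal L_1 = \mathcal L_1$ is correct for étale local systems.
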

\begin{proof}
Let us first assume that $X$ is smooth. By \cref{rslt:locfree-sheaf-fully-faithful-qproet-to-v} and \cite[Proposition 14.7]{etale-cohomology-of-diamonds} (together with \cref{rslt:qproet-locsys-equiv-v-locsys-complete-huber-rings}) we can compute all cohomologies on the quasi-pro-étale (or even the étale) site of $X$. In the case $q = p$ the claim is \cite[Theorem 5.1]{rigid-p-adic-hodge}. The proof of that theorem generalises directly to the case $q = p^n$ (and adapts to the minor difference between the quasi-pro-étale site used in the present paper and the pro-étale site used in the reference), which we outline in the following.

First note that Lemmas 5.2 through 5.5 and 5.10 of \cite{rigid-p-adic-hodge} do not depend on $q$ or on the difference between the quasi-pro-étale and the pro-étale site. Lemma 5.6 still holds if we replace $\Fld_p$ by $\Fld_q$ everywhere: The only place where it matters is in the application of Lemma 4.12; the latter is true for $\Fld_p$ replaced by $\Fld_q$ as it only uses $\mathcal L_1$ to get a finite étale cover of $X$ on which it becomes trivial. Note also that the arguments of Lemma 4.12 work the same on the quasi-pro-étale site, notably because $\hat\ri^+_X$ is still almost acyclic on affinoid perfectoids by \cite[Proposition 8.5.(iii)]{etale-cohomology-of-diamonds}. Now it follows that Lemma 5.8 of \cite{rigid-p-adic-hodge} is true with $\Fld_p$ replaced by $\Fld_q$, as this difference only matters in the application of Lemma 5.6.

Next we observe that Lemma 2.12 of \cite{rigid-p-adic-hodge} still holds if we replace $p$ by $q$ and the Frobenius $\varphi$ by the Frobenius $\varphi_q$ over $\Fld_q$ in condition (iii): The only part of the proof that needs a slight adjustment is the following: Let $K$ be a separably closed field over $\Fld_q$ and let $V$ be a finite-dimensional $K$-vector space together with an isomorphism $V \tensor_{K,\varphi_q} K$. Then $V$ is trivial as a $\varphi$-module, i.e. admits a $K$-basis fixed by $\varphi$. This statement can be found for example in \cite[Proposition 3.2.4]{schneider_2017}.

From all of the above we deduce that the first part of the proof of \cite[Theorem 5.1]{rigid-p-adic-hodge} works the same way with $\Fld_p$ replaced by $\Fld_q$. We arrive at $H^i(X_\qproet, \mathcal L_1 \tensor_{\Fld_q} \hat\ri_{X^\flat}) \isom (\Cpx_p^\flat)^r$, compatibly with the action of the Frobenius $\varphi_q$. As in the reference we use the Artin-Schreier sequence $0 \to \mathcal L_1 \to \mathcal L_1 \tensor_{\Fld_q} \hat\ri_{X^\flat} \to \mathcal L_1 \tensor_{\Fld_q} \hat\ri_{X^\flat} \to 0$, where the second map is $v \tensor f \mapsto v \tensor (f^q - f)$. This sequence is exact, by the (slightly generalised) argument of the reference. As in the reference we deduce
\begin{align*}
	H^i(X_\qproet, \mathcal L_1) = H^i(X_\qproet, \mathcal L_1 \tensor_{\Fld_q} \hat\ri_{X^\flat})^{\varphi_q=1} = ((\Cpx_p^\flat)^r)^{\varphi_q=1} = \Fld_q^r.
\end{align*}
This finishes the proof in the smooth case.

If $X$ is not smooth but $\mathcal L_1 = \Fld_q$ then we can deduce the claim from the smooth case by resolution of singularities, in exactly the same way as in \cite[Theorem 3.17]{perfectoid-spaces-survey}. For general $\mathcal L_1$, we argue similar to \cite[Theorem 3.13]{perfectoid-spaces-survey}: Let $f\colon Y \to X$ be a finite étale cover such that $f^{-1}\mathcal L_1 = \restrict{\mathcal L_1}Y$ is constant (this exists either by the classical theory of adic spaces or by \cref{rslt:locsys-trivial-on-fet-cover}). For $k \ge 1$, let $Y_k$ denote the $k$-fold fiber product of $Y$ over $X$ and let $f_k\colon Y_k \to X$ be the projection. Then we have an exact sequence of sheaves on $X_\vsite$:
\begin{align*}
	0 \to \mathcal L_1 \to f_{1*}f_1^{-1}\mathcal L_1 \to f_{2*}f_2^{-1}\mathcal L_1 \to f_{3*}f_3^{-1}\mathcal L_1 \to \dots
\end{align*}
Tensoring with $\check\ri^+_X/p$ and using the flatness of $\check\ri^+_X/p$ (see \cref{rslt:hat-O-flat-over-O-E}) we obtain the exact sequence of sheaves
\begin{align*}
	0 \to \mathcal L_1 \tensor_{\Fld_q} \check\ri^+_X/p \to f_{1*}f_1^{-1}\mathcal L_1 \tensor_{\Fld_q} \check\ri^+_X/p \to f_{2*}f_2^{-1}\mathcal L_1 \tensor_{\Fld_q} \check\ri^+_X/p \to \dots
\end{align*}
By a classical spectral sequence argument, in order to prove the desired almost isomorphism $H^i(X_\vsite, \mathcal L_1) \tensor_{\Fld_q} \ri^a/p \isoto H^i(X_\vsite, \mathcal L_1 \tensor_{\Fld_q} \check\ri_X^{+a}/p)$ it is enough to prove the almost isomorphisms
\begin{align*}
	H^i(X_\vsite, f_{k*}f_k^{-1}\mathcal L_1) \tensor_{\Fld_q} \ri^a/p \isoto H^i(X_\vsite, f_{k*}f_k^{-1}\mathcal L_1 \tensor_{\Fld_q} \check\ri_X^{+a}/p)
\end{align*}
for all $i \ge 0$ and $k \ge 1$. We have $f_{k*}f_k^{-1}\mathcal L_1 \tensor_{\Fld_q} \check\ri_X^{+a}/p = f_{k*}(f_k^{-1}\mathcal L_1 \tensor_{\Fld_q} \check\ri_{Y_k}^{+a}/p)$ by \cite[Proposition 5.5.1.(iv)]{huber-etale-cohomology}. Furthermore, since $f_k$ is finite, $f_{k*}$ is exact (by \cite[Proposition 2.6.4]{huber-etale-cohomology}), so that for every sheaf $\mathcal F$ on $Y_{k\vsite}$ we have $H^i(X_\vsite, f_{k*}\mathcal F) = H^i(Y_{k\vsite}, \mathcal F)$. This reduces the claim to proving the almost isomorphism
\begin{align*}
	H^i(Y_{k\vsite}, f_k^{-1}\mathcal L_1) \tensor_{\Fld_q} \ri^a/p \isoto H^i(Y_{k\vsite}, f_k^{-1}\mathcal L_1 \tensor_{\Fld_q} \check\ri_{Y_k}^{+a}/p)
\end{align*}
for all $i \ge 0$ and $k \ge 1$. But this follows from the case $\mathcal L_1 = \Fld_q$ discussed above, because $f_k^{-1}\mathcal L_1$ is constant.
\end{proof}

\begin{corollary} \label{rslt:O-F-mod-pi-n-cohomology-on-proper-space}
Let $X$ be a proper adic space of finite type over $\Cpx_p$, let $F$ be a finite extension of $\Q_p$ with ring of integers $\ri_F$ and uniformizer $\pi \in \ri_F$. For any $n > 0$, let $\mathcal L_n$ be an $\ri_F/\pi^n$-local system on $X_\vsite$. Then $H^i(X_\vsite, \mathcal L_n)$ is finite over $\ri_F/\pi^n$ and the natural map $\mathcal L_n \to \mathcal L_n \tensor_{\ri_F} \check\ri^+_X$ induces almost isomorphisms
\begin{align*}
	H^i(X_\vsite, \mathcal L_n) \tensor_{\ri_F} \ri^a \isoto H^i(X_\vsite, \mathcal L_n \tensor_{\ri_F} \check\ri_X^{+a})
\end{align*}
for all $i \ge 0$.
\end{corollary}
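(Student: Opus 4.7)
The plan is to argue by induction on $n$. I would treat the base case $n=1$ as the main content and reduce the inductive step to a five-lemma argument.

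For the base case, $\mathcal L_1$ is an $\Fld_q$-local system (where $q = \abs{\ri_F/\pi}$), and since $\mathcal L_1$ is killed by $\pi$ the desired almost isomorphism rewrites as
\begin{align*}
	H^i(X_\vsite, \mathcal L_1) \tensor_{\Fld_q} \ri^a/\pi \isoto H^i(X_\vsite, \mathcal L_1 \tensor_{\Fld_q} \check\ri_X^{+a}/\pi).
\end{align*}
This is the statement of \cref{rslt:F-q-cohomology-on-proper-space} with $\pi$ in place of $p$, and my claim is that the proof of that lemma adapts without change to any pseudo-uniformizer. Indeed, the core computation $H^i(X_\qproet, \mathcal L_1 \tensor_{\Fld_q} \hat\ri_{X^\flat}) \isom (\Cpx_p^\flat)^r$ takes place on the tilt in characteristic $p$ and does not refer to the chosen untilted uniformizer, while the transfer to the untilted side only uses the identification $\check\ri_X^+/\varpi \isom \hat\ri^+_{X^\flat}/\varpi^\flat$ (for a tilted $\varpi^\flat$ untilting to $\varpi$ up to a unit), which is available for $\varpi = \pi$ just as for $\varpi = p$. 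The reductions via resolution of singularities and via the finite étale cover trivializing $\mathcal L_1$ then carry over verbatim, giving both the base case and the finiteness of $H^i(X_\vsite, \mathcal L_1)$ over $\Fld_q$.

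For the inductive step, the input is the short exact sequence of $\ri_F/\pi^n$-modules $0 \to \ri_F/\pi \xto{\pi^{n-1}} \ri_F/\pi^n \to \ri_F/\pi^{n-1} \to 0$. Since $\mathcal L_n$ is locally free over $\ri_F/\pi^n$, tensoring yields a short exact sequence of $\ri_F$-local systems
\begin{align*}
	0 \to \mathcal L_n/\pi \to \mathcal L_n \to \mathcal L_n/\pi^{n-1} \to 0,
\end{align*}
in which the outer terms are covered by the base case and the induction hypothesis, respectively. Taking the long exact sequence of cohomology and tensoring with $\ri^a$ over $\ri_F$ preserves exactness, since $\ri$ is torsion-free and hence flat over the DVR $\ri_F$. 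On the other side, tensoring the short exact sequence of local systems with $\check\ri_X^{+a}$ over $\ri_F$ again produces a short exact sequence: locally $\mathcal L_n \cong (\ri_F/\pi^n)^r$, so the resulting sequence $0 \to (\check\ri_X^{+a}/\pi)^r \to (\check\ri_X^{+a}/\pi^n)^r \to (\check\ri_X^{+a}/\pi^{n-1})^r \to 0$ is exact by $\pi$-torsion-freeness of $\check\ri_X^+$ (checked on the basis of strictly totally disconnected spaces, where the sections are rings of integers of perfectoid fields). Comparing the two long exact sequences via the natural base-change maps and invoking the five-lemma yields the desired almost isomorphism for $\mathcal L_n$, and finiteness of $H^i(X_\vsite, \mathcal L_n)$ over $\ri_F/\pi^n$ follows from the same diagram together with finiteness for $\mathcal L_n/\pi$ and $\mathcal L_n/\pi^{n-1}$.

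The main obstacle is verifying the base case: one needs to check that the proof of \cref{rslt:F-q-cohomology-on-proper-space} is indeed uniform in the choice of pseudo-uniformizer on the untilted side, so that substituting $\pi$ for $p$ causes no trouble. Once that is in hand, the inductive step is routine diagram-chasing.
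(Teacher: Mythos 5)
Your proof is correct and follows the same strategy as the paper: induction on $n$, with the base case coming from \cref{rslt:F-q-cohomology-on-proper-space} and the inductive step via a dévissage short exact sequence and the five lemma applied to the resulting long exact sequences. You are slightly more careful than the paper's own write-up in one place: the paper cites \cref{rslt:F-q-cohomology-on-proper-space} verbatim for the case $n=1$, even though that lemma is literally stated with $p$ rather than $\pi$, and for ramified $F/\Q_p$ the mod-$\pi$ and mod-$p$ statements genuinely differ; your observation that the argument of \cref{rslt:F-q-cohomology-on-proper-space} is uniform in the pseudo-uniformizer (via the tilting identification $\check\ri_X^+/\varpi \isom \hat\ri^+_{X^\flat}/\varpi^\flat$) is exactly the right way to justify the base case, and is a worthwhile clarification of the paper's proof. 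The remaining differences (your dévissage $0 \to \mathcal L_n/\pi \to \mathcal L_n \to \mathcal L_n/\pi^{n-1} \to 0$ versus the paper's $0 \to \mathcal L_n/\pi^{n-1} \to \mathcal L_n \to \mathcal L_n/\pi \to 0$, and your direct torsion-freeness argument for flatness rather than citing \cref{rslt:hat-O-flat-over-O-E}) are purely cosmetic.
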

\begin{proof}
We use induction on $n$. The case $n = 1$ is \cref{rslt:F-q-cohomology-on-proper-space}. Now assume $n > 1$ and let $\mathcal L_1 = \mathcal L_n/\pi$ and $\mathcal L_{n-1} = \mathcal L_n/\pi^{n-1}$. Then $\mathcal L_j$ is a $\ri_F/\pi^j$-local system for $j \in \{ 1, n-1, n \}$ and there is a short exact sequence $0 \to \mathcal L_{n-1} \xto{\pi} \mathcal L_n \to \mathcal L_1 \to 0$. By \cref{rslt:hat-O-flat-over-O-E}, tensoring with $\check\ri^{+a}_X/\pi^n$ preserves almost exactness. By the long exact sequence of cohomology we obtain the following commutative diagram with almost exact rows (abbreviating $- \tensor_{\ri_F} \check\ri^{+a}_X$ by $(-)^{\ri^+_X}$ and $- \tensor_{\ri_F} \ri^a$ by $()^\ri$):
\begin{center}\begin{tikzcd}[column sep=small]
	H^{i-1}(X, \mathcal L_1)^\ri \arrow{r} \arrow{d} & H^i(X, \mathcal L_{n-1})^\ri \arrow{r}\arrow{d} & H^i(X, \mathcal L_n)^\ri \arrow{r}\arrow{d} & H^i(X, \mathcal L_1)^\ri \arrow{r}\arrow{d} & H^{i+1}(X, \mathcal L_{n-1})^\ri \arrow{d}\\
	H^{i-1}(X, \mathcal L^{\ri^+_X}_1) \arrow{r} & H^i(X, \mathcal L^{\ri^+_X}_{n-1}) \arrow{r} & H^i(X, \mathcal L^{\ri^+_X}_n) \arrow{r} & H^i(X, \mathcal L^{\ri^+_X}_1) \arrow{r} & H^{i+1}(X, \mathcal L^{\ri^+_X}_{n-1})
\end{tikzcd}\end{center}
By induction, all vertical arrows except the middle one are almost isomorphisms. Hence by the Five Lemma we deduce that the middle arrow is an almost isomorphism as well.
\end{proof}

\begin{proposition} \label{rslt:O-E-cohomology-on-proper-space}
Let $X$ be a proper adic space of finite type over $\Cpx_p$ and let $\varpi \in \ri$ be either a pseudo-uniformizer or $\varpi = 0$. Let $\mathcal L$ be an $\ri/\varpi$-local system on $X_\vsite$. Then the natural map $\mathcal L \to \mathcal L \tensor_\ri \check\ri^+_X$ induces almost isomorphisms
\begin{align*}
	H^i(X_\vsite, \mathcal L^a) \isoto H^i(X_\vsite, \mathcal L^a \tensor_{\ri^a} \check\ri_X^{+a})
\end{align*}
for all $i \ge 0$. Moreover, if $\varpi \ne 0$ then $H^i(X_\vsite, \mathcal L)$ is a finitely presented $\ri$-module.
\end{proposition}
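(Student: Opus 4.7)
The strategy is to reduce the case $\varpi \ne 0$ to the already-established result for $\ri_F/\pi_F^n$-local systems (\cref{rslt:O-F-mod-pi-n-cohomology-on-proper-space}), and then to derive the case $\varpi = 0$ by passage to the inverse limit $\mathcal L = \varprojlim_n \mathcal L/p^n$. For the case $\varpi \ne 0$: since $\aclos\Q_p$ is dense in $\Cpx_p$ and any element of the same absolute value generates the same ideal in $\ri$, we may assume $\varpi \in \ri_{F_0}$ for some finite extension $F_0$ of $\Q_p$; as $\ri_{F_0}$ is a DVR, $\varpi = u \pi_{F_0}^n$ for some $u \in \ri_{F_0}\units$ and $n \ge 1$. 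Now $\ri/\varpi = \varinjlim_{F \supseteq F_0} \ri_F/\varpi$ is a filtered colimit of discrete (and in fact finite) rings, so \cref{rslt:injlim-of-locsys} (applicable since $X$ is qcqs) produces a finite extension $F \supseteq F_0$ and an $\ri_F/\varpi$-local system $\mathcal L_F$ on $X_\vsite$ with $\mathcal L = \mathcal L_F \tensor_{\ri_F} \ri$. Applying \cref{rslt:O-F-mod-pi-n-cohomology-on-proper-space} to $\mathcal L_F$ (with the identification $\ri_F/\varpi = \ri_F/\pi_F^k$ for suitable $k$) yields finiteness of $H^i(X_\vsite, \mathcal L_F)$ over $\ri_F/\pi_F^k$ and the almost isomorphism
\begin{align*}
	H^i(X_\vsite, \mathcal L_F) \tensor_{\ri_F} \ri^a \isoto H^i(X_\vsite, \mathcal L_F \tensor_{\ri_F} \check\ri^{+a}_X).
\end{align*}
Since $\ri_{F'}/\varpi$ is a finite free $\ri_F/\varpi$-module for every finite $F \subseteq F'$, the colimit $\ri/\varpi$ is flat over $\ri_F/\varpi$, and \cref{rslt:discrete-base-change-on-qcqs} identifies the left-hand side with $H^i(X_\vsite, \mathcal L)^a$, while the right-hand side is tautologically $H^i(X_\vsite, \mathcal L^a \tensor_{\ri^a} \check\ri^{+a}_X)$. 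The finite presentation of $H^i(X_\vsite, \mathcal L)$ over $\ri$ follows from that of $H^i(X_\vsite, \mathcal L_F)$ over the noetherian ring $\ri_F/\pi_F^k$, preserved by flat base change to $\ri/\varpi$ and then by the transition from $\ri/\varpi$-presentation to $\ri$-presentation (the extra relations $\varpi \cdot m_i = 0$ are finitely many).

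For the case $\varpi = 0$: using that $\mathcal L$ is locally isomorphic to $\ri^r$ and that $\ri$ is $p$-adically complete (together with the explicit description in \cref{rslt:explicit-computation-of-S-X}), one checks that $\mathcal L = \varprojlim_n \mathcal L/p^n$ as sheaves on $X_\vsite$ with vanishing $R^1\varprojlim$, since the transition maps are surjective. Similarly, $\check\ri^+_X$ is $p$-adically complete on the basis of affinoid perfectoid opens, so $\mathcal L \tensor_\ri \check\ri^+_X = \varprojlim_n(\mathcal L/p^n \tensor_{\ri/p^n} \check\ri^+_X/p^n)$ with vanishing $R^1\varprojlim$ as well. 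Commuting the right adjoint $R\Gamma$ with $R\varprojlim$ yields Milnor exact sequences
\begin{align*}
	0 \to R^1\varprojlim_n H^{i-1}(X_\vsite, \mathcal L/p^n) \to H^i(X_\vsite, \mathcal L) \to \varprojlim_n H^i(X_\vsite, \mathcal L/p^n) \to 0,
\end{align*}
and analogously for $\mathcal L \tensor_\ri \check\ri^+_X$. The almost isomorphisms obtained in the first case for $\varpi = p^n$ are compatible in $n$; since both $\varprojlim$ and $R^1\varprojlim$ preserve termwise almost isomorphisms of inverse systems of $\ri^a$-modules, the Five Lemma applied to the two Milnor sequences gives the desired almost isomorphism in the limit.

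The main obstacle lies in the case $\varpi = 0$: one must justify that both $\mathcal L$ and $\mathcal L \tensor_\ri \check\ri^+_X$ coincide with the \emph{derived} $p$-adic limits of their mod-$p^n$ versions, so that Milnor sequences actually apply. This relies on the local freeness of $\mathcal L$ (giving $R^1\varprojlim$-vanishing for $\mathcal L/p^n$) and the $p$-adic completeness of $\check\ri^+_X$ on perfectoid opens (giving the same vanishing for the tensored sheaves). Once this is in place, the almost-isomorphism transport along the inverse system is formal.
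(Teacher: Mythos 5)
Your $\varpi \ne 0$ case matches the paper's argument, and you are if anything slightly more careful: the paper simply cites \cref{rslt:injlim-of-locsys} and \cref{rslt:discrete-base-change-on-qcqs}, whereas you spell out why $\ri/\varpi$ is flat over $\ri_F/\varpi$ (free transitions) and how finite presentation over $\ri$ is deduced from finiteness over the noetherian ring $\ri_F/\varpi$. These justifications are correct and fill in details the paper leaves implicit.

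In the $\varpi = 0$ case there is a genuine gap. You work on $X_\vsite$ throughout and justify $\mathcal L = R\varprojlim_n \mathcal L/p^n$ by asserting that surjectivity of the transition maps kills $R^1\varprojlim$. That implication is not automatic for a sheaf inverse system on an arbitrary site; it is precisely the content of \emph{repleteness}, and the paper only establishes repleteness for $X_\qproet$ (\cref{rslt:w-contractible-basis}, via the w-contractible basis), not for $X_\vsite$. Indeed, for $X_\vsite$ one would need a basis of objects on which every \emph{v}-cover splits, which is much stronger and is not available: w-contractible spaces trivialize quasi-pro-étale covers, not general v-covers. The paper sidesteps this by first passing from $X_\vsite$ to $X_\qproet$ (using \cref{rslt:locfree-sheaf-fully-faithful-qproet-to-v} and \cref{rslt:qproet-locsys-equiv-v-locsys-complete-huber-rings}, so cohomology of $\mathcal L$ and of $\mathcal L \tensor_\ri \check\ri^+_X$ is unchanged) and then invoking repleteness of $X_\qproet$. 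You must do the same; as written, the claim that $R^1\varprojlim$ vanishes, and hence the Milnor sequences, are not justified on $X_\vsite$.

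Once that correction is made, the rest of your $\varpi = 0$ argument is fine: working on $X_\qproet$, the Milnor sequences for $\mathcal L$ and $\mathcal L\tensor_\ri\hat\ri^+_X$ do hold, and your observation that $\varprojlim$ and $R^1\varprojlim$ carry termwise almost isomorphisms of inverse systems to almost isomorphisms is correct (termwise almost-zero systems have almost-zero $\varprojlim$ and $R^1\varprojlim$, since $\mm$ annihilates the products appearing in the definition). This Milnor-sequence packaging is a cosmetic variant of the paper's direct manipulation of $R\Gamma \comp R\varprojlim$, which also needs, and proves separately, the identification $\varprojlim_n(\mathcal L/p^n \tensor_\ri \hat\ri_X^+) = (\varprojlim_n \mathcal L/p^n) \tensor_\ri \hat\ri_X^+$ from $p$-adic completeness of $\ri$ and $\hat\ri^+_X$; you should make that step explicit as well rather than leaving it at "one checks on a basis."
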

\begin{proof}
Let us first assume $\varpi \ne 0$. Then (after possible multiplying $\varpi$ by a unit), \cref{rslt:injlim-of-locsys} implies that $\mathcal L$ is of the form $\mathcal L = \mathcal L_F \tensor_{\ri_F} \ri$ for some finite extension $F$ of $\Q_p$ and some $\ri_F/\varpi$-local system $\mathcal L_F$. Now $\varpi$ is some power of a uniformizer in $\ri_F$ and hence by \cref{rslt:O-F-mod-pi-n-cohomology-on-proper-space} we have almost isomorphisms $H^i(X, \mathcal L_F) \tensor_{\ri_F} \ri^a \isoto H^i(X, \mathcal L_F \tensor_{\ri_F} \check\ri_X^{+a})$ for all $i \ge 0$. By \cref{rslt:discrete-base-change-on-qcqs} we can pull in the tensor product on the left-hand side, which produces the desired statement (including the claim about finite presentation).

Now let $\varpi = 0$, i.e. $\mathcal L$ is an $\ri$-local system. As in the proof of \cref{rslt:F-q-cohomology-on-proper-space} we can as well work on the quasi-pro-étale site instead of the v-site. Then $\mathcal L$ is $p$-adically complete, i.e. $\mathcal L = \varprojlim_n \mathcal L/p^n$. In fact, this is even true in the derived sense, i.e. $\mathcal L = R\varprojlim_n \mathcal L/p^n$: The site $X_\vsite$ is replete by \cref{rslt:w-contractible-basis} so that the assertion follows from \cite[Proposition 3.1.10]{proetale-topology}. Now cohomology commutes with derived limits (see \cite[Lemma 0BKP]{stacks-project}) and hence, using the case $\varpi \ne 0$ discussed above we get an almost isomorphism
\begin{align*}
	& R\Gamma(X, \mathcal L) = R\Gamma(X, R\varprojlim_n \mathcal L/p^n) = R\varprojlim_n R\Gamma(X, \mathcal L/p^n) \to R\varprojlim_n R\Gamma(X, \mathcal L/p^n \tensor_\ri \hat\ri_X^+) \\&\qquad = R\Gamma(X, R\varprojlim_n (\mathcal L/p^n \tensor_\ri \hat\ri_X^+)).
\end{align*}
The $R\varprojlim$ on the right is just a $\varprojlim$ (by the same argument as above) and hence we are reduced to showing a natural isomorphism
\begin{align*}
	\varprojlim_n (\mathcal L/p^n \tensor_\ri \hat\ri_X^+) = (\varprojlim_n \mathcal L/p^n) \tensor_\ri \hat\ri_X^+.
\end{align*}
There is a natural map from right to left and thus it suffices to prove the claimed isomorphism locally. But $\mathcal L$ is locally constant, so we reduce to the case $\mathcal L = \ri$. Then the claim follows from the fact that both $\ri$ and $\hat\ri^+_X$ are $p$-adically complete.
\end{proof}

We now have established a powerful relation between the almost cohomology of a local system and its tensor product with $\check\ri^+_X$. For the construction of the main functor in this paper we only need this statement for the $0$-th cohomology, i.e. the global sections. However, we require the almost isomorphism to be an actual isomorphism, which we will prove now (in spirit this is similar to \cref{rslt:O+-mod-p-on-vsite}, on which it relies).

\begin{theorem} \label{rslt:global-sections-on-proper-space}
Let $X$ be a proper adic space of finite type over $\Cpx_p$ and let $\varpi \in \ri$ be a pseudo-uniformizer. Let $\mathcal L_\varpi$ be an $\ri/\varpi$-local system, $\mathcal L$ an $\ri$-local system and $L$ a $\Cpx_p$-local system with integral model. Then we have natural isomorphisms
\begin{align*}
	\Gamma(X_\vsite, \mathcal L_\varpi) &= \Gamma(X_\vsite, \mathcal L_\varpi \tensor_\ri \check\ri^+_X),\\
	\Gamma(X_\vsite, \mathcal L) &= \Gamma(X_\vsite, \mathcal L \tensor_\ri \check\ri^+_X),\\
	\Gamma(X_\vsite, L) &= \Gamma(X_\vsite, L \tensor_{\Cpx_p} \check\ri_X),
\end{align*}
In particular we have
\begin{align*}
	\Gamma(X_\vsite, \ri/\varpi) = \Gamma(X_\vsite, \check\ri^+_X/\varpi), \quad \Gamma(X_\vsite, \ri) = \Gamma(X_\vsite, \check\ri^+_X), \quad \Gamma(X_\vsite, \Cpx_p) = \Gamma(X_\vsite, \check\ri_X).
\end{align*}
\end{theorem}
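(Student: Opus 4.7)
Plan: I would prove the three identities in sequence, with the $\ri/\varpi$-identity doing the heavy lifting, and the other two following by limits and localization.

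For the $\ri/\varpi$-identity, the starting point is the almost isomorphism $\Gamma(X, \mathcal L_\varpi)^a \isoto \Gamma(X, \mathcal L_\varpi \tensor_\ri \check\ri^+_X)^a$ provided by \cref{rslt:O-E-cohomology-on-proper-space} at $i = 0$; the task is to upgrade it to an honest isomorphism, for which the crucial input is the non-almost statement \cref{rslt:O+-mod-p-on-vsite}. The natural map is automatically injective: $\check\ri^+_X$ is $\varpi$-torsion-free, so $\ri/\varpi \injto \check\ri^+_X/\varpi$; tensoring with $\mathcal L_\varpi$ preserves this injection by flatness (\cref{rslt:hat-O-flat-over-O-E}), and $\Gamma$ is left-exact. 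For surjectivity, I would first reduce to the constant case $\mathcal L_\varpi = \ri/\varpi$ by combining \cref{rslt:injlim-of-locsys} (descending $\mathcal L_\varpi$ to an $\ri_F/\varpi$-local system for some finite $F/\Q_p$, where $\ri_F/\varpi$ is a finite ring) with \cref{rslt:locsys-trivial-on-fet-cover} to obtain a finite étale trivialization $f\colon Y \to X$, then using the Čech-type resolution $0 \to \mathcal L_\varpi \to f_{1*}f_1^{-1}\mathcal L_\varpi \to f_{2*}f_2^{-1}\mathcal L_\varpi \to \dots$ (and its tensor product with $\check\ri^+_X$ via flatness) exactly as in the proof of \cref{rslt:F-q-cohomology-on-proper-space}, which reduces the claim to the constant case on the finite-étale covers $Y_k \to X$. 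In the constant case, the point is that \cref{rslt:O+-mod-p-on-vsite} gives $(\check\ri^+_X/\varpi)(Y) = \check\ri^+_X(Y)/\varpi$ on strictly totally disconnected $Y$ on the nose, which, combined with the almost iso, promotes the comparison to an honest isomorphism.

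For the $\ri$-identity, I would pass to the $p$-adic limit. By repleteness of $X_\vsite$ (\cref{rslt:w-contractible-basis}), $\mathcal L = R\varprojlim_n \mathcal L/p^n$ with derived and underived limits agreeing, and similarly $\mathcal L \tensor_\ri \check\ri^+_X = \varprojlim_n (\mathcal L/p^n \tensor_\ri \check\ri^+_X)$ since $\check\ri^+_X$ is $p$-adically complete. Global sections commute with such limits, so the $\ri$-case follows termwise from the $\ri/p^n$-case. For the $\Cpx_p$-identity, pick an integral model $\mathcal L \subset L$; then $L = \mathcal L[p^{-1}]$ and $L \tensor_{\Cpx_p} \check\ri_X = (\mathcal L \tensor_\ri \check\ri^+_X)[p^{-1}]$, and since $X$ is qcqs, $\Gamma(X, -)$ commutes with the filtered colimit defining $[p^{-1}]$, so the identity follows from the $\ri$-case.

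The main obstacle is the passage from almost isomorphism to honest isomorphism in the $\ri/\varpi$-case. The almost cohomology machinery from \cref{rslt:O-E-cohomology-on-proper-space} is not tight enough by itself, and the non-almost sheaf identity \cref{rslt:O+-mod-p-on-vsite} is indispensable to pin down the equalizer comparison on a strictly totally disconnected descent cover; matching the two computations carefully is the critical technical step.
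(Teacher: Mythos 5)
Your overall architecture matches the paper's: reduce everything to the $\ri/\varpi$ case, feed the almost isomorphism of \cref{rslt:O-E-cohomology-on-proper-space} through the non-almost identity \cref{rslt:O+-mod-p-on-vsite}, then pass to the $\ri$ case by $p$-adic limits and to the $\Cpx_p$ case by inverting $p$. The limit/colimit bookkeeping for the second and third identities is correct. However, the decisive step — upgrading the almost isomorphism to an honest one in the $\ri/\varpi$ case — is not actually carried out, and the sketch you give does not close the gap.

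Injectivity plus almost-surjectivity does \emph{not} imply surjectivity. For instance, with $k = \ri/\mm$ the inclusion $\ri/\varpi \injto \ri/\varpi \oplus k$ is an injective almost isomorphism that is not surjective. Thus your flatness-based injectivity argument (which is fine in substance, though the cited \cref{rslt:hat-O-flat-over-O-E} is the wrong flatness — one needs flatness of $\mathcal L_\varpi$ over $\ri/\varpi$, immediate from local freeness) cannot finish the proof. What is missing is an argument that the target $N = \Gamma(X_\vsite, \mathcal L_\varpi \tensor_\ri \check\ri^+_X)$ contains no nonzero almost zero elements, together with the structural input that $M = \Gamma(X_\vsite, \mathcal L_\varpi)$ is a finitely presented $\ri$-module; the paper isolates these two facts and then applies \cref{rslt:almost-iso-implies-iso-on-fin-pres-no-alm-zero}, which is precisely the abstract device that converts ``almost iso, finitely presented source, torsion-free-in-the-almost-sense target'' into ``iso.''

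Your invocation of \cref{rslt:O+-mod-p-on-vsite} is the right move to get off the ground, but you stop short. In the paper this lemma is used (together with \cref{rslt:comparison-of-adic-and-diamond-world}) to rewrite $\Gamma(X_\vsite, \check\ri^+_X/\varpi)$ as $\Gamma(X_\et, \ri^+_X/\varpi)$; the substantive content is then a concrete analysis of almost zero elements of sections of $\ri^+_X/\varpi$ on affinoids, via separatedness of the presheaf $U \mapsto \ri^+_X(U)/\varpi$ and the supremum seminorm (specifically $\abs{a}_\supn \le \abs\varpi$ implies $a/\varpi$ power-bounded). Without this, saying that the non-almost sheaf identity ``combined with the almost iso, promotes the comparison to an honest isomorphism'' is a non sequitur. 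Your reduction to the constant case via a trivializing finite étale cover and the sheaf-equalizer diagram is fine in principle for $H^0$, but it only displaces the problem; the constant case itself still requires the no-almost-zero-elements analysis and the finitely-presented-module lemma, neither of which appears in your sketch.
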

\begin{proof}
The second isomorphism follows from the first one by taking the inverse limit over $\varpi = p^n$ (as global sections commute with inverse limits) and the third isomorphism follows from the second one by using the integral model and inverting $p$ (as $X$ is qcqs, cf. \cite[Lemma 0738]{stacks-project}). Thus we only have to prove the first isomorphism. Let $M = \Gamma(X_\vsite, \mathcal L_\varpi)$ and $N = \Gamma(X_\vsite, \mathcal L_\varpi \tensor_\ri \check\ri^+_X)$. By \cref{rslt:O-E-cohomology-on-proper-space} $M$ is finitely presented and there is a natural almost isomorphism $M \to N$.

We claim that $N$ contains no non-trivial almost zero elements. To see this we can assume that $X$ is connected. By \cref{rslt:injlim-of-locsys} and \cref{rslt:locsys-trivial-on-fet-cover} there is a finite étale cover $X' \to X$ such that $\restrict{\mathcal L_\varpi}{X'}$ is constant. Then $N$ injects into $\Gamma(X'_\vsite, \restrict{\mathcal L_\varpi}{X'} \tensor_\ri \check\ri^+_{X'})$ so that we can replace $X$ by $X'$ and hence assume that $\mathcal L_\varpi$ is constant; we then immediately reduce to $\mathcal L_\varpi = \ri/\varpi$, i.e. $N = \Gamma(X_\vsite, \check\ri^+_X/\varpi)$. By \cref{rslt:O+-mod-p-on-vsite} and \cref{rslt:comparison-of-adic-and-diamond-world} we have
\begin{align*}
	N = \Gamma(X_\vsite, \check\ri^+_X/\varpi) = \Gamma(X_\qproet, \hat\ri^+_X/\varpi) = \Gamma(X_\et, \ri^+_X/\varpi).
\end{align*}
Consider the presheaf $\mathcal F\colon U \mapsto \ri^+_X(U)/\varpi$ on $X_\et$. From the $\varpi$-torsion freeness of $\ri^+_X$ one deduces easily that $\mathcal F$ is separated. Since $\ri^+_X/\varpi$ is the sheafification of $\mathcal F$, \cite[Theorem 00WB]{stacks-project} shows that the natural map $\mathcal F \to \ri^+_X/\varpi$ is injective. Now let $s \in \Gamma(X_\et, \ri^+_X/\varpi)$ be almost zero. By \cite[Lemma 00WK]{stacks-project} there is an étale covering $(U_i \to X)_{i\in I}$ such that for every $i$, $\restrict s{U_i}$ lies in $\mathcal F(U_i) \subset (\ri^+_X/\varpi)(U_i)$. The fact that $s$ is almost zero implies that $\restrict s{U_i}$ is almost zero for all $i$. Hence it is enough to show that $\mathcal F(U_i)$ does not contain any non-trivial almost zero elements. By refining the covering if necessary, we can assume that all $U_i$ are affinoid.

It remains to show the following: Let $(A, A^+)$ be an adic $\Cpx_p$-algebra of topologically finite type; then $A^+/\varpi$ has no non-trivial almost zero elements. By definition of topologically finite type we have $A^+ = A^\circ$, the ring of power-bounded elements in $A$. Let $\overline a \in A^\circ/\varpi$ be almost zero and choose any lift $a \in A^\circ$.

For every multiplicative seminorm $\alpha\colon A \to \R_{\ge0}$ extending the absolute value on $\mathbb{C}_p$ we have $\alpha(a) \le \abs{\varpi}$ (because the fact that $\overline a$ is almost zero implies that $\alpha(m a) \le \abs{\varpi}$ for all $m \in \mm$). Hence for the supremum seminorm $\abs{\cdot}_\supn$ (as defined in \cite[Lemma 6.2.1/1]{nla.cat-vn772374}) we deduce $\abs{a}_\supn \le \abs{\varpi}$, i.e. $\abs{a/\varpi}_\supn \le 1$. But then \cite[Proposition 6.2.3/1]{nla.cat-vn772374} implies that $a/\varpi \in A^\circ$, i.e. $\overline a = 0$.

We have shown that $N$ has no non-trivial almost zero elements. We can thus deduce the claimed isomorphism $M = N$ from \cref{rslt:almost-iso-implies-iso-on-fin-pres-no-alm-zero} below.
\end{proof}

\begin{lemma} \label{rslt:almost-iso-implies-iso-on-fin-pres-no-alm-zero}
Let $f\colon M \to N$ be an almost isomorphism of $\ri$-modules. If $M$ is finitely presented and $N$ has no non-zero almost zero elements, then $f$ is an isomorphism.
\end{lemma}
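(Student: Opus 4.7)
The strategy is to introduce the functor $(-)_* := \Hom_\ri(\mm, -)$ together with its natural transformation $\iota_M\colon M \to M_*$, $m \mapsto (\epsilon \mapsto \epsilon m)$, and deduce the lemma formally from a single key claim: that $\iota_M$ is an isomorphism whenever $M$ is finitely presented. Three elementary facts set up the framework. First, $\ker \iota_M$ equals the almost zero submodule of $M$, so the hypothesis on $N$ gives that $\iota_N$ is injective. Second, since $\mm = \mm^2$, any almost zero module $X$ satisfies $X_* = 0$; hence an almost isomorphism $f$ induces an honest isomorphism $f_*\colon M_* \isoto N_*$. Third, naturality yields $\iota_N \comp f = f_* \comp \iota_M$.

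Granting the key claim, the lemma falls out by a diagram chase: the composite $f_* \comp \iota_M\colon M \to N_*$ is an isomorphism, so the equal composite $\iota_N \comp f$ is surjective. Combined with the injectivity of $\iota_N$ this forces $\iota_N$ itself to be an isomorphism, and then $f = \iota_N^{-1} \comp f_* \comp \iota_M$ is an isomorphism as required.

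It remains to prove that $\iota_M$ is an isomorphism for finitely presented $M$. Since $\ri = \ri_{\Cpx_p}$ is a Bézout valuation domain, Smith normal form identifies any such $M$ with $\ri^r \dsum \bigdsum_{i=1}^s \ri/(a_i)$ for nonzero $a_i \in \ri$, and because $(-)_*$ and $\iota$ respect finite direct sums the problem reduces to the cyclic cases $M = \ri$ and $M = \ri/(a)$. The free case is immediate: the relation $\epsilon \phi(\epsilon') = \epsilon' \phi(\epsilon)$ forces $\phi(\epsilon)/\epsilon \in \Cpx_p$ to be a constant $c$ independent of $\epsilon$, and the bound $|\phi(\epsilon)| \le 1$ combined with $\sup_{\epsilon \in \mm}|\epsilon| = 1$ forces $c \in \ri$. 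The cyclic torsion case $M = \ri/(a)$ is the main obstacle: using $\mm = \bigunion_n (p^{1/p^n})$ one realizes $\Hom_\ri(\mm, \ri/(a))$ as the inverse limit of $\ri/(a)$ along transition maps given by multiplication by $p^{(p-1)/p^{n+1}}$, and one must show that every compatible family arises from a single element of $\ri/(a)$. A naive choice of lifts need not be Cauchy in the usual sense, so the argument requires exploiting the ambiguity in the lifts against the decreasing filtration by the ideals $(p^{1-1/p^n}) \subset \ri$ together with completeness of $\Cpx_p$ to produce a genuine limit.
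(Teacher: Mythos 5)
Your route via the almost-elements functor $M \mapsto M_* := \Hom_\ri(\mm, M)$ is genuinely different in form from the paper's proof, which uses the structure theorem $M \cong \bigoplus_i \ri/\pi_i$ to check injectivity directly (no almost-zero elements in such an $M$) and then proves surjectivity by a chase: for $y \in N$ and $m \in \mm$, the unique preimage $x_m$ of $my$ is shown (using the structure of $M$) to be divisible by $m$, producing $x$ with $m(f(x)-y)=0$. The formal framework you set up is correct: $\ker\iota_M$ is the almost-zero submodule, $f_*$ is an honest isomorphism, and the naturality square gives the diagram chase. The reduction to cyclic modules via Smith normal form and the treatment of the free case are also fine.

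The problem is that your key claim — that $\iota_M$ is an isomorphism for finitely presented $M$ — is not a strict reduction, because it is in fact \emph{equivalent} to the lemma itself: applying the lemma to $\iota_M\colon M \to M_*$ (and noting that $M_*$ inherits the absence of nonzero almost-zero elements from $M$) recovers exactly your claim. You have traded the lemma for a special case of it, and the torsion case $M = \ri/(a)$, which you correctly flag as the main obstacle, is precisely where the whole difficulty is concentrated. Your appeal to ``completeness of $\Cpx_p$'' to produce the required limit does not work: as you yourself set it up, the lifts $\tilde x_n$ are determined only modulo the ideals $(p^{1-1/p^n})$, whose absolute values decrease to $|p|$ rather than to $0$. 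The nested balls $\tilde x_n + (p^{1-1/p^n})$ have radii bounded below, so ordinary ($p$-adic) completeness is of no use; what a naive argument would require is a form of \emph{spherical} completeness, and $\Cpx_p$ is famously not spherically complete. Indeed, applying $\Hom_\ri(\mm,-)$ to $0 \to \ri \xto{a} \ri \to \ri/(a) \to 0$ identifies $\mathrm{coker}(\iota_{\ri/(a)})$ with $\Ext^1_\ri(\mm, \ri)$ (an almost-zero module, whose vanishing is a delicate fact about $\ri$ and not a consequence of completeness alone). So the argument as written has a genuine gap at the very step you acknowledge is difficult, and the suggested resolution would not close it.
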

\begin{proof}
By \cite[Proposition 2.8.(i)]{rigid-p-adic-hodge} and \cite[Proposition 2.10.(i)]{rigid-p-adic-hodge} we have $M = \ri/\pi_1 \dsum \dots \ri/\pi_n$ for some $\pi_1, \dots, \pi_n \in \mm$. In particular $M$ has no non-zero almost zero elements, which implies that $f$ is injective.

To prove that $f$ is surjective, fix an element $y \in N$. As $f$ is almost surjective, for every $m \in \mm$ there is some $x_m \in M$ such that $f(x_m) = my$. Fix such an $m$. Then for every other $m' \in \mm$ with $m' \divides m$ we have $f(\frac m{m'} x_m') = \frac m{m'} (m'y) = my = f(x_m)$, so by the injectivity of $f$ we deduce $\frac m{m'} x_m' = x_m$, i.e. $x_m$ is divisible by $\frac m{m'}$. As this is true for all $m'$, we deduce (by the above structure of $M$) that $x_m$ is divisible by $m$, i.e. there is some $x \in M$ with $mx = x_m$. Then $m (f(x) - y) = 0$. As $m$ was arbitrary, we deduce that $f(x) - y$ is almost zero, so by assumption on $N$ we get $f(x) = y$.
\end{proof}

The functors from the following result will provide a quasi-inverse to the main functor of this paper. It is therefore helpful to understand them properly, so we prove fully faithfulness and analyze the essential image. The fully faithfulness also follows directly from the construction of the main functor.

\begin{proposition} \label{rslt:fully-faithful-functor-loc-sys}
Let $X$ be a proper adic space of finite type over $\Cpx_p$ and let $\varpi \in \ri$ be a pseudo-uniformizer. Then the functors
\begin{alignat*}{2}
	- \tensor_{\ri/\varpi} \check\ri^+_X/\varpi \colon&& \catlocsys{\ri/\varpi}X &\longto \catmod{\check\ri^+_X/\varpi},\\
	- \tensor_\ri \check\ri^+_X \colon&& \catlocsys\ri X &\longto \catmod{\check\ri^+_X},\\
	- \tensor_{\Cpx_p} \check\ri_X \colon&& \catlocsysint{\Cpx_p}X &\longto \catmod{\check\ri_X}
\end{alignat*}
are fully faithful. Moreover all objects in the essential image of the first functor $\tensor_{\ri/\varpi} \check\ri^+_X/\varpi$ are in fact \emph{étale} locally free $\ri{_X}/\varpi$-modules.
\end{proposition}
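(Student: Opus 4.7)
The plan for fully faithfulness is to reduce each $\Hom$ computation to a global-section computation of an internal Hom sheaf that is itself a local system, and then invoke \cref{rslt:global-sections-on-proper-space}. For the first functor, fix $\mathcal L, \mathcal L' \in \catlocsys{\ri/\varpi}X$. Since $\mathcal L$ is locally free of finite rank as an $\ri/\varpi$-module, the internal Hom $\IHom_{\ri/\varpi}(\mathcal L, \mathcal L')$ is again an $\ri/\varpi$-local system, and tensor-hom adjunction combined with local freeness of $\mathcal L$ yields an identity of $\check\ri^+_X/\varpi$-modules
\begin{align*}
\IHom_{\check\ri^+_X/\varpi}(\mathcal L \tensor \check\ri^+_X/\varpi,\, \mathcal L' \tensor \check\ri^+_X/\varpi) \;=\; \IHom_{\ri/\varpi}(\mathcal L, \mathcal L') \tensor_{\ri/\varpi} \check\ri^+_X/\varpi.
\end{align*}
Taking global sections and applying \cref{rslt:global-sections-on-proper-space} to the local system $\IHom_{\ri/\varpi}(\mathcal L, \mathcal L')$ produces the desired equality with $\Hom(\mathcal L, \mathcal L')$, proving fully faithfulness of the first functor.

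The second functor is handled by the same computation using the identity $\Gamma(X_\vsite, \mathcal L) = \Gamma(X_\vsite, \mathcal L \tensor \check\ri^+_X)$ from \cref{rslt:global-sections-on-proper-space}. For the third, given $L, L' \in \catlocsysint{\Cpx_p}X$ with integral models $\mathcal L, \mathcal L'$, the sheaf $\IHom_\ri(\mathcal L, \mathcal L') \isom \mathcal L^\vee \tensor_\ri \mathcal L'$ is an $\ri$-local system whose inversion at $p$ is $\IHom_{\Cpx_p}(L, L')$, so it provides an integral model for the latter; the same computation, now invoking the $\Cpx_p$-identity in \cref{rslt:global-sections-on-proper-space}, yields fully faithfulness.

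For the essential image claim, fix an $\ri/\varpi$-local system $\mathcal L$. Since $X$ is qcqs and $\ri/\varpi = \varinjlim_F \ri_F/\varpi$ as $F$ runs over finite extensions of $\Q_p$ containing $\varpi$, \cref{rslt:injlim-of-locsys} produces some finite $F$ and an $\ri_F/\varpi$-local system $\mathcal L_F$ with $\mathcal L = \mathcal L_F \tensor_{\ri_F/\varpi} \ri/\varpi$. Since $\ri_F/\varpi$ is finite discrete, \cref{rslt:et-locsys-equiv-qproet-locsys} descends $\mathcal L_F$ uniquely to an étale local system on $X_\et$, and \cref{rslt:locsys-trivial-on-fet-cover} shows that it trivializes on a finite étale cover $Y \to X$. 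Setting $\mathcal M_0 := \mathcal L_F \tensor_{\ri_F/\varpi} \ri^+_X/\varpi$ on $X_\et$ then yields an étale locally free $\ri^+_X/\varpi$-module of finite rank, and its pullback along the morphism of ringed sites $(X_\vsite, \check\ri^+_X/\varpi) \to (X_\et, \ri^+_X/\varpi)$ recovers $\mathcal L \tensor_{\ri/\varpi} \check\ri^+_X/\varpi$, using the compatibility of the étale and v-site structure sheaves from \cref{rslt:comparison-of-adic-and-diamond-world}. The heavy lifting has already been carried out in \cref{rslt:global-sections-on-proper-space} and in the structural results on local systems collected in \cref{sec:locsys}; the present proposition is an assembly of those ingredients with tensor-hom formalism, so no genuinely new obstacle arises.
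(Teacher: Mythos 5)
Your proof is correct and follows essentially the same route as the paper's: both parts reduce to the tensor-hom isomorphism for internal Hom of locally free sheaves followed by \cref{rslt:global-sections-on-proper-space}, and the essential-image claim is established by reducing via \cref{rslt:injlim-of-locsys} to finite discrete coefficients and invoking \cref{rslt:locsys-trivial-on-fet-cover}. The only cosmetic difference is that you spell out the mod-$\varpi$ case in detail while the paper writes out the $\Cpx_p$ case, and you make the construction of the étale module $\mathcal M_0$ slightly more explicit than the paper does.
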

\begin{proof}
We first show fully faithfulness. All three cases are analogous, so we only explain the proof in the third case. Let $L_1 = \mathcal L_1[p^{-1}]$ and $L_2 = \mathcal L_2[p^{-1}]$ be two $\Cpx_p$-local systems on $X$ with integral models in the sense of  \cref{def:rep-integral-model}. Then $\IHom_{\Cpx_p}(L_1, L_2)$ is also a $\Cpx_p$-local system on $X$. Furthermore, there is a natural isomorphism of $\check\ri_X$-modules on $X_\vsite$
\begin{align*}
	\IHom_{\check\ri_X}(L_1 \tensor_{\Cpx_p} \check\ri_X, L_2 \tensor_{\Cpx_p} \check\ri_X) = \IHom_{\Cpx_p}(L_1, L_2) \tensor_{\Cpx_p} \check\ri_X.
\end{align*}
Indeed, there is a natural map from right to left, so we can check locally, where everything is free and hence obvious. The global sections of the left-hand side are $\Hom_{\check\ri_X}(L_1 \tensor_{\Cpx_p} \check\ri_X, L_2 \tensor_{\Cpx_p} \check\ri_X)$. On the other hand, $\IHom_{\Cpx_p}(L_1, L_2)$ has an integral model, namely $\IHom_\ri(\mathcal L_1, \mathcal L_2)$ (this can again be checked locally). Hence by \cref{rslt:global-sections-on-proper-space} the global sections of the right-hand side of the above equation are $\Hom_{\Cpx_p}(L_1, L_2)$.

To prove the claim about the essential image of the first functor, let $\mathcal L_\varpi$ be an $\ri/\varpi$-local system. By \cref{rslt:injlim-of-locsys} we have $\mathcal L_\varpi = \mathcal L_F \tensor_{\ri_F} \ri$ for some finite extension $F$ of $\Q_p$ and some $\ri_F/\varpi$-local system $\mathcal L_F$ on $X$. Then by \cref{rslt:locsys-trivial-on-fet-cover} there is some finite étale cover $Y \to X$ such that $\restrict{\mathcal L_F}Y$ is constant. Then the same is true for $\mathcal L_\varpi$ and hence $\mathcal L_\varpi \tensor_{\ri/\varpi} \check\ri^+_X/\varpi$ is free on $Y$.
\end{proof}

\section{Descent of Integral Models} \label{sec:desc-int-model}

The goal of this section is to show that the existence of an integral model of a $\Cpx_p$-local system can be descended along a large class of proper surjective morphisms. This will ultimately show that the construction of the functor $\Delta$ from \cref{sec:construction} can be descended along these morphisms.

The strategy of the proof is to reduce the statement to a local statement which can then be checked on w-contractible spaces. In fact this allows us to reduce the problem to a purely topological question in terms of profinite sets. We first analyse the theory in that case, which is the content of the following results.

\begin{lemma} \label{rslt:etalification-of-finite-open-map-of-profinite-sets}
Let $X$ be an extremally disconnected profinite set, $Y$ a profinite set and $f\colon Y \to X$ an open continuous map with finite fibers of bounded size. Then there is a profinite set $Z$ with a surjective continuous map $g\colon Z \surjto Y$ such that $f\comp g\colon Z \to X$ is a finite disjoint union of clopen immersions.
\end{lemma}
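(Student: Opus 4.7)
The plan is to reformulate the conclusion in terms of sections of $f$. A map $Z \to X$ that is a finite disjoint union of clopen immersions is equivalent to specifying finitely many clopen subsets $C_i \subset X$; the surjection $g\colon Z \to Y$ with $f \circ g$ the disjoint union of the inclusions $C_i \hookrightarrow X$ then corresponds to continuous sections $\sigma_i\colon C_i \to Y$ of $f$ satisfying $\bigcup_i \sigma_i(C_i) = Y$. I would therefore aim to produce finitely many such sections. We may assume $f$ is surjective, by replacing $X$ with the clopen subset $f(Y)$ (clopen since $f$ is open continuous and $Y$ is compact), which remains extremally disconnected.

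The local existence of sections through any given $y \in Y$ comes from Gleason's theorem. Enumerate the finite fibre $f^{-1}(f(y)) = \{y, y_2, \dots, y_k\}$, take pairwise disjoint clopen neighbourhoods $V_1, \dots, V_k \subset Y$, and set $V := V_1 \ni y$. Since $V$ is clopen compact and $f$ is open continuous, $f(V) \subset X$ is both open (by openness of $f$) and closed (image of a compact set in a Hausdorff space), hence clopen and therefore extremally disconnected. Gleason applied to the surjection $f|_V\colon V \twoheadrightarrow f(V)$ produces a continuous section $\sigma\colon f(V) \to V$, and the disjointness $V \cap f^{-1}(f(y)) = \{y\}$ automatically forces $\sigma(f(y)) = y$.

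To extract a \emph{finite} family of such sections whose images cover $Y$, I would induct on the uniform bound $N$ on fibre sizes; the cases $N \le 1$ are trivial. For $N \ge 2$, the locus $U_N := \{x \in X \colon |f^{-1}(x)| = N\}$ is open in $X$ by lower semicontinuity of fibre size (pick $N$ distinct fibre points over $x_0 \in U_N$, separate by disjoint clopens, and use openness of $f$), so $\overline{U_N}$ is clopen by extremal disconnectedness. Decompose $X = \overline{U_N} \sqcup (X \setminus \overline{U_N})$. On the complement the fibre bound drops to $N - 1$ and the inductive hypothesis applies. On $\overline{U_N}$, Gleason produces a section $s$; the local triviality of $f$ at every point of $U_N$ (where the maximality of the fibre size combined with a disjoint-clopen-separation argument and openness of $f$ forces a trivial $N$-sheeted local structure) shows $s(U_N)$ is open in $Y$, so the closed residual $Y' := \overline{f^{-1}(\overline{U_N}) \setminus s(\overline{U_N})}$ has fibres of size $\le N - 1$ throughout $\overline{U_N}$: one sheet is removed over $U_N$, and the fibres were already small over the boundary.

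The main obstacle is to verify that the restricted map $f|_{Y'}\colon Y' \to \overline{U_N}$ is again open, so that the inductive hypothesis applies directly to $Y'$. Since $Y'$ is only closed in $f^{-1}(\overline{U_N})$, this is not automatic from openness of $f$: the proof will require combining the trivial-cover behaviour over the dense open $U_N$ (where removing the open subset $s(U_N)$ preserves openness) with a second disjoint-clopen-separation argument at each boundary point of $\overline{U_N} \setminus U_N$, where the fibres are already strictly smaller. Granting this, the inductive hypothesis furnishes finitely many sections whose images cover $Y'$, and these together with $s$ and the sections produced on $X \setminus \overline{U_N}$ assemble into the required finite family; taking $Z := \bigsqcup_i C_i$ and $g := \bigsqcup_i \sigma_i$ finishes the proof.
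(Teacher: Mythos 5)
You correctly identify the crux: for the induction to close, the restricted map $f|_{Y'}$ must again be open, and as you say this is not automatic because $Y'$ is only closed in $f^{-1}(\overline{U_N})$. But you then leave this as ``the main obstacle,'' propose to resolve it by some boundary-pointwise disjoint-clopen analysis, and finally write ``granting this.'' This is a genuine gap. The proposed route is also problematic: at a boundary point of $\overline{U_N}$ you lose the trivial $N$-sheeted local structure that drives the argument over $U_N$, and an open subset of $Y'$ that meets both $f^{-1}(U_N)$ and the boundary fibers is not controlled by a separate clopen-separation step at each boundary point. Nothing in the proposal shows that $f|_{Y'}$ sends opens to opens.

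The paper handles exactly this point, with no case split at all: it applies Gleason once over $f(Y)$ (which is clopen, hence itself extremally disconnected) and sets $Y_{(n-1)}$ to be the closure of the open complement $Y \setminus s(Z)$. Openness of $f|_{Y_{(n-1)}}$ then follows from the two global facts you already have in hand but do not exploit here: $f$ is \emph{closed} (continuous map of compact Hausdorff spaces), and $X$ is extremally disconnected. Concretely, take clopen $V \subset Y_{(n-1)}$ (these form a basis); write $V = V' \cap Y_{(n-1)}$ with $V'$ clopen in $Y$. Then $V' \cap (Y \setminus s(Z))$ is open and dense in $V$, so by closedness of $f$ one has $f(V) = \overline{f\bigl(V' \cap (Y \setminus s(Z))\bigr)}$, the closure of an open set, hence clopen by extremal disconnectedness. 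The same argument applies verbatim to your $Y'$ over $\overline{U_N}$, so your gap is fillable --- but the whole decomposition of $X$ into $\overline{U_N}$ and its complement is unnecessary and just adds bookkeeping; dropping it recovers the paper's proof. The rest of your proposal (Gleason for local sections, the trivial $N$-sheeted local structure at maximal-fiber points via disjoint clopens, and the resulting drop of the fiber bound after removing one sheet) does match the paper's fiber-counting step.
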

\begin{proof}
We use induction on the maximum size $n$ of a fiber of $Y$ over $X$. Let $Z_n = f(Y)$, which is a clopen subset of $X$. In particular $Z_n$ is extremally disconnected and $f\colon Y \to Z_n$ is surjective. Hence by \cite[Proposition 08YN]{stacks-project} there is a continuous section $s_n\colon Z_n \to Y$ of $f$ on $Z_n$. Let $Y_{(n-1)} := \tclos{Y \setminus s_n(Z_n)}$. Since $f$ is closed (as a continuous map of compact Hausdorff spaces) we have $f(Y_{(n-1)}) = \tclos{f(Y \setminus s_n(Z_n))}$. On the other hand, $s_n(Z_n)$ is closed in $Y$, so its complement is open in $Y$, and by the openness of $f$ the image $f(Y \setminus s_n(Z_n))$ is open in $X$. As $X$ is extremally disconnected, the closure of this open set is still open, hence $f(Y_{(n-1)})$ is a clopen subset of $X$. In particular, the restriction $\restrict f{Y_{(n-1)}}\colon Y_{(n-1)} \to X$ is still a continuous open map.

We claim that every fiber of $\restrict f{Y_{(n-1)}}$ has at most size $n - 1$. Let $x \in X$ such that the fiber $Y_x$ of $Y$ over $X$ has size $n$. Since $Y$ is compact Hausdorff and in particular T4, we can find disjoint open neighbourhoods $U_1, \dots, U_n \subset Y$ of the points in $Y_x$. The images $f(U_i)$ of these open neighbourhoods in $X$ are open, and by letting $W \subset X$ be a clopen neighbourhood of $x$ inside the intersection of all $f(U_i)$'s we see that $f^{-1}(W) = \bigdunion_{i=1}^n W_i$ with clopen disjoint subsets $W_i$ of $Y$. Now $s_n(x)$ is contained in one of them, say $W_1$. Now $W_1 \cap s_n(W) = W_1 \cap( s_n \circ f)^{-1} (W_1)$ is an open neighborhood of $s_n(x)$, which shows that  $s_n(x)$ does not lie in $Y_{(n-1)}$, so that the fiber of $Y_{(n-1)}$ over $x$ has at most size $n - 1$.

We have now shown that we can apply the induction hypothesis to $\restrict f{Y_{(n-1)}}$ in order to get a profinite set $Z_{(n-1)}$ and a surjective map $g_{(n-1)}\colon Z_{(n-1)} \surjto Y_{(n-1)}$ such that the composition $f\comp g\colon Z_{(n-1)} \to X$ is a finite disjoint union of clopen immersions. But then we can just take $Z := Z_n \dunion Z_{(n-1)}$ with the obvious map to $Y$ to finish the proof.
\end{proof}

In the following, for any profinite set $X$ we let $X_\qproet$ denote the \emph{pro-étale site} of $X$: It consists of all continuous maps $Y \to X$ from profinite sets $Y$, with coverings given by finite families of jointly surjective maps (note that this is just the slice of $*_\proet$ over $X$).

\begin{lemma} \label{rslt:intersection-of-O-n-with-free-C-p-module-is-free-on-profinite-set}
Let $X$ be a profinite set and let $L \subset \Cpx_p^n$ be a free sub-$\Cpx_p$-module on $X_\qproet$. Let $\mathcal L := L \isect \ri^n$. Then $\mathcal L$ is a free $\ri$-module on $X_\qproet$ and $\mathcal L[p^{-1}] = L$.
\end{lemma}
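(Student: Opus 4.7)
The plan is to make the data underlying the lemma explicit and exhibit a trivialization of $\mathcal L$ on a suitable pro-\'etale cover.  By \cref{rslt:explicit-computation-of-S-X}, the embedding $L \injto \Cpx_p^n$ of a free sub-$\Cpx_p$-module of rank $r$ corresponds to a continuous matrix $M \colon X \to M_{n \cprod r}(\Cpx_p)$ of rank $r$ at every point.  For each $r$-element subset $I \subset \{1, \dots, n\}$, let $M_I(y)$ denote the determinant of the $r \cprod r$ submatrix of $M(y)$ on the rows indexed by $I$; this is continuous in $y$, and the rank condition forces $\max_{I'} \abs{M_{I'}(y)} > 0$ at every $y$.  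The closed subsets
\[
	X_I := \{\, y \in X \setst \abs{M_I(y)} \ge \abs{M_{I'}(y)} \text{ for all } I' \,\}
\]
therefore cover $X$, and $\bigdunion_I X_I \to X$ is a finite jointly surjective map of profinite sets, hence a pro-\'etale cover of $X$.

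Over each $X_I$ I would consider the continuous matrix $N_I := M \cdot M_I^{-1} \colon X_I \to M_{n \cprod r}(\Cpx_p)$.  Its submatrix on rows $I$ is the identity by construction, while for $j \notin I$ Cramer's rule expresses $(N_I)_{j,k}$ as $\pm M_{I'}/M_I$, where $I'$ is obtained from $I$ by replacing its $k$-th element with $j$.  By the defining property of $X_I$, each such ratio has absolute value at most $1$, so $N_I$ lands in $M_{n \cprod r}(\ri)$, and its columns $w_1, \dots, w_r$ give sections of $L \isect \ri^n = \mathcal L$ on $X_I$.

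To verify that the $w_k$ form an $\ri$-basis of $\mathcal L$ on $X_I$, I would use the projection $\pi_I \colon \Cpx_p^n \surjto \Cpx_p^r$ onto the coordinates indexed by $I$: for each $y \in X_I$, its restriction to $L_y$ is a $\Cpx_p$-linear isomorphism $L_y \isoto \Cpx_p^r$ (as $M_I(y)$ is invertible), it carries $\mathcal L_y$ into $\ri^r$, and it sends $w_k(y)$ to the $k$-th standard basis vector.  Hence $\pi_I$ identifies $\mathcal L_y$ with $\ri^r$ and the $w_k(y)$ with its standard basis, yielding $\restrict{\mathcal L}{X_I} \isom \ri^r$ and so the desired freeness of $\mathcal L$ on $X_\qproet$.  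Finally, the inclusion $\mathcal L[p^{-1}] \subset L$ is immediate, and the reverse one holds because any section of $L$ over a quasi-compact $U$ is by \cref{rslt:explicit-computation-of-S-X} a bounded continuous map $\abs U \to \Cpx_p^n$, hence lies in $\mathcal L(U)$ after multiplication by a sufficiently large power of $p$.  The only non-formal step in the whole argument is the Cramer identification of the entries of $N_I$ with minors of $M$; once that is in place the norm bound and the basis check are automatic.
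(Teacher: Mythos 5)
Your approach is genuinely different from the paper's, and the core computation is correct: viewing the inclusion $L \injto \Cpx_p^n$ as a continuous $n\cprod r$ matrix $M$ of everywhere full rank $r$, stratifying $X$ by which $r\cprod r$ minor has maximal absolute value, using Cramer's rule to bound the entries of $N_I$ by $1$ on $X_I$, and verifying via $\pi_I$ that the columns of $N_I$ give an $\ri$-basis of $\mathcal L$ over $X_I$ all check out. The paper instead argues pointwise: it picks a basis of the lattice $\mathcal L_x$ at a point $x$, extends it to global sections of $L$, and observes that the conditions defining an $\ri$-basis of $\mathcal L_y$ (basis of $L_y$, contained in $\ri^n$, independent mod $\mm$) are \emph{open} in $y$. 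Your version has the advantage of being explicit and of exhibiting a concrete trivializing cover.

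There is, however, a gap at the very last step. Your $X_I$ are \emph{closed} sets (defined by non-strict norm inequalities among continuous functions), they overlap, and trivializing $\mathcal L$ on the pro-\'etale cover $\bigdunion_I X_I \to X$ only establishes that $\mathcal L$ is \emph{locally} free, whereas the lemma asserts a global isomorphism $\mathcal L \isom \ri^r$. The change-of-basis matrices on the overlaps $X_I \isect X_{I'}$ do land in $\GL_r(\ri)$, but you give no reason for this cocycle to be a coboundary, so global freeness does not follow as stated. The paper's openness observation is precisely what closes this: an open cover of the profinite set $X$ can be refined, by compactness and zero-dimensionality, to a finite \emph{disjoint clopen} partition, on which the local bases simply concatenate into one global one. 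Your construction can be repaired in the same spirit: since $\ri$ is clopen in $\Cpx_p$, the set $\tilde{X}_I := \{\, y \setst M_I(y) \ne 0 \text{ and } N_I(y) \text{ has all entries in } \ri \,\}$ is \emph{open}, contains $X_I$, and your $\pi_I$-argument applies verbatim over $\tilde{X}_I$; refining the open cover $\{\tilde{X}_I\}_I$ of $X$ to a disjoint clopen partition then yields the asserted global freeness.
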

\begin{proof}
Let $r$ be the rank of $L$. Take any $x \in X$ and consider the fiber $L_x$ of $L$ over $x$ (to be precise we can define $L_x = L(\{ x \})$, where we note that $\{ x \} \injto X$ is a pro-étale map). Then $L_x$ is an $r$-dimensional $\Cpx_p$-vector space inside $\Cpx_p^n$ and it follows from elementary considerations that $\mathcal L_x = \ri^n \isect L_x \subset L_x$ is an $\ri$-lattice. Let $e_{1,x}, \dots, e_{r,x} \in \Cpx_p^n$ be a $\ri$-basis of $\mathcal L_x$ and extend each $e_{i,x}$ to a section $e_i \in L(X)$. It is enough to show that there is an open neighbourhood $U \subset X$ of $x$ such that for all $y \in U$, $\{ e_1(y), \dots, e_r(y) \}$ is a basis of $\mathcal L_y$ over $\ri$. Indeed, by covering $X$ by a disjoint union of such $U$'s we may assume $U = X$. Then consider the map $\alpha\colon \ri^r \to \mathcal L$ of sheaves on $X_\qproet$ which is induced by the $e_i$'s. One first checks that $\alpha(X)\colon \ri^r(X) \to \mathcal L(X)$ is an isomorphism: Every section $s \in \mathcal L(X)$ can be uniquely written as a linear combination $s = \sum_{i=1}^r s_i e_i$ for some $s_i \in \Cpx_p(X) = \cts(X, \Cpx_p)$ (because $\mathcal L \subset L$ and $L$ is free over $\Cpx_p$) and it follows immediately that all $s_i$ have image in $\ri$. But then the same argument shows that $\alpha(V)\colon \ri^r(V) \to \mathcal L(V)$ is an isomorphism for all $V \in X_\qproet$.

It remains to show the existence of the open neighbourhood $U$ of $x$ on which $\{ e_1, \dots, e_r \}$ is a basis of $\mathcal L$. We observe that for a point $y \in X$, the vectors $e_1(y), \dots, e_r(y) \in L \subset \Cpx_p^n$ form a basis of $\mathcal L_y \subset L_y$ if and only if they form a basis of $L_y$, lie in $\ri^n$ and are linearly independent modulo $\mm$. In particular these three conditions are satisfied at $y = x$. But by looking at ranks of matrices and using the fact that $\mm \subset \ri \subset \Cpx_p$ are open inclusions, one deduces immediately that the three conditions are automatically also satisfied on an open neighbourhood of $x$.
\end{proof}

\begin{corollary} \label{rslt:descent-of-integral-model-on-profinite-set}
Let $X$ be an extremally disconnected profinite set, $Y$ a profinite set and $f\colon Y \surjto X$ a continuous surjective open map with finite fiber of bounded size. Let $L$ be a free $\Cpx_p$-module on $X_\qproet$ and let $\mathcal L_Y \subset f^{-1}L$ be a free $\ri$-module on $Y_\qproet$ such that $f^{-1}L = \mathcal L_Y[p^{-1}]$. Let $\mathcal L := L \isect f_*\mathcal L_Y$ (the intersection being taken inside $f_*f^{-1}L$). Then $\mathcal L$ is free over $\ri$ and $L = \mathcal L[p^{-1}]$.
\end{corollary}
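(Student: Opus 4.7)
The plan is to reduce, via \cref{rslt:etalification-of-finite-open-map-of-profinite-sets}, to the case where $f$ is a finite disjoint union of clopen immersions, and then to refine this cover into a finite clopen partition so that on each piece the problem becomes a finite intersection of lattices that can be handled by the same local-basis argument as \cref{rslt:intersection-of-O-n-with-free-C-p-module-is-free-on-profinite-set}.

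First I would apply \cref{rslt:etalification-of-finite-open-map-of-profinite-sets} to the open surjection $f\colon Y \surjto X$ (which has finite fibers of bounded size, since $f$ is a continuous surjection of profinite sets with finite fibers and $Y$ is compact) to obtain a surjection $g\colon Z \surjto Y$ from a profinite set $Z$ such that the composition $h := f \comp g\colon Z \to X$ is a finite disjoint union of clopen immersions $j_i\colon Z_i \injto X$, $i = 1,\dots,k$, with $\bigunion_i Z_i = X$. Setting $\mathcal L_Z := g^{-1}\mathcal L_Y$, which is a free $\ri$-module on $Z$ with $\mathcal L_Z[p^{-1}] = h^{-1}L$, I would verify that $L \isect f_*\mathcal L_Y = L \isect h_*\mathcal L_Z$ inside $h_* h^{-1}L$. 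The inclusion ``$\subset$'' is obvious, and the reverse follows because $\mathcal L_Y \injto f^{-1}L$ is an injection of sheaves and membership in this subsheaf can be tested after pulling back along the surjection $g$; that is, $\mathcal L_Y = f^{-1}L \isect g_*\mathcal L_Z$. This lets me replace $(Y,\mathcal L_Y)$ by $(Z,\mathcal L_Z)$ and thereby assume that $f$ itself is a finite disjoint union of clopen immersions $j_i\colon Y_i \injto X$ covering $X$, and that $\mathcal L_Y = \bigdsum_i \mathcal L_{Y_i}$ with each $\mathcal L_{Y_i}$ a free integral model of $L|_{Y_i}$.

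Next I would refine the cover to a finite disjoint clopen partition: for each non-empty $S \subset \{1,\dots,k\}$ set
\[
W_S := \bigisect_{i\in S} Y_i \,\isect\, \bigisect_{i\notin S}(X\without Y_i),
\]
so that $X = \bigdunion_S W_S$. Since each $Y_i$ is clopen in $X$, the formula $(f_*\mathcal L_Y)(U) = \prod_i \mathcal L_{Y_i}(U\isect Y_i)$ for $U \in X_\qproet$ unwinds $\mathcal L$ to the sheaf that, on any $U \to X$, picks out $\ell \in L(U)$ whose restriction to $U\isect Y_i$ lies in $\mathcal L_{Y_i}(U\isect Y_i)$ for each $i$. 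When $U$ lies over $W_S$ this forces the condition only for $i \in S$, and therefore $\mathcal L|_{W_S} = \bigisect_{i \in S} \mathcal L_{Y_i}|_{W_S}$ as $\ri$-submodules of $L|_{W_S}$.

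The remaining (and main) step is to show that this finite intersection of free $\ri$-lattices inside the free $\Cpx_p$-module $L|_{W_S}$ is itself free of full rank over $\ri$ on the profinite set $W_S$. Stalkwise the intersection of finitely many $\ri$-lattices in $\Cpx_p^n$ is again an $\ri$-lattice of rank $n$, so I would repeat the argument of \cref{rslt:intersection-of-O-n-with-free-C-p-module-is-free-on-profinite-set}: at each $w \in W_S$ choose an $\ri$-basis $e_{1,w},\dots,e_{n,w}$ of the stalk of $\mathcal L$, extend these to sections on a clopen neighbourhood $U \subset W_S$, and use the openness of $\GL_n(\ri)$ in $\GL_n(\Cpx_p)$ together with the openness of $\ri$ in $\Cpx_p$ to shrink $U$ so that these sections lie in each $\mathcal L_{Y_i}$ and remain an $\ri$-basis of the intersection at every point of $U$. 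Covering the quasi-compact $W_S$ by finitely many such $U$'s gives a global free presentation of $\mathcal L|_{W_S}$; since the $W_S$ form a finite disjoint clopen cover of $X$, gluing yields freeness of $\mathcal L$ on $X$, and $\mathcal L[p^{-1}] = L$ follows from the rank being full on each $W_S$. The main obstacle is the last step, which is a verbatim adaptation of the proof of \cref{rslt:intersection-of-O-n-with-free-C-p-module-is-free-on-profinite-set} to a finite intersection of lattices rather than to the intersection of a single sub-$\Cpx_p$-module with $\ri^n$.
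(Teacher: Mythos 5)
Your reduction to the case of a finite disjoint union of clopen immersions via \cref{rslt:etalification-of-finite-open-map-of-profinite-sets}, and the verification that $L \isect f_*\mathcal L_Y = L \isect h_*\mathcal L_Z$, match the paper's proof. The divergence is at the final step. After reducing to $Y = \bigdunion_{i=1}^n X$, the paper does \emph{not} re-run the argument of \cref{rslt:intersection-of-O-n-with-free-C-p-module-is-free-on-profinite-set}: it fixes an isomorphism $f_*\mathcal L_Y \isom \ri^{rn}$, which induces $f_*f^{-1}L \isom \Cpx_p^{rn}$, and observes that then $\mathcal L = L \isect \ri^{rn}$ with $L$ embedded diagonally in $\Cpx_p^{rn}$; this is \emph{literally} the situation of \cref{rslt:intersection-of-O-n-with-free-C-p-module-is-free-on-profinite-set}, which is applied as a black box. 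You instead reinterpret $\mathcal L|_{W_S}$ as a finite intersection $\bigisect_{i\in S}\mathcal L_{Y_i}|_{W_S}$ of lattices inside $L|_{W_S}$ and propose to redo the argument there.

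That adaptation is not verbatim, and the openness you invoke does not carry over as written. The proof of \cref{rslt:intersection-of-O-n-with-free-C-p-module-is-free-on-profinite-set} rests on the criterion that $e_1(y),\dots,e_r(y)$ form an $\ri$-basis of $L_y\isect\ri^n$ iff they form a $\Cpx_p$-basis of $L_y$, lie in $\ri^n$, and are linearly independent modulo $\mm$. The naive analogue for $\bigisect_{i\in S}\Lambda_i$ --- that a basis of the intersection is linearly independent modulo $\mm\Lambda_i$ for each $i$ --- is simply false. For instance, in $\Cpx_p^2$ take $\Lambda_1 = \ri \dsum p^{-1}\ri$ and $\Lambda_2 = p^{-1}\ri\dsum\ri$; then $\Lambda_1\isect\Lambda_2 = \ri^2$ and the standard basis is a basis of the intersection, yet $e_2 = (0,1) = p\cdot(0,p^{-1})\equiv 0$ in $\Lambda_1/\mm\Lambda_1$. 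The correct open condition is that the images of the $e_j(y)$ in $\bigdsum_{i\in S}\Lambda_i/\mm\Lambda_i$ be linearly independent --- which is exactly what the paper's diagonal embedding into $\Cpx_p^{r\cdot|S|}$ encodes automatically. So either use the diagonal/direct-sum trick to reduce to \cref{rslt:intersection-of-O-n-with-free-C-p-module-is-free-on-profinite-set} as stated (the paper's route), or state and verify the modified criterion; the claim that the last step is a ``verbatim adaptation'' via openness of $\GL_n(\ri)$ is where the proposal, as written, has a gap.
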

\begin{proof}
Let $g\colon Z \surjto Y$ be as in \cref{rslt:etalification-of-finite-open-map-of-profinite-sets}. From the surjectivity of $g$ and the freeness of $\mathcal L_Y$ we deduce $\mathcal L_Y = g_*g^{-1}\mathcal L_Y \isect f^{-1}L$ (the intersection being taken inside $g_* g^{-1} (f^{-1} L)$). This equation remains true after applying $f_*$, which implies $\mathcal L = L \isect f_*\mathcal L_Y = L \isect (f \comp g)_* (g^{-1} \mathcal L_Y) \isect f_* f^{-1} L = L \isect (f\comp g)_* (g^{-1} \mathcal L_Y)$. Thus we can w.l.o.g. replace $Y$ by $Z$, $\mathcal L_Y $ by $g^{-1} \mathcal L_Y$ and $f$ by $f\comp g$, i.e. we can assume that $f\colon Y \to X$ is a finite disjoint union of clopen immersions. By passing to a disjoint open cover of $X$ we can then reduce to the case $Y = \bigdunion_{i=1}^n X$ for some $n \ge 1$. In this case we note that $f_* \mathcal L_Y$ is free of rank $rn$ over $\ri$, where $r$ is the rank of $\mathcal L_Y$. By fixing an isomorphism $f_*\mathcal L_Y \isom \ri^{rn}$ (and consequently $f_*f^{-1}L \isom \Cpx_p^{rn}$) we are precisely in the situation of \cref{rslt:intersection-of-O-n-with-free-C-p-module-is-free-on-profinite-set}, which implies that $\mathcal L$ has the desired properties.
\end{proof}

We have finished our analysis in the profinite case. The next results provide the bridge to the world of diamonds:

\begin{lemma} \label{rslt:construction-of-pi-X-minus-1}
Let $X$ be a spatial diamond. Then $\pi_0(X)$ is a profinite set and the functor $\pi_0\colon X_{\qproet,\qcqs} \to \pi_0(X)_\qproet$ admits a fully faithful right adjoint.
\end{lemma}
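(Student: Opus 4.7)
The plan is to construct the right adjoint explicitly by pulling back clopen decompositions from $\pi_0(X)$ to $X$, mirroring the scheme-theoretic construction of \cite[Lemma 2.2.8]{proetale-topology}.

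That $\pi_0(X)$ is profinite is immediate from the spatial hypothesis: $|X|$ is spectral, and $\pi_0$ of any spectral space is profinite by \cite[Lemma 0906]{stacks-project}. For the right adjoint $\pi_X^{-1}\colon \pi_0(X)_\qproet \to X_{\qproet,\qcqs}$, I start from the observation that the continuous projection $\pi_X\colon |X| \surjto \pi_0(X)$ sends clopens $U \subset \pi_0(X)$ to clopen (hence open quasi-compact) subsets of $|X|$, which correspond to open qcqs subdiamonds $X_U \injto X$ with $|X_U| = \pi_X^{-1}(U)$ and, tautologically, $\pi_0(X_U) = U$. For a finite set $T$ with a map $f\colon T \to \pi_0(X)$ I set
\begin{align*}
	\pi_X^{-1}(T) := \bigdunion_{t \in T} X_{f^{-1}(t)},
\end{align*}
which is finite étale (in particular qcqs pro-étale) over $X$, and for a profinite set $S = \varprojlim_i T_i$ over $\pi_0(X)$ with $T_i$ finite discrete I set $\pi_X^{-1}(S) := \varprojlim_i \pi_X^{-1}(T_i)$. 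This limit exists in $X_{\qproet,\qcqs}$ and remains spatial by \cite[Lemma 11.22]{etale-cohomology-of-diamonds}.

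Next I establish the adjunction. The key identity is $\pi_0 \comp \pi_X^{-1} = \id$: for finite $T$ the connected components of $\bigdunion_t X_{f^{-1}(t)}$ group naturally into $T$, and for general $S$ one passes to the limit, using that $|\varprojlim_i \pi_X^{-1}(T_i)| = \varprojlim_i |\pi_X^{-1}(T_i)|$ (again \cite[Lemma 11.22]{etale-cohomology-of-diamonds}) and that $\pi_0$ commutes with cofiltered limits in the category of spectral spaces along spectral maps. For the adjunction statement itself, given $Y \in X_{\qproet,\qcqs}$ and $S \in \pi_0(X)_\qproet$, I use quasi-compactness of $Y$ and write $S = \varprojlim_i T_i$ to reduce to the case $S = T$ finite; the disjoint-union decomposition then further reduces to the case $T = \{*\}$ with prescribed image a single clopen $U \subset \pi_0(X)$. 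In that case the claim becomes: a morphism $Y \to X$ factors through $X_U \injto X$ if and only if $\pi_0(Y) \to \pi_0(X)$ has image in $U$. Necessity is obvious and sufficiency holds because $|X_U|$ is exactly the preimage of $U$ in $|X|$, so the set-theoretic image of $|Y|$ already lies in $|X_U|$, and since $X_U \injto X$ is an open immersion the factorization lifts uniquely to a morphism of diamonds. Fully faithfulness of $\pi_X^{-1}$ then follows formally from the identity $\pi_0 \comp \pi_X^{-1} = \id$ via the unit–counit formalism.

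The main obstacle will be the bookkeeping showing that $\pi_0$ commutes with the cofiltered limits that appear in $\pi_X^{-1}(S)$, which is essential both for verifying $\pi_0 \comp \pi_X^{-1} = \id$ and for reducing the adjunction to the finite case. Once this is in hand, everything else is a routine translation between clopen decompositions of $\pi_0(X)$ and clopen sub-diamonds of $X$.
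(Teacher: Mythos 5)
Your overall strategy matches the paper's: pull clopen subsets of $\pi_0(X)$ back to clopen subdiamonds of $X$, then pass to cofiltered limits. The adjunction argument via the factorization criterion for open immersions is the same, and fully faithfulness from $\pi_0 \comp \pi_X^{-1} = \id$ is correct formalism. However, there is a genuine gap in the choice of intermediate building blocks.

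You write a general $S \in \pi_0(X)_\qproet$ as $S = \varprojlim_i T_i$ with each $T_i$ a \emph{finite discrete set} equipped with a map $T_i \to \pi_0(X)$. This presentation does not exist in general: if every $T_i$ is a finite set mapping to $\pi_0(X)$ and the structure maps are compatible, then the image of $S$ in $\pi_0(X)$ is contained in the (finite) image of each $T_i$, so it is finite. That already fails for $S = \pi_0(X)$ whenever $\pi_0(X)$ is infinite. Relatedly, the formula $\pi_X^{-1}(T) := \bigdunion_{t\in T} X_{f^{-1}(t)}$ does not parse: with $f\colon T \to \pi_0(X)$ and $t \in T$, the symbol $f^{-1}(t)$ is undefined, since $f^{-1}$ takes subsets of $\pi_0(X)$ to subsets of $T$. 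The correct intermediate objects are the finite \emph{exterior disjoint unions of clopen subsets} of $\pi_0(X)$, i.e.\ data consisting of a finite index set $J$ and clopen $U_j \subset \pi_0(X)$ for each $j \in J$, with $\pi_X^{-1}(\bigdunion_j U_j) := \bigdunion_j \pi_X^{-1}(U_j)$. These objects are typically infinite as profinite sets, so they are not captured by ``finite sets over $\pi_0(X)$.'' The presentation of an arbitrary $S \in \pi_0(X)_\qproet$ as a cofiltered limit of such objects is exactly \cite[Lemma 2.1.13]{proetale-topology}, which is the key external input in the paper's proof and the piece you are missing. You also flag ``$\pi_0$ commutes with cofiltered limits'' as the main obstacle, but that step is comparatively routine (cofiltered limits of spectral spaces along spectral maps); the actual non-formal input is the decomposition lemma above.
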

\begin{proof}
Since $\abs X$ is a spectral space (because $X$ is spatial), $\pi_0(X) = \pi_0(\abs X)$ is a profinite set. The same is true for any $Y \in X_{\qproet,\qcqs}$ (see \cite[Corollary 11.28]{etale-cohomology-of-diamonds}), so that $\pi_0(Y) \in \pi_0(X)_\qproet$. We now construct a right adjoint $\pi_X^{-1}\colon \pi_0(X)_\qproet \to X_{\qproet,\qcqs}$.

Suppose first that $U \subset \pi_0(X)$ is a quasi-compact open subset. Then we let $\pi_X^{-1}(U) \subset X$ be the preimage of $U$ under the projection $X \surjto \pi_0(X)$ (this is a clopen subset). We claim that for every $V \in X_{\qproet,\qcqs}$ we have
\begin{align*}
	\Hom_X(V, \pi_X^{-1}(U)) = \Hom_{\pi_0(X)}(\pi_0(V), U).
\end{align*}
To see this, note that both sides are either empty or contain a single morphism, depending on whether $V \to X$ (resp. $\pi_0(V) \to \pi_0(X)$) factors over $\pi_X^{-1}(U)$ (resp. $U$). It is then clear that both sides coincide. Next let $U = \bigdunion_{k=1}^n U_k$ be a disjoint union of quasi-compact open subsets $U_k \subset \pi_0(X)$, which is a topological space covering the union of the $U_k$. Then we let $\pi_X^{-1}(U) := \bigdunion_{k=1}^n \pi_X^{-1}(U_k)$. It is easy to see that the above identity of $\Hom$'s is still satisfied.

Now let $U \in \pi_0(X)_\qproet$ be general. Using \cite[Lemma 2.1.13]{proetale-topology}, we can write $U = \varprojlim_{i\in I} U_i$ with all $U_i$ being finite disjoint unions of quasi-compact open subsets of $\pi_0(X)$. Letting $\pi_X^{-1}(U) = \varprojlim_i \pi_X^{-1}(U_i)$ we get for all $V \in X_{\qproet,\qcqs}$:
\begin{align*}
	&\Hom_X(V, \pi_X^{-1}(U)) = \Hom_X(V, \varprojlim_i \pi_X^{-1}(U_i)) = \varprojlim_i \Hom_{\pi_0(X)}(\pi_0(V), U_i) =\\\qquad&=\Hom_{\pi_0(X)}(\pi_0(V), U).
\end{align*}
This shows that $\pi_X^{-1}(U)$ is independent of the limit representation $U = \varprojlim_i U$ and that the thus defined functor $\pi_X^{-1}$ is a right adjoint of $\pi_0$.

To show that $\pi_X^{-1}$ is fully faithful we have to show that $\pi_0(\pi_X^{-1}(U)) = U$ for all $U \in \pi_0(X)_\qproet$. If $U$ is a disjoint union of quasi-compact open subsets of $\pi_0(X)$ then this is clear. For general $U$ we again use the representation $U = \varprojlim_i U_i$ as above. Then
\begin{align*}
	\pi_0(\pi_X^{-1}(U)) = \pi_0(\varprojlim_i \pi_X^{-1}(U_i)) = \varprojlim_i \pi_0(\pi_X^{-1}(U_i)) = \varprojlim U_i = U,
\end{align*}
as desired.
\end{proof}

\begin{definition}
Let $X$ be a spatial diamond and let $\pi_X^{-1}\colon \pi_0(X)_\qproet \to X_\qproet$ be the right adjoint in \cref{rslt:construction-of-pi-X-minus-1}. This defines a morphism of sites
\begin{align*}
	\pi_X\colon X_\qproet \to \pi_0(X)_\qproet.
\end{align*}
\end{definition}

\begin{remark} \label{rmk:pr-qproet-is-composition-of-pi-X-with-slice}
Let $X$ be a spatial diamond. In \cref{def:constant-sheaves} we introduced a morphism of sites $\pr\qproet\colon X_\qproet \to *_\qproet$. One verifies easily that this morphism is in fact the composition of the morphism $\pi_X\colon X_\qproet \to \pi_0(X)_\qproet$ with the slice $\pi_0(X)_\qproet \to *_\qproet$.\end{remark}

\begin{lemma} \label{rslt:properties-of-pi-X}
Let $X$ be a spatial diamond.
\begin{lemenum}
	\item For every sheaf $\mathcal F$ on $\pi_0(X)_\qproet$ and every qcqs $V \in X_\qproet$ we have $(\pi_X^{-1}\mathcal F)(V) = \mathcal F(\pi_0(V))$. In particular, the inverse image functor $\pi_X^{-1}$ commutes with limits of sheaves.
	\item We have $\pi_{X*}\pi_X^{-1} = \id{}$ on $\catshs{\pi_0(X)_\qproet}$. In particular, the inverse image functor $\pi_X^{-1}\colon \catshs{\pi_0(X)_\qproet} \injto \catshs{X_\qproet}$ is fully faithful.
\end{lemenum}
\end{lemma}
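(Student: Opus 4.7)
The plan is as follows.

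For part (i), I would start from the adjunction $\pi_0 \dashv \pi_X^{-1}$ established in \cref{rslt:construction-of-pi-X-minus-1}. For any qcqs $V \in X_\qproet$, the comma category parameterizing the presheaf inverse image of $\mathcal F$ along the functor $\pi_X^{-1}\colon \pi_0(X)_\qproet \to X_{\qproet,\qcqs}$ has the adjunction unit $V \to \pi_X^{-1}(\pi_0(V))$ as initial object. Consequently, the presheaf pullback collapses to $V \mapsto \mathcal F(\pi_0(V))$. Thus (i) reduces to showing that this presheaf is already a sheaf on $X_{\qproet,\qcqs}$, so that no sheafification is needed.

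For the sheaf property, take a covering $(V_i \to V)$ in $X_{\qproet,\qcqs}$, which by quasi-compactness of $V$ we may assume finite. The family $(\pi_0(V_i) \to \pi_0(V))$ is jointly surjective (since $V \surjto \pi_0(V)$ is a quotient onto a profinite set) and therefore a covering in $\pi_0(X)_\qproet$. The sheaf property of $\mathcal F$ for this cover then gives
\begin{align*}
	\mathcal F(\pi_0(V)) = \mathrm{eq}\Bigl(\prod_i \mathcal F(\pi_0(V_i)) \rightrightarrows \prod_{i,j} \mathcal F(\pi_0(V_i) \times_{\pi_0(V)} \pi_0(V_j))\Bigr).
\end{align*}
To identify this with the desired equalizer involving $\mathcal F(\pi_0(V_i \times_V V_j))$, it suffices to show that the canonical continuous map $\pi_0(V_i \times_V V_j) \to \pi_0(V_i) \times_{\pi_0(V)} \pi_0(V_j)$ is surjective: applying the sheaf $\mathcal F$ to a surjection of profinite sets yields an injection, so the two equalizers coincide.

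The main obstacle is verifying this surjectivity statement on $\pi_0$. I would reduce to the case where $V$, $V_i$, $V_j$ are strictly totally disconnected (which form a basis of $X_\qproet$ and are preserved by pro-étale pullback by \cite[Lemma 7.19]{etale-cohomology-of-diamonds}), and then invoke \cite[Lemma 14.6]{etale-cohomology-of-diamonds}, which describes $\pi_0$ of fiber products of such spaces via the connected components of the form $\Spa(C, C^+)$ for algebraically closed perfectoid fields $C$. The final assertion of (i), that $\pi_X^{-1}$ commutes with limits, is then immediate: for qcqs $V$ and an inverse system $(\mathcal F_i)$ of sheaves, one has $(\pi_X^{-1} \varprojlim_i \mathcal F_i)(V) = (\varprojlim_i \mathcal F_i)(\pi_0(V)) = \varprojlim_i (\pi_X^{-1}\mathcal F_i)(V)$.

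For part (ii), for any $U \in \pi_0(X)_\qproet$ we have by definition
\begin{align*}
	(\pi_{X*}\pi_X^{-1}\mathcal F)(U) = (\pi_X^{-1}\mathcal F)(\pi_X^{-1}(U)).
\end{align*}
Since $\pi_X^{-1}(U) \in X_{\qproet,\qcqs}$, part (i) identifies this with $\mathcal F(\pi_0(\pi_X^{-1}(U)))$, and the fully faithfulness of $\pi_X^{-1}$ established in \cref{rslt:construction-of-pi-X-minus-1} gives $\pi_0 \comp \pi_X^{-1} = \id$, hence the expression is $\mathcal F(U)$. Thus $\pi_{X*}\pi_X^{-1} = \id$. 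Since $\pi_X^{-1}$ is left adjoint to $\pi_{X*}$, the triangle identities imply that fully faithfulness of $\pi_X^{-1}$ is equivalent to the unit $\id \to \pi_{X*}\pi_X^{-1}$ being an isomorphism, which we just established.
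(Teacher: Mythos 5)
Your strategy for part (i) --- use the adjunction from \cref{rslt:construction-of-pi-X-minus-1} to collapse the presheaf pullback to $V \mapsto \mathcal F(\pi_0(V))$, then verify the sheaf condition for this presheaf --- matches the paper up to the key technical step, and your treatment of part (ii) and of the ``in particular'' remark is also the same. The gap is in the claim that the canonical map $\pi_0(V_i \cprod_V V_j) \to \pi_0(V_i) \cprod_{\pi_0(V)} \pi_0(V_j)$ is surjective: this is false for general quasi-pro-\'etale covers. Take $V$ connected and cover it by three connected quasi-compact open subsets $S_1, S_2, S_3$ with $S_1 \isect S_2 = \emptyset$ but $S_1 \isect S_3 \ne \emptyset \ne S_2 \isect S_3$ (e.g.\ a small closed inner disk, an outer annulus, and an overlapping middle annulus inside a closed unit disk). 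Then $\pi_0(S_1 \cprod_V S_2) = \emptyset$ while $\pi_0(S_1) \cprod_{\pi_0(V)} \pi_0(S_2)$ is a single point, so the map cannot be surjective. Your proposed reduction also does not repair this: the sheaf condition must be verified for arbitrary qcqs $V$, so $V$ cannot be exchanged for a strictly totally disconnected space, and \cite[Lemma 7.19]{etale-cohomology-of-diamonds} only preserves strict total disconnectedness when the base is already of that form.

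What is actually needed, and what the paper proves, is the weaker assertion that $\pi_0(U \cprod_V U) \rightrightarrows \pi_0(U) \to \pi_0(V)$ is a coequalizer in profinite sets. This suffices because the locus inside $\pi_0(U) \cprod_{\pi_0(V)} \pi_0(U)$ on which the two pullbacks of a given section of $\mathcal F$ agree is a closed equivalence relation; if it contains the image of $\pi_0(U \cprod_V U)$, then by the coequalizer property it equals the whole kernel pair, so the section descends. The paper checks the coequalizer statement by testing against an arbitrary profinite set $S$: via the identification $\Hom(\pi_0(W), S) = \cts(\abs W, S)$, the required equalizer becomes exactly the sheaf condition for $W \mapsto \cts(\abs W, S)$ on $X_\qproet$, which is \cref{rslt:F-T-is-sheaf-on-vsite}. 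That argument works uniformly for all qcqs $V$ and $U$, with no restriction to a basis and no surjectivity claim on $\pi_0$ of fibre products.
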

\begin{proof}
This is analogous to \cite[Lemma 4.2.13]{proetale-topology}. To prove (i), let a sheaf $\mathcal F$ on $\pi_0(X)_\qproet$ be given. From the adjunction of $\pi_X^{-1}$ and $\pi_0$ (see \cref{rslt:construction-of-pi-X-minus-1}) one easily deduces that $\pi_X^{-1}\mathcal F$ is the sheafification of the presheaf $U \mapsto \mathcal F(\pi_0(U))$, so we only have to show that this presheaf satisfies the sheaf condition for covers $U \surjto V$ in $X_{\qproet,\qcqs}$. Given such a cover, we want to show that the sequence
\begin{align*}
	0 \to \mathcal F(\pi_0(V)) \to \mathcal F(\pi_0(U)) \rightrightarrows \mathcal F(\pi_0(U \cprod_V U))
\end{align*}
is exact. It is enough to show that $\pi_0(U \cprod_V U) \rightrightarrows \pi_0(U) \to \pi_0(V)$ is a coequalizer diagram in the category of profinite sets (because then the natural map $\pi_0(U \cprod_V U) \to \pi_0(U) \cprod_{\pi_0(V)} \pi_0(U)$ is surjective and we can conclude by the sheafiness of $\mathcal F$). This means that for every profinite set $S$ we need to show that
\begin{align*}
	\Hom(\pi_0(V), S) \to \Hom(\pi_0(U), S) \rightrightarrows \Hom(\pi_0(U \cprod_V U), S)
\end{align*}
is an equalizer diagram of sets. Since $S$ is a totally disconnected topological space we have $\Hom(\pi_0(V), S) = \cts(\pi_0(\abs V), S) = \cts(\abs V, S)$ and similarly for the other two entries in the above diagram. We are therefore reduced to showing that the presheaf $W \mapsto \cts(\abs W, S)$ is a sheaf on $X_\qproet$; but this is \cref{rslt:F-T-is-sheaf-on-vsite}.

Part (ii) follows easily from (i) using that $\pi_0(\pi_X^{-1}(U)) = U$ for all $U \in \pi_0(X)_\qproet$ (see \cref{rslt:construction-of-pi-X-minus-1}). 
\end{proof}

\begin{lemma} \label{rslt:pi-0-commutes-with-pro-etale-fibre-product}
Let $X$ be a strictly totally disconnected space, let $V \to X$ be an affinoid pro-étale morphism and let $Y \to X$ be any morphism from a spatial v-sheaf $Y$. Then there is a natural homeomorphism
\begin{align*}
	\pi_0(Y \cprod_X V) = \pi_0(Y) \cprod_{\pi_0(X)} \pi_0(V).
\end{align*}
\end{lemma}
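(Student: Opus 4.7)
The plan is to reduce to the fact that $\pi_0$ commutes with fiber products of strictly totally disconnected perfectoid spaces (cf.\ \cite[Lemma 14.6]{etale-cohomology-of-diamonds}, as already used in the proof of \cref{rslt:constant-sheaf-commutes-with-lim-and-colim}), after handling the only new feature: $Y$ is merely assumed spatial. First I would observe that $V$ is itself strictly totally disconnected by \cite[Lemma 7.19]{etale-cohomology-of-diamonds}, since it is affinoid pro-étale over the strictly totally disconnected $X$. Thus two of the three factors already satisfy the hypothesis of Lemma 14.6.

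Next I would choose a v-cover $\tilde Y \surjto Y$ by a strictly totally disconnected perfectoid space $\tilde Y$ (which exists by covering $Y$ by affinoid perfectoids and then applying \cite[Lemma 7.18]{etale-cohomology-of-diamonds}), and set $\tilde Y^{(2)} := \tilde Y \cprod_Y \tilde Y$. Since $\tilde Y^{(2)} \to \tilde Y$ is pro-étale and $\tilde Y$ is strictly totally disconnected, $\tilde Y^{(2)}$ is again strictly totally disconnected by \cite[Lemma 7.19]{etale-cohomology-of-diamonds}. The v-sheaf $Y$ is the coequalizer of $\tilde Y^{(2)} \rightrightarrows \tilde Y$, and since fiber products in the v-topos preserve colimits in each variable, $Y \cprod_X V$ is the v-coequalizer of $\tilde Y^{(2)} \cprod_X V \rightrightarrows \tilde Y \cprod_X V$. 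Applying \cite[Lemma 14.6]{etale-cohomology-of-diamonds} to each of these strictly totally disconnected fiber products (over the strictly totally disconnected $X$) yields
\begin{align*}
\pi_0(\tilde Y \cprod_X V) &= \pi_0(\tilde Y) \cprod_{\pi_0(X)} \pi_0(V), \\
\pi_0(\tilde Y^{(2)} \cprod_X V) &= \pi_0(\tilde Y^{(2)}) \cprod_{\pi_0(X)} \pi_0(V).
\end{align*}

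To conclude, I would invoke that $\pi_0$, viewed as the functor $\lambda_\circ$ from the proof of \cref{rslt:constant-sheaf-commutes-with-lim-and-colim}, is left adjoint to $\pr\vsite^{-1}$ and hence preserves coequalizers: thus $\pi_0(Y \cprod_X V)$ is the coequalizer of $\pi_0(\tilde Y^{(2)} \cprod_X V) \rightrightarrows \pi_0(\tilde Y \cprod_X V)$ in profinite sets, and similarly $\pi_0(Y) = \coeq(\pi_0(\tilde Y^{(2)}) \rightrightarrows \pi_0(\tilde Y))$. Since the functor $- \cprod_{\pi_0(X)} \pi_0(V)$ on the slice category $*_{\qproet}/\pi_0(X)$ is itself a left adjoint and so commutes with coequalizers, combining the two identifications above yields a natural continuous bijection $\pi_0(Y \cprod_X V) \to \pi_0(Y) \cprod_{\pi_0(X)} \pi_0(V)$, which is automatically a homeomorphism since both sides are profinite.

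The main obstacle is verifying that the various adjoint and coequalizer statements hold correctly at the level of profinite sets (not merely on underlying sets), so that $\pi_0$ genuinely turns the v-coequalizer defining $Y$ into a coequalizer of profinite sets; this is a formal consequence of the explicit description of $\lambda_\circ$ in the proof of \cref{rslt:constant-sheaf-commutes-with-lim-and-colim} combined with standard properties of colimits in $*_{\qproet}$.
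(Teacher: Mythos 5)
Your proposal takes a genuinely different route from the paper: you reduce to the strictly totally disconnected case via a Čech/coequalizer descent and the fact that $\pi_0$ sends v-covers to coequalizers of profinite sets, whereas the paper argues directly on topological spaces, reducing to connected $X$, $V$, $Y$ and then showing that $f^{-1}(V)\subset Y$ is connected by a specialization argument using the explicit structure $X=\Spa(C,C^+)$, $V=\Spa(C,C^\circ)$ and \cite[Proposition 12.13.(iii)]{etale-cohomology-of-diamonds}. However, your proof as written has two genuine gaps.

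First, you claim that $\tilde Y^{(2)}=\tilde Y\cprod_Y\tilde Y\to\tilde Y$ is pro-étale and hence that $\tilde Y^{(2)}$ is strictly totally disconnected. This is unjustified: $\tilde Y\to Y$ is only a v-cover, not (quasi-)pro-étale, since $Y$ is merely assumed to be a spatial v-sheaf and not a diamond. The base change $\tilde Y^{(2)}\to\tilde Y$ is therefore again just a v-cover, and \cite[Lemma 7.19]{etale-cohomology-of-diamonds} does not apply. For example, if $Y=\Spd\Fld_p$ and $\tilde Y=\Spa(C,C^+)$ is a geometric point, then $\tilde Y^{(2)}=\Spa(C,C^+)\times_{\Spd\Fld_p}\Spa(C,C^+)$ is far from strictly totally disconnected. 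Your argument would work for $Y$ a diamond with a quasi-pro-étale cover, but the lemma is stated in the spatial v-sheaf generality. The gap can be repaired by covering $\tilde Y^{(2)}$ further by a strictly totally disconnected $\tilde Y^{(2)\prime}$ and replacing it in the coequalizer presentation (one checks that $Y=\coeq(\tilde Y^{(2)\prime}\rightrightarrows\tilde Y)$ still holds and that the $\pi_0$-coequalizers are unchanged because $\pi_0(\tilde Y^{(2)\prime})\surjto\pi_0(\tilde Y^{(2)})$), but you have not done this.

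Second, your justification of the commutation of $-\cprod_{\pi_0(X)}\pi_0(V)$ with the coequalizer is incorrect: the base change functor on $*_\qproet/\pi_0(X)$ is not a left adjoint, because profinite sets are not a locally cartesian closed category (they are not even cartesian closed: $\Hom(A,B)$ for $A$ profinite and $B$ finite is typically discrete infinite, hence not profinite). The desired commutation does in fact hold in this situation, but for a different reason: the coequalizer is an effective epimorphism of compact Hausdorff spaces, $B\to C$ surjective implies $B\cprod_{\pi_0(X)}\pi_0(V)\to C\cprod_{\pi_0(X)}\pi_0(V)$ is a quotient map, and one identifies $(B\cprod_C B)\cprod_{\pi_0(X)}\pi_0(V)$ with the Čech nerve of that quotient and observes that the target is already profinite. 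That argument would need to be made explicit; the appeal to a left adjoint does not do the job. Your closing remark that the profinite-level verification is "a formal consequence of standard properties of colimits in $*_\qproet$" is therefore too optimistic, since colimits in $*_\qproet$ are precisely the place where the naive argument fails.
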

\begin{proof}
There is a natural continuous map from left to right and both sides are compact Hausdorff spaces; hence it is enough to show that the map is bijective. For every connected component $X_0$ of $X$, the connected components of $Y \cprod_X V$ that map to $X_0$ are precisely the connected components of $(Y \cprod_X V) \cprod_X X_0 \subset Y \cprod_X V$. Thus it is enough to show the claim for $X_0$ in place of $X$, so we can w.l.o.g. assume that $X$ is connected.

We can similarly assume that $V$ is connected. Then $X = \Spa(C, C^+)$ for some algebraically closed perfectoid field $C$ and open and bounded valuation subring $C^+$ and $V = \Spa(C, C'^+)$ for another open and bounded valuation subring $C'^+ \subset C$ with $C^+ \subset C'^+$. It is then enough to consider the case $C'^+ = C^\circ$.

We can furthermore assume that $Y$ is connected, and have to show that then also $f^{-1}(V)$ is connected, where $f$ denotes the map $Y \to X$. Assume that $f^{-1}(V) = Y_1 \dunion Y_2$ for clopen subsets $Y_1, Y_2 \subset f^{-1}(V)$. Let $\tclos{Y_1}, \tclos{Y_2} \subset Y$ be the closures; we claim that $Y = \tclos{Y_1} \dunion \tclos{Y_2}$ which would finish the proof. To show the claim we note that both $Y_1$ and $Y_2$ are quasi-compact open subsets of $Y$ and in particular (pro-)constructible, hence their closures are obtained by adding all specializations (see \cite[Lemma 2.4]{etale-cohomology-of-diamonds}). It is therefore enough to show that every $y \in Y$ has a unique maximal generalization, and that $f^{-1}(V)$ contains all maximal generalizations of $Y$. The latter part is clear because $f^{-1}(V) \subset Y$ is open. For the former part, we note that the statement is true for analytic adic (and in particular perfectoid) spaces and that it descends along surjective generalizing maps, so the claim follows from \cite[Proposition 12.13.(iii)]{etale-cohomology-of-diamonds}.
\end{proof}

\begin{corollary} \label{rslt:base-change-for-pi-0}
Let $X$ be a strictly totally disconnected space and let $f\colon Y \to X$ be any morphism from a spatial diamond $Y$. Consider the natural morphisms of sites $\pi_X\colon X_\qproet \to \pi_0(X)_\qproet$ and $\pi_Y\colon Y_\qproet \to \pi_0(Y)_\qproet$. They fit in a commutative diagram of sites
\begin{center}\begin{tikzcd}
	Y_\qproet \arrow[r,"\pi_Y"] \arrow[d,"f",swap] & \pi_0(Y)_\qproet \arrow[d,"\pi_0(f)"]\\
	X_\qproet \arrow[r,"\pi_X"] & \pi_0(X)_\qproet
\end{tikzcd}\end{center}
Then for every sheaf $\mathcal F$ on $\pi_0(Y)_\qproet$, the base change morphism
\begin{align*}
	\pi_X^{-1} \pi_0(f)_* \mathcal F \isoto f_* \pi_Y^{-1} \mathcal F
\end{align*}
is an isomorphism.
\end{corollary}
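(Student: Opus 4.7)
The plan is to check that the base change morphism is an isomorphism of sheaves on $X_\qproet$ by evaluating it on a suitable basis, and then to reduce everything to the two already-established facts \cref{rslt:properties-of-pi-X}(i) (which computes sections of $\pi^{-1}_\bullet$) and \cref{rslt:pi-0-commutes-with-pro-etale-fibre-product} (which identifies $\pi_0$ of a fibre product). Concretely, I would first construct the base change morphism in the standard way from the adjunctions $(\pi_X^{-1}, \pi_{X*})$, $(\pi_Y^{-1}, \pi_{Y*})$: it is the map obtained from $\pi_0(f)_*\mathcal F \to \pi_0(f)_*\pi_{Y*}\pi_Y^{-1}\mathcal F = \pi_{X*}f_*\pi_Y^{-1}\mathcal F$ by adjunction, where the equality uses $\pi_{Y*}\pi_Y^{-1} = \id{}$ from \cref{rslt:properties-of-pi-X}(ii).

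For the verification, I would pick as basis of $X_\qproet$ the class of affinoid perfectoid $V \to X$ which are affinoid pro-étale over $X$; such objects form a basis because $X$ is strictly totally disconnected and any pro-étale map is locally affinoid pro-étale (cf.\ \cite[\S7]{etale-cohomology-of-diamonds}). For such a $V$, \cref{rslt:properties-of-pi-X}(i) gives
\begin{align*}
    (\pi_X^{-1}\pi_0(f)_*\mathcal F)(V) = (\pi_0(f)_*\mathcal F)(\pi_0(V)) = \mathcal F\bigl(\pi_0(V) \cprod_{\pi_0(X)} \pi_0(Y)\bigr),
\end{align*}
where the second equality is the tautological formula for pushforward along the continuous map $\pi_0(f)\colon \pi_0(Y) \to \pi_0(X)$ of profinite sets. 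On the other hand, $V \cprod_X Y$ is a qcqs object of $Y_\qproet$ (being the base change of an affinoid pro-étale morphism along $Y \to X$), so a second application of \cref{rslt:properties-of-pi-X}(i) yields
\begin{align*}
    (f_*\pi_Y^{-1}\mathcal F)(V) = (\pi_Y^{-1}\mathcal F)(V \cprod_X Y) = \mathcal F(\pi_0(V \cprod_X Y)).
\end{align*}

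These two expressions agree via the natural homeomorphism $\pi_0(V \cprod_X Y) \isoto \pi_0(V) \cprod_{\pi_0(X)} \pi_0(Y)$ furnished by \cref{rslt:pi-0-commutes-with-pro-etale-fibre-product} (whose hypotheses are met: $X$ is strictly totally disconnected, $V \to X$ is affinoid pro-étale, and $Y$ is spatial). Unwinding the construction of the base change morphism, one checks that on $V$ it coincides with the isomorphism induced by this homeomorphism, finishing the proof.

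The main conceptual obstacle has already been handled in \cref{rslt:pi-0-commutes-with-pro-etale-fibre-product}; the remaining work is the routine bookkeeping of matching the base-change morphism with the $\pi_0$-fibre-product identification, together with the minor point of verifying that affinoid pro-étale $V \to X$ do form a basis (so that the hypothesis of \cref{rslt:pi-0-commutes-with-pro-etale-fibre-product} applies) and that $V \cprod_X Y$ is qcqs and spatial (so that \cref{rslt:properties-of-pi-X}(i) can be applied to it inside $Y_\qproet$).
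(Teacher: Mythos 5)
Your proposal is correct and takes essentially the same approach as the paper: evaluate both sides on affinoid pro-étale $V \to X$, apply \cref{rslt:properties-of-pi-X}.(i) to each side, and invoke \cref{rslt:pi-0-commutes-with-pro-etale-fibre-product}. The paper's proof is terser (it omits the explicit construction of the base-change map and the checks that such $V$ form a basis and that $V \cprod_X Y$ is qcqs and spatial), but these are exactly the routine points you correctly flag as needing verification, so the extra detail is harmless.
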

\begin{proof}
It is enough to show that both sides have the same value on affinoid pro-étale $V \to X$. Using \cref{rslt:properties-of-pi-X} we get
\begin{align*}
	& (\pi_X^{-1} \pi_0(f)_* \mathcal F)(V) = (\pi_0(f)_* \mathcal F)(\pi_0(V)) = \mathcal F(\pi_0(Y) \cprod_{\pi_0(X)} \pi_0(V)),\\
	& (f_* \pi_Y^{-1} \mathcal F)(V) = (\pi_Y^{-1} \mathcal F)(Y \cprod_X V) = \mathcal F(\pi_0(Y \cprod_X V)).
\end{align*}
Thus the claim follows immediately from \cref{rslt:pi-0-commutes-with-pro-etale-fibre-product}.
\end{proof}

We are now prepared to prove the main technical result about descent of integral models. To keep our statements more concise we first introduce the following terminology.

\begin{definition}
Let $f\colon Y \to X$ be a map of diamonds. We say that \emph{integral models descend along $f$} if for every $\Cpx_p$-local system $L$ on $X$ such that $\restrict LY$ has an integral model, also $L$ has an integral model.
\end{definition}

\begin{proposition} \label{rslt:descent-of-integral-models-abstract-result}
Let $f\colon Y \to X$ be a surjective morphism of spatial diamonds and suppose there exists a quasi-pro-étale covering $\tilde X \to X$ by a w-contractible space $\tilde X$ such that the pullback morphism $\tilde f\colon \tilde Y := Y \cprod_X \tilde X \to \tilde X$ induces an open map $\pi_0(\tilde f)\colon \pi_0(\tilde Y) \to \pi_0(\tilde X)$ with finite fibers of bounded size. Then integral models descend along $f$.
\end{proposition}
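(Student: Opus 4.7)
The plan is to construct the integral model as an explicit intersection, check local systems status locally on $\tilde X$, then transport the verification to the profinite sets $\pi_0(\tilde X)$ and $\pi_0(\tilde Y)$ in order to invoke \cref{rslt:descent-of-integral-model-on-profinite-set}. Concretely, since $f$ is surjective, the unit of adjunction $L \injto f_* f^{-1} L$ is injective, and the natural candidate is
\begin{equation*}
	\mathcal L := L \isect f_* \mathcal L_Y \subset f_* f^{-1} L.
\end{equation*}
I have to verify that (a) $\mathcal L$ is an $\ri$-local system and (b) $\mathcal L[p^{-1}] = L$. Both assertions are quasi-pro-étale local on $X$, so it suffices to check them after pullback along the v-cover $\tilde X \to X$.

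Next I would establish a base change identity $\restrict{\mathcal L}{\tilde X} = \restrict L{\tilde X} \isect \tilde f_* \mathcal L_{\tilde Y}$, where $\mathcal L_{\tilde Y} := \restrict{\mathcal L_Y}{\tilde Y}$; this is where quasi-pro-étaleness of $\tilde X \to X$ is used. Since $\tilde X$ is w-contractible, \cref{rslt:w-contractible-basis} together with the argument in \cref{rslt:projlim-of-locsys} forces $\restrict L{\tilde X}$ to be constant, hence of the form $\pi_{\tilde X}^{-1} L_0$ for a free $\Cpx_p$-module $L_0$ on $\pi_0(\tilde X)_\qproet$. The key intermediate step is to exhibit $\mathcal L_{\tilde Y}$ as the pullback $\pi_{\tilde Y}^{-1} \mathcal L_0$ of a free $\ri$-module $\mathcal L_0$ on $\pi_0(\tilde Y)_\qproet$ contained in $\pi_0(\tilde f)^{-1} L_0$. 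Granting this, \cref{rslt:base-change-for-pi-0} gives $\tilde f_* \mathcal L_{\tilde Y} = \pi_{\tilde X}^{-1} \pi_0(\tilde f)_* \mathcal L_0$, and because $\pi_{\tilde X}^{-1}$ commutes with limits by \cref{rslt:properties-of-pi-X} it commutes with the intersection, yielding
\begin{equation*}
	\restrict{\mathcal L}{\tilde X} = \pi_{\tilde X}^{-1}\bigl(L_0 \isect \pi_0(\tilde f)_* \mathcal L_0\bigr).
\end{equation*}

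Finally, $\pi_0(\tilde X)$ is extremally disconnected by definition of w-contractibility, and $\pi_0(\tilde f)$ is open with finite fibers of bounded size by hypothesis, so \cref{rslt:descent-of-integral-model-on-profinite-set} applies and shows that $L_0 \isect \pi_0(\tilde f)_* \mathcal L_0$ is a free $\ri$-module on $\pi_0(\tilde X)_\qproet$ whose localization at $p$ recovers $L_0$. Pulling back through $\pi_{\tilde X}^{-1}$ then transfers both properties to $\restrict{\mathcal L}{\tilde X}$, completing the argument.

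The main obstacle will be the profinitization step for $\mathcal L_{\tilde Y}$: in contrast to $\tilde X$, the space $\tilde Y$ need not be w-contractible, so $\mathcal L_{\tilde Y}$ is not globally constant a priori and one has to justify its origin from $\pi_0(\tilde Y)_\qproet$. The natural approach is to use \cref{rslt:etalification-of-finite-open-map-of-profinite-sets} on $\pi_0(\tilde f)$ to replace $\pi_0(\tilde Y) \to \pi_0(\tilde X)$ by a disjoint union of clopen immersions after a surjection $Z \surjto \pi_0(\tilde Y)$, then lift $Z$ back to a quasi-pro-étale cover of $\tilde Y$ (via $\pi_{\tilde Y}^{-1}$) in order to trivialize $\mathcal L_{\tilde Y}$ in a controlled way; the identification with $\pi_{\tilde Y}^{-1}\mathcal L_0$ should then follow from \cref{rslt:properties-of-pi-X} and the full faithfulness of $\pi_{\tilde Y}^{-1}$, but making this precise requires care with the interplay between quasi-pro-étale local systems and connected components.
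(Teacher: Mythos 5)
Your proposal tracks the paper's argument quite closely in its overall structure: the same candidate $\mathcal L := L \isect f_*\mathcal L_Y$, the same localization to the w-contractible cover $\tilde X$, and the same strategy of pushing everything down to profinite sets via $\pi_{\tilde X}$ and $\pi_{\tilde Y}$, applying \cref{rslt:base-change-for-pi-0} to move the intersection inside $\pi_{\tilde X}^{-1}$, and concluding with \cref{rslt:descent-of-integral-model-on-profinite-set}. You also correctly identify the genuine obstacle: $\tilde Y$ need not be w-contractible, so it is not clear why $\mathcal L_{\tilde Y}$ should descend to a sheaf on $\pi_0(\tilde Y)_\qproet$.

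The workaround you sketch for that obstacle is, however, a gap. Pulling back \cref{rslt:etalification-of-finite-open-map-of-profinite-sets} along $\pi_{\tilde Y}^{-1}$ yields a quasi-pro-étale map $\pi_{\tilde Y}^{-1}(Z) \to \tilde Y$ that is, by the construction in \cref{rslt:construction-of-pi-X-minus-1}, a cofiltered limit of finite clopen disjoint unions of $\tilde Y$. There is no reason this particular cover trivializes the $\ri$-local system $\mathcal L_{\tilde Y}$: local systems are typically trivialized on genuinely pro-finite-étale covers that are invisible to $\pi_0$, so this approach stalls. (Also, full faithfulness of $\pi_{\tilde Y}^{-1}$ tells you $\pi_{\tilde Y *}\pi_{\tilde Y}^{-1} = \id{}$, which does not by itself show that a given sheaf lies in the essential image.) The etalification lemma does play a role, but only later, inside \cref{rslt:descent-of-integral-model-on-profinite-set}, after everything is already expressed on the profinite sets with genuinely free modules.

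What is actually needed here is \cref{rslt:integral-model-of-constant-sheaf-is-constant}: once $\restrict L{\tilde X}$, and hence $\restrict L{\tilde Y}$, is constant, any integral model of a constant $\Cpx_p$-sheaf on a spatial diamond is automatically free. The point is that the sheaf of integral models of $\Cpx_p^r$ is the constant sheaf associated to the \emph{discrete} set $\GL_r(\Cpx_p)/\GL_r(\ri)$; since constant sheaves for discrete sets always have global sections (no w-contractibility required), a trivializing section exists globally on $\tilde Y$. Once $\mathcal L_{\tilde Y}$ is known to be free it is the constant sheaf of a T1 space, hence arises as a pullback along $\pi_{\tilde Y}$ by \cref{rslt:explicit-computation-of-S-X}, and from there your computation goes through. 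One further small point you elide: a local system on a w-contractible space can have different ranks on different connected components, so before asserting $\restrict L{\tilde X}$ is constant one must first pass to a disjoint open cover of $\tilde X$ on which $L$ has constant rank, as the paper does.
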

\begin{proof}
Let $\mathcal L_Y$ be an integral model of $\restrict LY$, i.e. $\mathcal L_Y \subset \restrict LY$ is an $\ri$-local system such that $\restrict LY = \mathcal L_Y[p^{-1}]$. There are natural inclusions $L \injto f_*(\restrict LY)$ and $f_*\mathcal L_Y \injto f_*(\restrict LY)$. We define the sheaf $\mathcal L$ on $X_\vsite$ as the intersection $\mathcal L := L \isect f_*\mathcal L_Y$. The claim is that $\mathcal L$ is an integral model of $L$. Now this claim is local on $X_\vsite$, so we can replace $X$ by $\tilde X$ and w.l.o.g. assume that $X$ is a w-contractible space and $\pi_0(f)\colon \pi_0(Y) \to \pi_0(X)$ is an open map of profinite sets with finite fibers of bounded size. Since $X$ is w-contractible and $L$ is locally constant, there is an open disjoint cover of $X$ on which $L$ is constant (possibly of different ranks); by passing to that open cover we can assume that $L$ is constant, i.e. isomorphic to $\Cpx_p^r$ for some $r \ge 0$. Then also $\restrict LY$ is constant, so by \cref{rslt:integral-model-of-constant-sheaf-is-constant} below, $\mathcal L_Y \subset \restrict LY$ is free of rank $r$ over $\ri$. We claim that $\mathcal L$ is free of rank $r$ over $\ri$, which is enough to finish the proof.

To prove the claim, consider the morphism of sites $\pi_Y\colon Y_\qproet \to \pi_0(Y)_\qproet$ introduced in \cref{rslt:base-change-for-pi-0}. Since both $\mathcal L_Y$ and $\restrict LY$ are constant sheaves on $Y_\qproet$ asssociated to a T1 topological space, they arise as pullbacks from $\pi_0(Y)_\qproet$, more precisely $\mathcal L_Y = \pi_Y^{-1}\pi_{Y*} \mathcal L_Y$ and $\restrict LY = \pi_Y^{-1}\pi_{Y*} (\restrict LY)$. Similarly $L = \pi_X^{-1}\pi_{X*} L$. By \cref{rslt:base-change-for-pi-0} we deduce
\begin{align*}
	\mathcal L &= f_* \mathcal L_Y \isect L = f_* \pi_Y^{-1} \pi_{Y*} \mathcal L_Y \isect \pi_X^{-1} \pi_{X*} L = \pi_X^{-1} \pi_0(f)_* \pi_{Y*} \mathcal L_Y \isect \pi_X^{-1} \pi_{X*} L\\
	&= \pi_X^{-1} (\pi_0(f)_* \pi_{Y*} \mathcal L_Y \isect \pi_{X*} L).
\end{align*}
Thus it is enough to show that the sheaf $\pi_0(f)_* \pi_{Y*} \mathcal L_Y \isect \pi_{X*} L$ on $\pi_0(X)_\qproet$ is free of rank $r$ over $\ri$. But this is \cref{rslt:descent-of-integral-model-on-profinite-set}.
\end{proof}

\begin{lemma} \label{rslt:integral-model-of-constant-sheaf-is-constant}
Let $Y$ be a spatial diamond and let $\mathcal L$ be an integral model of the constant sheaf $L = \Cpx_p^r$ on $Y$. Then $\mathcal L$ is free of rank $r$ over $\ri$.
\end{lemma}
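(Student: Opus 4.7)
My plan is to exploit the fact that $L = \Cpx_p^r$ is constant on $Y$, reducing the freeness of $\mathcal L$ to a topological question about the pointwise lattices $\mathcal L_y \subset \Cpx_p^r$. First I would pick a quasi-pro-étale cover $V \surjto Y$ by a w-contractible space via \cref{rslt:w-contractible-basis}, on which $\mathcal L|_V$ becomes isomorphic to the constant sheaf $\ri^r_V$ (since local systems on w-contractible spaces are trivial). The composed inclusion $\ri^r_V \isoto \mathcal L|_V \injto \Cpx_p^r_V$ is then a morphism of constant sheaves of $\ri$-modules; a direct computation of the internal $\IHom$ combined with \cref{rslt:explicit-computation-of-S-X} shows that this morphism is encoded by a continuous function $A \colon \abs V \to \GL_r(\Cpx_p)$, with invertibility of $A(v)$ at each $v$ forced by fibrewise injectivity together with $\mathcal L[p^{-1}] = L$. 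Moreover, the lattice $\mathcal L_v \subset \Cpx_p^r$ is precisely $A(v)\ri^r$.

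I would then pass to the class $\bar A \colon \abs V \to \GL_r(\Cpx_p)/\GL_r(\ri)$. Since $\ri \subset \Cpx_p$ is open, $\GL_r(\ri) \subset \GL_r(\Cpx_p)$ is an open subgroup, so the quotient is discrete and $\bar A$ is locally constant. The key observation is that $\bar A(v) = [\mathcal L_v]$ depends only on the image of $v$ in $\abs Y$: the fiber $\mathcal L_v$ is the base change of $\mathcal L_y$ for $y$ the image of $v$, and since $L$ is constant both are lattices in the same copy of $\Cpx_p^r$, hence equal. Since $\abs V \to \abs Y$ is a quotient map by \cite[Proposition 12.9]{etale-cohomology-of-diamonds}, $\bar A$ descends continuously to $\abs Y$; quasi-compactness of $\abs Y$ then forces $\bar A$ to take only finitely many values, producing a finite clopen decomposition $\abs Y = U_1 \dunion \cdots \dunion U_n$.

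Working on each corresponding clopen subdiamond separately, I may assume $\mathcal L_y = g \ri^r$ for a single fixed $g \in \GL_r(\Cpx_p)$ and all $y \in \abs Y$. Any $c \in g \ri^r$ then defines a constant section $\tilde c$ of $L(Y) = \cts(\abs Y, \Cpx_p^r)$, and I would verify that $\tilde c$ lies in $\mathcal L(Y)$: on the trivializing cover $V$ the function $v \mapsto A(v)^{-1}c$ is continuous with values in $\ri^r$ (using $A(v)\ri^r = g\ri^r$ pointwise), hence encodes $\tilde c|_V$ as a section of $\mathcal L|_V \isom \ri^r_V$, and the sheaf property descends this to $\mathcal L(Y)$. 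This produces a morphism of sheaves $(g\ri^r)_Y \to \mathcal L$ whose induced map on every fibre is the identity, hence an isomorphism; post-composing with multiplication by $g^{-1}$ yields $\mathcal L \isom \ri^r_Y$, as desired.

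The main obstacle I anticipate is rigorously justifying the descent of the lattice class $\bar A$ to a continuous function on $\abs Y$; this is the bridge between the sheaf-theoretic setting on $Y$ and the topological setting on $\abs Y$, and it hinges both on the quotient map property of v-covers and on the ability to encode morphisms of constant sheaves as continuous matrix-valued functions via \cref{rslt:explicit-computation-of-S-X}. Once this topological descent is in hand, the remainder is a direct construction of constant sections.
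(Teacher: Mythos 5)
Your overall strategy is sound and is essentially the paper's argument made concrete: the paper parametrizes integral models of $L$ by the constant sheaf associated to the discrete set $S = \GL_r(\Cpx_p)/\GL_r(\ri)$, so that $\mathcal L$ corresponds to an element of $\cts(\abs Y, S)$ — precisely your descended function $\bar A_Y$ — and then uses that a continuous map from a spectral space to a discrete set has finite image, giving the clopen decomposition on each piece of which $\mathcal L$ is visibly free. You just carry this out by hand via the matrix $A$.

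The one place where you need to be careful is exactly where you flagged: the claim that $\bar A(v)$ depends only on the image of $v$ in $\abs Y$. Your justification invokes ``the fiber $\mathcal L_y$ for $y$ the image of $v$'', but a point $y \in \abs Y$ of an abstract spatial diamond is not a morphism into $Y$ and has no well-defined fiber of $\mathcal L$; the phrase is a gloss, not an argument. The correct mechanism is the one you sketch in your closing paragraph. Form the two pullbacks $\pr1^{-1}\phi, \pr2^{-1}\phi$ of the trivialization $\phi\colon \ri^r_V \isoto \mathcal L|_V$ to $V\cprod_Y V$; the transition $\left(\pr2^{-1}\phi\right)^{-1} \comp \pr1^{-1}\phi$ is an automorphism of $\ri^r_{V\cprod_Y V}$, hence by \cref{rslt:explicit-computation-of-S-X} is a continuous function $G\colon \abs{V\cprod_Y V} \to \GL_r(\ri)$. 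Composing with the inclusion $\mathcal L \injto L = \Cpx_p^r$ gives $A\comp\abs{\pr1} = (A\comp\abs{\pr2})\cdot G$, so $\bar A\comp\abs{\pr1} = \bar A\comp\abs{\pr2}$ on $\abs{V\cprod_Y V}$. Combined with surjectivity of $\abs{V\cprod_Y V} \to \abs V \cprod_{\abs Y} \abs V$ (\cite[Proposition 12.10]{etale-cohomology-of-diamonds}) this shows $\bar A$ factors set-theoretically through $\abs Y$, and the quotient-map property of $\abs V \to \abs Y$ (\cite[Proposition 12.9]{etale-cohomology-of-diamonds}) then gives continuity of the descent. With this cocycle computation inserted, the remainder of your argument (finiteness of the image of $\bar A_Y$ on the qcqs space $\abs Y$, and the explicit construction of constant sections on each clopen piece) is correct.
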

\begin{proof}
As in the proof of \cref{rslt:complete-huber-ring-has-etale-locally-integral-model} (see also \cite[Proposition 6.8.4.(6)]{proetale-topology}) one shows that the integral models of $L$ on $Y$ are parametrized by the constant sheaf associated to the discrete set $S = \GL_r(\Cpx_p)/\GL_r(\ri)$. Thus there is an isomorphism $s \in S(Y)$ from $\ri^r$ to $\mathcal L$.
\end{proof}

It remains to identify a large class of morphisms $f\colon Y \to X$ which satisfy the hypothesis of \cref{rslt:descent-of-integral-models-abstract-result}. For this we have to make extensive use of the geometry of adic spaces over $\Cpx_p$. Unfortunately most of the results we want to apply are only stated for rigid-analytic and Berkovich spaces instead of adic spaces in the literature. We have therefore gathered adic versions of the relevant results in \cite{simpson-appendix} in order to reference them here. We work in a slightly more general setting than proper adic spaces over $\Cpx_p$, which is the one needed later in the present paper, because we believe that the following results are interesting on their own.  Let us therefore fix a non-archimedean field $K$ and let $\catwfintype K$ denote the category of adic spaces locally of weakly finite type over $K$ (cf. \cite[Definition 1.2.1.(i)]{huber-etale-cohomology}).

\begin{lemma} \label{rslt:descent-of-integral-models-along-finite-map}
Let $X, Y \in \catwfintype K$, let $f\colon Y \to X$ be a finite surjective morphism and assume that both $X$ and $Y$ are of the same pure dimension $d$ and that $X$ is normal. Then integral models descend along $f$.
\end{lemma}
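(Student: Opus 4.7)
The plan is to apply \cref{rslt:descent-of-integral-models-abstract-result}. Since the existence of an integral model of a given $\Cpx_p$-local system is a property local on $X_\vsite$, we may work analytic-locally on $X$ and assume that $X$ is affinoid; then $Y$ is affinoid too (as $f$ is finite), and both $X$ and $Y$ are spatial diamonds. Choose a w-contractible quasi-pro-étale cover $\tilde X \to X$, which exists by \cref{rslt:w-contractible-basis}, set $\tilde Y := Y \cprod_X \tilde X$ and let $\tilde f\colon \tilde Y \to \tilde X$ be the base change of $f$. To conclude it suffices to verify the two hypotheses of \cref{rslt:descent-of-integral-models-abstract-result}: that $\pi_0(\tilde f)$ has finite fibers of bounded size and that $\pi_0(\tilde f)$ is open.

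For the statement about fibers, recall that every connected component of a strictly totally disconnected space such as $\tilde X$ is of the form $\Spa(C, C^+)$ for an algebraically closed perfectoid field $C$ with open and bounded valuation subring $C^+$. Since $C$ is algebraically closed every finite $C$-algebra splits as a finite product of copies of $C$, so the pullback of $\tilde f$ to such a component decomposes as a finite disjoint union $\bigdunion_i \Spa(C, D_i^+)$ of connected affinoids, indexed by the finitely many points in the fiber. The number of summands is bounded by the generic degree of $f$, which is constant in the connected case (after enlarging if need be) because $X$ is normal and both spaces are equidimensional; on the quasi-compact $X$ it is uniformly bounded.

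For openness, the idea is to combine the openness of $f$ with the fact that $\tilde f$ remains finite. By \cite{simpson-appendix}, a finite surjective morphism between normal adic spaces of the same pure dimension weakly of finite type is flat and hence open, so $f$ is open. Using results from \cite{simpson-appendix} on base change, $\tilde f\colon |\tilde Y| \to |\tilde X|$ is also open, and it is finite (hence closed). Now let $V \subset \pi_0(\tilde Y)$ be a clopen subset; its preimage $\tilde V \subset |\tilde Y|$ is quasi-compact and clopen (using that $|\tilde Y| \to \pi_0(\tilde Y)$ is a spectral quotient map). Then $\tilde f(\tilde V) \subset |\tilde X|$ is open by openness of $\tilde f$ and closed by closedness of $\tilde f$, hence clopen; any clopen subset of $|\tilde X|$ is automatically a union of connected components, so it descends to a clopen subset of $\pi_0(\tilde X)$. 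This proves that $\pi_0(\tilde f)(V)$ is open and we conclude.

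The main obstacle is the openness of $f$ together with its preservation under the base change to $\tilde X$; both rest on the interplay between finiteness, normality and equidimensionality in the category of adic spaces weakly of finite type over $K$, for which we invoke the auxiliary results from \cite{simpson-appendix}. The decomposition of fibers over strictly totally disconnected components, by contrast, is a routine consequence of the structure theory of totally disconnected spaces in \cite{etale-cohomology-of-diamonds}.
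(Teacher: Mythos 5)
Your overall strategy matches the paper's — reduce to \cref{rslt:descent-of-integral-models-abstract-result} by exhibiting a w-contractible cover and checking the two hypotheses — but there is a real gap in the openness step, and it is precisely the point the paper's proof is engineered around.

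You establish openness of $f$ by citing \cite{simpson-appendix}, which is fine, since that reference works with adic spaces (weakly) of finite type over $K$ and $f\colon Y \to X$ lives there. But then you assert that $\tilde f\colon \tilde Y \to \tilde X$ is open ``using results from \cite{simpson-appendix} on base change.'' This does not go through as stated. Openness of a map is not stable under base change, and the objects $\tilde X$, $\tilde Y$ are perfectoid spaces, not adic spaces weakly of finite type over $K$, so the dimension/normality/flatness machinery of \cite{simpson-appendix} is simply not available for $\tilde f$ directly. The paper sidesteps this by insisting that $\tilde X$ be chosen as a cofiltered limit $\tilde X = \varprojlim_i X_i$ of \emph{étale} covers $X_i \to X$; then each base change $f_i\colon Y_i \to X_i$ is still between locally noetherian adic spaces (with $X_i$ normal and the right dimensions, by \cite[Lemma 1.18]{simpson-appendix}), so \cite[Theorem 1.20]{simpson-appendix} gives openness of each $f_i$, and one passes to the limit: any open $U \subset \tilde Y$ can be taken of the form $U_i \cprod_{Y_i} \tilde Y$, and then $\tilde f(U) = f_i(U_i) \cprod_{X_i} \tilde X$ because $U_i \to f_i(U_i)$ is surjective and surjectivity is stable under base change. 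This last limit argument is the missing idea in your write-up; without it you have no mechanism to transport openness into the perfectoid world. Note also a small slip: the hypothesis only requires $X$ normal, not both spaces, so ``between normal adic spaces'' in your citation is stronger than what is assumed or needed.

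On the fiber boundedness side your argument is plausible but fussier than it needs to be. Appealing to a ``generic degree'' being constant requires normality and irreducibility considerations (and the statement that a finite $C$-algebra ``splits as a finite product of copies of $C$'' needs a reducedness caveat — think of $C[x]/x^2$ — even though the conclusion about the number of points is unaffected). The paper's argument is both simpler and more robust: since $\tilde f$ is a finite morphism of affinoid perfectoid spaces, $\ri_{\tilde Y}(\tilde Y)$ is generated over $\ri_{\tilde X}(\tilde X)$ by some $n$ elements, and every fiber over a geometric point $\Spa(C, C^+)$ is then the adic spectrum of a finite $C$-algebra generated by $n$ elements as a module, hence has at most $n$ points. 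That bound uses only finiteness, not normality or dimension.

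Finally, your argument that openness of $\tilde f$ implies openness of $\pi_0(\tilde f)$ (clopen sets mapping to clopen sets, clopen sets being unions of components, and the quotient topology on $\pi_0$) is fine and is essentially the paper's concluding remark; the problem is upstream of it.
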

\begin{proof}
Let $\tilde X \to X$ be any quasi-pro-étale map from a w-contractible space $\tilde X$ which is the cofiltered limit $\tilde X = \varprojlim_i X_i$ of étale maps $X_i \to X$. Let $Y_i := Y \cprod_X X_i$ for all $i$, let $\tilde Y := Y \cprod_X \tilde X = \varprojlim_i Y_i$ and let $f_i\colon Y_i \to X_i$ and $\tilde f\colon \tilde Y \to \tilde X$ be the base changes of $f$. Note that by \cite[Lemma 1.18]{simpson-appendix}, all $X_i$ and $Y_i$ are of pure dimension $d$, all $X_i$ are normal and all $f_i$ are finite. Hence by \cite[Theorem 1.20]{simpson-appendix}, all $f_i$ are open.

We claim that also $\tilde f$ is open. Indeed, let $U \subset \tilde Y$ be open. Since $\abs{\tilde Y} = \varprojlim_i \abs{Y_i}$ by \cite[Lemma 11.22]{etale-cohomology-of-diamonds}, $U$ is covered by preimages of open sets under the projections $\tilde Y \to Y_i$, so we can assume that $U$ itself is of the form $U = U_i \cprod_{Y_i} \tilde Y$ for some $i$ and some qcqs open subset $U_i \subset Y_i$. But then we have $\tilde f(U) = f_i(U_i) \cprod_{X_i} \tilde X$ (indeed, $\restrict{f_i}{U_i}\colon U_i \to f_i(U_i)$ is surjective, hence $\restrict{\tilde f}{U}\colon U_i \cprod_{Y_i} \tilde Y = U_i \cprod_{X_i} \tilde X \to f_i(U_i) \cprod_{X_i} \tilde X$ is surjective), which is open in $\tilde X$.

Now $\tilde f\colon \tilde Y \to \tilde X$ is a closed and open map, which easily implies that $\pi_0(\tilde f)\colon \pi_0(\tilde Y) \to \pi_0(\tilde X)$ is open. Furthermore, $\tilde f$ is a finite morphism of affinoid adic spaces and all points of $\tilde X$ are geometric, which easily implies that the fiber site of $\tilde f$ is bounded: If $\ri_{\tilde Y}(\tilde Y)$ is generated over $\ri_{\tilde X}(\tilde X)$ by $n$ sections then each fiber over some $\Spa(C, C^+)$ has at most size $n$. By \cref{rslt:descent-of-integral-models-abstract-result} we deduce that integral models descend along $f$.
\end{proof}

\begin{theorem} \label{rslt:descent-of-integral-model-along-morphism-of-proper-spaces-with-loc-irr-target}
Let $X, Y \in \catwfintype K$, let $f\colon Y \to X$ be a surjective proper morphism of finite type and assume that $X$ is normal. Then integral models descend along $f$.
\end{theorem}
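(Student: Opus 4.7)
My plan is to reduce the problem to the two cases that the existing machinery handles: finite surjective morphisms (via \cref{rslt:descent-of-integral-models-along-finite-map}) and proper morphisms with geometrically connected fibers (via \cref{rslt:descent-of-integral-models-abstract-result}). The bridge between these is a Stein factorization of $f$, which I would invoke in the adic setting (a proof of existence in the rigid/adic context of the kind needed here should be among the results collected in the appendix \cite{simpson-appendix}; this existence is the main outside ingredient). After possibly decomposing $X$ into irreducible components and pulling back, I would assume that both $X$ and $Y$ are of pure dimension $d$, and that $X$ is normal. Stein factorization then produces $f = g \comp h$ where $h\colon Y \to Z$ is proper surjective with geometrically connected fibers and $g\colon Z \to X$ is finite surjective; in particular $Z \in \catwfintype K$ and $Z$ is also of pure dimension $d$. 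Since descent of integral models is transitive along compositions (from the definition), it suffices to show separately that integral models descend along $g$ and along $h$.

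For $g$, the target $X$ is normal, $g$ is finite surjective, and both $Z$ and $X$ are of pure dimension $d$, so \cref{rslt:descent-of-integral-models-along-finite-map} applies directly.

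For $h$, I would apply \cref{rslt:descent-of-integral-models-abstract-result}. Let $\tilde Z \to Z$ be a quasi-pro-étale covering by a w-contractible space (which exists by \cref{rslt:w-contractible-basis}), and form the base change $\tilde h\colon \tilde Y := Y \cprod_Z \tilde Z \to \tilde Z$. Every connected component of $\tilde Z$ has the form $\Spa(C, C^+)$ for an algebraically closed perfectoid field $C$. Such a connected component maps into a geometric point of $Z$, and since $h$ has geometrically connected fibers, the base change $Y \cprod_Z \Spa(C, C^+)$ is connected (here one uses that $\Spa(C,C^+)$ is connected and that preservation of connectedness for fibers under enlarging the open bounded valuation subring follows from, e.g., the fact that these spaces differ only by rank-$\ge 2$ specializations and hence share the same set of connected components). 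Consequently the map $\pi_0(\tilde h)\colon \pi_0(\tilde Y) \to \pi_0(\tilde Z)$ is a continuous bijection of profinite spaces, hence a homeomorphism, and in particular open with fibers of size exactly one. Thus the hypothesis of \cref{rslt:descent-of-integral-models-abstract-result} is satisfied.

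The main obstacle is verifying the Stein factorization step cleanly in the adic-space world: one needs $h$ to genuinely be proper of finite type with geometrically connected fibers, and $g$ to be finite surjective with the dimension- and normality-preservation properties needed to invoke \cref{rslt:descent-of-integral-models-along-finite-map}. The reduction to the pure-dimensional, normal case requires a bit of bookkeeping using the results on adic spaces in \cite{simpson-appendix} (in particular to pass from $X$ to its irreducible components and back, and to ensure that $Z$ has the correct dimension-theoretic properties). Once Stein factorization is available and these reductions are in place, the verification of the hypothesis of \cref{rslt:descent-of-integral-models-abstract-result} for $h$ is formal from the definition of ``geometrically connected fibers.''
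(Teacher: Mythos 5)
Your overall strategy matches the paper's exactly: Stein factorization into a map with geometrically connected fibers and a finite map, then handle each leg via \cref{rslt:descent-of-integral-models-abstract-result} (by passing to a w-contractible cover and observing $\pi_0$ of the base change is a continuous bijection of compact Hausdorff spaces, hence a homeomorphism) and \cref{rslt:descent-of-integral-models-along-finite-map} respectively.

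However, the one place where your bookkeeping is genuinely off is the reduction to the pure-dimensional case. You propose to arrange, \emph{before} Stein factorization, that both $X$ and $Y$ are pure-dimensional, and then assert ``in particular $Z$ is also of pure dimension $d$.'' Neither step is valid as stated: decomposing $X$ into irreducible components (via normality and \cite[Lemma 1.12, Proposition 1.17]{simpson-appendix}) makes $X$ pure-dimensional, but pulling back $Y$ over such a component does not make $Y$ pure-dimensional; and even if $X$ and $Y$ were pure-dimensional, pure-dimensionality of the Stein factor $Z$ would not follow. The paper sidesteps this by performing the reduction only after Stein factorization and only on the finite leg $Z \to X$: one takes $X$ irreducible (so pure-dimensional), decomposes $Z$ (not $Y$) into irreducible components $Z_k$, uses that their images cover $X$ by closed adic subspaces (\cite[Lemma 1.13]{simpson-appendix}) so one $Z_k \to X$ is surjective, and then concludes $\dim Z_k = \dim X$ from finiteness (\cite[Lemma 1.10]{simpson-appendix}). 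This reordering is not merely cosmetic; it is what makes the pure-dimensionality claim provable. You do flag this part as needing ``bookkeeping,'' but the specific order of reductions you chose has a gap that the paper's order avoids.
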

\begin{proof}
By the Stein Factorization (see \cite[Theorem 2.9]{simpson-appendix}) we can decompose $f$ into proper maps $g\colon Y \to Z$ and $h\colon Z \to X$ such that the fibers of $g$ are geometrically connected and $h$ is finite. It is enough to show that integral models descend along $g$ and $h$ separately.

Let us start with $g$. Take any w-contractible $\tilde Z \in Z_\qproet$ and let $\tilde g\colon \tilde Y := Y \cprod_Z \tilde Z \to \tilde Z$ be the base change. Then the fibers of $\tilde g$ are geometric fibers of $g$ which are connected by assumption. It follows that the map $\pi_0(\tilde g)\colon \pi_0(\tilde Y) \to \pi_0(\tilde Z)$ is a (necessarily continuous) bijection of compact Hausdorff spaces, hence a homeomorphism and in particular open. By \cref{rslt:descent-of-integral-models-abstract-result}, integral models descend along $g$.

It remains to prove that integral models descend along $h$. We want to reduce this to the case that $X$ and $Z$ are pure-dimensional, in which case we can apply \cref{rslt:descent-of-integral-models-along-finite-map}. By passing to connected components of $X$ we can assume that $X$ is irreducible (cf. \cite[Lemma 1.12]{simpson-appendix}
). By \cite[Proposition 1.17]{simpson-appendix} $X$ is pure-dimensional. Now decompose $Z$ into irreducible components $Z = \bigunion_{k=1}^n Z_k$ (cf. \cite[Definition 1.14]{simpson-appendix}
). By \cite[Lemma 1.13]{simpson-appendix} 
their images under $h$ form a covering of $X$ by closed adic subspaces, so by the irreducibility of $X$ there must be some $k \in \{ 1, \dots, n \}$ such that $\restrict h{Z_k}\colon Z_k \to X$ is surjective. It is enough to show that integral models descend along $\restrict h{Z_k}$ (then the same is obviously true for $h$), so we can replace $Z$ by $Z_k$ to assume that $Z$ is irreducible and hence pure-dimensional (again by \cite[Proposition 1.17]{simpson-appendix}
). Since $h$ is finite surjective, the dimensions of $X$ and $Z$ must coincide (see \cite[Lemma 1.10]{simpson-appendix}
), hence we can apply \cref{rslt:descent-of-integral-models-along-finite-map} to finish the proof. 
\end{proof}

\section{From Modules to Local Systems} \label{sec:construction}

Fix a proper adic space $X$ of finite type over $\Cpx_p$. We now construct a functor from a certain class of vector bundles on $X$ to $\Cpx_p$-local systems on $X$. In contrast to previous approaches (\cite{deninger-werner-parallel-transport-varieties}, \cite{matti-rigid-DW-functor}) we allow general proper covers in order to trivialize vector bundles modulo $p^n$. We also introduce a different construction of the associated local systems. We start with some observations relying on the results from \cref{sec:globsec}.

\begin{lemma}
Let $Y$ be a proper adic space of finite type over $\Cpx_p$ and fix $n \ge 1$.
\begin{lemenum}
	\item \label{rslt:sub-locsys-of-free-O-mod-p-module} Let $\mathcal E_n$ be a free $\check\ri^+_Y/p^n$-module. Then there is a unique free $\ri/p^n$-submodule $\mathcal L_n \injto \mathcal E_n$ of maximal rank.
	\item \label{rslt:morphism-on-free-vb-preserves-loc-sys} Let $\mathcal E_n$ and $\mathcal E'_n$ be free $\check\ri^+_Y/p^n$-modules and let $\mathcal L_n \injto \mathcal E_n$ and $\mathcal L'_n \injto \mathcal E'_n$ denote the free $\ri/p^n$-submodules of maximal rank. Then for any morphism $\psi\colon \mathcal E_n \to \mathcal E'_n$, the induced map $\mathcal L_n \to \mathcal E'_n$ factors uniquely over a morphism $\psi'\colon \mathcal L_n \to \mathcal L'_n$. Furthermore, if $\psi$ is injective, surjective or an isomorphism, the same is true for $\psi'$.
\end{lemenum}
\end{lemma}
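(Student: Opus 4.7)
Both parts of the lemma will follow from Proposition \cref{rslt:fully-faithful-functor-loc-sys} (fully faithfulness of the functor $F := -\tensor_{\ri/p^n} \check\ri^+_Y/p^n$ from $\catlocsys{\ri/p^n}Y$ to $\catmod{\check\ri^+_Y/p^n}$), the global sections theorem \cref{rslt:global-sections-on-proper-space}, and flatness of $\check\ri^+_Y/p^n$ over $\ri/p^n$ (\cref{rslt:hat-O-flat-over-O-E}). For (i), I would fix a trivialization $\phi\colon (\check\ri^+_Y/p^n)^r \isoto \mathcal E_n$ and set $\mathcal L_n := \phi((\ri/p^n)^r)$, where the embedding $(\ri/p^n)^r \injto (\check\ri^+_Y/p^n)^r$ is injective by \cref{rslt:hat-O-flat-over-O-E}. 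The crucial observation is that $\mathcal L_n$ is independent of $\phi$: any two trivializations differ by an automorphism of $(\check\ri^+_Y/p^n)^r$, given by a matrix in $\GL_r(\Gamma(Y, \check\ri^+_Y/p^n))$; by \cref{rslt:global-sections-on-proper-space} this equals $\GL_r(\Gamma(Y, \ri/p^n))$, hence the matrix has entries in $\Gamma(Y, \ri/p^n)$ and preserves the subsheaf $(\ri/p^n)^r$. Uniqueness as a maximal-rank free $\ri/p^n$-submodule then follows from fully faithfulness of $F$, which characterizes $\mathcal L_n$ up to canonical isomorphism as the essential preimage of $\mathcal E_n$.

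For (ii), fully faithfulness of $F$ gives a bijection
\begin{align*}
	\Hom_{\ri/p^n}(\mathcal L_n, \mathcal L'_n) \isoto \Hom_{\check\ri^+_Y/p^n}(\mathcal E_n, \mathcal E'_n),
\end{align*}
producing a unique $\psi'\colon \mathcal L_n \to \mathcal L'_n$ with $F(\psi') = \psi$. Naturality of the unit $\mathrm{id} \to F$ yields a commutative square
\begin{center}
\begin{tikzcd}
\mathcal L_n \arrow[r, "\psi'"] \arrow[d, hook] & \mathcal L'_n \arrow[d, hook] \\
\mathcal E_n \arrow[r, "\psi"] & \mathcal E'_n
\end{tikzcd}
\end{center}
which immediately gives the factorization: the restriction of $\psi$ to $\mathcal L_n \subset \mathcal E_n$ equals the composite $\mathcal L_n \xto{\psi'} \mathcal L'_n \injto \mathcal E'_n$. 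Injectivity of $\psi'$ when $\psi$ is injective is then immediate, since the composite $\mathcal L_n \injto \mathcal E_n \xto{\psi} \mathcal E'_n$ is injective and factors through $\mathcal L'_n \injto \mathcal E'_n$.

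The transfer of surjectivity is the main subtlety. I would extend $F$ to the category of all sheaves of $\ri/p^n$-modules (still by tensoring with $\check\ri^+_Y/p^n$), where it remains exact by \cref{rslt:hat-O-flat-over-O-E}. Then $F(\coker\psi') = \coker\psi = 0$, and the faithful flatness of $\check\ri^+_Y/p^n$ over $\ri/p^n$---which follows from flatness together with the fact that the local Artinian ring $\ri/p^n$ maps into $\check\ri^+_Y/p^n$ without killing the residue field---forces $\coker\psi' = 0$, so $\psi'$ is surjective. The isomorphism case follows by combining the two.
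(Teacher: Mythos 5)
Your proof takes a genuinely different route from the paper's — trivialization-plus-fully-faithfulness instead of the paper's direct global-sections argument — and while most of it is salvageable, the uniqueness step in part~(i) as written has a real gap.

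In part (i), you correctly construct $\mathcal L_n$ from a trivialization $\phi$ and show the construction is independent of $\phi$. But the appeal to fully faithfulness of $F$ does not establish what the lemma asks for. Fully faithfulness would at best tell you that any $\ri/p^n$-local system $\mathcal M$ with $F(\mathcal M) \cong \mathcal E_n$ is isomorphic to $\mathcal L_n$ as an abstract local system; it does not identify $\mathcal L_n$ as a \emph{subobject} of $\mathcal E_n$. More importantly, if $\mathcal L_n' \subset \mathcal E_n$ is an arbitrary maximal-rank free $\ri/p^n$-submodule, you have not shown that the natural map $\mathcal L_n' \tensor_{\ri/p^n} \check\ri^+_Y/p^n \to \mathcal E_n$ is an isomorphism — a priori an $\ri/p^n$-basis of $\mathcal L_n'$ need not be an $\check\ri^+_Y/p^n$-basis of $\mathcal E_n$ — so the essential-preimage reasoning simply does not apply. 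The missing ingredient is exactly what the paper does: using $\Gamma(Y, \mathcal E_n) \cong (\ri/p^n)^r$ from \cref{rslt:global-sections-on-proper-space} to show that any maximal-rank free submodule injects onto \emph{all} global sections (an injective endomorphism of $(\ri/p^n)^r$ is automatically surjective since every non-unit of $\ri/p^n$ is a zero divisor), whence $\mathcal L_n$ is the $\ri/p^n$-subsheaf of $\mathcal E_n$ generated by $\Gamma(Y, \mathcal E_n)$, which is manifestly unique. This step also shows that every maximal-rank submodule does come from a trivialization, which would then let your independence-of-$\phi$ observation finish the argument; but as written, you skip this step entirely.

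On part (ii): your construction of $\psi'$ via fully faithfulness and the naturality square is fine and in fact equivalent to the paper's argument that morphisms of constant sheaves on a connected base are determined by global sections. For the transfer of surjectivity, your faithful-flatness idea is in the right spirit, but the statement that $\ri/p^n$ is ``local Artinian'' is false — $\ri$ is a non-noetherian rank-one valuation ring, so $\ri/p^n$ is local but far from Artinian. What you actually need is Nakayama for the finitely generated $\ri/p^n$-module $\coker(\psi')$ (presented as the cokernel of a matrix over $\ri/p^n$), combined with the fact that $\check\ri^+_Y/p^n$ does not kill the residue field $k = \ri/\mm$. Spelling this out for a sheaf of rings requires some care (you have to descend to a point or use $\pi_0$). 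A cleaner route is available to you: since $\mathcal E_n'$ is free, a surjection $\psi$ splits, and by fully faithfulness the splitting $\sigma$ descends to $\sigma'\colon \mathcal L_n' \to \mathcal L_n$ with $\psi' \circ \sigma' = \id$, giving surjectivity of $\psi'$ immediately.
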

\begin{proof}
We may assume that $Y$ is connected. In order to prove the first claim, note that existence of $\mathcal L_n$ is clear, since $\ri/p^n$ is a subsheaf of $\check\ri^+_Y/p^n$, so that every  $\check\ri^+_Y/p^n$-basis of $\mathcal E_n$ defines a free $\ri/p^n$-module inside $\mathcal E_n$ of maximal rank. On the other hand, every free $\ri/p^n$-module $\mathcal L_n$  inside $\mathcal E_n$  satisfies $\Gamma(Y, L_n ) \subset \Gamma(Y, \mathcal E_n) $. By \cref{rslt:global-sections-on-proper-space} we find $ \Gamma(Y, \mathcal E_n) \simeq \Gamma (Y,  \check\ri^+_X/p^n)^r = \Gamma(Y, \ri / p^n)^r = ( \ri / p^n)^r$, since $Y$ is connected. If $\mathcal L_n$ has maximal rank, we find $\Gamma(Y, \mathcal L_n) = \Gamma(Y, \mathcal E_n)$. Hence $\mathcal L_n$ is the free $\ri/p^n$-submodule of $\mathcal E_n$ generated by its global sections and therefore unique.

In order to prove the second claim, we have to construct the dashed arrow in the following diagram:
\begin{center}\begin{tikzcd}
	\mathcal L_n \arrow[r,hookrightarrow] \arrow[d,dashed] & \mathcal E_n \arrow[d,"\psi"]\\
	\mathcal L'_n \arrow[r,hookrightarrow] & \mathcal E'_n
\end{tikzcd}\end{center}
Note that  to give a morphism $\mathcal L_n \to \mathcal L'_n$ is the same as to give a morphism $\Gamma(Y, \mathcal L_n) \to \Gamma(Y, \mathcal L'_n)$. By \cref{rslt:global-sections-on-proper-space}, after applying $\Gamma(Y, -)$, the horizontal maps become isomorphisms. Hence the existence of the dashed arrow is clear.
\end{proof}

We now construct a functor from modules to local systems by working modulo all $p^n$. The appropriate class of $\check\ri^+_X/p^n$-modules is as follows.

\begin{definition}
Fix $n \ge 1$. A sheaf $\mathcal E_n$ of $\check\ri^+_X/p^n$-modules is called \emph{properly trivializable} if there is a surjective map $\alpha\colon U \to X$ with $U$ a proper adic space of finite type over $\Cpx_p$ such that $\alpha^* \mathcal E_n$ is free. The category of properly trivializable $\check\ri^+_X/p^n$-modules is denoted $\catmodproptriv{\check\ri^+_X/p^n}$.
\end{definition}

\begin{lemma} \label{rslt:descent-datum-exists-for-DW-n}
Let $\mathcal E_n$ be a properly trivializable $\check\ri^+_X/p^n$-module and let $\alpha\colon U \to X$ be as in the definition. Let $\mathcal L_{n,U} \injto \alpha^*\mathcal E_n$ be the unique free $\ri$-submodule of $\alpha^*\mathcal E_n$ of maximal rank (given by \cref{rslt:sub-locsys-of-free-O-mod-p-module}). Then the descent datum of $\alpha^*\mathcal E_n$ along $\alpha$ restricts to a descent datum of $\mathcal L_{n,U}$ along $\alpha$.
\end{lemma}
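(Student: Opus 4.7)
My plan is to reduce the claim to an application of \cref{rslt:morphism-on-free-vb-preserves-loc-sys} applied to the descent isomorphism of $\alpha^*\mathcal E_n$. Write $\alpha^*\mathcal E_n =: \mathcal F$ and let $\varphi\colon \pr_1^*\mathcal F \isoto \pr_2^*\mathcal F$ be the given descent isomorphism on $U \cprod_X U$, where $\pr_1, \pr_2\colon U\cprod_X U \to U$ are the two projections. I need to show that $\varphi$ restricts (necessarily uniquely) to an isomorphism $\pr_1^*\mathcal L_{n,U} \isoto \pr_2^*\mathcal L_{n,U}$, and that this restriction inherits the cocycle condition.

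First I would observe that $U\cprod_X U$ is itself a proper adic space of finite type over $\Cpx_p$: since $X$ is separated over $\Cpx_p$, the diagonal $X \injto X \cprod_{\Cpx_p} X$ is a closed immersion, so $U\cprod_X U$ is closed in the proper finite-type space $U \cprod_{\Cpx_p} U$. An identical argument applies to $U\cprod_X U\cprod_X U$, which is what we will need for the cocycle condition. Hence \cref{rslt:sub-locsys-of-free-O-mod-p-module} and \cref{rslt:morphism-on-free-vb-preserves-loc-sys} are applicable on these spaces.

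Next I would identify $\pr_i^*\mathcal L_{n,U}$ as the unique free $\ri/p^n$-submodule of maximal rank of $\pr_i^*\mathcal F$. Here $\mathcal L_{n,U}$ is a free $\ri/p^n$-module of some rank $r$, i.e.\ essentially a constant sheaf, so its pullback along $\pr_i$ remains free of rank $r$ as an $\ri/p^n$-module. On the other hand $\pr_i^*\mathcal F$ is free of rank $r$ as a $\check\ri^+/p^n$-module (the rank being preserved under pullback of free modules). Thus $\pr_i^*\mathcal L_{n,U} \injto \pr_i^*\mathcal F$ is a free $\ri/p^n$-submodule of maximal rank, and by the uniqueness from \cref{rslt:sub-locsys-of-free-O-mod-p-module} it must coincide with the maximal-rank submodule produced by that lemma.

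Applying \cref{rslt:morphism-on-free-vb-preserves-loc-sys} to the isomorphism $\varphi$ on the proper space $U \cprod_X U$ therefore yields a unique restriction to an isomorphism $\varphi'\colon \pr_1^*\mathcal L_{n,U} \isoto \pr_2^*\mathcal L_{n,U}$. For the cocycle condition, pulling back to $U\cprod_X U\cprod_X U$ along the three projections $\pr_{12}, \pr_{23}, \pr_{13}$ and using the same uniqueness statement, the equality $\pr_{13}^*\varphi = \pr_{23}^*\varphi \comp \pr_{12}^*\varphi$ restricts to the analogous equality $\pr_{13}^*\varphi' = \pr_{23}^*\varphi' \comp \pr_{12}^*\varphi'$. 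I do not anticipate a serious obstacle here: the main point is the bookkeeping identification of $\pr_i^*\mathcal L_{n,U}$ as the maximal-rank $\ri/p^n$-submodule, together with checking that all fiber products appearing stay in the class of proper finite-type adic spaces over $\Cpx_p$ so that the earlier lemmas apply.
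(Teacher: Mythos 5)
Your proof is correct and takes essentially the same approach as the paper: both reduce to applying \cref{rslt:morphism-on-free-vb-preserves-loc-sys} to the descent isomorphism on $U\cprod_X U$. You simply make explicit two points the paper leaves implicit — that the fiber products $U\cprod_X U$ and $U\cprod_X U\cprod_X U$ are again proper of finite type over $\Cpx_p$, and that the cocycle condition for the restricted isomorphism follows from the uniqueness clause of that lemma.
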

\begin{proof}
Note that we work on the v-site of $X$, so that $\alpha^*$ is just the restriction of sheaves. In particular, as on any site, we can descend arbitrary sheaves along covers.

We only need to check that the isomorphism $\pr1^*(\alpha^*\mathcal E_n) \isom \pr2^*(\alpha^*\mathcal E_n)$ restricts to a uniquely determined isomorphism $\pr1^{-1} \mathcal L_{n,U} \isoto \pr2^{-1}\mathcal L_{n,U}$, where $\pr1, \pr2\colon U \cprod_X U \to U$ are the two projections. This is \cref{rslt:morphism-on-free-vb-preserves-loc-sys} with $Y = U \cprod_X U$.
\end{proof}

\begin{definition}
Fix $n \ge 1$ and let $\mathcal E_n$ be a properly trivializable $\check\ri^+_X/p^n$-module with $\alpha\colon U \to X$ as in the definition. We define
\begin{align*}
	\Delta^+_n(\mathcal E_n) \in \catlocsys{\ri/p^n}X
\end{align*}
to be the $\ri/p^n$-local system on $X$ obtained by descending the sheaf $\mathcal L_{n,U}$ defined in \cref{rslt:descent-datum-exists-for-DW-n} along the v-cover $\alpha$.
\end{definition}

\begin{remark}
A priori the image of $\Delta^+_n$ consists of v-local systems. But by \cref{rslt:qproet-locsys-equiv-v-locsys-complete-huber-rings} and \cref{rslt:et-locsys-equiv-qproet-locsys} these are the same as étale local systems and in particular correspond to representations of the algebraic fundamental group. 
\end{remark}

\begin{lemma} \label{rslt:DW-n-is-functor}
The assignment $\mathcal E_n \mapsto \Delta^+_n(\mathcal E_n)$ provides a well-defined functor, i.e. it does not depend on the chosen cover $\alpha\colon U \to X$. Moreover, there is a natural isomorphism
\begin{align*}
	\Delta^+_n(\mathcal E_n) \tensor_\ri \check\ri^+_X = \mathcal E_n.
\end{align*}
In particular, every properly trivializable $\check\ri_X^+/p^n$-module already becomes trivial on a finite étale cover of $X$.
\end{lemma}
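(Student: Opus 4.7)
Well-definedness of $\Delta_n^+$ and independence of the cover: Suppose $\alpha_1\colon U_1 \to X$ and $\alpha_2\colon U_2 \to X$ are two proper surjective covers of finite type over $\Cpx_p$ on which $\alpha_i^*\mathcal E_n$ becomes free, producing unique maximal-rank free $\ri/p^n$-submodules $\mathcal L_{n,U_i}\injto\alpha_i^*\mathcal E_n$ by \cref{rslt:sub-locsys-of-free-O-mod-p-module}. Set $U := U_1\cprod_X U_2$ with its two projections $\beta_i\colon U\to U_i$. Since $U$ is itself proper of finite type over $\Cpx_p$ and $\beta_i^*\mathcal L_{n,U_i}$ is a free $\ri/p^n$-submodule of $\beta_i^*\alpha_i^*\mathcal E_n = (\alpha_1\alpha_2)^*\mathcal E_n$ of maximal rank, the uniqueness part of \cref{rslt:sub-locsys-of-free-O-mod-p-module} applied component-wise on connected components of $U$ forces $\beta_1^*\mathcal L_{n,U_1} = \mathcal L_{n,U} = \beta_2^*\mathcal L_{n,U_2}$. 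Hence both descent data glue to the same $\ri/p^n$-local system on $X$. Functoriality is then immediate: given $\psi\colon\mathcal E_n\to\mathcal E'_n$, choose a common trivializing cover $\alpha\colon U\to X$ (e.g. fiber product of individual covers); by \cref{rslt:morphism-on-free-vb-preserves-loc-sys} the morphism $\alpha^*\psi$ restricts uniquely to $\mathcal L_{n,U}\to\mathcal L'_{n,U}$, which is compatible with the descent data by the same uniqueness, hence descends to a morphism $\Delta_n^+(\mathcal E_n)\to\Delta_n^+(\mathcal E'_n)$.

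The natural isomorphism $\Delta_n^+(\mathcal E_n)\tensor_\ri \check\ri_X^+ = \mathcal E_n$: Pullback to a trivializing cover $\alpha\colon U\to X$ yields $\alpha^*\mathcal E_n\cong(\check\ri_U^+/p^n)^r$ with $\mathcal L_{n,U}\cong(\ri/p^n)^r$ sitting inside, and the inclusion is obtained from the standard basis tensored with the unit map $\ri/p^n\injto\check\ri_U^+/p^n$. The induced map $\mathcal L_{n,U}\tensor_\ri\check\ri_U^+\to\alpha^*\mathcal E_n$ is therefore an isomorphism of $\check\ri_U^+/p^n$-modules. Since both sides carry compatible descent data along $\alpha$ (for the left, this is precisely the descent datum for $\mathcal L_{n,U}$ tensored with the identity on $\check\ri_X^+$; for the right, it is the original datum on $\mathcal E_n$), and these two descent data agree on $U$ by construction, the isomorphism descends to the desired isomorphism on $X$. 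Naturality in $\mathcal E_n$ follows because every step was canonical.

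Reduction to a finite étale cover: The local system $\mathcal L := \Delta_n^+(\mathcal E_n)$ takes values in $\ri/p^n$-modules; since $X$ is qcqs, \cref{rslt:injlim-of-locsys} produces a finite extension $F/\Q_p$ and an $\ri_F/p^n$-local system $\mathcal L_F$ with $\mathcal L = \mathcal L_F\tensor_{\ri_F}\ri$. The coefficient ring $\ri_F/p^n$ is a \emph{finite} discrete ring (as $\ri_F$ is a DVR of finite residue field), so \cref{rslt:locsys-trivial-on-fet-cover}, applied on each connected component of $X$ where the rank is constant, produces a finite étale cover $Y\to X$ on which $\mathcal L_F|_Y$ is constant, hence also $\mathcal L|_Y\cong(\ri/p^n)^s$ for some $s$. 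Tensoring with $\check\ri_Y^+$ and using the natural isomorphism just proved gives $\mathcal E_n|_Y \cong(\check\ri_Y^+/p^n)^s$, so $\mathcal E_n$ becomes free on the finite étale cover $Y\to X$.

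The main obstacle is the descent step in the second paragraph: one must check that the local identification $\mathcal L_{n,U}\tensor_\ri\check\ri_U^+ = \alpha^*\mathcal E_n$ is compatible with the two descent data, which in turn rests on the fact that the passage $\mathcal F\mapsto$ (unique maximal free $\ri/p^n$-sub of $\mathcal F$) is functorial on isomorphisms between free modules over proper adic spaces—exactly the content of \cref{rslt:morphism-on-free-vb-preserves-loc-sys}. All other assertions then follow formally.
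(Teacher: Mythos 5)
Your proof is correct and follows essentially the same route as the paper's: well-definedness via a common refining cover together with the uniqueness clause of \cref{rslt:sub-locsys-of-free-O-mod-p-module}, the isomorphism verified on a trivializing cover, and the finite-\'etale reduction by descending coefficients to $\ri_F/p^n$ and invoking \cref{rslt:locsys-trivial-on-fet-cover} (which is precisely the argument packaged in \cref{rslt:fully-faithful-functor-loc-sys}, the reference the paper cites for this last assertion). The paper is somewhat terser — for the isomorphism it simply observes that a natural global map $\Delta^+_n(\mathcal E_n)\tensor_\ri\check\ri^+_X \to \mathcal E_n$ already exists, so it suffices to check it is an isomorphism locally — but your explicit verification of descent-compatibility and your treatment of functoriality on morphisms usefully spell out details the paper leaves implicit.
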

\begin{proof}
To check well-definedness, let $\alpha\colon U \to X$ and $\alpha'\colon U' \to X$ be two different choices of cover. By considering the fiber product $U \cprod_X U'$ we can w.l.o.g. restrict to the case of a double cover $U' \to U \to X$. Then we have to see that pulling back $\mathcal L_{n,U}$ along $U' \to U$ gives $\mathcal L_{n,U'}$. But this is clear from the uniqueness of $\mathcal L_{n,U}$ and $\mathcal L_{n,U'}$.

To prove the claimed isomorphism, first note that there is a natural map from left to right. Thus the isomorphism can be checked locally, for example on $U$ for cover $U \to X$ as in the definition of ``properly trivializable''. But on $U$ everything is free by construction, thus the isomorphism is obvious.

Finally, the last part of the claim follows directly from the isomorphism we just proved. Indeed, the isomorphism shows that every properly trivializable $\check\ri^+_X/p^n$-module is of the form $\mathcal E_n = \mathcal L_n \tensor_{\ri} \check\ri^+_X$ for some $\ri/p^n$-local system $\mathcal L_n$ on $X$. Thus the claim follows from \cref{rslt:fully-faithful-functor-loc-sys}.
\end{proof}

\begin{corollary} \label{rslt:DW-n-is-equivalence}
For every $n \ge 1$, the functor $\Delta^+_n$ is an equivalence of categories
\begin{align*}
	\Delta^+_n\colon \catmodproptriv{\check\ri^+_X/p^n} \longisoto \catlocsys{\ri/p^n}X.
\end{align*}
\end{corollary}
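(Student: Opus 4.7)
The plan is to exhibit a quasi-inverse functor
\begin{align*}
	\Phi\colon \catlocsys{\ri/p^n}X \longto \catmodproptriv{\check\ri^+_X/p^n}, \qquad \mathcal L_n \longmapsto \mathcal L_n \tensor_{\ri/p^n} \check\ri^+_X/p^n,
\end{align*}
and verify natural isomorphisms $\Delta^+_n \comp \Phi \isom \id{}$ and $\Phi \comp \Delta^+_n \isom \id{}$. The first check is that $\Phi$ actually lands in $\catmodproptriv{\check\ri^+_X/p^n}$. Given an $\ri/p^n$-local system $\mathcal L_n$ on $X$, \cref{rslt:injlim-of-locsys} provides a finite extension $F/\Q_p$ and an $\ri_F/p^n$-local system $\mathcal L_{n,F}$ with $\mathcal L_n = \mathcal L_{n,F} \tensor_{\ri_F} \ri$, and \cref{rslt:locsys-trivial-on-fet-cover} yields a finite étale cover $\alpha\colon U \to X$ trivializing $\mathcal L_{n,F}$ (and hence $\mathcal L_n$). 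As $U$ is itself a proper adic space of finite type over $\Cpx_p$, the module $\Phi(\mathcal L_n)$ becomes free on $U$, i.e.\ is properly trivializable.

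Next, the isomorphism $\Delta^+_n(\Phi(\mathcal L_n)) \isom \mathcal L_n$ is checked on the cover $\alpha\colon U \to X$ from the previous step, where $\restrict{\mathcal L_n}U$ is a constant free $\ri/p^n$-module of some rank $r$ and $\alpha^*\Phi(\mathcal L_n) = \restrict{\mathcal L_n}U \tensor_{\ri/p^n} \check\ri^+_U/p^n$ is a free $\check\ri^+_U/p^n$-module of the same rank. The pullback $\restrict{\mathcal L_n}U$ is itself a free $\ri/p^n$-submodule of $\alpha^*\Phi(\mathcal L_n)$ of maximal rank, so by the uniqueness statement in \cref{rslt:sub-locsys-of-free-O-mod-p-module} it coincides with the $\mathcal L_{n,U}$ used to define $\Delta^+_n$. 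The descent datum trivially descends back to $\mathcal L_n$, giving the claim. The reverse direction $\Phi(\Delta^+_n(\mathcal E_n)) \isom \mathcal E_n$ is the natural isomorphism already established in \cref{rslt:DW-n-is-functor}.

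These two natural isomorphisms together show that $\Delta^+_n$ and $\Phi$ are mutually quasi-inverse, so $\Delta^+_n$ is an equivalence of categories. For fully faithfulness, one can alternatively invoke \cref{rslt:fully-faithful-functor-loc-sys}: given $\mathcal E_n, \mathcal E'_n \in \catmodproptriv{\check\ri^+_X/p^n}$, rewriting them as $\Delta^+_n(\mathcal E_n) \tensor_\ri \check\ri^+_X$ and $\Delta^+_n(\mathcal E'_n) \tensor_\ri \check\ri^+_X$ via \cref{rslt:DW-n-is-functor} reduces $\Hom_{\check\ri^+_X/p^n}(\mathcal E_n, \mathcal E'_n) = \Hom_{\ri/p^n}(\Delta^+_n(\mathcal E_n), \Delta^+_n(\mathcal E'_n))$ to full faithfulness of $- \tensor_{\ri/p^n} \check\ri^+_X/p^n$.

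There is no genuine obstacle here: the main statement is essentially packaging the constructions already performed in \cref{rslt:DW-n-is-functor} together with the triviality-on-finite-étale-covers fact from \cref{rslt:locsys-trivial-on-fet-cover}. The only point that requires a little care is verifying that the canonical $\mathcal L_{n,U} \injto \alpha^*\Phi(\mathcal L_n)$ really agrees with $\restrict{\mathcal L_n}U$, which is immediate from the uniqueness clause in \cref{rslt:sub-locsys-of-free-O-mod-p-module}.
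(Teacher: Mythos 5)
Your argument is correct and follows essentially the same route as the paper: quasi-inverse given by $- \tensor_{\ri/p^n} \check\ri^+_X/p^n$, with one direction supplied by \cref{rslt:DW-n-is-functor} and the other by the uniqueness in \cref{rslt:sub-locsys-of-free-O-mod-p-module}, plus \cref{rslt:fully-faithful-functor-loc-sys} for full faithfulness. You simply spell out the step the paper dismisses as ``obvious from the construction,'' including the (worthwhile) observation that \cref{rslt:injlim-of-locsys} and \cref{rslt:locsys-trivial-on-fet-cover} are what guarantee the quasi-inverse lands in $\catmodproptriv{\check\ri^+_X/p^n}$.
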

\begin{proof} Fully faithfulness follows from \cref{rslt:fully-faithful-functor-loc-sys}. A quasi-inverse of $\Delta^+_n$ is given by $- \tensor_{\ri/p^n} \check\ri^+_X/p^n$. Indeed, by \cref{rslt:DW-n-is-functor} we have $\Delta^+_n(-) \tensor_{\ri/p^n} \check\ri^+_X/p^n = \id{}$. It is obvious from the construction that $\Delta^+_n(- \tensor_{\ri/p^n} \check\ri^+_X/p^n) = \id{}$.
\end{proof}

\begin{remark}
As we have seen in \cref{rslt:DW-n-is-functor}, properly trivializable $\check\ri^+_X/p^n$-modules become in fact trivial on a  finite étale (instead of only proper) $\alpha\colon U \to X$. In the original algebraic approach \cite{deninger-werner-parallel-transport-varieties} and  in W\"urthen's analytic refinement \cite{matti-rigid-DW-functor} finite étale covers were used to define similar bundle categories. However, by choosing the less restrictive definition (which is enabled by new insights from the v-site) we gain much more freedom in applications. We also give an alternative definition of the $\ri/p^n$-local system given by such a bundle.
\end{remark}

Having constructed the functor modulo $p^n$, we now want to pass to the limit. For this we have to check the following:

\begin{lemma} \label{rslt:inverse-system-for-DW-n}
Let $\mathcal E$ be an $\check\ri^+_X$-module such that $\mathcal E/p^n$ is a properly trivializable $\check\ri^+_X/p^n$-module for all $n \ge 1$. Then the inverse system $(\mathcal E/p^n)_n$ induces a natural inverse system $(\Delta^+_n(\mathcal E/p^n))_n$.
\end{lemma}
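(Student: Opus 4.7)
The plan is to construct natural transition maps $\Delta^+_{n+1}(\mathcal E/p^{n+1}) \to \Delta^+_n(\mathcal E/p^n)$ lifting the quotient maps $\mathcal E/p^{n+1} \surjto \mathcal E/p^n$, and then observe that compatibility (i.e.\ the cocycle/commutativity on triples) is automatic by the uniqueness built into the equivalence \cref{rslt:DW-n-is-equivalence}.

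The key step is the following identification. By the second assertion of \cref{rslt:DW-n-is-functor} applied at level $n+1$, there is a natural isomorphism
\begin{align*}
    \Delta^+_{n+1}(\mathcal E/p^{n+1}) \tensor_\ri \check\ri^+_X = \mathcal E/p^{n+1}.
\end{align*}
Reducing both sides modulo $p^n$ (and using that $\Delta^+_{n+1}(\mathcal E/p^{n+1})$ is locally free of finite rank over $\ri/p^{n+1}$, so that its reduction modulo $p^n$ is an $\ri/p^n$-local system on $X$) yields
\begin{align*}
    \bigl(\Delta^+_{n+1}(\mathcal E/p^{n+1})/p^n\bigr) \tensor_{\ri/p^n} \check\ri^+_X/p^n = \mathcal E/p^n.
\end{align*}
Thus the $\ri/p^n$-local system $\Delta^+_{n+1}(\mathcal E/p^{n+1})/p^n$ is a pre-image of $\mathcal E/p^n$ under the equivalence
\begin{align*}
    \Delta^+_n \colon \catmodproptriv{\check\ri^+_X/p^n} \longisoto \catlocsys{\ri/p^n}X
\end{align*}
from \cref{rslt:DW-n-is-equivalence}. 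By fully faithfulness this yields a canonical isomorphism
\begin{align*}
    \Delta^+_n(\mathcal E/p^n) \isom \Delta^+_{n+1}(\mathcal E/p^{n+1})/p^n,
\end{align*}
and composing with the quotient map $\Delta^+_{n+1}(\mathcal E/p^{n+1}) \surjto \Delta^+_{n+1}(\mathcal E/p^{n+1})/p^n$ provides the desired transition morphism. In particular, this construction shows that $\mathcal E/p^n$ is automatically properly trivializable as a consequence of the same being assumed for $\mathcal E/p^{n+1}$ (for any fixed trivializing cover of the latter).

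The main (and essentially only) point to check is that these transition maps compose correctly, i.e.\ that for $m \ge n+1 \ge n$ the two resulting maps $\Delta^+_m(\mathcal E/p^m) \to \Delta^+_n(\mathcal E/p^n)$ (direct versus via $\Delta^+_{n+1}(\mathcal E/p^{n+1})$) agree. But both maps lift the single reduction morphism $\mathcal E/p^m \to \mathcal E/p^n$ under $\Delta^+_n(-)\tensor_\ri\check\ri^+_X$, so they coincide by the fully faithfulness half of \cref{rslt:DW-n-is-equivalence} (equivalently, by \cref{rslt:fully-faithful-functor-loc-sys}). This is the step that one has to state, but there is no real obstacle: everything reduces mechanically to the equivalence at each finite level, so no new geometric input is needed beyond what is already established.
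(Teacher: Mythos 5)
Your proof is correct, and it takes a genuinely different (and somewhat more abstract) route than the paper's.

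The paper constructs the transition map $\Delta^+_n(\mathcal E/p^n) \to \Delta^+_{n-1}(\mathcal E/p^{n-1})$ ``by hand'': it fixes a proper cover $\alpha\colon U \to X$ trivializing $\mathcal E/p^n$, notes the same cover trivializes $\mathcal E/p^{n-1}$, and then invokes the uniqueness statement of \cref{rslt:morphism-on-free-vb-preserves-loc-sys} on $U$ to factor the reduction $\alpha^*\mathcal E_n \surjto \alpha^*\mathcal E_{n-1}$ through the distinguished free $\ri$-submodules $\mathcal L_{n,U} \to \mathcal L_{n-1,U}$; compatibility of the resulting maps is left implicit, being guaranteed by the same uniqueness. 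Your argument instead works at the level of the categorical equivalence already established in \cref{rslt:DW-n-is-equivalence}: you reduce the natural identification $\Delta^+_{n+1}(\mathcal E/p^{n+1}) \tensor_\ri \check\ri^+_X = \mathcal E/p^{n+1}$ from \cref{rslt:DW-n-is-functor} modulo $p^n$, identify the quasi-inverse image of $\Delta^+_{n+1}(\mathcal E/p^{n+1})/p^n$ with $\mathcal E/p^n$, and then conclude by fully faithfulness. This avoids choosing any explicit trivializing cover and makes the definition of the transition map visibly canonical; in exchange it leans more heavily on the equivalence as a ``black box.'' Your argument also makes the verification of compatibility for triples explicit (reduce the comparison to a morphism of $\ri/p^n$-local systems and invoke fully faithfulness of $-\tensor_{\ri/p^n}\check\ri^+_X/p^n$), which the paper's proof leaves tacit. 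One small point of wording: ``pre-image under the equivalence $\Delta^+_n$'' should really read ``image under the quasi-inverse $-\tensor_{\ri/p^n}\check\ri^+_X/p^n$,'' but the intent is clear and the surrounding computation is correct.
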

\begin{proof}
Fix $n \ge 2$ and let $\mathcal E_n := \mathcal E/p^n$. We have to construct a natural map $\Delta^+_n(\mathcal E_n) \to \Delta_{n-1}(\mathcal E_{n-1})$. Let $\alpha\colon U \to X$ be a trivializing cover for $\mathcal E_n$. It is then also trivializing for $\mathcal E_{n-1}$. It is enough to construct a natural map $\mathcal L_{n,U} \to \mathcal L_{n-1,U}$. For this we take the unique such map induced by the projection $\alpha^*\mathcal E_n \surjto \alpha^*\mathcal E_{n-1}$ as in  \cref{rslt:morphism-on-free-vb-preserves-loc-sys}.
\end{proof}

\begin{definition}
\begin{defenum}
	\item \label{def:Modplus} We define $\Modplus$ to be the full subcategory of all $\check\ri^+_X$-modules $\mathcal E$ on $X_\vsite$ such that $\mathcal E / p^n$ is properly trivializable vor all $n$.
		\item Let $\mathcal E \in \Modplus$. We define
	\begin{align*}
		\Delta^+(\mathcal E) := \varprojlim_n \Delta^+_n(\mathcal E/p^n),
	\end{align*}
	where the inverse system on the right is the one defined in \cref{rslt:inverse-system-for-DW-n}.
\end{defenum}
\end{definition}

\begin{remark}
If the $\check\ri^+_X$-module $\mathcal E$ on $X_\vsite$ is étale locally free then the condition in \cref{def:Modplus} only needs to be checked for $n = 1$, see \cref{rslt:matti-trivializable-mod-pn-from-p} below.
\end{remark}

\begin{theorem} \label{rslt:DW-a-is-functor}
The assignment $\mathcal E \to \Delta^+(\mathcal E)$ is a fully faithful functor from the category $\Modplus$ to the category $\catlocsys\ri X$ of $\ri$-local systems on $X$.
Moreover, for every $\mathcal E$  in $\Modplus$ there is a natural isomorphism
\begin{align*}
	\Delta^+(\mathcal E) \tensor_\ri \check\ri^+_X = \mathcal E.
\end{align*}
\end{theorem}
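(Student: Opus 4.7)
The strategy is to first verify that $\Delta^+$ lands in $\catlocsys\ri X$ by applying the inverse limit result \cref{rslt:projlim-of-locsys}, then establish the tensor product identity by a local calculation on a v-cover, and finally deduce fully faithfulness from \cref{rslt:fully-faithful-functor-loc-sys}. First I would check that the inverse system $(\Delta^+_n(\mathcal E/p^n))_n$ from \cref{rslt:inverse-system-for-DW-n} is compatible with base change, in the sense that the transition morphism $\Delta^+_n(\mathcal E/p^n) \to \Delta^+_{n-1}(\mathcal E/p^{n-1})$ induces an isomorphism $\Delta^+_n(\mathcal E/p^n) \tensor_{\ri/p^n} \ri/p^{n-1} \isoto \Delta^+_{n-1}(\mathcal E/p^{n-1})$. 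Indeed, since $\Delta^+_n$ is an equivalence with quasi-inverse $- \tensor_{\ri/p^n} \check\ri^+_X/p^n$ by \cref{rslt:DW-n-is-equivalence}, it suffices to check this after applying $- \tensor_{\ri/p^{n-1}} \check\ri^+_X/p^{n-1}$, where it reduces to the tautological identity $(\mathcal E/p^n)/p^{n-1} = \mathcal E/p^{n-1}$. Then \cref{rslt:projlim-of-locsys} applied to the inverse system $(\ri/p^n)_n$ with limit $\ri$ and index set $\N$ yields $\Delta^+(\mathcal E) \in \catlocsys\ri X$, and this assignment is clearly functorial.

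Next I would construct the natural map $\Delta^+(\mathcal E) \tensor_\ri \check\ri^+_X \to \mathcal E$: by \cref{rslt:DW-n-is-functor} we have $\Delta^+_n(\mathcal E/p^n) \tensor_\ri \check\ri^+_X/p^n = \mathcal E/p^n$ for each $n$, and passing to the inverse limit over $n$ gives a map $\Delta^+(\mathcal E) \tensor_\ri \check\ri^+_X \to \varprojlim_n \mathcal E/p^n = \mathcal E$, where the last equality uses that $\mathcal E$ is a $\check\ri^+_X$-module on the replete site $X_\vsite$ (see \cref{rslt:w-contractible-basis}), hence derived $p$-adically complete. To check this is an isomorphism, I would argue v-locally. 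Testing on a w-contractible $U \in X_\vsite$, the two sides can be computed explicitly: $\Delta^+(\mathcal E)$ is constant of some rank $r$ on (a disjoint open cover of) $U$, so the left-hand side equals $\check\ri^+_U(U)^r$, while $\mathcal E(U) = \varprojlim_n (\mathcal E/p^n)(U)$ equals the same thing because the transition maps are compatibly given by reduction of free modules.

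Fully faithfulness of $\Delta^+$ is then formal from the isomorphism just established. Given $\mathcal E, \mathcal E' \in \Modplus$ with $\mathcal L := \Delta^+(\mathcal E)$ and $\mathcal L' := \Delta^+(\mathcal E')$, we have $\mathcal E = \mathcal L \tensor_\ri \check\ri^+_X$ and similarly for $\mathcal E'$, so
\begin{align*}
    \Hom_{\check\ri^+_X}(\mathcal E, \mathcal E') = \Hom_{\check\ri^+_X}(\mathcal L \tensor_\ri \check\ri^+_X, \mathcal L' \tensor_\ri \check\ri^+_X) = \Hom_\ri(\mathcal L, \mathcal L'),
\end{align*}
where the second equality is the fully faithfulness of $- \tensor_\ri \check\ri^+_X$ from \cref{rslt:fully-faithful-functor-loc-sys}. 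This identifies $\Hom(\mathcal E, \mathcal E')$ with $\Hom(\Delta^+(\mathcal E), \Delta^+(\mathcal E'))$, and inspection of the construction shows the identification is induced by $\Delta^+$.

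The main obstacle will be the passage to the limit in the tensor product identity: one needs to commute $\varprojlim_n$ with $- \tensor_\ri \check\ri^+_X$ and to verify that $\mathcal E$ is indeed recovered as $\varprojlim_n \mathcal E/p^n$. Both points should ultimately rest on repleteness of $X_\vsite$ and on the fact that on a w-contractible basis element of $X_\qproet$ (which hence is pro-étale weakly contractible), the inverse limit of free $\check\ri^+_X/p^n$-modules that are compatible under reduction recovers a free $\check\ri^+_X$-module of the same rank. This last fact can be verified by choosing compatible bases inductively, which is possible because each reduction step involves lifting a basis across the surjection $\check\ri^+_X/p^n \surjto \check\ri^+_X/p^{n-1}$.
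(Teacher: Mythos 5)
Your proof follows essentially the same approach as the paper's: you use \cref{rslt:projlim-of-locsys} to see $\Delta^+(\mathcal E) \in \catlocsys\ri X$, establish the tensor identity by passing $\tensor_\ri \check\ri^+_X$ through the inverse limit (a local check, since everything is free on a suitable cover) together with \cref{rslt:DW-n-is-functor}, and deduce fully faithfulness from \cref{rslt:fully-faithful-functor-loc-sys}. You helpfully make explicit the base-change compatibility of the inverse system $(\Delta^+_n(\mathcal E/p^n))_n$, which the paper leaves implicit but is genuinely needed to invoke \cref{rslt:projlim-of-locsys}.

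One imprecision: your justification for $\mathcal E = \varprojlim_n \mathcal E/p^n$ --- that $X_\vsite$ is replete, \emph{hence} $\mathcal E$ is derived $p$-adically complete --- is not a valid inference. Repleteness of the topos does not make arbitrary $\check\ri^+_X$-modules derived $p$-complete; for instance $\check\ri_X$ has $\check\ri_X/p^n = 0$ for all $n$, so $\varprojlim_n \check\ri_X/p^n = 0 \ne \check\ri_X$. What repleteness does buy (via \cite[Proposition 3.1.10]{proetale-topology}) is that for a $p$-torsion-free module one may compute the derived limit as an ordinary limit, but the starting completeness has to come from elsewhere. The paper's own proof asserts $\mathcal E = \varprojlim_n \mathcal E/p^n$ without argument, so you are matching its structure; this step should be understood as part of the intended meaning of $\Modplus$ (the sheaf is recovered from its $p^n$-reductions) rather than as a consequence of repleteness alone.
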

\begin{proof} Note that  $\Delta(\mathcal E)$ is indeed an $\ri$-local system by \cref{rslt:projlim-of-locsys}. If $\varphi: \mathcal E \rightarrow \mathcal E'$ is a morphism in $\Modplus$, then we get a morphism $\Delta(\varphi) $ as $\varprojlim_n \Delta^+_n(\varphi  / p^n)$. Fully faithfulness follows again  from \cref{rslt:fully-faithful-functor-loc-sys}.
For the second part, first note that
\begin{align*}
	\Delta^+(\mathcal E) \tensor_{\ri} \check\ri^+_X = \left(\varprojlim_n \Delta^+_n(\mathcal E/p^n)\right) \tensor_\ri \check\ri^+_X = \varprojlim_n \left(\Delta^+_n(\mathcal E/p^n) \tensor_\ri \check\ri^+_X\right).
\end{align*}
For the second step note that we have a natural map from left to right, which is locally an isomorphism because locally everything is free. By \cref{rslt:DW-n-is-functor} we have a natural isomorphism $\Delta^+_n(\mathcal E/p^n) \tensor_\ri \check\ri^+_X = \mathcal E/p^n$. Thus $\mathcal E = \varprojlim_n \mathcal E/p^n$ implies the result.
\end{proof}

\begin{corollary} \label{rslt:DW-+-is-equivalence}
The functor $\Delta^+$ is an equivalence of categories
\begin{align*}
	\Delta^+\colon \Modplus \longisoto \catlocsys\ri X.
\end{align*}
\end{corollary}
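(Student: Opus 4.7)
The plan is to observe that fully faithfulness of $\Delta^+$ was already established in \cref{rslt:DW-a-is-functor}, so only essential surjectivity remains. I would prove this by exhibiting an explicit quasi-inverse
\[
\Psi\colon \catlocsys\ri X \to \Modplus, \qquad \Psi(\mathcal L) := \mathcal L \tensor_\ri \check\ri^+_X.
\]

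First I would check that $\Psi$ lands in $\Modplus$. Given an $\ri$-local system $\mathcal L$, the sheaf $\Psi(\mathcal L)$ is $v$-locally isomorphic to $(\check\ri^+_X)^r$ and hence $p$-torsion free, so there is a natural isomorphism
\[
\Psi(\mathcal L)/p^n \;\isom\; (\mathcal L/p^n) \tensor_{\ri/p^n} (\check\ri^+_X/p^n).
\]
Now $\mathcal L/p^n$ is an $\ri/p^n$-local system, and by \cref{rslt:DW-n-is-equivalence} the functor $- \tensor_{\ri/p^n} \check\ri^+_X/p^n$ is a quasi-inverse of $\Delta^+_n$ and in particular sends $\ri/p^n$-local systems to properly trivializable $\check\ri^+_X/p^n$-modules. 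Thus $\Psi(\mathcal L)/p^n \in \catmodproptriv{\check\ri^+_X/p^n}$ for every $n$, as required.

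Next I would verify that $\Delta^+ \comp \Psi \isom \id{}$. Unwinding the definition of $\Delta^+$ together with the identification above,
\[
\Delta^+(\Psi(\mathcal L)) \;=\; \varprojlim_n \Delta^+_n\bigl((\mathcal L/p^n) \tensor_{\ri/p^n} \check\ri^+_X/p^n\bigr) \;\isom\; \varprojlim_n \mathcal L/p^n \;\isom\; \mathcal L,
\]
where the middle isomorphism is (one half of) the equivalence $\Delta^+_n$, natural in $\mathcal L/p^n$, so it is compatible with the transition maps in the inverse system of \cref{rslt:inverse-system-for-DW-n}, and the last isomorphism uses $p$-adic completeness of $\mathcal L$. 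Combined with $\Psi \comp \Delta^+ \isom \id{}$, which is precisely the natural isomorphism $\Delta^+(\mathcal E) \tensor_\ri \check\ri^+_X = \mathcal E$ of \cref{rslt:DW-a-is-functor}, this shows that $\Psi$ is a quasi-inverse of $\Delta^+$, completing the proof.

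This is essentially an assembly of results already in hand, so I do not expect any serious obstacle. The one point requiring a modicum of care is the compatibility of the transition maps in the inverse limit defining $\Delta^+$ with those coming from reducing $\mathcal L$ modulo $p^n$; this reduces to naturality of the equivalences $\Delta^+_n$ and the construction in \cref{rslt:inverse-system-for-DW-n}, and need not be spelled out explicitly.
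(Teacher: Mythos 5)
Your argument is correct and follows essentially the same route as the paper's proof: fully faithfulness from \cref{rslt:DW-a-is-functor}, and essential surjectivity by showing that $\mathcal L \mapsto \mathcal L \tensor_\ri \check\ri^+_X$ provides a quasi-inverse, using $\Delta^+_n$ modulo each $p^n$. The paper's proof is terser (it does not explicitly verify that $\mathcal L \tensor_\ri \check\ri^+_X$ lands in $\Modplus$, which you do check), so your write-up is in fact a bit more complete; the only minor quibble is that the appeal to $p$-torsion freeness of $\Psi(\mathcal L)$ is not actually needed for the identification $\Psi(\mathcal L)/p^n \isom (\mathcal L/p^n) \tensor_{\ri/p^n} \check\ri^+_X/p^n$, which follows from right-exactness of the tensor product alone.
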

\begin{proof}
By \cref{rslt:DW-a-is-functor}, the functor $\Delta^+$ is fully faithful. If $\mathcal L$ is a local system in $\catlocsys\ri X  $, we find that  $\Delta^+( \mathcal L\tensor_{\ri} \check\ri^+_X) $ is isomorphic to $ \mathcal L$, which proves our claim. \end{proof}

Let us now define a similar functor for $\check\ri_X$-modules.

\begin{definition}\label{def:functor}
\begin{defenum}
	\item We say that a $\check\ri_X$-module $E$ has \emph{properly trivializable reduction modulo all $p^n$} if there exists a subsheaf $\mathcal E \subset E$ such that $E = \mathcal E[p^{-1}]$ and $\mathcal E $ in $\Modplus$. In this case $\mathcal E$ is called a \emph{model} of $E$. The category of $\check\ri_X$-modules with properly trivializable reduction is denoted $\Mod$.

	\item \label{def:DW} Let $E \in \Mod$. We define
	\begin{align*}
		\Delta(E) := \Delta^+(\mathcal E)[p^{-1}],
	\end{align*}
	where $\mathcal E$ is any model of $E$.
\end{defenum}
\end{definition}

\begin{proposition} \label{rslt:DW-is-functor-on-very-nice}
The assignment $E \to \Delta(E)$ provides a well-defined fully faithful functor from the category $\Mod$ to the category of $\Cpx_p$-local systems on $X$.
\end{proposition}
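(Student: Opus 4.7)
My plan is to deduce everything from the already-established equivalence $\Delta^+$ on integral coefficients together with the fully faithfulness of the tensor functor $- \tensor_{\Cpx_p} \check\ri_X$ from \cref{rslt:fully-faithful-functor-loc-sys}. The key technical identity I would use throughout is that for any $\mathcal E \in \Modplus$ we have $\Delta^+(\mathcal E) \tensor_\ri \check\ri^+_X = \mathcal E$ (\cref{rslt:DW-a-is-functor}); inverting $p$ this yields $\Delta^+(\mathcal E)[p^{-1}] \tensor_{\Cpx_p} \check\ri_X = \mathcal E[p^{-1}]$, so in particular every $E \in \Mod$ satisfies $\Delta(E) \tensor_{\Cpx_p} \check\ri_X = E$.

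First I would check well-definedness of $\Delta$ on objects. Let $\mathcal E, \mathcal E' \in \Modplus$ be two models of the same $E$, and set $\mathcal L := \Delta^+(\mathcal E)$, $\mathcal L' := \Delta^+(\mathcal E')$. Then $\mathcal L[p^{-1}]$ and $\mathcal L'[p^{-1}]$ are both objects of $\catlocsysint{\Cpx_p}X$ and, by the identity above, both are sent to $E$ under the functor $- \tensor_{\Cpx_p} \check\ri_X$. By the fully faithfulness of that functor (\cref{rslt:fully-faithful-functor-loc-sys}), there is a unique isomorphism $\mathcal L[p^{-1}] \isoto \mathcal L'[p^{-1}]$ compatible with the two identifications with $E$. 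This shows $\Delta(E)$ is well-defined up to canonical isomorphism.

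For functoriality and full faithfulness I would argue in one stroke using this same fully faithfulness. Given $E, E' \in \Mod$, the identity $E = \Delta(E) \tensor_{\Cpx_p} \check\ri_X$ (and likewise for $E'$) gives
\begin{align*}
	\Hom_{\check\ri_X}(E, E') = \Hom_{\check\ri_X}\bigl(\Delta(E) \tensor_{\Cpx_p} \check\ri_X,\; \Delta(E') \tensor_{\Cpx_p} \check\ri_X\bigr) = \Hom_{\Cpx_p}\bigl(\Delta(E), \Delta(E')\bigr),
\end{align*}
where the second equality is the fully faithfulness part of \cref{rslt:fully-faithful-functor-loc-sys} (noting that $\Delta(E), \Delta(E') \in \catlocsysint{\Cpx_p}X$ by construction). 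Reading this identity from left to right produces the functoriality of $\Delta$; reading it as an equality is precisely full faithfulness.

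I do not expect a serious obstacle here: the content is packaged in the earlier equivalence $\Delta^+$ and in the fully faithfulness of tensoring up structure sheaves. The one subtle point worth being careful about is that a morphism $\varphi \colon E \to E'$ need not restrict to a morphism $\mathcal E \to \mathcal E'$ of chosen models, so I avoid constructing $\Delta(\varphi)$ by restriction and instead read it off from the displayed chain of $\Hom$-identifications above; this is why invoking \cref{rslt:fully-faithful-functor-loc-sys} rather than the integral equivalence $\Delta^+$ is the cleanest route.
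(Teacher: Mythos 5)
Your proof is correct, but it takes a genuinely different route from the paper's.  The paper proves well-definedness by a hands-on argument: it compares the two models $\mathcal E, \mathcal E' \subset E$ by producing a morphism $f\colon \mathcal E \to \mathcal E'$ lifting $p^m\cdot\mathrm{id}_E$, observes that $\Delta^+(f)$ is the restriction of $f$ to the sub-local-systems, and then uses bidirectional inclusions $\Delta^+(\mathcal E)[p^{-1}] \subset \Delta^+(\mathcal E')[p^{-1}]$ inside $E$; fully faithfulness is then read off from \cref{rslt:DW-a-is-functor}.  That version implicitly leans on a couple of ``finiteness/completeness'' identities --- namely $\Hom_{\check\ri_X}(E,E) = \Hom_{\check\ri^+_X}(\mathcal E,\mathcal E')[p^{-1}]$ and $\mathcal E = \varprojlim_n \mathcal E/p^n$ --- which are true here but are left implicit.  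Your approach instead pushes everything through the single fully faithful tensor functor $- \tensor_{\Cpx_p} \check\ri_X$ of \cref{rslt:fully-faithful-functor-loc-sys}, together with the identity $\Delta^+(\mathcal E) \tensor_\ri \check\ri^+_X = \mathcal E$ of \cref{rslt:DW-a-is-functor} (and its $p$-inverted consequence).  This buys you two things: the comparison of two models becomes an immediate consequence of fully faithfulness (the unique morphism over $\mathrm{id}_E$ is automatically an isomorphism since fully faithful functors reflect isomorphisms), and functoriality plus full faithfulness drop out of the same $\Hom$-bijection in one step, with no need for the localization commutations above.  You are also right to flag that a morphism $E \to E'$ need not restrict to a morphism of chosen models; reading $\Delta(\varphi)$ off the $\Hom$-identification, rather than attempting restriction, cleanly sidesteps this.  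One small bookkeeping point worth making explicit if you write this up: the $\Hom$-bijection $\Hom_{\Cpx_p}(\Delta(E),\Delta(E')) \isom \Hom_{\check\ri_X}(E,E')$ is obtained by composing the bijection from \cref{rslt:fully-faithful-functor-loc-sys} with conjugation by the natural identifications $\Delta(E) \tensor_{\Cpx_p} \check\ri_X \isom E$, and both operations respect composition of morphisms, which is what makes the resulting $\Delta$ a functor.
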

\begin{proof}
Let $E$ be an $\check\ri_X$-module with properly trivializable reduction modulo all $p^n$ and let $\mathcal E, \mathcal E' \subset E$ be two models. We have $\Hom_{\check\ri_{X}}(E, E) = \Hom_{\check\ri_{X^+}}(\mathcal E, \mathcal E')[p^{-1}]$, so that $p^m: E \to E$ for suitable $m$ is induced by a morphism $f: \mathcal E \rightarrow \mathcal E'$, which is compatible with the inclusions of both models in $E$ and in particular injective. By construction, $\Delta^+(f)$ is the limit of all $\Delta^+ (f / p^n): \Delta^+(\mathcal E /p^n) \rightarrow \Delta^+( \mathcal E' / p^n)$, which is the restriction of $f / p^n: \mathcal{E} / p^n \rightarrow \mathcal E'/ p^n$. In the limit we find that $\Delta ^+( \mathcal E) \subset \mathcal E$, and that $\Delta^+ (f) $ is the restriction of $f$. After inverting $p$ we conclude that the identity on $E$ corresponds to  an injection $\Delta^+(\mathcal E)[p^{-1}] \subset \Delta^+(\mathcal E')[p^{-1}]$. Switching roles of $\mathcal E$ and $\mathcal E'$ we similarly deduce an injection in the other direction, hence together we get $\Delta^+(\mathcal E)[p^{-1}] = \Delta^+(\mathcal E')[p^{-1}]$. Hence $\Delta(E)$ is indeed well-defined. Fully faithfulness follows from \cref{rslt:DW-a-is-functor}.
\end{proof}

By construction, the image of $\Delta$ is contained in the category $\catlocsysint{\Cpx_p}X$ defined in \cref{def:rep-integral-model}. Note that the results in part (i) and (ii) of the following theorem are also proven in \cite[Lemma 3.11 and Corollary 3.12]{matti-rigid-DW-functor} in slightly different terms.

\begin{theorem}\label{rslt:DW-for-OX}
\begin{thmenum}
	\item \label{rslt:DW-is-equivalence} The functor $\Delta$ is an equivalence
	\begin{align*}
		\Delta \colon \Mod \longisoto \catlocsysint{\Cpx_p}X,
	\end{align*}
	which is compatible with direct sums, tensor products, internal homs and exterior powers.
	
	\item \label{rslt:DW-partial-inverse} For every $E \in \Mod$ there is a natural isomorphism
	\begin{align*}
		\Delta(E) \tensor_{\Cpx_p} \check\ri_X = E.
	\end{align*}

	\item \label{rslt:DW-commutes-with-pullback} Let $f\colon Y \to X$ be a morphism of finite-type proper adic spaces over $\Cpx_p$ and let $E \in \Mod$. Then $f^*E \in \ModY$ and
	\begin{align*}
		\Delta(f^*E) = f^*\Delta(E).
	\end{align*}

	\item \label{rslt:descent-of-prop-triv-reduction} Let $f\colon Y \to X$ be a surjective morphism of finite-type proper adic spaces over $\Cpx_p$ and assume that $X$ is normal. Let $E$ be an $\check\ri_X$-module. If $f^*E \in \catmodproptriv{\check\ri_Y}$ then $E \in \catmodproptriv{\check\ri_X}$. In other words, the property of having properly trivializable reduction descends along $f$.
\end{thmenum}
\end{theorem}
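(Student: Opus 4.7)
Part (i) of the theorem: Essential surjectivity is immediate from the already-proven equivalence on integral models. Given $L \in \catlocsysint{\Cpx_p}X$ with integral model $\mathcal L$, apply \cref{rslt:DW-+-is-equivalence} to obtain $\mathcal E = \mathcal L \tensor_\ri \check\ri^+_X \in \Modplus$ with $\Delta^+(\mathcal E) = \mathcal L$; then $E := \mathcal E[p^{-1}] \in \Mod$ satisfies $\Delta(E) = L$. Part (ii) follows by inverting $p$ in the isomorphism $\Delta^+(\mathcal E) \tensor_\ri \check\ri^+_X = \mathcal E$ from \cref{rslt:DW-a-is-functor}. For compatibility with direct sums, tensor products, internal homs, and exterior powers, I check each operation at the level of $\Delta^+_n$: given two modules $\mathcal E_n, \mathcal E'_n$, pick a common proper cover $\alpha\colon U \to X$ trivializing both; on $U$ everything is free over $\check\ri^+_U/p^n$, and the unique maximal free $\ri/p^n$-submodule (\cref{rslt:sub-locsys-of-free-O-mod-p-module}) commutes with these operations by a direct basis argument. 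Taking the inverse limit over $n$ and inverting $p$ yields the compatibilities for $\Delta$.

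Part (iii): let $\mathcal E$ be a model of $E$ and fix $n$. If $\alpha\colon U \to X$ is a proper cover trivializing $\mathcal E/p^n$, then the base change $\alpha_Y\colon U \cprod_X Y \to Y$ is again a proper surjection of finite-type adic spaces and it trivializes $(f^*\mathcal E)/p^n = f^*(\mathcal E/p^n)$; hence $f^*\mathcal E \in \Modplus$ on $Y$ and $f^*E \in \ModY$. Moreover, by the uniqueness part of \cref{rslt:sub-locsys-of-free-O-mod-p-module}, pulling back the maximal free $\ri/p^n$-submodule on $U$ gives the maximal such submodule on $U \cprod_X Y$, which yields $\Delta^+_n(f^*\mathcal E/p^n) = f^*\Delta^+_n(\mathcal E/p^n)$. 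Passing to the inverse limit over $n$ and inverting $p$ gives $\Delta(f^*E) = f^*\Delta(E)$.

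Part (iv) is the heart of the theorem and the main obstacle. The plan is: first, by hypothesis $f^*E \in \catmodproptriv{\check\ri_Y}$, so by the equivalence on $Y$ proved in (i) there is $L_Y \in \catlocsysint{\Cpx_p}Y$ together with an isomorphism $f^*E \isom L_Y \tensor_{\Cpx_p} \check\ri_Y$. Let $Y_1 := Y \cprod_X Y$ and $Y_2 := Y \cprod_X Y \cprod_X Y$ with the usual projections; both are finite-type proper adic spaces over $\Cpx_p$ because $f$ is proper of finite type (it factors as $Y \injto Y \cprod_{\Cpx_p} X \to X$, both maps being proper). The canonical descent datum $\varphi\colon p_1^* f^*E \isoto p_2^* f^*E$ transports, via the above identification, to a $\check\ri_{Y_1}$-linear isomorphism $p_1^*L_Y \tensor \check\ri_{Y_1} \isoto p_2^*L_Y \tensor \check\ri_{Y_1}$. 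Since $- \tensor_{\Cpx_p} \check\ri_{Y_1}$ is fully faithful on $\catlocsysint{\Cpx_p}{Y_1}$ by \cref{rslt:fully-faithful-functor-loc-sys}, this isomorphism comes from a unique $\Cpx_p$-linear isomorphism $\psi\colon p_1^* L_Y \isoto p_2^* L_Y$, and the cocycle condition on $Y_2$ follows from the same fully faithfulness. By effective $v$-descent of sheaves, $L_Y$ together with $\psi$ descends to a sheaf $L$ of $\Cpx_p$-modules on $X_\vsite$ with $f^*L = L_Y$; since being a local system is $v$-local and $f$ is surjective, $L \in \catlocsys{\Cpx_p}X$. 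Now I apply \cref{intro-descent} (i.e.\ \cref{rslt:descent-of-integral-model-along-morphism-of-proper-spaces-with-loc-irr-target}): since $f$ is a surjective proper morphism of finite type with $X$ normal and $f^*L = L_Y$ has an integral model, $L$ itself admits an integral model $\mathcal L$. Setting $\mathcal E := \mathcal L \tensor_\ri \check\ri^+_X \in \Modplus$, the isomorphism $f^*E \isom L_Y \tensor \check\ri_Y = f^*(L \tensor \check\ri_X) = f^*(\mathcal E[p^{-1}])$ is descent-datum-compatible by construction of $\psi$, hence descends to an isomorphism $E \isom \mathcal E[p^{-1}]$, proving $E \in \Mod$. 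The main difficulty is the faithful transport of descent data from $\check\ri$-modules to $\Cpx_p$-local systems, which rests on the global sections comparison of \cref{rslt:global-sections-on-proper-space} (underlying \cref{rslt:fully-faithful-functor-loc-sys}), combined with the deep descent result \cref{intro-descent}.
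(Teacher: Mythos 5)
Your proof is correct and follows essentially the same route as the paper: (i) and (ii) reduce to \cref{rslt:DW-+-is-equivalence} and \cref{rslt:DW-a-is-functor}, (iii) comes from uniqueness of the maximal free $\ri/p^n$-submodule, and (iv) descends the local system $L_Y = \Delta(f^*E)$ along the $v$-cover $Y \to X$ and then invokes \cref{rslt:descent-of-integral-model-along-morphism-of-proper-spaces-with-loc-irr-target}. The only cosmetic differences are that you phrase the transport of the descent datum via fully faithfulness of $-\tensor_{\Cpx_p}\check\ri_{Y_1}$ (\cref{rslt:fully-faithful-functor-loc-sys}) rather than applying the functor $\Delta$ on $Y_1$ directly (these are equivalent by (i)), you are more explicit than the paper in verifying that $Y\cprod_X Y$ and $Y\cprod_X Y\cprod_X Y$ are finite-type proper over $\Cpx_p$ (which is a genuine hypothesis needed to invoke the earlier results), and for compatibility with internal homs the paper shortcuts via the observation that objects of $\Mod$ are locally free while you argue on bases — both work.
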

\begin{proof}
The fact that $\Delta$ is an equivalence of categories and (ii) follow easily from \cref{rslt:DW-+-is-equivalence} and \cref{rslt:DW-a-is-functor}. It is straightforward to see that $\Mod$ is closed under direct sums, tensor products and exterior powers, and that $\Delta$ is compatible with these operations. 
Part (ii) also shows that all objects in $\Mod$ are locally free $\check\ri_X$-modules. Hence $\Delta$ is also compatible with internal homs. 

To prove (iii), we view $Y$ as an element of the v-site of $X$ to see that $f^*$ is just the restriction functor and in particular commutes with limits. Then $f^*E \in \ModY$ follows easily and the second part of the claim reduces to the statement that $f^* \Delta^+_n(\mathcal E_n) = \Delta^+_n(f^* \mathcal E_n)$ for a properly trivializable $\check\ri^+_X/p^n$-module $\mathcal E_n$. But this isomorphism follows directly from the uniqueness of the construction of $\Delta^+_n$.

It remains to prove (iv), so let $E$ be given. Let $L_Y$ be the $\Cpx_p$-local system on $Y$ which corresponds to $f^* E$ under the functor $\Delta$ by (i). The glueing datum of $f^*E$ along the v-cover $Y \to X$ is given by an isomorphism $\varphi\colon \pr1^* f^*E \isoto \pr2^* f^*E$, where $\pr1, \pr2\colon Y \cprod_X Y \to Y$ are the two projections. Applying the functor $\Delta$ to $\varphi$ and using (iii) produces an isomorphism
\begin{align*}
	\Delta(\varphi)\colon &\pr1^* L_Y = \pr1^* \Delta(f^*E) = \Delta(\pr1^*f^*E) \\& \qquad\isoto \Delta(\pr2^*f^*E) = \pr2^* \Delta(f^*E) = \pr2^* L_Y.
\end{align*}
The functoriality of $\Delta$ (and commutation with pullback) implies that the cocycle condition for $\Delta(\varphi)$ follows from the cocycle condition for $\varphi$. Hence we can glue $L_Y$ along $f$ to get a $\Cpx_p$-local system $L \subset E$ on $X$. By definition of $\Delta$, $L_Y$ admits an integral model, so by \cref{rslt:descent-of-integral-model-along-morphism-of-proper-spaces-with-loc-irr-target} the same is true for $L$. On the other hand it is obviously true that the natural map $L \tensor_{\Cpx_p} \check\ri_X \to E$ is an isomorphism, because this can be checked v-locally. By (i) we conclude $E \in \Mod$ and $\Delta(E) = L$.
\end{proof}

\begin{corollary}\label{rslt:Mod-is-abelian}
The category $\Mod$ is abelian.
\end{corollary}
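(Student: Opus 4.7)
The plan is to transport the abelian structure from $\catlocsysint{\Cpx_p}X$ through the equivalence $\Delta\colon \Mod \isoto \catlocsysint{\Cpx_p}X$ established in \cref{rslt:DW-is-equivalence}. Since $X$ is a proper adic space of finite type over $\Cpx_p$, its associated diamond is a spatial diamond with only finitely many connected components; by working on each component we may reduce to the case where $X$ is a connected spatial diamond, so that \cref{rslt:ILoc-is-abelian} applies and guarantees that $\catlocsysint{\Cpx_p}X$ is an abelian subcategory of the category of all $\Cpx_p$-module sheaves on $X_\vsite$.

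Concretely, given a morphism $\varphi\colon E \to E'$ in $\Mod$, I would set $L := \Delta(E)$, $L' := \Delta(E')$ and $\psi := \Delta(\varphi)$. By \cref{rslt:ILoc-is-abelian} the kernel $K$ and cokernel $C$ of $\psi$, computed in the ambient category of $\Cpx_p$-module sheaves, again lie in $\catlocsysint{\Cpx_p}X$. I would then define candidate kernel and cokernel for $\varphi$ in $\Mod$ by
\[
	\ker_{\Mod}(\varphi) := K \tensor_{\Cpx_p} \check\ri_X, \qquad \coker_{\Mod}(\varphi) := C \tensor_{\Cpx_p} \check\ri_X,
\]
which are objects of $\Mod$ by the quasi-inverse to $\Delta$ implicitly given in \cref{rslt:DW-partial-inverse}.

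The core step is to verify that these tensor products actually agree with the kernel and cokernel of $\varphi$ in the ambient category $\catmod{\check\ri_X}$. For this I would argue locally on $X_\vsite$: by \cref{rslt:DW-partial-inverse} every object of $\Mod$ is locally free as a $\check\ri_X$-module, and on a v-cover that simultaneously trivializes $L$ and $L'$ the morphism $\varphi$ becomes the extension of scalars along $\Cpx_p \to \check\ri_X$ of the linear map $\psi$ between free $\Cpx_p$-modules of finite rank (using the compatibility $\Delta(E)\tensor_{\Cpx_p}\check\ri_X = E$). In this free setting taking kernels and cokernels clearly commutes with tensoring up, so the identification follows globally by descent. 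Functoriality of $\Delta$ and the compatibilities in \cref{rslt:DW-for-OX}(i) ensure that these constructions define kernels and cokernels in $\Mod$ and that the usual axioms (mono/epi being normal, kernel of cokernel equals image of kernel) hold, inherited from the corresponding statements in $\catlocsysint{\Cpx_p}X$.

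The main obstacle is the last identification: a priori $\Mod$ is only defined as a full subcategory of $\catmod{\check\ri_X}$, so it must be checked that the abelian structure inherited through $\Delta$ coincides with the one induced from the ambient category, and not merely that $\Mod$ abstractly admits kernels and cokernels. This reduces, via local trivialization on a proper cover trivializing the reduction mod $p^n$ and passing to the inverse limit, to the observation already used in the proof of \cref{rslt:ILoc-is-abelian} that kernels and cokernels of maps between free $\ri_E$-modules of finite rank are again free of finite rank.
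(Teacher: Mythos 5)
Your proof is correct and takes the same route as the paper: reduce to connected $X$, invoke \cref{rslt:ILoc-is-abelian} so that $\catlocsysint{\Cpx_p}X$ is abelian, and transport the abelian structure along the equivalence $\Delta$ of \cref{rslt:DW-is-equivalence}. The paper's proof is precisely this two-line observation and stops there; your extra verification that the transported kernels and cokernels coincide with those computed in $\catmod{\check\ri_X}$ is not needed for the stated claim (a category equivalent to an abelian category is abelian), but it is a correct argument proving the slightly stronger statement that $\Mod$ is an abelian \emph{sub}category of $\catmod{\check\ri_X}$ --- a refinement the paper does not spell out but appears to rely on implicitly in the later proof of \cref{rslt:category-abelian}.
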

\begin{proof}
We may assume that $X$ is connected. Then by \cref{rslt:ILoc-is-abelian} the category $\catlocsysint{\Cpx_p}X$ is abelian, hence by \cref{rslt:DW-is-equivalence} the same is true for $\Mod$.
\end{proof}

\section{Vector Bundles on Adic Spaces}

In this section we assume that $X$ is a proper, connected, seminormal (see e.g. \cite[section 1.4]{relative-p-adic-hodge-2}) adic space of finite type over $\mathbb{C}_p$.
Then $X$ is induced by a proper rigid analytic variety over $\mathbb{C}_p$.

We study vector bundles on $X$, i.e. locally free $\ri_X$-modules for the analytic topology. We use the natural morphism of ringed sites $\tau: (X_\vsite, \check\ri_{X}) \rightarrow (X_{an}, \ri_{X})$ induced by composing  $\lambda\colon (X_\vsite, \check\ri_X) \to (X_\qproet, \hat\ri_X)
 $ with $ \nu \colon (X_\qproet, \hat \ri_X) \to (X_\et, \ri_{X})$ and with the natural projection from the étale to the analytic site on $X$. As in \cite[Lemma 7.3]{rigid-p-adic-hodge} pullback with respect to this projection is fully faithful.  By \cref{rslt:locfree-sheaf-fully-faithful-qproet-to-v} and \cref{rslt:locfree-sheaf-fully-faithful-et-to-qproet} the functor $\tau^\ast$ is therefore fully faithful on vector bundles on $X$. In a similar way, we define $\tau^+: (X_\vsite, \check\ri_{X}^+) \rightarrow (X_{an}, \ri_{X}^+)$.

\begin{definition}
\begin{defenum}
	\item We define $\catbdl{X}$ as the full subcategory of analytic vector bundles (i.e. locally free $\ri_ {X}$-sheaves) $E$ on $X$ such that $\tau^{\ast} E$ is an object of $\Mod$.
	\item For every $E$ in $\catbdl{X}$ we define a local system in $\catlocsysint{\Cpx_p}X$ by setting $\Delta (E) = \Delta(\tau^\ast E)$.
\end{defenum}
\end{definition}

\begin{proposition}\label{rslt:parallel-transport-for-adic}
The functor $\Delta: \catbdl{X} \longto \catlocsysint{\Cpx_p}X$ is fully faithful, exact and compatible with direct sums, tensor products, internal homs and exterior powers.  Moreover, for every vector bundle $E$ in $\catbdl{X} $ and every morphism of finite-type proper adic spaces $f: Y \longto X$ over $\mathbb{C}_p$ the pullback bundle $f^\ast E$ lies in $\catbdl{Y} $,  and we have $\Delta(f^\ast E) = f^\ast \Delta(E)$.
\end{proposition}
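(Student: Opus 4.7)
The plan is to factor $\Delta$ on $\catbdl{X}$ as the composition
\[
\catbdl{X} \xto{\tau^*} \Mod \xto{\Delta_{\Mod}} \catlocsysint{\Cpx_p}X,
\]
where $\Delta_{\Mod}$ is the equivalence from \cref{rslt:DW-is-equivalence}, and to establish each of the claimed properties of $\Delta$ by combining a property of $\tau^*$ on vector bundles with the corresponding property of $\Delta_{\Mod}$ on $\Mod$. In this way the proposition reduces almost formally to results already proved, and the work amounts to transporting facts across the equivalence.

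For fully faithfulness I would note that $\tau^*$ is fully faithful on locally free sheaves by \cref{rslt:locfree-sheaf-fully-faithful-qproet-to-v} and \cref{rslt:locfree-sheaf-fully-faithful-et-to-qproet}, combined with the seminormal comparison $\ri_X = \nu_*\hat\ri_{X^\diamond}$ in \cref{rslt:comparison-of-adic-and-diamond-world}; post-composing with the equivalence $\Delta_{\Mod}$ yields fully faithfulness of $\Delta$. For compatibility with direct sums, tensor products, internal homs, and exterior powers, $\tau^*$ commutes with all these operations when restricted to locally free sheaves (for $\dsum$ and $\tensor$ this is formal; for $\IHom$ and $\Lambda^k$ one reduces locally to the trivial case), and \cref{rslt:DW-for-OX}(i) gives the corresponding compatibilities on $\Mod$.

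For exactness, given a short exact sequence $0 \to E' \to E \to E'' \to 0$ of objects of $\catbdl{X}$, the quotient $E''$ is locally free so the sequence splits analytic-locally on $X$. Pulling back via $\tau^*$ therefore yields a sequence $0 \to \tau^* E' \to \tau^* E \to \tau^* E'' \to 0$ in $\Mod$ which is locally split and hence a short exact sequence in the abelian category $\Mod$ (\cref{rslt:Mod-is-abelian}). Applying the equivalence $\Delta_{\Mod}$ then gives the desired short exact sequence of $\Cpx_p$-local systems in $\catlocsysint{\Cpx_p}X$.

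For compatibility with pullback, given $f\colon Y \to X$, functoriality of the v-site yields a commuting square of morphisms of ringed sites, so $\tau_Y^* \comp f^*_{\an} = f^*_{\vsite} \comp \tau_X^*$ on $\ri_X$-modules. Applied to $E \in \catbdl{X}$, this identifies $\tau_Y^*(f^*E)$ with $f^*_{\vsite}(\tau_X^*E)$, which lies in $\Mod_Y$ by \cref{rslt:DW-commutes-with-pullback}; hence $f^*E \in \catbdl{Y}$. The same part of \cref{rslt:DW-for-OX} then gives $\Delta(f^*E) = \Delta_{\Mod}(f^*_{\vsite}\tau_X^*E) = f^*\Delta_{\Mod}(\tau_X^*E) = f^*\Delta(E)$. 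There is no real obstacle here; the most delicate point is ensuring that the pullback of a short exact sequence of vector bundles remains short exact after $\tau^*$, which is handled by the local splitting argument above.
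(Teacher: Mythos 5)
Your proof is correct and takes essentially the same approach as the paper, which simply cites \cref{rslt:DW-for-OX}; you have filled in the details that the paper leaves implicit, namely that $\Delta$ on $\catbdl{X}$ factors as $\Delta_{\Mod}\comp\tau^*$, that $\tau^*$ is fully faithful on vector bundles, exact on locally split sequences, commutes with the listed operations and with pullback, and that the corresponding properties of $\Delta_{\Mod}$ are supplied by \cref{rslt:DW-for-OX} together with the abelianness of $\Mod$ (\cref{rslt:Mod-is-abelian}).
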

\begin{proof}
This follows from \cref{rslt:DW-for-OX}.
\end{proof}

Hence the category $\catbdl{X}$ of analytic vector bundles giving rise to local system has the important property that the required reduction behaviour can be  checked on a proper cover.

\begin{theorem}\label{rslt:proper-pullback}
Let $f\colon Y \to X$ be a surjective morphism of finite-type proper, connected seminormal adic spaces over $\Cpx_p$ and assume in addition that $X$ is normal. Let $E$ be a vector bundle on $X$. If $f^*E \in \catbdl{Y}$ then $E \in \catbdl{X}$.
\end{theorem}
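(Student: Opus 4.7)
The proof will be a direct application of the descent statement for $\check\ri_X$-modules established in part (iv) of \cref{rslt:DW-for-OX}, so essentially no new work is needed. Set $\mathcal E := \tau_X^* E$, the pullback of $E$ to the v-site $X_\vsite$ viewed as a $\check\ri_X$-module. By the very definition of $\catbdl{X}$, proving $E \in \catbdl{X}$ amounts to showing $\mathcal E \in \Mod$.

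The key compatibility I will use is that the formation of the v-structure sheaf and the pullback of modules along the morphism $f$ commute, i.e.\ the square
\begin{equation*}
\begin{tikzcd}
Y_\vsite \arrow[r,"\tau_Y"] \arrow[d,"f",swap] & Y_\an \arrow[d,"f"] \\
X_\vsite \arrow[r,"\tau_X"] & X_\an
\end{tikzcd}
\end{equation*}
of ringed sites commutes (both $\tau_X$ and $f$ are defined on the underlying sites in the natural way and the structure sheaves are pulled back accordingly). Consequently $f^* \mathcal E = f^* \tau_X^* E = \tau_Y^* f^* E$ as $\check\ri_Y$-modules on $Y_\vsite$. By hypothesis $f^* E \in \catbdl{Y}$, which by definition means $\tau_Y^*(f^*E) \in \ModY$. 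Therefore $f^*\mathcal E \in \ModY$.

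Now I invoke \cref{rslt:descent-of-prop-triv-reduction}: since $f \colon Y \to X$ is a surjective morphism of finite-type proper adic spaces over $\Cpx_p$ with $X$ normal, and since $\mathcal E$ is an $\check\ri_X$-module with $f^*\mathcal E \in \catmodproptriv{\check\ri_Y}$, the descent result yields $\mathcal E \in \catmodproptriv{\check\ri_X} = \Mod$. Hence $E \in \catbdl{X}$, as desired. The entire content of the theorem is thus contained in the descent statement \cref{rslt:descent-of-prop-triv-reduction}, whose proof in turn rests on \cref{rslt:descent-of-integral-model-along-morphism-of-proper-spaces-with-loc-irr-target} via the Stein factorization; there are no additional obstacles to handle in the vector bundle setting.
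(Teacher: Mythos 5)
Your proof is correct and takes essentially the same approach as the paper, which just cites \cref{rslt:descent-of-prop-triv-reduction}; you have simply made explicit the standard compatibility $f^*\tau_X^* = \tau_Y^* f^*$ that the paper leaves implicit.
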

\begin{proof}
This follows from \cref{rslt:descent-of-prop-triv-reduction}
\end{proof}

Note that we do not demand the existence of a suitable $\ri_X^+$-lattice for bundles in $\catbdl{X}$. If we add this condition, we only have to check the trivializability condition modulo $p$, as the following result based on \cite{matti-rigid-DW-functor} shows.

\begin{definition}
We define $\mathcal{B}_{\ri_X^+}(X)$ as the full subcategory of analytic vector bundles $E$ on $X$, such that there exists a locally free $\ri^+_X$-module $\mathcal E$ satisfying $E = \mathcal E[1/p]$ with $\tau^{+\ast} \mathcal E$ in $\Modplus$.
\end{definition}

Obviously $\mathcal{B}_{\ri_X^+}(X)$ is a full subcategory of $\catbdl{X}$. The following proposition allows for an easier description of this category.

\begin{proposition} \label{rslt:matti-trivializable-mod-pn-from-p} 
Let $\mathcal E$ be a locally free  $\ri^+_X$-module of finite rank on the adic space $X$ (which satisfies our general assumptions). Then the following are equivalent:
\begin{enumerate}[(a)]
	\item $\tau^{+\ast} \mathcal E$ is an object of $\Modplus$, i.e. $\tau^{+\ast} \mathcal E/p^n$ is a properly trivializable $\check\ri^+_X/p^n$-module for all $n \ge 1$.
	\item $\tau^{+\ast}\mathcal E/p$ is a properly trivializable $\check\ri^+_X/p$-module.
	\item $\mathcal E /p$ is trivializable on a finite étale cover of $X$.
\end{enumerate}
\end{proposition}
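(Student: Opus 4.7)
I would prove the cycle of implications (a) $\Rightarrow$ (b) $\Rightarrow$ (c) $\Rightarrow$ (a).

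The implication (a) $\Rightarrow$ (b) is immediate by specialising to $n = 1$.

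For (b) $\Rightarrow$ (c), the strategy is to apply the last assertion of \cref{rslt:DW-n-is-functor}: every properly trivializable $\check\ri^+_X/p$-module is already trivial on some finite étale cover $g\colon Y \to X$. Hence $\tau^{+*}_Y(g^*\mathcal E/p)$ is free as a $\check\ri^+_Y/p$-module of rank $r$. Since $\mathcal E$ (and thus $g^*\mathcal E/p$) is a locally free $\ri^+_Y/p$-module, the fully faithfulness of the v-pullback functor on locally free modules, obtained by combining \cref{rslt:locfree-sheaf-fully-faithful-et-to-qproet} with \cref{rslt:locfree-sheaf-fully-faithful-qproet-to-v}, shows that an isomorphism $\tau^{+*}_Y(g^*\mathcal E/p) \simeq (\check\ri^+_Y/p)^r$ descends to an isomorphism $g^*(\mathcal E/p) \simeq (\ri^+_Y/p)^r$. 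Thus $\mathcal E/p$ trivialises on the finite étale cover $Y$.

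For (c) $\Rightarrow$ (a), which is the main content, I would follow the inductive strategy developed by Würthen in \cite{matti-rigid-DW-functor}. I claim that for every $n \ge 1$ there is a finite étale cover $Y_n \to X$ on which $\mathcal E/p^n$ is free. Since a finite étale cover of a proper adic space of finite type over $\Cpx_p$ is again proper of finite type, this immediately implies (a) by pulling back to the v-site. The base case $n = 1$ is exactly condition (c). For the inductive step from $n-1$ to $n$, one first enlarges the finite étale cover $Y_{n-1}$ (trivialising $\mathcal E/p^{n-1}$) by taking its fibre product with the cover from (c), so that both $\mathcal E/p^{n-1}$ and $\mathcal E/p$ become free on the resulting cover. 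Choosing a basis of $\mathcal E/p^{n-1}$ and analysing local lifts along the short exact sequence
\[
	0 \to \mathcal E/p \xrightarrow{\,p^{n-1}\,} \mathcal E/p^n \to \mathcal E/p^{n-1} \to 0,
\]
the obstruction lies in $H^1$ of a free $\ri^+/p$-module of rank $r^2$ on the étale site of $Y_{n-1}$. Würthen's argument then produces a further finite étale cover $Y_n \to Y_{n-1}$ on which this obstruction vanishes, so that a global basis of $\mathcal E/p^n|_{Y_n}$ exists.

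The main obstacle is the inductive step in (c) $\Rightarrow$ (a): controlling the first cohomology obstruction and constructing a finite étale cover on which it dies. This is precisely where the geometry of finite étale covers of proper rigid-analytic varieties enters in an essential way, and where I would invoke Würthen's technique from \cite{matti-rigid-DW-functor} rather than trying to re-prove it from scratch.
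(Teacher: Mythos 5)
Your proof is correct and follows essentially the same route as the paper's: the (b) $\Leftrightarrow$ (c) equivalence via \cref{rslt:DW-n-is-functor}, and the crucial lifting step by appeal to the proof of W\"urthen's Theorem~3.19. The minor difference is organizational — you run the cycle (a)$\Rightarrow$(b)$\Rightarrow$(c)$\Rightarrow$(a) whereas the paper establishes (b)$\Leftrightarrow$(c) directly and then (b)$\Rightarrow$(a); you also make explicit the descent of the trivialization from the v-site to the analytic site via the fully faithfulness results of \cref{rslt:locfree-sheaf-fully-faithful-et-to-qproet} and \cref{rslt:locfree-sheaf-fully-faithful-qproet-to-v}, a step the paper leaves implicit.
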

\begin{proof}
Assume that $\mathcal E$ satisfies (b) and let $\alpha\colon U \to X$ be a proper cover such that  $\alpha^{+*}\tau^{+\ast} \mathcal E/p$ is trivial. By \cref{rslt:DW-n-is-functor}, we may choose $U$ to be finite étale over $X$, hence (b) is equivalent to (c).
Moreover, by the proof of \cite[Theorem 3.19]{matti-rigid-DW-functor} we see that for all $n \ge 1$, the $\check\ri^+_Y/p^n$-module $\alpha^{+*}\mathcal E/p^n$ is properly trivializable. But then the same is true for $\tau^{+ \ast}\mathcal E/p^n$ on $X$.
\end{proof}

Note \cite[Proposition 2.10]{matti-rigid-DW-functor} that $\ri$-local systems on $X_\qproet$ correspond to continuous representations of $\pi_1^{\et}(X,{x})$ on  free $\ri$-modules of finite rank, where the association is given by taking the fiber over ${x}$. In \cite[Section 4.3]{matti-rigid-DW-functor} it is shown how to define parallel transport on suitable vector bundles in a similar way.  Let us adapt this construction to our situation.

We denote by $\Pi_1(X)$ the algebraic fundamental groupoid of $X$, i.e. the category with objects $X(\mathrm{Spa}(\mathbb{C}_p, \ri))$, such that the set of morphisms from $x$ to $y$ is given by the set of isomorphisms $\mathrm{Iso}(F_x, F_y)$, where $F_x$ is the algebraic fiber functor, mapping a finite \'etale cover $\alpha: Y \rightarrow X$ to the set $\alpha^{-1}(x)$. We endow $\mathrm{Iso}(F_x, F_y)$ with the profinite topology, thus making $\Pi_1(X)$ into a topological groupoid. We refer to elements in $\mathrm{Iso}(F_x, F_y)$ as \'etale paths from $x$ to $y$.

Let $\mathcal L$  be an element of $\catlocsys\ri{X}$, i.e. a local system for the quasi-pro-\'etale topoloy. For each $x \in X(\mathrm{Spa}(\mathbb{C}_p, \ri))$ the fiber $x^{-1} \mathcal L$  of $\mathcal L$ in $x$  is a constant local system on $\mathrm{Spa}(\mathbb{C}_p, \ri)$. Put $\mathcal L_x = \Gamma(\mathrm{Spa}(\mathbb{C}_p, \ri),  x^{-1} \mathcal L)$. 
By \cref{rslt:profinet-cover} we find a connected quasi-pro-\'etale cover $f: Y = \varprojlim_n Y_n$ of $X^\diamond$ trivializing $\mathcal L$, such that each $Y_n$ is the diamond of a connected finite  \'etale cover of $X$. 
Let $\gamma$ be an \'etale path from $x$ to $y$. Choose  a lift $x'$  of $x$  to a morphism (of diamonds) $\mathrm{Spd}(\mathbb{C}_p, \ri) \rightarrow Y$ by lifting $x$ successively to all $Y_n$. Then the \'etale path $\gamma$ induces a lift $y' = \gamma(x'): \mathrm{Spd}(\mathbb{C}_p, \ri) \rightarrow Y$ of $y$. There is a natural isomorphism $(\mathcal L|_{Y})_{x'} \isoto \mathcal L_x$. Since $\mathcal L|_{Y}$ is trivial as a local system on $Y$, we find that the natural morphism $H^0(Y, \mathcal L|_{Y}) \rightarrow (\mathcal L)_{x'}$ is an isomorphism of $\ri$-modules. We define an isomorphism of $\ri$-modules associated to $\gamma$ as
\[\rho_{\mathcal L}(\gamma): \mathcal L_x \isofrom (\mathcal L|_{Y})_{x'} \isofrom H^0(Y, \mathcal L|_{Y}) \isoto  (\mathcal L|_{Y})_{y'} \isoto \mathcal L_y.\]

\begin{proposition} \label{rslt:parallel-transport-localsys}
\begin{enumerate}[(a)]
	\item The isomorphism $\rho_{\mathcal L}(\gamma): \mathcal L_x \isoto \mathcal L_y$ is independent of all choices made in the construction.
	\item The association $x \mapsto \mathcal L_x$ together with $\gamma \mapsto \rho_{\mathcal L}(\gamma)$ induces for every $\mathcal L $ in $\catlocsys\ri{X}$ a continuous functor $\rho_{\mathcal L}$ from the fundamental groupoid $\Pi_1(X)$ to the category of finite free $\ri$-modules.
	The association $\mathcal L \mapsto \rho_{\mathcal L}$ is functorial and compatible with direct sums, tensor products, internal homs and exterior powers in an evident way. In particular, every local system $\mathcal L$ in $\catlocsys\ri{X}$ gives rise to a continuous representation
	 $\rho_{\mathcal L} $ of the \'etale fundamental group $\pi_1^{\et}(X,{x})$  on a finite free $\ri$-module.
\end{enumerate}
\end{proposition}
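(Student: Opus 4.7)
For (a), I will first fix a trivializing cover $Y = \varprojlim Y_n$ and verify independence of the chosen lift $x'$ of $x$. Given two lifts $x', x''$ of $x$ and their translates $y' = \gamma(x')$, $y'' = \gamma(x'')$, I will pick a trivialization $\phi\colon \mathcal L|_Y \isom \ri_Y^r$ and check that, after this trivialization, $\rho_{\mathcal L}(\gamma)$ becomes the map $T_{y'}^{-1} \circ T_{x'}$, where $T_z$ denotes the stalk isomorphism $\mathcal L_{f(z)} \isoto \ri^r$ at a point $z$ of $Y$ induced by $\phi$ together with the canonical identification $(\mathcal L|_Y)_z = \mathcal L_{f(z)}$. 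The equality with $T_{y''}^{-1} \circ T_{x''}$ then follows from the fact that the étale path $\gamma$ is a natural transformation of fiber functors and therefore intertwines the change of lift on the $x$-side with the corresponding change of lift on the $y$-side. Independence of the cover is obtained by taking a common refinement: given two trivializing covers $Y, Y'$, a connected component of $Y \cprod_X Y'$ still surjects onto both factors (using \cref{rslt:fet-cover-of-connected-space-has-finitely-many-components} at each finite level) and trivializes $\mathcal L$, and the naturality of the evaluation maps under the refinement then yields the claimed compatibility.

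For (b), functoriality on $\Pi_1(X)$ uses (a) to pick a single trivializing cover $Y$ for any triple $x, y, z$: given composable paths $\gamma_1 \colon x \to y$, $\gamma_2 \colon y \to z$ and a lift $x'$, the lifts $y' := \gamma_1(x')$ and $z' := \gamma_2(y')$ satisfy $(\gamma_2 \circ \gamma_1)(x') = z'$ by functoriality of étale paths on $F_x(Y) \to F_z(Y)$, and the resulting three-step diagram telescopes to the definition of $\rho_{\mathcal L}(\gamma_2 \circ \gamma_1)$. For continuity, I reduce modulo $p^n$: the target $\mathrm{Iso}(\mathcal L_x, \mathcal L_y)$ carries the $p$-adic topology, so it suffices to show that each induced map $\mathrm{Iso}(F_x, F_y) \to \mathrm{Iso}(\mathcal L_x/p^n, \mathcal L_y/p^n)$ to a finite discrete target is continuous. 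By \cref{rslt:locsys-trivial-on-fet-cover} applied to $\ri/p^n$, the sheaf $\mathcal L/p^n$ is trivialized on a finite étale cover $Y_n \to X$, and the resulting map factors through the finite quotient of $\mathrm{Iso}(F_x, F_y)$ corresponding to isomorphisms of the fibers of $Y_n$ at $x$ and $y$, giving the continuity.

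Finally, functoriality in $\mathcal L$ and compatibility with $\oplus$, $\otimes$, $\IHom$ and exterior powers are formal consequences of the construction: each of the three ingredients---the fiber $\mathcal L \mapsto \mathcal L_x$, the global sections $\mathcal L|_Y \mapsto H^0(Y, \mathcal L|_Y)$ on a trivializing cover, and the canonical stalk identifications---commutes with these operations, so the composite $\rho_{\mathcal L}(\gamma)$ does as well. Restricting $\rho_{\mathcal L}$ to loops at $x$ yields the claimed continuous representation of $\pi_1^{\et}(X, x)$ on the finite free $\ri$-module $\mathcal L_x$. I anticipate the main technical obstacle to be the verification in (a) that the change of lift $x' \rightarrow x''$ cancels with the corresponding change $y' \rightarrow y''$; this rests on interpreting an étale path as a natural transformation compatible with the monodromy action of the étale fundamental groupoid on both the source and target.
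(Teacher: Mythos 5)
The paper's own proof is merely a pointer to W\"urthen's \cite[Section 3.1]{matti-rigid-DW-functor}, so your detailed argument is a reasonable spelling-out following the same path-lifting strategy. There is, however, a concrete slip in the continuity argument for (b): you cite \cref{rslt:locsys-trivial-on-fet-cover} ``applied to $\ri/p^n$'' and describe $\mathrm{Iso}(\mathcal L_x/p^n, \mathcal L_y/p^n)$ as a ``finite discrete target'', but $\ri/p^n = \ri_{\Cpx_p}/p^n$ is an \emph{infinite} ring, so both statements are off: $\GL_r(\ri/p^n)$ is discrete but not finite, and \cref{rslt:locsys-trivial-on-fet-cover} requires the coefficient ring to be finite. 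The correct route is already built into the construction preceding the proposition: the tower $Y = \varprojlim_n Y_n$ is produced via \cref{rslt:profinet-cover}, whose proof first uses \cref{rslt:injlim-of-locsys} to descend $\mathcal L/p^n$ to an $\ri_F/p^n$-local system for a finite extension $F/\Q_p$ before invoking \cref{rslt:locsys-trivial-on-fet-cover}. With that tower in hand, $\gamma \mapsto \rho_n(\gamma)$ factors through $\gamma \mapsto \gamma(x'_n) \in F_y(Y_n)$, a \emph{finite} set; this factorization, not finiteness of the target, is what gives continuity into the discrete group $\GL_r(\ri/p^n)$.

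A second, milder point concerns the cancellation step in (a): ``$\gamma$ intertwines the change of lift'' skips over the fact that $x'$ and $x''$ need not differ by a deck transformation, since the $Y_n$ are connected but not necessarily Galois over $X$. The refinement device you already use for independence of cover makes this precise: naturality of $\gamma$ with respect to the two projections $Y_n \cprod_X Y_n \to Y_n$ shows that $(y'_n, y''_n)$ lies in the same connected component $Z$ of $Y_n \cprod_X Y_n$ as $(x'_n, x''_n)$, and both $T_{y'}^{-1} \comp T_{x'}$ and $T_{y''}^{-1} \comp T_{x''}$ are computed by the same comparison of evaluation maps at $(x'_n, x''_n)$ and $(y'_n, y''_n)$ on $H^0(Z, \restrict{\mathcal L/p^n}{Z})$, hence agree. (Equivalently, pass to the Galois closure of each $Y_n$, where the change of lift is a genuine deck transformation and your naturality argument applies verbatim.)
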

\begin{proof}
This is straightforward, using similar arguments as in \cite[Section 3.1]{matti-rigid-DW-functor}.\end{proof}

\begin{corollary} \label{rslt:comparison-to-Matti}
Every vector bundle $E$ in $\catbdl{X}$ gives rise to a continuous functor $\rho_E$ from the fundamental groupoid $\Pi_1(X)$ to the category of finite free $\mathbb{C}_p$-vector spaces. If $E$ is contained in $\mathcal{B}_{\ri_X^+}(X)$, our construction coincides with the one given by W\"urthen in \cite[Section 4.3]{matti-rigid-DW-functor}.
\end{corollary}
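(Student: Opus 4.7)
The plan is to combine the functor $\Delta$ from $\catbdl{X}$ to integral $\Cpx_p$-local systems with the parallel transport construction for $\ri$-local systems, and then to compare the resulting functor with W\"urthen's construction on a common trivializing cover.

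For the first assertion, given $E \in \catbdl{X}$ I would first invoke \cref{rslt:parallel-transport-for-adic} to obtain a $\Cpx_p$-local system $\Delta(E) \in \catlocsysint{\Cpx_p}X$ equipped, by \cref{def:rep-integral-model}, with an integral model $\mathcal L \subset \Delta(E)$ satisfying $\mathcal L[p^{-1}] = \Delta(E)$. Next I would apply \cref{rslt:parallel-transport-localsys} to $\mathcal L$ to produce a continuous functor $\rho_{\mathcal L}\colon \Pi_1(X) \to \catmod{\ri}$ with values in finite free $\ri$-modules, and then define
\[
    \rho_E(x) := \mathcal L_x \tensor_{\ri} \Cpx_p, \qquad \rho_E(\gamma) := \rho_{\mathcal L}(\gamma) \tensor_{\ri} \Cpx_p.
\]
Independence of the choice of integral model would follow from the observation that any two integral models inside the same $\Cpx_p$-local system differ only by multiplication by a power of $p$, together with the naturality of the construction of \cref{rslt:parallel-transport-localsys} in $\mathcal L$; both integral models therefore induce the same $\Cpx_p$-valued functor after inverting $p$.

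For the comparison with W\"urthen's construction, suppose $E \in \mathcal{B}_{\ri_X^+}(X)$ with integral lattice $\mathcal E$. By \cref{rslt:matti-trivializable-mod-pn-from-p} one finds a compatible tower $\dots \to Y_n \to \dots \to Y_1 \to X$ of connected finite \'etale covers trivializing $\mathcal E/p^n$; this is precisely the type of cover used by W\"urthen in \cite[Section 4.3]{matti-rigid-DW-functor}. I would then show that on this tower both constructions produce the same $\ri$-local system: by \cref{rslt:sub-locsys-of-free-O-mod-p-module} the unique free $\ri/p^n$-submodule of maximal rank inside $\tau^{+\ast}\mathcal E/p^n$ over $Y_n$ is generated by its global sections, and this coincides with the $\ri/p^n$-module $H^0(Y_n, \mathcal E/p^n|_{Y_n})$ that W\"urthen uses. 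The descent of this submodule along $Y_n \to X$ agrees with W\"urthen's glueing datum, as both are induced by the action on global sections. Since lifts of \'etale paths from a chosen basepoint to the cover are canonical under the diamond functor, the associated transport maps agree; passing to the inverse limit over $n$ and inverting $p$ then identifies $\rho_E$ with W\"urthen's functor.

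The main obstacle I expect is the careful translation of fiber identifications between the pro-\'etale framework of \cite{matti-rigid-DW-functor} and the v-theoretic setup of the present paper. Both constructions identify the fiber $\mathcal L_x$ with the global sections of $\mathcal L|_Y$ via a chosen lift of $x$, but W\"urthen works on the pro-\'etale site while our $\Delta^+_n$ is formulated on the v-site. Here the equivalences $\catlocsys\ri{X_\et} \isoto \catlocsys\ri{X_\qproet} \isoto \catlocsys\ri{X_\vsite}$ from \cref{rslt:et-locsys-equiv-qproet-locsys} and \cref{rslt:qproet-locsys-equiv-v-locsys-complete-huber-rings} guarantee that no information is lost in this translation, so the comparison should reduce to an unwinding of definitions on the common finite \'etale tower.
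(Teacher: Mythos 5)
Your proposal follows the same route as the paper's proof, which is even terser: set $\rho_E := \rho_{\mathcal L} \otimes_{\ri} \Cpx_p$ where $\mathcal L$ is an integral model of $\Delta(E)$, and observe that the comparison with W\"urthen's construction for $E \in \mathcal{B}_{\ri_X^+}(X)$ follows by unwinding definitions on the common finite \'etale tower. One small correction: two integral models of the same $\Cpx_p$-local system need \emph{not} differ by multiplication by a power of $p$ (e.g. $\ri$ versus $p^{1/2}\ri$ inside $\Cpx_p$, since the value group of $\Cpx_p$ is $\Q$); the independence of $\rho_E$ from the choice of model is instead already built into the well-definedness of $\Delta$ proved in \cref{rslt:DW-is-functor-on-very-nice}, which uses the existence of a morphism $p^m\colon \mathcal L_1 \to \mathcal L_2$ inducing the identity after inverting $p$, together with functoriality of $\rho_{(-)}$ in the local system.
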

\begin{proof} Since $\Delta: \Mod \to \catlocsysint{\Cpx_p}X$ is a category equivalence by  \cref{rslt:DW-is-equivalence}, we can define $\rho_E = \rho_{\mathcal L} \otimes \mathbb{C}_p$ for $\Delta(E)  = \mathcal L [p^{-1}]$.   Compatibility with the construction in \cite[Section 4.3]{matti-rigid-DW-functor}  for bundles in $\mathcal{B}_{\ri_X^+}(X)$ is then evident. 
\end{proof}

\begin{theorem}\label{rslt:category-abelian}
The category $\catbdl{X}$ is abelian.
\end{theorem}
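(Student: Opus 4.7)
The plan is to transport the abelian structure from the equivalent category $\catlocsysint{\Cpx_p}X$ (abelian by \cref{rslt:ILoc-is-abelian}) via the fully faithful and exact functor $\Delta\colon \catbdl{X} \to \catlocsysint{\Cpx_p}X$ of \cref{rslt:parallel-transport-for-adic}. Since $\catbdl{X}$ is by definition a full subcategory of $\ri_X$-modules on $X_\an$ and $\Delta$ is exact and fully faithful, the abelian axioms will follow formally once we know $\catbdl{X}$ is closed under kernels and cokernels. Since $\catbdl{X}$ obviously has a zero object and direct sums (the latter by \cref{rslt:DW-for-OX}(i)), only the construction of kernels and cokernels in $\catbdl{X}$ needs attention.

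Given a morphism $\varphi\colon E \to E'$ in $\catbdl{X}$, I first form the kernel $\tilde K$ and cokernel $\tilde C$ of $\tau^*\varphi$ inside $\Mod$, which exist by \cref{rslt:Mod-is-abelian}. Under the equivalence $\Delta\colon \Mod \isoto \catlocsysint{\Cpx_p}X$ of \cref{rslt:DW-for-OX}(i), these correspond to the kernel $L_K$ and cokernel $L_C$ of $\Delta(\varphi)$ in $\catlocsysint{\Cpx_p}X$, and \cref{rslt:DW-for-OX}(ii) gives identifications $\tilde K = L_K \tensor_{\Cpx_p} \check\ri_X$ and $\tilde C = L_C \tensor_{\Cpx_p} \check\ri_X$. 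In parallel, I form the kernel $K$ and cokernel $C$ of $\varphi$ in the abelian category of coherent $\ri_X$-modules on $X_\an$.

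The main technical step is to identify these two pictures: namely, to show $\tau^* K = \tilde K$ and $\tau^* C = \tilde C$ in $\Mod$, and that $K$ and $C$ are analytic vector bundles. For cokernels this is formal because $\tau^*$, being a left adjoint, preserves cokernels. For kernels the point is that $E$ and $E'$ are locally free on $X_\an$, so working locally $\varphi$ is represented by a matrix with entries in $\ri_X$; applying $\tau^*$ replaces the coefficients by $\check\ri_X$, and $K$ is computed by the same matrix. Since $L_K$ is locally free of constant rank (on each connected component), the rank of $\tilde K = \tau^* K$ is locally constant on $\abs{X}$; by the standard fact that a coherent sheaf of locally constant rank on a reduced rigid-analytic variety is locally free (and $X$ is seminormal, hence reduced), $K$ is a vector bundle, and similarly for $C$. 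Then $\tau^* K \in \Mod$ and $\tau^* C \in \Mod$ follow, so $K, C \in \catbdl{X}$.

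The universal properties of $K$ and $C$ as kernel and cokernel of $\varphi$ in $\catbdl{X}$ follow from their universal properties in the ambient category of $\ri_X$-modules: any test morphism $F \to E$ with $F \in \catbdl{X}$ that composes to zero with $\varphi$ factors uniquely through $K$ as an $\ri_X$-module map. Finally, to verify that every monic in $\catbdl{X}$ is the kernel of its cokernel and every epic is the cokernel of its kernel, I would use that the exact functor $\Delta$ reflects these properties: the analogous facts hold in $\catlocsysint{\Cpx_p}X$, and fully faithfulness together with exactness of $\Delta$ transports them back to $\catbdl{X}$. The hardest step is the local-freeness argument combined with the identification $\tau^* K = \tilde K$, since it requires exploiting the passage between the analytic coherent world and the $v$-topological picture in $\Mod$; once this is in place, the abelian structure of $\catbdl{X}$ is inherited cleanly from $\catlocsysint{\Cpx_p}X$.
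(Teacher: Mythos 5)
Your overall strategy -- transporting the abelian structure from $\catlocsysint{\Cpx_p}X$ through $\Delta$ and verifying closure of $\catbdl X$ under kernels and cokernels -- matches the paper's, and your cokernel treatment is essentially correct: $\tau^*$ is a left adjoint, so $\tau^*\mathrm{coker}(\varphi) = \mathrm{coker}(\tau^*\varphi)$, and the constant-rank-implies-locally-free step is precisely what the paper establishes with its Mumford-style argument on fibers at classical points together with the reduced Jacobson property of the local rings (the paper verifies constancy of the fiber rank directly via a trivializing quasi-pro-\'etale cover coming from \cref{rslt:profinet-cover}, which is a cleaner way to organize your rank argument).

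The kernel step, however, has a genuine gap. Your ``matrix argument'' asserts that $\tau^* K$ coincides with $\tilde K = \ker(\tau^*\varphi)$ because both are ``computed by the same matrix,'' but this is exactly the claim that $\tau^*$ is left exact on the relevant sequence, which is not automatic: $\tau^*$ is a pullback along a morphism of ringed sites, hence right exact but in general not left exact, and $\check\ri_X$ is very far from being flat over $\tau^{-1}\ri_X$. Without this identification your subsequent rank argument for $K$ is circular. The paper circumvents the issue in a different way: having shown that $\mathrm{coker}(\varphi)$ is locally free, it reduces to surjective $\varphi$, dualizes the short exact sequence $0 \to K \to E' \to E \to 0$ (using that $E$ is locally free so $\IExt^1(E,\ri_X)=0$) to exhibit $K^*$ as a cokernel of a map in $\catbdl X$, concludes $K^{**}\in\catbdl X$, and then identifies $K$ with $K^{**}$ via a commutative diagram. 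An alternative, perhaps more direct, fix for your argument: once $E''$ is known to be locally free, the surjection $E \surjto E''$ of vector bundles is locally split, so $\mathrm{im}(\varphi) = \ker(E \to E'')$ is locally a direct summand of $E$ and hence locally free; repeating this for $E' \surjto \mathrm{im}(\varphi)$ shows $K$ is locally free; and $\tau^*$ applied to a locally split short exact sequence of vector bundles stays exact, which \emph{then} justifies $\tau^* K = \tilde K$. Either way, the crucial missing ingredient is that local freeness of $E''$ must be established \emph{before} you can transfer the kernel through $\tau^*$ -- the order of deduction in your write-up cannot stand.
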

\begin{proof}
We may assume that $X$ is connected.  Let $\varphi: E' \rightarrow E$ be a morphism in $  \catbdl{X}$. We have to check that $\mathrm{ker}(\varphi)$ and $\mathrm{coker}(\varphi)$ are also objects in $  \catbdl{X}$.

Let us first look at the coherent $\ri_X$-module  $E'' = \mathrm{coker}(\varphi)$. Since the category $\Mod$ is abelian by \cref{rslt:Mod-is-abelian}, and $\tau^\ast$ is exact on locally free sheaves, it suffices to show that $E''$ is locally free on $X$.
For every classical rank-1-point $x \in X(\mathrm{Spa}(\mathbb{C}_p, \ri))$
we consider the fiber $E''_x = \Gamma(\mathrm{Spa}(\mathbb{C}_p, \ri), x^\ast E'')$, which is a finite-dimensional $\mathbb{C}_p$-vector space,
since $E_x \rightarrow E''_x $ is surjective. It suffices to show that the dimension of  $E''_x$ is constant for all $x \in X(\mathrm{Spa}(\mathbb{C}_p, \ri))$. If this is the case, we can argue as in \cite[Section 5, Lemma 1]{mumford-av} to prove that $E''$ is indeed a locally free $\ri_X$-module:
For every $x \in X(\mathrm{Spa}(\mathbb{C}_p, \ri))$ we lift a $\mathbb{C}_p$-basis of $E''_x$ to sections $\sigma_1, \ldots, \sigma_r \in E''(U)$ on a suitable open neighbourhood $U$ of $x$. These sections induce a morphism of $\ri_X|_U$-modules $\alpha: \ri_X^r|_U \rightarrow E''|_U$, which is surjective in the fiber over $x$. Hence by Nakayama's lemma, $\alpha$ is surjective in a neighbourhood of $x$, which we  may assume to be $U$. For every classical rank-1-point $z$ in $U(\mathrm{Spa}(\mathbb{C}_p, \ri))$, the fiber map $\alpha(z)$ is therefore an epimorphism of  $\mathbb{C}_p$-vector spaces of the same dimension (by our assumption) and hence an isomorphism.   If $E_0$ denotes the kernel of $\alpha$, we find that the fiber of $E_0$ on every classical rank-1-point $z$ in $U(\mathrm{Spa}(\mathbb{C}_p, \ri))$ vanishes.  Therefore the stalk of $E_0$ in $z$ is contained in $\mathfrak{m}_z \ri_{X,z}^r$. Since locally  $X$ is given by $\mathrm{Spa}(A, A^+)$ for a reduced Jacobson ring $A$ (cf. \cite[Proposition 3.3]{joao-master-thesis}), we deduce that  $E_0 = 0$. Hence $E''$ is indeed locally free of finite rank.

Let us now prove that the dimension of $E''_x$ is constant. By \cref{rslt:profinet-cover} and the category equivalence \cref{rslt:DW-is-equivalence}, we find a connected quasi-pro-\'etale cover $Y \rightarrow X$ trivializing $\tau^\ast E$,  $\tau^\ast E'$ and the cokernel of $\tau^\ast \varphi$, which we denote by $F$. Note that $F$ is also an object of $\Mod$ by \cref{rslt:Mod-is-abelian}.
Put $S = \mathrm{Spa}(\mathbb{C}_p, \ri)$, and let  $x \in X(S)$ be a classical rank-1-point in $X$. 
Note that we can lift $x$ to a point $y$ in $Y(S)$. Since $\tau^\ast E$, $\tau^\ast E'$ and $F$ have constant rank on $Y$,  we find that 
 $E''_x$, which is the  cokernel of the fiber map $E'_x \rightarrow E_x$, can be identified with $\Gamma(S, x^{\ast} F) = \Gamma(S, y^\ast (F|_Y)$, and thus has constant $\mathbb{C}_p$-rank on $X(S)$.

Now let us look at the kernel of $\varphi: E' \rightarrow E$. Since the cokernel of $\varphi$ is contained in $\catbdl{X}$, we may assume that $\varphi$ is surjective. Since $E$ is a locally free $\ri_X$-module, dualizing  $0 \rightarrow \mathrm{ker}(\varphi) \rightarrow E' \rightarrow E \rightarrow 0$ leads to an exact sequence $ 0 \rightarrow E^*  \rightarrow (E')^\ast \rightarrow \mathrm{ker}(\varphi)^\ast \rightarrow 0$, which implies that $\mathrm{ker}(\varphi)^\ast$ lies in $\catbdl{X}$, and therefore also the double dual $\mathrm{ker}(\varphi)^{\ast \ast} $.
Now we have a commutative diagram with exact lines
\begin{center}\begin{tikzcd}
0 \arrow[r]	 & \mathrm{ker}(\varphi) \arrow[r] \arrow[d]  & E' \arrow[r] \arrow[d,"\rotatebox{90}{\(\sim\)}"] & E \arrow[r] \arrow[d,"\rotatebox{90}{\(\sim\)}"] & 0\\
0 \arrow[r]	  & \mathrm{ker}(\varphi)^{\ast \ast} \arrow[r]  & (E')^{\ast \ast}  \arrow[r] & E^{\ast \ast}  \arrow[r] &0
\end{tikzcd}\end{center}
which implies that $\mathrm{ker}(\varphi)$  is isomorphic to its double dual and therefore in $\catbdl{X}$.
\end{proof}

\section{Application: The Algebraic Case}
\label{section:algebraic-case}

In this section we want to relate the results of the present paper to previous results by Deninger and the second author. In \cite{deninger-werner-parallel-transport-varieties}, a theory of $p$-adic parallel transport for vector bundles with numerically flat reduction on a smooth, proper variety $V$ over $\overline{\mathbb{Q}}_p$ is developed. This generalizes previous results on curves (\cite{deninger-werner-parallel-transport-1}, \cite{deninger-werner-parallel-transport-2}, \cite{deninger-werner-Tannaka}). Recall that a vector bundle $E$ is called numerically flat, if $E$ and its dual are nef.

For every normal, proper variety $V$ over $\overline{\mathbb{Q}}_p$ let  $\mathcal{B}^s_{V_{\mathbb{C}_p}}$ denote the (full) category of all vector bundles $E$ on $V_{\mathbb{C}_p}$ such that

\begin{itemize}
	\item there exists a flat, proper scheme $\mathcal{V}$ of finite presentation over ${\overline{\mathbb{Z}}_p}$ with generic fiber $V$, and

	\item there exists a vector bundle $\mathcal{E}$ on $\mathcal V \otimes \mathfrak{o}_p$ with generic fiber isomorphic to $E$ such that
	the special fiber $\mathcal{E}_k = \mathcal{E} \otimes k$ is numerically flat on $\mathcal V_k$, where $k$ is the residue field of $\mathbb{C}_p$.
\end{itemize}

The main theorem of \cite{deninger-werner-parallel-transport-varieties} defines (if $V$ is smooth) for every vector bundle $E$ in $\mathcal{B}^s_{V_{\mathbb{C}_p}}$ a continuous functor
\begin{equation} \label{eqn:DW} \tag{DW}
	\rho_E: \Pi_1(V_{\mathbb{C}_p}) \rightarrow \mathrm{Vec_{\mathbb{C}_p}}
\end{equation}
from the étale fundamental groupoid of $V_{\mathbb{C}_p}$ to the category of finite-dimensional ${\mathbb{C}_p}$-vector spaces which is the fiber map $x \mapsto E_x$ on objects, and which is continuous on morphisms.
The association $E \mapsto \rho_E$ is functorial and respects  direct sums, tensor products,  internal homs and exterior powers. Moreover it is functorial with respect to morphisms of varieties, see \cite[Proposition 9.4 and Theorem 9.5]{deninger-werner-parallel-transport-varieties}.

In order to show that these results fit into the framework of the present paper,  denote by  $X$ the adic space associated to $V_{\mathbb{C}_p}$.
Every algebraic vector bundle $E$ on $V$ gives rise to a vector bundle $E^{ad}$ on the analytic site $X_{an} $.

We will now generalize  the main result of  \cite{deninger-werner-parallel-transport-varieties} to vector bundles which have numerically flat reduction only after pullback to a proper cover. (Note that the relaxation on the smoothmess hypothesis was already established in \cite{matti-rigid-DW-functor}.) This provides a vast generalization of the previous \cite[Theorem 10]{deninger-werner-parallel-transport-2} which deals with finite covers of curves.

\begin{theorem}
\label{rslt:algebraic-case}
Let $E$ be a vector bundle on $V_{\mathbb{C}_p}$ such that there exists a normal proper variety $W$ over $\overline{\mathbb{Q}}_p$ and a  surjective $\overline{\mathbb{Q}}_p$-morphism $f: W \rightarrow V$ such that $f^\ast E$ lies in the category $\mathcal{B}^s_{W_{\mathbb{C}_p}}$, i.e. $f^\ast E$  has numerically flat reduction on $W$.

Then there exists a continuous functor  $\rho_E: \Pi_1(V_{\mathbb{C}_p}) \rightarrow \mathrm{Vec_{\mathbb{C}_p}}$ such that the identification of fibers $(f^\ast E)_x \simeq E_{f(x)}$ induces a natural isomorphism $\rho_{f^\ast E} \simeq \rho_E \circ f_\ast$. The association $E \mapsto \rho_E$ is functorial and respects direct sums, tensor products, internal homs and exterior powers. Moreover it is functorial with respect to morphisms of varieties and coincides with the functor (\ref{eqn:DW}) if $E$ lies in $\mathcal{B}^s_{V_{\mathbb{C}_p}}$ and $V$ is smooth.
\end{theorem}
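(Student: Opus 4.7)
The strategy is to reduce the algebraic statement to the analytic theory developed in Sections~6 and~7 and then apply the proper-descent result \cref{rslt:proper-pullback}. Let $X = V_{\Cpx_p}^{\mathrm{ad}}$ and $Y = W_{\Cpx_p}^{\mathrm{ad}}$, and let $g\colon Y \to X$ be the analytification of $f_{\Cpx_p}$. Both are proper, connected adic spaces of finite type over $\Cpx_p$, and $Y$ is normal by assumption on $W$, while $X$ is normal since $V$ is (for $\mathcal{B}^s_{V_{\Cpx_p}}$ to be defined). The map $g$ is surjective because $f$ is. Set $F := f^{*}E$, so $F \in \mathcal{B}^s_{W_{\Cpx_p}}$ by hypothesis.

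The key step is to show that $F^{\mathrm{ad}} \in \mathcal{B}_{\ri_Y^+}(Y) \subset \catbdl{Y}$. By definition of $\mathcal{B}^s_{W_{\Cpx_p}}$, there is a flat proper model $\mathcal{W}$ of $W$ over $\overline{\Z}_p$ and a vector bundle $\mathcal{F}$ on $\mathcal{W} \tensor \ri$ with generic fiber $F$ whose special fiber $\mathcal{F}_k$ is numerically flat. The results of \cite{deninger-werner-parallel-transport-varieties} (specifically, the construction of trivializing proper covers for bundles with numerically flat reduction) produce, for every $n \ge 1$, a proper surjective map $\pi_n\colon \mathcal{Y}_n \to \mathcal{W} \tensor \ri$, which is finite étale in the generic fiber, such that $\pi_n^{*}\mathcal{F}/p^n$ is free. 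Analytifying and restricting to the generic fiber produces proper surjective maps of adic spaces trivializing $\mathcal{F}^{\mathrm{ad}}/p^n$; in particular the analytic integral model $\mathcal{F}^{\mathrm{ad}}$ satisfies the hypotheses of \cref{rslt:matti-trivializable-mod-pn-from-p}, so $F^{\mathrm{ad}} \in \mathcal{B}_{\ri_Y^+}(Y)$. (One may alternatively argue that Würthen's construction in \cite[\S4]{matti-rigid-DW-functor} already places $F^{\mathrm{ad}}$ in $\catbdl{Y}$.)

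Having established that $g^{*}E^{\mathrm{ad}} = F^{\mathrm{ad}} \in \catbdl{Y}$, we invoke the proper-descent theorem \cref{rslt:proper-pullback}: since $g$ is surjective and $X$ is normal, we conclude $E^{\mathrm{ad}} \in \catbdl{X}$. Then \cref{rslt:comparison-to-Matti} associates to $E^{\mathrm{ad}}$ a continuous functor $\rho_E := \rho_{E^{\mathrm{ad}}}\colon \Pi_1(V_{\Cpx_p}) \to \mathrm{Vec}_{\Cpx_p}$. The natural isomorphism $\rho_{f^{*}E} \isom \rho_E \comp f_{*}$ follows from the pullback compatibility of $\Delta$ in \cref{rslt:DW-commutes-with-pullback} together with the explicit description of the parallel transport in terms of fibers (\cref{rslt:parallel-transport-localsys}, applied to the natural identification of the fiber of $\Delta(f^{*}E^{\mathrm{ad}})$ at $x$ with that of $\Delta(E^{\mathrm{ad}})$ at $f(x)$). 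Compatibility with direct sums, tensor products, internal homs, and exterior powers, as well as functoriality in morphisms of varieties, is inherited from \cref{rslt:parallel-transport-for-adic}. Finally, the coincidence with the functor \eqref{eqn:DW} of \cite{deninger-werner-parallel-transport-varieties} in the smooth case with $E \in \mathcal{B}^s_{V_{\Cpx_p}}$ follows because in that case $E^{\mathrm{ad}} \in \mathcal{B}_{\ri_X^+}(X)$, so by \cref{rslt:comparison-to-Matti} our $\rho_E$ agrees with Würthen's construction in \cite[\S4.3]{matti-rigid-DW-functor}, which in turn coincides with the functor of \cite{deninger-werner-parallel-transport-varieties} on the category $\mathcal{B}^s_{V_{\Cpx_p}}$.

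The main obstacle is the second step: translating the algebraic hypothesis of numerically flat reduction on $W$ into the analytic proper-trivializability-modulo-$p^n$ condition needed to land in $\catbdl{Y}$. This is precisely the content of the Frobenius-pullback/proper-cover construction from \cite{deninger-werner-parallel-transport-varieties}, combined with the passage from schematic to adic proper covers; once granted, everything else is a formal consequence of the machinery built up in Sections~5--7 of the present paper.
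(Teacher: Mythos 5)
Your proof is correct and follows the same approach as the paper: analytify to $X = V^{\mathrm{ad}}_{\Cpx_p}$, $Y = W^{\mathrm{ad}}_{\Cpx_p}$; show that the numerically-flat hypothesis places $(f^*E)^{\mathrm{ad}}$ in $\catbdl Y$; descend to $E^{\mathrm{ad}} \in \catbdl X$ via \cref{rslt:proper-pullback}; produce $\rho_E$ by precomposing $\rho_{E^{\mathrm{ad}}}$ from \cref{rslt:comparison-to-Matti} with the analytification functor on fundamental groupoids; and deduce the compatibility claims from \cref{rslt:parallel-transport-for-adic} and \cref{rslt:parallel-transport-localsys}. The one place you expand on the paper is the key implication $\mathcal{B}^s_{W_{\Cpx_p}} \Rightarrow \catbdl Y$: you reconstruct it from the schematic proper-trivializing covers of \cite{deninger-werner-parallel-transport-varieties} and \cref{rslt:matti-trivializable-mod-pn-from-p}, whereas the paper delegates this directly to W\"urthen's Theorem~4.28 and \cref{rslt:comparison-to-Matti}; both routes are sound.
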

\begin{proof}
 If $E$ is contained in the category $\mathcal{B}^s_{V_{\mathbb{C}_p}}$ the associated adic vector bundle $E^{ad}$ is an object of $\mathcal{B}_{\ri_X^+}(X)$. By \cref{rslt:comparison-to-Matti} combined with \cite[Theorem 4.28]{matti-rigid-DW-functor}, the parallel transport functor $\rho_E$ from (\ref{eqn:DW}) is induced by the functor $\rho_{E^{ad}}$ defined in  \cref{rslt:comparison-to-Matti} via precomposition with the natural analytification functor $\Pi_1(V_{\mathbb{C}_p}) \rightarrow \Pi_1(X)$.
 Let $Y$ be the adic space associated to $W_{\mathbb{C}_p}$, and write by abuse of notation $f = f^{ad}: Y \rightarrow X$ for the induced adic morphism. If $f^\ast E$ is an object of $\mathcal{B}^s_{W_{\mathbb{C}_p}}$, the associated adic vector bundle $f^\ast E^{ad}$ on $X$ is an object of $\catbdl{Y}$. Hence $E^{ad}$ lies in $\catbdl{X}$ by \cref{rslt:proper-pullback}, and gives thus rise to a continuous functor of parallel transport $\rho_{E^{ad}}: \Pi_1(X) \rightarrow \mathrm{Vec_{\mathbb{C}_p}}$. We define $\rho_E$ again by precomposition with the natural analytification functor $\Pi_1(V_{\mathbb{C}_p}) \rightarrow \Pi_1(X)$. The rest of our claim follows from \cref{rslt:parallel-transport-localsys} and  \cref{rslt:parallel-transport-for-adic}.
\end{proof}

Note that it is not clear that the category of bundles $E$ on $V_{\mathbb{C}_p}$ such that a proper pullback as in the previous theorem lies in the category $\mathcal{B}^s_{W_{\mathbb{C}_p}}$ lies in the algebraic category $\mathcal{B}^s_{V_{\mathbb{C}_p}}$, since we do not know if $E$ admits an algebraic model with numerically flat reduction.


\bibliography{bibliography}
\addcontentsline{toc}{section}{References}

\end{document}